\tikzset{cross/.style={cross out, draw=black, minimum size=2*(#1-\pgflinewidth), inner sep=0pt, outer sep=0pt},
	%default radius will be 1pt. 
	cross/.default={2pt}}
\newcommand{\supp}[0]{\mathrm{supp}}
\newcommand{\height}[0]{\mathrm{ht}}
\newcommand{\wt}[0]{\mathrm{wt}}
\newcommand{\ad}[0]{\mathrm{ad}}
\theoremstyle{plain}
\newtheorem{theorem}{Theorem}[section]
\newtheorem{lemma}[theorem]{Lemma}
\newtheorem{prop}[theorem]{Proposition}
\newtheorem{cor}[theorem]{Corollary}
\newtheorem{thmx}{Theorem}[section]
\theoremstyle{definition}
\newtheorem{definition}[theorem]{Definition}
\newtheorem{example}[theorem]{Example}
\newtheorem{question}{Question}
\newtheorem*{p-psp}{parabolic-PSP}
\newtheorem{corollary-C}[theorem]{Corollary to Theorem C}
\newtheorem{observation}[theorem]{Observation}
\numberwithin{equation}{section}
\newtheoremstyle{problem}{5pt}{5pt}{}{}{\normalfont}{\textbf{:}}{.5em}{}
\theoremstyle{problem}
\newtheorem{problem}{\textbf{Problem}}
\newtheorem{note}{\textbf{Note}}
\begin{document}

\title[Weights and characters of highest weight modules]{Weights and characters of highest weight modules}
	\author{Souvik Pal  and G. Krishna Teja}
	
	\subjclass[2010]{Primary: 17B10; Secondary: 17B20, 17B22, 17B67, 17B70, 52B20.}
	\keywords{Integrable modules; parabolic and higher order Verma modules; weights and characters of highest weight modules; and maximal vectors.}
	\begin{abstract}
		Let $\mathfrak{g}=\mathfrak{g}(A)$ be any Borcherds--Kac--Moody $\mathbb{C}$-Lie algebra (BKM LA) for BKM-Cartan matrix $A$, with Cartan subalgebra $\mathfrak{h}$.
        Let $V$ denote a highest weight $\mathfrak{g}$-module, with top weight $\lambda\in \mathfrak{h}^*$ (not necessarily in the domninant integral cone $P^+$).
        The non-integrable simples $V= L(\lambda)$ in [Naito, {\it Trans. Amer. Soc.} 1995] are widely studied beyond integrable simple $L(\nu)s,\ \nu \in P^+$.
     We introduce and study: 1) A weight cone $P^{\pm}=\big\{\mu\in \mathfrak{h}^*\ \big|\ \mu(\alpha_i^{\vee})\in \frac{A_{ii}}{2}\mathbb{Z}_{\geq 0}\text{ for all simple co-roots }\alpha_i^{\vee}\big\}$; note Weyl vector $\rho\in P^{\pm}\setminus P^+$. 2) The resulting (novel) non-integrable simple $L(\lambda)s, \ \lambda \in P^{\pm}\setminus P^{+}$; their Chevalley--Serre (CS) type relations (which are, in fact, complementary to those of integrable $L(\nu)$s); 3) Higher length CS type relations in any highest weight module under the name ``holes". 
     
     Using these, we obtain explicitly and uniformly, (notably) Weyl-orbit typed formulas for weight-sets of: all simples $L(\lambda)$s ($\forall$ $\lambda\in \mathfrak{h}^*$) and all quotients of parabolic Verma modules along imaginary directions.
     This generalizes and extends in one stroke, such formulas over Kac--Moody (KM) $\mathfrak{g}$, of all $L(\lambda)$ by Khare [{\it Trans. Amer. Math. Soc.} 2017] and Dhillon--Khare [{\it Adv. Math.} 2017], and also of all $V$ by Khare--Teja recently; which used parabolic and {\it higher order Verma} modules. 
  
We obtain Weyl--Kac--Borcherds type character formulas for  $L(\lambda) \text{ for } \lambda\in  P^{\pm}$, over negative rank-2 $\mathfrak{g}$'s; by exploring Verma module embeddings. 
We obtain character of every highest weight module $V$ for  $\lambda=\rho$ in negative $A$-type cases.  
All our results \big(even weight-formulas of infinitely many integrable $V$, for a given $\lambda \in P^{+}$\big) and ingredients are novel to the best of our knowledge.
	\end{abstract}\bigskip
   \maketitle
	\settocdepth{section}
	\tableofcontents
	\allowdisplaybreaks
    \section{Introduction} \label{Section 1}
    Throughout the paper, we work over complex numbers $\mathbb{C}$; and $\mathbb{R}\supset\mathbb{Z}\supset \mathbb{N}$ denote resp. the sets of reals, integers, natural numbers.
    We fix $\mathfrak{g}=\mathfrak{g}(A)$ %\footnote{Our results hold true over all intermediate LAs $\widetilde{\mathfrak{g}} \twoheadrightarrow \mathfrak{g} \twoheadrightarrow \overline{\mathfrak{g}}$ for a given BKMC matrix, for $\widetilde{\mathfrak{g}}$ generated purely by the Chevalley--Serre relations, and the quotient $\overline{\mathfrak{g}}$ of $\widetilde{\mathfrak{g}}$ by the maximal ideal trivially intersecting $\mathfrak{h}$.} 
to be the Borcherds--Kac--Moody Lie algebra (BKM LA) for an arbitrary fixed BKM-Cartan (BKMC) matrix $A_{\mathcal{I}\times \mathcal{I}}$.
We index the Dynkin graph nodes for $\mathfrak{g}$ also by $\mathcal{I}$.
Recall for BKMC $A$:\\
\hspace*{0.5cm}i) $A_{ii}\in \{2\}\sqcup\mathbb{R}_{\leq 0}$ (non-positive reals); \quad ii) $A_{ij}=0\iff A_{ji}=0$;\\  \hspace*{0.3cm}iii)  $A_{ij}\in \mathbb{R}_{\leq 0}$ for $i\neq j\in \mathcal{I}$;\quad iv) $A_{rj}\in \mathbb{Z}_{\leq 0}$ (non-positive integers) for $A_{rr}=2$ and $j\neq r\in \mathcal{I}$.
\begin{definition}
Corresponding to the diagonal entries of $A$, we define and fix the three {\underline{node-types}}:\\  $\mathcal{I}\ \   =\ \ \big(  \overset{\textbf{Real nodes}}{\mathcal{I}^+\ := \{i \ |\  A_{ii} =2\}} \big)\ \ \bigsqcup\   \ \big( \overset{\textbf{Heisenberg nodes}}{\mathcal{I}^0\ :=\{i\ |\ A_{ii}=0\}}\big) \ \  \bigsqcup\ \ \big(\overset{\textbf{Negative/Real-like nodes}}{\mathcal{I}^-:\ =\{i\ |\ A_{ii}\in \mathbb{R}_{<0}\}}\big)$.
\end{definition}\noindent
For a realization of $\mathfrak{g}$, we fix: $A=[A_{ij}=\alpha_{j}(\alpha_{i}^{\vee})]_{i,j\in \mathcal{I}}, \text{ Cartan subalgebra } \mathfrak{h},$ $\text{ simple roots }\Pi=  \{ \alpha_i \ |\  i \in \mathcal{I} \}\subset \mathfrak{h}^*,\ \text{simple co-roots } \Pi^{\vee}=\{ \alpha_i^{\vee} \ |\  i \in \mathcal{I} \}\subset \mathfrak{h}$.
Recall, $\dim \mathfrak{h}= |\mathcal{I}|+ \text{nullity of }A$.
$\mathfrak{g}$ has the root system $\Delta\subset \mathfrak{h}^*$, and the Weyl group $W$ generated by real-simple reflections $s_i$, $i\in\mathcal{I}^+$
\big(we deal with reflections for imaginary roots, see  Lemma \ref{Lemma (2,2) sol. characterization}\big).
The Chevalley generators, triangular decomposition, universal enveloping algebra of $\mathfrak{g}$ are
 $\{ e_i, f_i, \alpha_i^\vee \ | \ i \in \mathcal{I} \}$,  $\mathfrak{g}=\mathfrak{n}^+\oplus \mathfrak{h}\oplus \mathfrak{n}^-$, $U(\mathfrak{g})$ resp.

R. Borcherds \cite{Borcherds J. Alg, Borcherds Liesuper} introduced generalized Kac--Moody (KM) LAs -- called BKM LAs -- generalizing his Monster LA, or Monster vertex operator algebras (modules over affine KM LAs introduced in \cite{Lepowsky E8, Lepowsky Vertex operators}); the latter play a central role in string theory and conformal field theory in theoretical physics, helping in quantum chromodynamics and quantum gravity \cite{Witten}.
This was for studying and settling the Conway and Norton's moonshine conjecture(s) \cite{Conway--Norton, Thompson 2} for the (largest) monster sporadic simple group \cite{Fischer baby Monster, Leon baby Monster, Griess Inventions}, using the no-ghost theorem in string theory.
The Weyl--Kac--Borcherds character formulas \eqref{Eqn WKB character formula}, and their denominator identities shed more light on the structure of monster group, and modular functions and forms; e.g. Koike--Norton--Zagier identities.

BKM LAs are crucial settings of contragredient LAs studied by Kac--Kazhdan \cite{Kac--Kazhdan}.
BKM LAs include: 1) {\it Semisimple} and KM LAs under $\mathcal{I}=\mathcal{I}^+$; 2) Heisenberg LAs for $A=0$ and $\mathcal{I}=\mathcal{I}^0$; 3)~{\it partially commutative} LAs \cite{Duchamp, Lyndon}.
BKM $\mathfrak{g}$'s are constructed similar to KM LAs by {\it Serre relations}: $f_i^{1-A_{ij}}f_j=0$ $\forall$ $i\in \mathcal{I}^+$ and $[f_i, f_j]=0$ 
 when $A_{ij}=0$.
 For $i\in \mathcal{I}$, $\mathfrak{g}_{\{i\}}:=\mathrm{span}_{\mathbb{C}}\big\{ e_i,\alpha_i^{\vee}, f_i\big\}$ is isomorphic to : $\mathfrak{sl}_2(\mathbb{C})$ (traceless $2\times 2$ $\mathbb{C}$-matrices) if $i\in \mathcal{I}^+\sqcup \mathcal{I}^-$, or the 3-dim. Heisenberg LA if $i\in \mathcal{I}^0$.     
The representation theories of these subalgebras differing significantly from one another (Lemma \ref{Lemma rank-1 BKM rep. theory}) also makes the theory for BKM LAs interestingly subtler; seen for instance in the technicalities and applications of character formulas \eqref{Eqn WKB character formula}.
Let the negative part of $\mathfrak{g}$ be $\mathfrak{n}^-$.

This paper contributes to study weights, characters and presentations for arbitrary highest weight modules (we abbreviate as h.w.m.s) $V$'s over BKM $\mathfrak{g}$'s; these $V$'s need not be integrable or simple.
We discuss here, motivations and applications to our study. 
We elaborate on our main results with duly addressing several guiding questions in Section \ref{Section main results}.
The reader interested in them may proceed to Subsections \ref{Subsection wt-formula theorems}--\ref{Subsection character for L(lambda nice)}.
All the necessary concepts that we introduce are recorded in Section \ref{Section 2}.
\begin{definition}
Fix a BKM LA $\mathfrak{g}=\mathfrak{g}(A)$.
The classical dominant integral weight-cone over contragredient $\mathfrak{g}$'s, for the crucial integrable simple h.w.m.s over $\mathfrak{g}$, is given by
\begin{equation}\label{Eqn classical cone}
P^+:= \{ \lambda\in\mathfrak{h}^*\ \big|\  \lambda(\Pi^{\vee})\subset \mathbb{R}_{\geq 0} \text{, and } \lambda(\alpha_i^{\vee})\in \mathbb{Z}_{\geq 0}\ \
\forall\ i\in \mathcal{I}^+\}.
\end{equation}
Let $M(\lambda)$ and $L(\lambda)$ be the Verma $\mathfrak{g}$-module and simple h.w.m. with highest weight (h.w.) $\lambda\in \mathfrak{h}^*$.
Let $M(\lambda)\twoheadrightarrow V$ denote the h.w.m. $V$ with h.w. $\lambda$, and $V_{\mu}$ denote its $\mu$-th weight space $\forall$ $\mu\in \mathfrak{h}^*$.
\end{definition}

The first set of problems (among many others solved in the paper) on weights' side are below.
\begin{problem}\label{Question wt-form?}
Fix a BKM $\mathfrak{g}$ and any $\lambda\in \mathfrak{h}^*$.
a) What are all the weights of the $L(\lambda)$s, even for $\lambda\in P^+$? 
b) What are the weight-sets of non-simple $\mathfrak{g}$-h.w.m.s $V\twoheadleftarrow M(\lambda)$?
\end{problem}
We recall that even the weights of integrable simple h.w.m.s are seemingly unwritten in the literature; and that of arbitrary simple h.w.m.s were recently computed (only) up to KM settings in a series of works by Khare and Dhillon \cite{Khare_Ad, Khare_Trans} that we shall elaborate up on soon. 
Also, recall that we have the celebrated Weyl--Kac--Borcherds (WKB) character formulas \eqref{Eqn WKB character formula} for integrable simple $L(\mu)$'s and sophisticated extended Kazhdan--Lusztig theory for non-integrable simples (see Naito \cite{Naito 1}).  
However, their alternating nature and the involvement of computationally hard Kazhdan--Lusztig (KL) polynomials do not easily yield explicit answers to Problem a).

Our Theorem \ref{Theorem weight formula for nice modules} in Subsection \ref{Subsection wt-formula theorems} {\it explicitly} and {\it uniformly} describes weight-sets $\wt V$ for: 1) all simple h.w.m.s $ L(\lambda) \ \forall \ \lambda\in \mathfrak{h}^*$; 2) a large class of $V$'s which includes all h.w.m.s when every simple root in $\mathfrak{g}(A)$ is imaginary, i.e. when all $A_{ij}\leq 0$. 
It generalizes in one stroke, the formulas arising from series of works by Khare et.al. \cite{Chari_JPAA, Khare_JA, Khare_Trans, Khare_Ad, Teja--Khare} over KM $\mathfrak{g}$, ranging from weight-hulls of parabolic Vermas, to finer weight-sets of all $L(\lambda)$'s and $V$'s.
Motivations and applications of the study of weights up to KM settings from the literature are as follows.
\begin{note}\label{Note wt. form. motivations} Khare et al. (Theorem \ref{Theorem wt-formula for all V in KM setting} \eqref{Eqn wt-formula for all simple in KM setting}) solved Problem~\ref{Question wt-form?} a) over KM $\mathfrak{g}$, by first proving the following results for key objects  parabolic Verma modules, using integrability \eqref{Eqn Integrability} (size-1 holes) : \\
1)~$\wt L(\lambda)$ is the set of lattice points in its hull $\forall$ $\lambda\in \mathfrak{h}^*$; occuring as multiplicity-free character formula building on \cite{Brion, Postnikov}; has branching-fashioned slice-decompositions \eqref{Eqn wt-formula for all simple in KM setting} crucial for Khare et.al.
(These primarily solve some questions of D. Bump, M. Brion, J. Lepowsky, quoted in \cite{Khare_Ad}).\\
2)~The formulas in 1) have applications to shapes of hulls of $\wt V$'s for $V\twoheadleftarrow M(\lambda)$, which are important in convex geometry: determination of their shapes, polyhydrality, closedness (\cite[Theorem 2.5]{Khare_Ad}); faces and inclusions between them (\cite{Khare_Ad, Khare_Trans}); extremal rays (\cite[Corollary 4.16]{Dhillon_arXiv}) etc.\\
3) To extend classical results in 2) for shapes of {\it root and Weyl polytopes} from \cite{Satake, Borel, Cellini} to infinite hulls of $\wt L(\lambda\notin P^+)$; also for the equivalence of faces with recent combinatorial subsets of weights in \cite{Chari_Adv, Chari_JPAA, Khare_JA, Khare_AR}, which was established in \cite{WFHWMRS}.
For minimal descriptions for $\wt L(\lambda)$s, see \cite{Teja-ArXiv, Teja-Fpsac}.
\end{note}
More recently, the second author and Khare \cite{Teja--Khare} computed {\it five weight-formulas} for every h.w.m. over semisimple and KM LAs.
In that paper, they used novel tools of {\it Holes} and revealed {\it Higher order Verma modules} along with certain {\it Weyl semigroups}; also a previously unwritten weight-formulas for universal family of {\it Parabolic Verma modules}.
We record their definitions in the next section.
Our inspirations to problem b) above also include developing all those tools for BKM case.

We now move to the character-side of h.w.m.s $V$'s. Over BKM LAs, study of non-integrable simple h.w.m.s $L(\mu)$ for $\mu\notin P^+$, was pioneered by Naito \cite{Naito 1, Naito 2} for weights $\mu$'s that appear on the exponentials in the classical WKB formula \eqref{Eqn WKB character formula}.
Let $\Delta^+$ be the subset of positive roots of $\Delta$.
\begin{problem}
   The top weights of simple h.w.m.s considered by Naito \cite{Naito 1, Naito 2}, cover the whole integral weight lattice, up to KM cases.
   It is natural to explore if this phenomenon holds good over BKM LAs.
   Naito's works do not cover simples $L(\rho)$ for crucial  Weyl vector(s) $\rho$ (half-sum of positive roots in finite type, and ally for Harish-Chandra's linkage, for the WKB characters and denominator identities).
   Note $\rho\notin P^+$.
To what extent, extended KL-theory developed by Naito in BKM case, describes the characters of generic simple h.w.m.s with integral h.w.s (also for $\rho$)?
\end{problem}
Naito's works do not cover a  class of simple h.w.m.s, which  includes $L(\lambda)$'s for  $\lambda\in -P^+$ in negative settings of $\mathcal{I}=\mathcal{I}^-$.
We explain this in view point of rank-1 theory (Lemma \ref{Lemma rank-1 BKM rep. theory}) in the next section.
We introduce a novel $P^{\pm}$ weight cone in Definition \ref{Defn integrability of lambda} for those outlandish h.w.s, and initiate a study of the Vermas $M(\lambda)$'s and their quotients for $\lambda\in  P^{\pm}$.
Those simple h.w.m.s are universal for (and by-products of) our weight analysis.
Also, they were previously not studied in the literature, which we learnt from M. Wakimoto and S. Viswanath.

 $P^{\pm}$ refines classical dominance and integrality (Observation \ref{Observation signed-cone importance}), and the concepts of integrability and Chevalley--Serre presentations in simple h.w.m.s.
Indeed, $P^{\pm}$ captures Weyl vectors $\rho$  (see Observation \ref{Observation sign-dim int wts and Weyl vetor}), that is non-dominant and non-integral particularly when $\mathcal{I}^-\neq \emptyset$. 
\begin{note}\label{Note -rho}
Kac--Kazhdan \cite{Kac--Kazhdan} quotes studying $L(-\rho)$ and the simplicity of $M(-\rho)$ over (non-semisimple) contragredient $\mathfrak{g}(A)$'s to be important.
They conjectured on characters of $L(-\rho)$ using its free imaginary root-directions.
This was settled in KM cases (e.g. \cite{Hayashi}), and seemingly not beyond.
To our knowledge, the study on the complementary side of weights and characters of h.w.m.s with h.w. $+\rho$ -- particularly, the structure of $M(+\rho)$ --  has been unexplored; even in rank-2 when $\mathcal{I}^-\neq\emptyset$.
\end{note}

The second-half of the paper computes WKB (\eqref{Eqn WKB character formula}) type character-formulas and presentations in Theorems \ref{Theorem C}--\ref{Theorem D character of V(rho)} (in Subsection \ref{Subsection character for L(lambda nice)}) over two ``negative'' settings of $\mathfrak{g}(A)$ introduced in \eqref{Eqn negative settings of A}, for the following modules: \
1) Several non-integrable simple h.w.m.s with h.w. from $P^{\pm}$ (complementary to simples studied by \cite{Naito 1}), over rank-2 $\mathfrak{g}(A(b,a,c,d))$ (in fact, we study the structure of their Verma
covers).
2) All h.w.m.s $V$'s with h.w.s $\lambda=\rho$, in ``negative type-$A$ case'' of $\mathfrak{g}(A(n))$.\allowdisplaybreaks 
\begin{note} \label{Note all h.w.m.s over -ve An case are higher order Vermas}
We record here quick examples of higher order Vermas, before their definition. 
 Namely, all h.w.m.s $V$ with h.w. $\lambda=0$, when $\mathfrak{g}=\mathfrak{sl}_2(\mathbb{C})\oplus \cdots \oplus \mathfrak{sl}_2(\mathbb{C})$ ($A_1\times \cdots \times A_1$-type); reminiscent of monomial ideals in polynomial algebras.
 An interesting consequence of Theorem \ref{Theorem D character of V(rho)} is that the same assertion is true in the negative BKM setting of $A(n)$ in \eqref{Eqn negative settings of A} for $\lambda=\rho$.
 Namely, every h.w.m. $V\twoheadleftarrow M(\rho)$ is a higher order Verma; we determine all of their characters.
It might be interesting to explore if this is true for other $\lambda$'s in $A(n)$ case, as well as for $\lambda=\rho$ outside $A(n)$ case.
   \end{note}
\begin{equation}\label{Eqn negative settings of A}
\textbf{(A)}\ \ 
\underset{\large \mathcal{I}=\mathcal{I}^-= \{1,2\},\ \ a,b,c,d\in \mathbb{N}}{   A(b,a,c,d):=\begin{bmatrix}
   -b & -a\\
   -c & -d
\end{bmatrix}}.  \qquad   \textbf{(B)} \ \  A(n):=\underset{ \large  \mathcal{I}=\mathcal{I}^-=\{1, \ldots, n\} }{\begin{bmatrix}
             -2 & -1 & 0 & 0 & \cdots & 0  \\
             -1 & -2 & -1 & 0& \cdots & 0\\
    \  \vdots &  & \ \ \ \ \ \  \ddots & & \cdots  & 0 \\
             0 & 0 & \cdots & -1&  -2 & -1\\
             0 & 0 & \cdots & 0 & -1 & -2 
           \end{bmatrix}_{\mathclap{\ \ \  \ n\times n}}}.
\end{equation}

These two settings (involving only $\mathcal{I}^-$, i.e. negative simple roots) are fundamental for building our understanding of simple h.w.m.s from $P^{\pm}$, and for their Verma module structures which was again unexplored.
They include the crucial (negative) analogue $\left(\begin{smallmatrix}
    -2 & -1\\ -1 & -2
\end{smallmatrix}\right)$ of classical $\mathfrak{sl}_3(\mathbb{C})$; indeed we observe parallels of it to $\mathfrak{sl}_3(\mathbb{C})$.
Our character formulas for $L(\rho)$ are novel even in that rank 2 case; they also remind us of Weyl semigroups (\eqref{Eqn Weyl semigroup}) for higher order Vermas in \cite[Theorem F]{Teja--Khare}.

It might be worth recalling that when $A_{ij}\neq 0$ $\forall$ $i,j\in \mathcal{I}=\mathcal{I}^-$, $\mathfrak{n}^+ \text{ and } \mathfrak{n}^-$ are free LAs (e.g. \cite{Elizabeth}); with Witt's necklace formula for their root-multiplicities.
We also recall that, lower rank settings are prominent in the literature for studying: maximal vectors in Verma modules \cite{Malikov}, KL-theory in KM setting \cite{Wallach--Rocha}, identities using Weyl--Kac characters \cite{Feingold}, KM LA embeddings \cite{Naito subalgebras}, and word maps on (BKM) LAs \cite[Section 5]{Bracket width}. 
Let $\mathfrak{g}$ be symmetrizable, with its standard invariant form $(.,.)$.

Fix any $\lambda \in P^{\pm}$.
Our strategy for characters $\mathrm{char}(V)$'s of h.w.m.s $M(\lambda)\twoheadrightarrow V \twoheadrightarrow L(\lambda)$ (also for $\lambda=\rho$), is to compute solutions $\mu$ to {\it Norm equality} \eqref{Eqn norm equality with invariant form} and their $c(\mu)\in \mathbb{Z}$ in principle formula:
    \begin{equation}\label{Eqn char formula principle 1 from norm equality}
             \mathrm{char}(V)\ = \ \sum\limits_{\mu \text{ s.t. }\eqref{Eqn norm equality with invariant form}}c(\mu) \mathrm{char}\big(M(\lambda)\big)\ = \  \frac{\sum\limits_{\mu \text{ s.t. }\eqref{Eqn norm equality with invariant form}}c(\mu)e^{\mu}}{R\ = \ \prod\limits_{\alpha\in \Delta^+}\big(1-e^{-\alpha}\big)^{\dim \mathfrak{g}_{\alpha}}} \quad\text{wherein}\vspace*{-3mm} \end{equation}
        \begin{equation}\label{Eqn norm equality with invariant form}
                \ (\lambda+\rho \ , \ \lambda+\rho)\  = \  \ (\mu+\rho\ , \ \mu+\rho ).\vspace*{-7mm}\hspace*{3cm}
            \end{equation}
\begin{problem}\label{Question dot orbit = norm equality sol.?} 
Let $\mathfrak{g}$ be of finite or affine type (or even  be of type $A$).
(a) Is the solution-set $\{\mu\in \mathfrak{h}^*\ |\lambda\succeq  \mu \text{ satisfy } \eqref{Eqn norm equality with invariant form}\}$  equal to the {\it dot-orbit} $(W\bullet \lambda)\cap[\lambda-\mathbb{Z}_{\geq 0}\Pi] $, or more generaly  the {\it strong linkage dot-orbit} of $\lambda$  (\cite[Section 4]{Kac--Kazhdan} and \cite[Proposition 2]{Naito BGG 2}) by Kac--Kazhdan equation \eqref{Eqn dot action by any positive root} below?
   (b) Is this true for $\lambda\in P^+$, or even $\lambda=0$?
   We aim to explore them for $\lambda\in  P^{\pm}$.
\end{problem}
%\cite{Teja--Khare} showed positive answer to it in type $A_3$ for $\lambda=0$ for the two $2^{\text{nd}}$-order Verma characters.
No answers were known previously to Problem \ref{Question dot orbit = norm equality sol.?} even in finite type, according to some experts including M. Wakimoto.
Theorem \ref{Theorem D character of V(rho)} gives a
 negative answer to Problem \ref{Question dot orbit = norm equality sol.?}(a) when $\lambda=\rho$.
We do not know of an answer to (b).
Our Proposition \ref{Prop maxl vect} constructs maximal vectors in $M(\lambda)_{\mu}$, for solutions $\mu$ to \eqref{Eqn norm equality with invariant form} in some cases, following their count's bounds in \cite[Section 4]{Kac--Kazhdan}.
\begin{note}\label{Note KK unique solution}
a) Crucial Lemma 3.3 in \cite{Kac--Kazhdan} studies Kac-Kazhdan {\it dot-action} condition \eqref{Eqn dot action by any positive root} with unique solution(s) $\mu\precneqq \lambda$.
(See Note \ref{NOte KK maxl vect bounds} in Subsection \ref{Subsection character for L(lambda nice)}.)
b) But a characterization, or even example cases for such $\mu$'s seem to be unknown.
c) For our goal of $P^{\pm}$-simples' characters, our Proposition \ref{Prop number theory} characterizes unique solutions in ``interior of $\wt M(\lambda)$'' to those equations, in some cases.
\end{note}
We conclude this section with another motivation for us to study weights.
Namely, their equivalence with characters (when they are close to multiplicity-free characters), for the following special class of h.w.m.s $V$'s: i)~all $V$'s over (Heisenberg LA type) $\mathfrak{g}(0_{\mathcal{I}\times \mathcal{I}})$; ii) {\it 2-nd order Verma modules} over some rank-3 KM $\mathfrak{g}$'s and in type $A_3$ in \cite{Teja--Khare}.

\section{Elements in our study of weights and characters in Borcherds case}\label{Section 2}
We begin by developing our first ingredient, the notion of ``full'' dominant integral cone $P^{\pm}$ in Definition \ref{Defn integrability of lambda} (strengthening $P^+$-cone), guided by the following ``complete $\mathfrak{sl}_2$-theory'' picture.
\begin{lemma}\label{Lemma rank-1 BKM rep. theory}
Fix any rank-1 BKM or contragredient $\mathfrak{g}= \mathbb{C}\{e_i, \alpha_i^{\vee}, f_i\}$, for $\mathcal{I}=\{i\}$, and $\lambda\in \mathfrak{h}^*=\mathbb{C}\alpha_i$.
\big($M(\lambda)$ is the polynomial ring $\mathbb{C}[f_i]$.\big) $f_i^n M(\lambda)_{\lambda}= M(\lambda)_{\lambda-n \alpha_i}$ are maximal (i.e., killed by $\mathfrak{n}^+$) for $n\in \mathbb{N}$ $\iff$ $n\Big(\lambda(\alpha_i^{\vee})-\frac{A_{ii}}{2}(n-1) \Big)=0\ \iff\ n= 
\begin{cases}
\frac{2}{A_{ii}}\lambda(\alpha_i^{\vee})+1,\ &\text{ if }A_{ii}\neq 0,\\    
\text{arbitrary,} &\text{ if }A_{ii}\ = \ \lambda(\alpha_i^{\vee})\ = \ 0.
\end{cases}$
So $M(\lambda)$ over $\mathfrak{g}_{\{i\}}$ is simple $\iff \lambda(\alpha_i^{\vee})\notin \frac{A_{ii}}{2}\mathbb{Z}_{\geq 0}$.
Here, $\mathbb{Z}_{\geq 0}$ are non-negative integers.
%		Equivalently, $M(\lambda)$ has a proper submodule $V\neq 0$ $\iff\lambda(\alpha_i^{\vee})\in \frac{A_{ii}}{2}\mathbb{Z}_{\geq 0}$ $\iff f_i^{1+\frac{2}{A_{ii}}\lambda(\alpha_i^{\vee})} m_{\lambda}$ is (up to scalars) the maximal vector in $M(\lambda)$ other than $m_{\lambda}$ $\iff V=\mathbb{C}[f_i]\cdot f_i^{1+\frac{2}{A_{ii}}\lambda(\alpha_i^{\vee})} m_{\lambda}$.  
\end{lemma}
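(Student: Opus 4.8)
The plan is to reduce everything to a single explicit computation in the rank-1 Verma module $M(\lambda) = \mathbb{C}[f_i]\,m_\lambda$, where $m_\lambda$ is a highest weight vector. First I would record the basic relations in a rank-1 BKM Lie algebra: $[\alpha_i^\vee, e_i] = A_{ii}\,e_i$, $[\alpha_i^\vee, f_i] = -A_{ii}\,f_i$, and $[e_i, f_i] = \alpha_i^\vee$ (after normalizing), with $\alpha_i(\alpha_i^\vee) = A_{ii}$. Since $\mathfrak{n}^+ = \mathbb{C} e_i$, a vector in $M(\lambda)$ is maximal precisely when it is killed by $e_i$, so the whole statement is about when $e_i\,f_i^n m_\lambda = 0$. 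The weight $\lambda - n\alpha_i$ space of $M(\lambda)$ is one-dimensional, spanned by $f_i^n m_\lambda$, which immediately gives $f_i^n M(\lambda)_\lambda = M(\lambda)_{\lambda - n\alpha_i}$, so the content is the vanishing criterion.

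The key step is the standard $\mathfrak{sl}_2$-type induction computing $e_i f_i^n m_\lambda$. Using $[e_i, f_i] = \alpha_i^\vee$ and $e_i m_\lambda = 0$, one shows by induction on $n$ that
\[
e_i\, f_i^n m_\lambda \;=\; n\Bigl(\lambda(\alpha_i^\vee) - \tfrac{A_{ii}}{2}(n-1)\Bigr) f_i^{n-1} m_\lambda,
\]
where the recursion is $e_i f_i^n m_\lambda = f_i e_i f_i^{n-1} m_\lambda + [e_i,f_i] f_i^{n-1} m_\lambda$ and one uses $\alpha_i^\vee f_i^{n-1} m_\lambda = \bigl(\lambda(\alpha_i^\vee) - (n-1)A_{ii}\bigr) f_i^{n-1} m_\lambda$; the arithmetic $\sum_{k=0}^{n-1}\bigl(\lambda(\alpha_i^\vee) - kA_{ii}\bigr) = n\lambda(\alpha_i^\vee) - A_{ii}\binom{n}{2}$ collapses to the displayed coefficient. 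Since $f_i^{n-1} m_\lambda \neq 0$ in the Verma module, $f_i^n m_\lambda$ is maximal if and only if the scalar coefficient vanishes, i.e. $n\bigl(\lambda(\alpha_i^\vee) - \tfrac{A_{ii}}{2}(n-1)\bigr) = 0$. For $n \in \mathbb{N}$ the factor $n$ is nonzero, so this forces $\lambda(\alpha_i^\vee) = \tfrac{A_{ii}}{2}(n-1)$: if $A_{ii} \neq 0$ this has the unique solution $n = \tfrac{2}{A_{ii}}\lambda(\alpha_i^\vee) + 1$ (a genuine natural-number solution only when $\lambda(\alpha_i^\vee) \in \tfrac{A_{ii}}{2}\mathbb{Z}_{\geq 0}$), and if $A_{ii} = 0$ it forces $\lambda(\alpha_i^\vee) = 0$ and then every $n$ works — matching the two cases in the statement. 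Finally, $M(\lambda)$ over $\mathfrak{g}_{\{i\}}$ is simple if and only if it has no nonzero proper submodule; since any proper submodule is generated by a maximal vector $f_i^n m_\lambda$ with $n \in \mathbb{N}$, simplicity is equivalent to the nonexistence of such $n$, i.e. to $\lambda(\alpha_i^\vee) \notin \tfrac{A_{ii}}{2}\mathbb{Z}_{\geq 0}$.

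I do not anticipate a genuine obstacle here — the lemma is the rank-1 base case and the computation is elementary — but the one point requiring a little care is the $A_{ii} = 0$ (Heisenberg node) case: there $[e_i, f_i] = \alpha_i^\vee$ is central, $\alpha_i^\vee$ acts by the constant $\lambda(\alpha_i^\vee)$ on all of $M(\lambda)$, and the recursion degenerates to $e_i f_i^n m_\lambda = n\lambda(\alpha_i^\vee) f_i^{n-1} m_\lambda$, so maximality of some $f_i^n m_\lambda$ with $n \geq 1$ is equivalent to $\lambda(\alpha_i^\vee) = 0$, in which case $f_i$ acts nilpotently-freely and $M(\lambda)$ is as far from simple as possible. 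One should also note that the normalization $[e_i,f_i]=\alpha_i^\vee$ versus a nonzero scalar multiple does not affect the vanishing locus, so no loss of generality is incurred; and that $\tfrac{2}{A_{ii}}\lambda(\alpha_i^\vee)+1$ should be read as "the real number $n$ solving the quadratic," which lies in $\mathbb{N}$ exactly under the divisibility condition appearing in the simplicity criterion.
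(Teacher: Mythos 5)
Your proof is correct and takes exactly the approach the paper intends: the paper labels this the ``standard $\mathfrak{sl}_2(\mathbb{C})$-calculation'' and does not spell it out, and your induction yielding $e_i f_i^n m_\lambda = n\bigl(\lambda(\alpha_i^\vee) - \tfrac{A_{ii}}{2}(n-1)\bigr)f_i^{n-1}m_\lambda$ is precisely that calculation, appearing implicitly in, e.g., equation \eqref{sl-2 theory non vanishing saclar for e-i action} later in the paper. Your treatment of the simplicity criterion (every nonzero submodule of the rank-1 Verma has an upward-closed weight support and hence a maximal vector $f_i^{n_0}m_\lambda$ at its top) and your care with the $A_{ii}=0$ Heisenberg case are exactly what is needed.
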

\begin{example}\label{Example all Vectors in Heiesnberg-Verma are maximal} (1) For $\mathfrak{g}\big([0]_{\mathcal{I}^0\times \mathcal{I}^0}\big)$, any vector lying in $M(0)$ is maximal.
(2) For $M(-4)$ over $\mathfrak{g}\big([-2]_{\{1\}\times  \{1\}}\big)$, note $f_1^5 M(-4)_{-4}$ are maximal, and moreover $f_1^5 L(-4)=0$; so $\dim L(2\alpha_1)< \infty$.
\end{example}
\begin{definition}\label{Defn integrability of lambda} We define over any BKM or even contragredient $\mathfrak{g}$,  
\begin{equation}
P^{\pm}\ :=\  \Big\{ \lambda\in \mathfrak{h}^*\ \Big| \ \lambda(\alpha_i^{\vee})\in {\tiny \frac{A_{ii}}{2}}\mathbb{Z}_{\geq 0}\ \forall\ i\in \ \mathcal{I}\  =  \mathcal{I}^+\sqcup\mathcal{I}^0\sqcup \mathcal{I}^-\Big\}.
\end{equation}
 In reverse, for any $\lambda\in \mathfrak{h}^*$, its {\it maximum integrability } is $J_{\lambda}:= \Big\{i \in \mathcal{I}\ \big|\ \lambda(\alpha_i^{\vee})\in \frac{A_{ii}}{2} \mathbb{Z}_{\geq 0} \Big\}$ (also see  Definition \eqref{Eqn Integrability}). 
Note that $\lambda(\alpha_j^{\vee})=0$ $\forall$ $j\in J_{\lambda}\cap \mathcal{I}^0$.
Quotients along $\mathcal{I}^-$ in $M(\lambda)$s (as in Example \ref{Example all Vectors in Heiesnberg-Verma are maximal}), even $L(\lambda)$ with $J_{\lambda}\cap \mathcal{I}^-\neq \emptyset$, were not seen in the literature to our knowledge.
\end{definition}
\begin{observation}\label{Observation sign-dim int wts and Weyl vetor}
   (1) Neither $P^+\not\subset P^{\pm}$ nor $P^{\pm}\not\subset P^+$, see Example \ref{Counter examples for slice-decomp.}.
(2) This signed-version $P^{\pm}$ of $P^+$ is seen in integrable setting with $\rho$, and not beyond.
$\rho(\alpha_i^{\vee})=\frac{A_{ii}}{2}$ $\forall$ $i$, and so $\rho\in P^{\pm}\setminus P^+$.
  \end{observation}
  \begin{observation}\label{Observation signed-cone importance}
   1)$P^{\pm}$-directions reveal the maximal vectors that are lost in during $V \twoheadleftarrow M(\lambda)$ from the 1-dim. weight spaces in $M(\lambda)$. 
   Essentially, the {\it holes} in $V$ are the weights of those lost vectors (see Definitions \ref{Defn holes in KM setting}, \ref{Defn holes in BKM setting}); in fact, we show that holes solely determine $\wt V$.\\
2) $P^{\pm}$-directions reveal {\it Chevalley-Serre (CS) type presentation} relations and there by all the sets $\wt L(\lambda)$ in Corollary \ref{Corollary to Theorem A}); we develop those CS relations through points 2$'$)--2$''''$) below.\\
2$'$) To begin, CS relations in $L(\lambda)$, $\lambda\in P^+$ are- $f_i^{\lambda(\alpha_i^{\vee})+1}L(\lambda)_{\lambda}=0\ \  \forall\ i\in \mathcal{I}^+ ; \ \ \ f_j^{1}L(\lambda)_{\lambda}=0$ for $j\in \mathcal{I}^0\sqcup \mathcal{I}^-$ with $\lambda(\alpha_j^{\vee})=0$.
\begin{equation}\label{Eqn Integrability}
\underset{\text{(Real CS-type relations)}}{2'') \text{ Integrability}  \text{ of any }V\twoheadleftarrow M(\lambda):} \qquad I_V:=\Big\{i\in \mathcal{I}^+ \Big| \lambda(\alpha_i^{\vee})\in \mathbb{Z}_{\geq 0}, \ f_i^{\lambda(\alpha_i^{\vee})+1 }V_{\lambda}=0\Big\}.\quad
\end{equation}
Note $\text{over KM }\ \mathfrak{g},\  I_{L(\lambda)} = J_{\lambda} \ \supseteq I_V \ \forall \  0\neq V \twoheadleftarrow M(\lambda)$.
$V$ is integrable over the parabolic subalgebra $\mathfrak{p}_{I_V}:=\mathfrak{n}^+\oplus \mathfrak{h}\oplus$\big(the subalgebra $\langle f_j\ |\ j\in I_V \rangle$ generated by $\{f_{j}\ |\ j\in I_V \}$\big).
 So~$\wt V \text{ and character }\\ \mathrm{char}(V)$ of $V$ are invariant under the parabolic subgroup $W_{I_V}:=\langle s_i\ |\ i\in I_V \rangle
 \subseteq W$.\\
 2$'''$) In BKM case,  \cite{Naito 1, Naito 2} mark the perspective-shift from the widely treated (for $A$ symmetrizable, in \cite{Kac--Kazhdan, Khare_Ad}) relations $f_h^1 L(\lambda)_{\lambda}=0$, $\lambda\in P^+$ in 2$'$)--2$''$).
Namely, to their higher-lengthed $\prod_{h\in H}f_h \cdot L(\lambda)_{\lambda}=0$, which are included in holes \eqref{Eqn holes in KM setting}, \eqref{Defn holes in BKM setting}.\\
 2$''''$) Now, in generic $L(\lambda)$ -- notably in $L(\rho)$ (see Theorem \ref{Theorem D character of V(rho)}) -- there are more CS type relations:\qquad 
    $\displaystyle f_j^{\frac{2}{A_{ii}}\lambda(\alpha_i^{\vee})+1} L(\lambda)_{\lambda}\ = \ 0$ \ $\forall$\ $i\in J_{\lambda}\cap \mathcal{I}^-$. This holds by Lemma \ref{Lemma rank-1 BKM rep. theory}, and it generalizes \eqref{Eqn Integrability}. 
%See \cite{Malikov, Kac--Kazhdan} and \cite[Proposition 2.12]{Naito BGG 2}, for the general side of Vermas modules' embeddings; which inspired our progress in Subsection \ref{Subsection character for L(lambda nice)}. 
\end{observation}
In KM case, $L(\lambda)$ has full CS relations iff $\lambda\in P^+$. 
In this perspective, over 
contragredient $\mathfrak{g}$, $L(\lambda)$ has full CS relations iff $\lambda\in P^{\pm}$.
Recall, $e_i, f_i$ act locally nilpotently on $L(\lambda\in P^+)$ $\forall$ $i\in \mathcal{I}^+$.

The next tool is the concept of holes, recorded in KM case first; see Definition \ref{Defn holes in BKM setting} in BKM case.
\begin{definition}[\cite{Teja--Khare}]\label{Defn holes in KM setting} Fix KM $\mathfrak{g}$.
The set of \textbf{holes} in $V\twoheadleftarrow M(\lambda)$ \big(\text{larger commutative CS-relations}\big):  \begin{align}\label{Eqn holes in KM setting}
\begin{aligned}
   \mathcal{H}_V\ :=\  \left\{ H \subseteq J_{\lambda} \ \bigg|\ H \text{ is {\bf independent}, }  \prod\limits_{h\in H}f_h^{\boldsymbol{\lambda(\alpha_h^{\vee})+1}}V_{\lambda} \ = 0 \right\}.\ \ \ 
    \end{aligned}
    \end{align}
    \end{definition}
   Despite KL theory for characters of non-integrable $L(\lambda)$'s in finite type (\cite{Kashiwara, Kazhdan--Lusztig}), and KM (\cite{Wallach--Rocha, Kashiwara KL Conj Sym-KM}) and BKM (\cite{Naito 1, Naito 2}) cases,
$\mathrm{char}\big(L(+\rho)\big)$s were not explored to our knowledge and also not in \cite{Naito 1, Naito 2}.
\begin{observation}
In our terminology, Naito computes $\mathrm{char}\big(L(\mu)\big)$, for $\mu$ in the $W$ dot-orbits of weights $\lambda-\sum_{h\in H}\alpha_h$ in $M(\lambda\in P^+)$s -- hole-typed as in 2$'''$) -- for independent  $H \subseteq \{i\in \mathcal{I}^0\sqcup \mathcal{I}^-\ |\ \lambda(\alpha_i^{\vee})=0\}\subseteq J_{\lambda}$.
So, holes do appear in a disguise in those works from BKM case on-wards, along imaginary directions; but only as singletons in real directions, leading to $W$ symmetry.
We show applications of working with combinations of such holes: 1) Weight formulas (particularly, freeness in weights, by Proposition \ref{Proposition Minkowski difference formula}). 
2) Certain symmetry in characters as in finite type \big(e.g. finitely many exponentials in $\mathrm{char}(L(\mu\in P^{\pm}))$ as in Theorems \ref{Theorem C}, \ref{Theorem D character of V(rho)}; also see Lemmas \ref{Lemma bddness of sols for two negative nodes}, \ref{Lemma (2,2) sol. characterization}\big). 
Following Lemma \ref{Lemma rank-1 BKM rep. theory} and \cite{Naito 2}, it might be interesting to study such $L\big(\lambda - \sum_{h\in H}\big(\frac{2}{A_{hh}}\lambda(\alpha_h^{\vee})+1\big)\alpha_h\big)$ for independent $H\subseteq J_{\lambda}\cap \mathcal{I}^-$.
For these and following the works of Kac--Kazdhan \cite[Section 4]{Kac--Kazhdan}, our Theorem \ref{Theorem C} and Proposition \ref{Prop maxl vect} (in Subsection \ref{Subsection character for L(lambda nice)}) initiate writing the WKB type character formulas for $L(\lambda)$, and maximal vectors in $M(\lambda)$ when $\lambda\in P^{\pm}$.
Holes also appear in the WKB character formulas, on whom our formulas for $\mathrm{char}\big(L(\lambda\in P^{\pm})\big)$s build-on.
\end{observation}
Let us denote the weight-set of a $\mathfrak{h}$-weight module $M$ by $\wt M\ := \ \big\{ 
\mu\in \mathfrak{h}^*\ \big|\ 
\mu\text{-weight space } M_{\mu}:=\big\{m\in M\ \big|\  hm=\mu(h)m\ \forall\ h\in \mathfrak{h}\big\}\neq 0 \big\}$; and so $M= \bigoplus_{\mu\in \wt M}M_{\mu}$.
And let us denote the formal-character of such $M$ by $\mathrm{char}M := \sum_{\mu\in \wt M}\dim M_{\mu}e^{\mu} $.

In continuation to the discussion on holes, we now record the WKB celebrated character formula.
\begin{align}\label{Eqn WKB character formula}
\begin{aligned}[t]
&{\bf Weyl}-{\bf Kac}-{\bf Borcherds}\ {\bf (WKB)}\\
&{\bf \ \  character \ formula} \quad \text{\cite{Kac book, Wakimoto}}\end{aligned}   {\bf :}\ \ 
\mathrm{char} L(\lambda)\ = \sum_{w \in W}
\frac{(-1)^{\ell(w)}\ \  w{\bf S_{\lambda}}}{e^{\rho} \hspace*{-2mm}\prod\limits_{\alpha \in \Delta^+}
(1 -e^{-\alpha})^{\dim(\mathfrak{g}_{\alpha})}}\  \forall\ \lambda\in P^+.
\end{align}
The ingredients above are: i) $\rho\in P^{\pm}$;
ii)  (holes) $\emptyset \subseteq \text{ independent } I\subseteq$ $ \mathcal{I}^0\sqcup \mathcal{I}^-$ with $\lambda(\{\alpha_{i}^{\vee} \ |\  i\in I\})=\{0\}$, on which ${\bf S_{\lambda}} \ :=\  e^{\lambda+\rho}\    \sum_{I} (-1)^{|I|}  e^{-\sum_{i\in I}\alpha_i }$ runs on.
The {\it dot-action} \eqref{Eqn dot action by any positive root} of holes alongside $W$'s was central to study $L(\mu\notin P^+)$, their characters, their Garland--Lepowsky, BGG type resolutions (also of parabolic Vermas) \cite{Naito 1}--\cite{Naito BGG 2}.
Specializing ${\bf S_{\lambda}}$ to $e^{\lambda+\rho}$ $(\mathcal{I}=\mathcal{I}^+)$, \eqref{Eqn WKB character formula} yields the Weyl--Kac character formula -- which includes  Macdonald identities \cite{Kac Macdonald identities} in affine case for $\lambda=0$.

\begin{definition}\label{Defn integrables}
Fix a BKM $\mathfrak{g}$, $\lambda\in P^+$ and $M(\lambda)\twoheadrightarrow V$. 
$V$ is said to $\mathfrak{g}$-integrable iff $f_i^{\lambda(\alpha_i^{\vee})+1}V_{\lambda}=0$ $\forall$ $i\in \mathcal{I}^+$.
We can have $\mathfrak{g}$-integrable $V$ which are $\mathfrak{n}^-_{\mathcal{I}^-\sqcup \mathcal{I}^0}$-free (the subalgebra generated by $\{f_j\ |\ j\in \mathcal{I}^-\sqcup \mathcal{I}^0 \}$).
So we can have infinitely many integrable h.w.m.s for $\lambda\in P^+$: 
$L^{\max \textnormal{ int}}(\lambda):= \frac{M(\lambda)}{\Big\langle f_j^{\lambda\big(\alpha_j^{\vee}\big)+1}M(\lambda)_{\lambda}\ \Big| \ j\in  \mathcal{I}^+  \Big\rangle} \ \twoheadrightarrow \frac{L^{\max\textnormal{ int}}(\lambda)}{\Big\langle f_i^{n_i}L^{\max \textnormal{ int}}(\lambda)_{\lambda} \Big|\ n_i= 1 \text{ or }  \infty \text{ if }i\in \mathcal{I}^- \  ;\ n_i \in \mathbb{N}\cup \{\infty\}\ \text{if }i\in \mathcal{I}^0 \Big\rangle}\ \twoheadrightarrow\ L(\lambda)$.
\end{definition}
The above WKB character formula is only for integrable $V$ that is simple.
In the context of h.w.m.s up to BKM settings, characters are seemingly unknown beyond those of: 1) $L(\lambda\in P^+)$; 2) the simples in \cite{Naito 2}; 3) parabolic Vermas (for non KM cases).
We expect that the weight and character analysis in this paper to help the study of characters of the above mentioned integrable h.w.m.s $L^{\max \text{int}}(\lambda\in P^+) \twoheadrightarrow V\twoheadrightarrow L(\lambda\in P^+)$ \big(see Corollary \ref{Corollary to Theorem A} of Theorem \ref{Theorem wt-formula by composition series simples for nice V} for all their weights\big).

We record the important weight-formulas in aforementioned works of Khare et al. and those for all $\wt V$'s in \cite{Teja--Khare}, over KM $\mathfrak{g}$'s.
These needed the novel concept of Holes, Higher order Vermas (which also featured in \cite{Log-conc}) defined in \eqref{Eqn defn higher order Vermas} in Section \ref{Section main results}, {\it Slice-formulas} of parabolic (1-st order) Vermas \eqref{Eqn wt-formula for all simple in KM setting}.

Let $L_{_{J_{\lambda}}}(\mu)$ be the simple h.w.m over $\mathfrak{p}_{J_{\lambda}}$ with h.w. $\mu\in \mathfrak{h}^*$, and
 ` $\bullet$ ' be the {\it dot}-action of $W$ on $\mathfrak{h}^*$.
We define involutions $s_H:=\prod_{h\in H}s_h\ \in W$ for independent $H \subseteq \mathcal{I}^+$.
See Subsection \ref{Subsection 2.1} for more notations.
\begin{theorem}[{\cite{Khare_JA, Dhillon_arXiv, Teja--Khare}}]\label{Theorem wt-formula for all V in KM setting} Fix any KM LA $\mathfrak{g}$, $\lambda\in \mathfrak{h}^*$ and a h.w. $\mathfrak{g}$-module $M(\lambda)\twoheadrightarrow V\neq 0$. 
\begin{align}\label{Eqn wt-formula for all simple in KM setting}
\begin{aligned}
    \wt L(\lambda)\ = \ \wt M(\lambda, J_{\lambda})\ = \ \underset{\textbf{Integrable slice decomposition}}{\bigsqcup\limits_{(c_i\in \mathbb{Z}_{\geq 0})_{i\in J_{\lambda}^c}} \wt L_{J_{\lambda}}\Big( \lambda- \sum_{i\in J_{\lambda}^c}c_i\alpha_i\Big)}.
    \end{aligned}\qquad
    \begin{aligned}  &\textnormal{\cite[Theorem D]{Khare_JA}}\\
    &\textnormal{\cite[Section 2.1] {Dhillon_arXiv}}
    \end{aligned}
\end{align}
\begin{equation}\label{eqn wt-formula for all V in KM setting}
    \wt V \ = \   \wt \mathbb{M}(\lambda,\ \mathcal{H}_V)\ = \ \underset{\quad(\text{by  }\textbf{local Jordan--Hölder series factors} \text{ in }V)  }{\bigcup\limits_{\text{independent }H\notin \mathcal{H}_V}\wt L(s_H\bullet \lambda)}.\qquad \textnormal{\cite[Theorems A--C]{Teja--Khare}}
\end{equation}\end{theorem}
\section{Main results}\label{Section main results}
\subsection{Weights of h.w.m.s $V$ over BKM $\mathfrak{g}$, by $P^{\pm}$-structured~holes}\label{Subsection wt-formula theorems}
 %Recall, even $\wt L(\lambda\in P^+)$s are seemingly unwritten in the literature in BKM case. 
  Let `$\preceq$' be the usual partial order on $\mathfrak{h}^*$.
   If $x=\sum_{i\in \mathcal{I}}c_i\alpha_i$ for $c_i\in \mathbb{C}$, then let $\supp(x):=\{i\in \mathcal{I}\ |\ c_i\neq 0\}$.  
\begin{question}\label{Question on wt-formulas for simples}
 (a) Does $\wt L(\lambda)$ for $\lambda\in P^{\pm}$ admit Weyl-orbit formulas \eqref{Eqn Weyl orbit wt-formula in KM setting} (\cite[Proposition 11.2]{Kac book})?
    \begin{equation}\label{Eqn Weyl orbit wt-formula in KM setting}
        \text{Over KM }\mathfrak{g},\  \wt L(\lambda\in P^+) \ = \ W\big\{ \mu\preceq \lambda\ \big|\  \mu\in P^+,\ \  \supp(\lambda-\mu) \text{ is {\it non-degenerate} w.r.t. }\lambda\big\}.
    \end{equation}
 (b) Are simple $L(\lambda\in P^+)$s or  $L(\lambda\in P^{\pm})$s atomic for weight considerations; 
 i.e. does any of them yield integrable slice decompositions for arbitrary $\wt L(\lambda)$ similar to \eqref{Eqn wt-formula for all simple in KM setting}?
  (c) Notably do $\wt L(\lambda\in P^+)$s admit such decompositions/approximations via $\wt L(\lambda\in P^{\pm})$s, or vice-versa?
\end{question}
Question (b) has a negative answer and we do not know the answer to (c) (see Corollary \ref{Corollary no integtrable Slice-decomp.} in Subsection \ref{Subsection 2.2})
Our first result Theorem \ref{Theorem weight formula for nice modules} solves Question \ref{Question on wt-formulas for simples}(a) positively, with determining all h.w.m.s $V$ (all $L(\lambda)$, parabolic Verma typed $V$,...) with fundamental weight-formula \eqref{Eqn Weyl orbit wt-formula in KM setting}; using:  
    \begin{definition}[{\textbf{Holes} over BKM $\mathfrak{g}$}]\label{Defn holes in BKM setting}\ 
		[a] \underline{The hole-set of $M(\lambda)\twoheadrightarrow V\neq 0$} is a collection of tuples~:
		\[
		\mathcal{H}_V:=
		\left\{\ 
		(H,m_H)\  \text{ satisfying following conditions (H1)--(H3) }
		\right\}.
		\]
		\begin{itemize}
			\item[(H1)] Independent $\emptyset\neq H\subseteq J_{\lambda}$ (Definition \ref{Defn integrability of lambda});\ \  so $\dim M(\lambda)_{\mu}=1$ $\forall$ $\mu \in \lambda-\mathbb{Z}_{\geq 0}\{\alpha_h\ |\ h\in H\}$.
			\item[(H2)] Power-map/sequence $m_H: H\rightarrow \mathbb{Z}_{\geq 0}$\  satisfies $m_H(h)=|\frac{2}{A_{hh}}\lambda(\alpha_h^{\vee})|+1$ $\forall$ $h\in H\cap(\mathcal{I}^+\sqcup\mathcal{I}^-)$.
           Example \ref{Example all Vectors in Heiesnberg-Verma are maximal}(1) explains the arbitrariness of $m_H(h) >\lambda(\alpha_h^{\vee})=0$ $\forall \ h\in H\cap\mathcal{I}^0$.
			\item[(H3)] $V_{\lambda-\sum\limits_{h\in H}m_H(h)\alpha_h}\ = \ \bigg(\prod\limits_{h\in H}f_h^{m_H(h)}\bigg)V_{\lambda}\ = \ \{0\}$; \quad so, $\lambda-\sum_{h\in H}m_H(h)\alpha_h \notin \wt V$.
                        \end{itemize}
      [b] $\mathrm{Indep}(J_{\lambda})\ :=\ \big\{(H, m_H)\ \big|\ H, m_H   \textnormal{ satisfy (H1) and (H2)} \big\}$.
     \ \  
            [b$'$] \underline{Hole-set} : Any $\mathcal{H}\subseteq \mathrm{Indep}(J_{\lambda})$.
            \smallskip\newline
            [c] \underline{The set of minimal holes, in a hole-set $\mathcal{H}\subseteq \mathrm{Indep}(J_{\lambda})$} : \\ 
 $\mathcal{H}^{\min}:=\big\{ (H, m_H)\in \mathcal{H}\ \big|\ \text{ there is no }\ (H,m_H) \neq (J,m_J)\in \mathcal{H}\text{ with } m_J(j)\leq m_H(j)\ \forall\ j\in J\subseteq H \big\}$.\smallskip\newline
[d] \underline{Nice hole-sets $\mathcal{H}$} :\ \  Hole-set $\mathcal{H}$ with \ \ $(H,m_H)\in \mathcal{H}^{\min}\ \ \implies  \begin{cases}
		\text{either}\ H\subseteq \mathcal{I^-}\sqcup\mathcal{I}^0 \ (\text{of any size}),\\
	 \text{or}\ H\subseteq \mathcal{I}^+ \text{ and }|H|=1.		\end{cases}$         
             \end{definition}
            \begin{note}\label{Note imaginary reflections in hole-weihts}
               Consider $\Big(\prod_{h\in H}f_h^{m_H(h)}\Big)M(\lambda)_{\lambda}$ for $(H, m_H)$ as in (H1) and (H2).
               It is maximal, and when $\mathcal{I}=\mathcal{I}^+\sqcup \mathcal{I}^-$, its weight is the ``dot-conjugate of $\lambda$ by $s_H:=\prod_{h\in H}s_{\alpha_h}$'' (the product of reflections $s_{\alpha_h}$,  possibly $A_{hh}<0$).
               For $\lambda\in \mathfrak{h}^*$, $\mathrm{Indep}(J_{\lambda})= \mathcal{H}_{L(\lambda)}\sqcup \{(H, 
               0\text{-map})\ |\ H \text{ as in } (H1)\}$. Moreover, $\mathcal{H}_{L(\lambda)}$ is nice (Definition \ref{Defn holes in BKM setting}[d]), and $\mathcal{H}_{L(\lambda)}^{\min}\ = \ \Big\{ \Big(\{i\},\ 1+\frac{2}{A_{ii}}\lambda(\alpha_i^{\vee})\Big)\ \Big|\ i\in J_{\lambda}\cap(\mathcal{I}^+\sqcup\mathcal{I}^-) \Big\}$ $\sqcup \big\{\big(\{i\}, \ 1\big)\ \big|\ i\in J_{\lambda}\cap \mathcal{I}^0 \big\}$. 
     Due to the lack of slice-formulas \eqref{Eqn wt-formula for all simple in KM setting}, which makes our proofs interestingly subtler, we prove Theorem~\ref{Theorem weight formula for nice modules} (in Subsection \ref{Subsection Theorem A proof}) directly for all $V$ in its hypothesis, and not first for simples.
     This deviates from the standard approach of going initially from simples to any $V$ which featured in the works of Khare et al (which is not applicable in the BKM case).     
            \end{note}
	\begin{thmx} \label{Theorem weight formula for nice modules}
		Fix any BKM LA  $\mathfrak{g}$ and $\lambda\in\mathfrak{h}^*$. Then the weight-set of $\mathfrak{g}$-h.w.m. $0\neq V \twoheadleftarrow M(\lambda)$ with holes $\mathcal{H}_V$ {\it nice} \big(Definition \ref{Defn holes in BKM setting} [d]\big) -- including $V=$ any simple $L(\lambda)$ for $\lambda\in \mathfrak{h}^*$ is given by
	\begin{equation}\label{Weight formula for nice modules}
		\wt V \ = \ W_{I_V} \big\{\mu \preceq \lambda\ |\ 
		\mu \text{ satisfies conditions } (C1) \text{ and }(C2) \text{  below}\big\}.
		\end{equation}
        \begin{itemize}
			\item[(C1)] $\mu(\alpha_i^{\vee})\in \mathbb{Z}_{\geq 0}$ $\forall\ i\in I_V$; see Definition \ref{Eqn Integrability}.\ \ \ \ \big(Note $I_V=\{i\in \mathcal{I}^+\ |\ \big(\{i\},\lambda(\alpha_i^{\vee})+1\big)\in \mathcal{H}_V\}$.\big) 
			\item[(C2)] Suppose $\mu=\lambda-\sum_{j\in J}c_j\alpha_j\precneqq\lambda$ \ \ \ \ for  $J=\supp(\lambda-\mu)\neq \emptyset$ and $c_j\in \mathbb{Z}_{>0}$ $\forall$ $j\in J$. \\
			Let $J=J_1\sqcup\cdots\sqcup J_n$ \ \ \ \ be the decomposition into (connected) Dynkin subgraph components.\\
			There exist nodes $j_k\in J_k$ for each $1\leq k\leq n$, \ \ \ \  s.t. the following element $\mu'$ lies in $\wt V$.
			\begin{equation}\label{Eqn canonical weight for mu}
			\underset{\big(\text{defined for }\mu\ \text{or} \ (c_j)_{j\in J}\big)}{\mu'}\ := \ \lambda-\sum_{k \text{ s.t. }|J_k|=1}c_{j_k}\alpha_{j_k} - \sum_{k \text{ s.t. }|J_k|>1}\alpha_{j_k}\ \ \in\ \wt V.
			\end{equation}
           \end{itemize}
       
	\end{thmx}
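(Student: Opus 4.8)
The plan is to establish the two inclusions of \eqref{Weight formula for nice modules} separately, using the hole-set $\mathcal{H}_V$ to control which weight spaces vanish and the parabolic $W_{I_V}$-invariance of $\wt V$ (from integrability, as recalled after \eqref{Eqn Integrability}) to reduce to dominant-type representatives. First I would fix notation: write $\Lambda := \{\mu\preceq\lambda \mid \mu \text{ satisfies (C1) and (C2)}\}$, so the claim is $\wt V = W_{I_V}\Lambda$. For the inclusion $W_{I_V}\Lambda\subseteq \wt V$: since $\wt V$ is $W_{I_V}$-stable it suffices to show $\Lambda\subseteq\wt V$. Condition (C2) is engineered precisely so that the ``canonical weight'' $\mu'$ of \eqref{Eqn canonical weight for mu} lies in $\wt V$; the job is to propagate from $\mu'$ to $\mu$ itself along each Dynkin component $J_k$. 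On a component with $|J_k|=1$ this is a rank-one $\mathfrak{sl}_2$- or Heisenberg-string argument in the single direction $\alpha_{j_k}$, climbing from exponent $c_{j_k}$ back down (or up) inside the one-dimensional weight spaces guaranteed by (H1); on a component with $|J_k|>1$ one starts from a single $\alpha_{j_k}$ and must fill in the remaining $c_j$'s, which is where connectedness of $J_k$ and an induction on $\height(\lambda-\mu)$ enter, using that $V$ is a highest weight module so $U(\mathfrak{n}^-)V_\lambda = V$ and that non-vanishing of a weight space forces non-vanishing of suitable neighbors via the $e_i,f_i$-action. I would package the core of this as a lemma: if $\nu\in\wt V$, $i\in\supp(\lambda-\nu)$ lies in a component of size $>1$ of $\supp(\lambda-\nu)$, and no hole obstructs, then $\nu-\alpha_i\in\wt V$ or $\nu+\alpha_i\in\wt V$ as needed — essentially the Khare--Teja ``connectedness of weights'' mechanism transported to the BKM setting.

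For the reverse inclusion $\wt V\subseteq W_{I_V}\Lambda$: given $\nu\in\wt V$, I would first use $W_{I_V}$-invariance to move $\nu$ to a representative $\mu$ that is $I_V$-dominant, i.e. $\mu(\alpha_i^\vee)\in\mathbb{Z}_{\geq 0}$ for all $i\in I_V$ — this is standard once one knows $\wt V$ lies in $\lambda - \mathbb{Z}_{\geq0}\Pi$ and each $W_{\{i\}}$-string is finite for $i\in I_V$ (integrability), so $\mu$ satisfies (C1). The substance is (C2): I must show that if $\mu\in\wt V$ with $J=\supp(\lambda-\mu)$ and components $J_1\sqcup\cdots\sqcup J_n$, then on each singleton component the exponent is ``free'' and on each larger component at least the single-$\alpha_{j_k}$ weight $\mu'$ is attained. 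Here the niceness hypothesis on $\mathcal{H}_V$ (Definition \ref{Defn holes in BKM setting}[d]) is essential: minimal holes are either single real nodes (accounted for by (C1)/integrability) or subsets of $\mathcal{I}^-\sqcup\mathcal{I}^0$ of arbitrary size, and I would argue that a nice hole can never obstruct the passage from $\mu$ down to $\mu'$ because $\mu'$ decreases $\lambda$ by only one simple root in each multi-node component, while genuine obstructions in $\mathcal{I}^-\sqcup\mathcal{I}^0$ require $\lambda(\alpha_h^\vee)=0$ and interact with (H2)/Lemma \ref{Lemma rank-1 BKM rep. theory} in a way that leaves the ``first step'' $\lambda-\alpha_{j_k}$ always reachable. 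Concretely I would induct downward on $\height(\lambda-\mu)$, at each stage either finding a node $j$ with $\mu+\alpha_j\in\wt V$ (then apply the inductive hypothesis to $\mu+\alpha_j$ and a rank-one raising argument) or, when $\mu$ is ``locally maximal'' in some direction, reading off from the $\mathfrak{sl}_2$/Heisenberg/negative-node rank-one theory in Lemma \ref{Lemma rank-1 BKM rep. theory} that $\mu$ must be a hole-weight of $\mathcal{H}_V$, contradicting $\mu\in\wt V$ unless $\mu$ already has the form permitted by (C2).

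The main obstacle I anticipate is the absence of the integrable slice-decomposition \eqref{Eqn wt-formula for all simple in KM setting}, flagged in Note \ref{Note imaginary reflections in hole-weihts}: in the KM proofs one reduces weights of a general $V$ to weights of parabolic Vermas and then to simple $\mathfrak{p}_{J_\lambda}$-modules with a clean branching formula, but over a BKM $\mathfrak{g}$ with $\mathcal{I}^0,\mathcal{I}^-\neq\emptyset$ this tool fails (Corollary \ref{Corollary no integtrable Slice-decomp.}), so the downward induction on $\height(\lambda-\mu)$ must be run directly on $V$ with the hole-set as the only bookkeeping device, and the delicate point is ensuring that the induction step never ``skips over'' a weight it needs — i.e. that whenever $\mu\in\wt V$ and a component of $\supp(\lambda-\mu)$ has size $>1$, some raising $\mu+\alpha_j$ stays in $\wt V$. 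I expect to prove this by a careful case analysis on the type of the node $j$ realizing the component (real vs.\ Heisenberg vs.\ negative), invoking Lemma \ref{Lemma rank-1 BKM rep. theory} for the rank-one behaviour and the niceness of $\mathcal{H}_V$ to exclude the bad configurations; the $\mathcal{I}^0$ case, where every vector of a rank-one Verma is maximal (Example \ref{Example all Vectors in Heiesnberg-Verma are maximal}(1)), needs separate attention since there the obstruction can sit at an arbitrary power $m_H(h)$, and this is exactly where the ``or $H\subseteq\mathcal{I}^-\sqcup\mathcal{I}^0$ of any size'' clause of niceness does the work.
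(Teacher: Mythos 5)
Your two-inclusion framework is the right shape, and you correctly flag the absence of integrable slice-decompositions as the central difficulty. But there are two substantive gaps relative to what the argument actually requires.

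First, you misattribute where the niceness hypothesis does work. The inclusion $\wt V\subseteq W_{I_V}\Lambda$ (your ``reverse inclusion'') does \emph{not} need niceness at all: the paper's Lemma~\ref{weight formula 1 forward inclusion lemma} proves it for arbitrary $V$ with $\mathcal{H}_V$ unconstrained, by a short direct argument — take a non-zero monomial $f_{i_1}\cdots f_{i_r}v_\lambda\in V_\mu$, use that $f_p,f_q$ commute when $p,q$ lie in distinct Dynkin components of $\supp(\lambda-\mu)$ to regroup the factors by component, and strip off one (or all, for singleton components) factor of each block to land on a candidate $\mu'$ with a visibly non-zero image. Your proposed height-induction and ``hole cannot obstruct the first step'' analysis for this direction is both unnecessary and misdirected; the niceness of $\mathcal{H}_V$ plays no role here.

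Second, and more seriously, your plan for $W_{I_V}\Lambda\subseteq\wt V$ is missing the structural tool that makes the induction close. You propose to ``propagate from $\mu'$ to $\mu$ along each Dynkin component $J_k$'' via rank-one string arguments and height induction, noting that the slice-decomposition is unavailable. But rank-one $\mathfrak{sl}_2$/Heisenberg/negative-node arguments and Lemmas~\ref{Lemma real root strings}--\ref{Lemma imaginary root strings} by themselves cannot control what happens in directions \emph{outside} $J_V:=\bigcup_{(H,m_H)\in\mathcal{H}_V^{\min}}H$: the weight $\mu$ may lie far from $\mu'$ along $J_V^c$-directions, where the weight spaces are high-dimensional and root strings alone do not tell you which weights survive in $V$. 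What actually replaces the slice-decomposition in the paper is the Minkowski-difference weight formula (Proposition~\ref{Proposition Minkowski difference formula}), $\wt V=\bigl(\wt V\cap[\lambda-\mathbb{Z}_{\geq0}\Pi_{J_V}]\bigr)-\mathbb{Z}_{\geq0}\bigl(\Delta^+\setminus\Delta^+_{J_V}\bigr)$, a genuine freeness theorem in the non-hole directions whose proof itself takes the bulk of Subsection~\ref{Subsection Minkowski difference wt.-form.} and requires niceness of $\mathcal{H}_V$ (via Lemma~\ref{Lemma freeness outside holes support}). Your plan never mentions this input, and without it the ``downward induction'' can get stuck exactly in Step~1 of the paper's minimal-counterexample argument, where one needs to know that $\eta-(c_k-d_k)\alpha_k\in\wt V-\mathbb{Z}_{\geq0}\Pi_{J_V^c}\subseteq\wt V$ for a free direction $k\notin J_V$. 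The paper also does not in fact propagate ``from $\mu'$ up to $\mu$'' component-by-component; it runs a minimal-counterexample contradiction over all triples $(\mathfrak{g},V,\mu)$, fixing the weight $\eta\in\wt V$ closest to $\mu$ from above and analysing $\supp(\eta-\mu)$ in five case-steps (including passage to parabolic subalgebras $\mathfrak{g}_{J_t}$, which your plan does not anticipate). So the roadmap you lay out would need to be substantially restructured — and in particular would need the Minkowski-difference formula proved first — before it could be completed.
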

    After integrable simples, the well-studied h.w.m.s are parabolic Verma modules $M(\lambda,J)$, defined by parabolic induction:\ 
For $\lambda\in \mathfrak{h}^* \text{ and } \ J\subseteq J_{\lambda}\cap \mathcal{I}^+$,  
$M(\lambda, J) :=U(\mathfrak{g})\otimes_{U(\mathfrak{p}_J)} L_J(\lambda)$ \big(\cite{Lepo parabolics, GaLe, KuBGG, Naito BGG 2}\big).
Up to over symmetrizble KM/BKM $\mathfrak{g}$,  $M(\lambda, J)=  \frac{M(\lambda)}{\Big\langle f_j^{\lambda(\alpha_j^{\vee})+1}M(\lambda)_{\lambda}\ \Big| \ j\in J \Big\rangle}$.
\begin{cor}
(1) Theorem \ref{Theorem weight formula for nice modules} yields weight-sets of all quotients of parabolic Verma $\mathfrak{g}$-modules $V'=\frac{M(\lambda, J)}{N_{V'}}$, for submodules $N_{V'}\subset M(\lambda ,J)$ that are generated by weight vectors made of imaginary $f_i$.
 In particular, we found weights of all h.w.m.s $V$ over $\mathfrak{g}(A)$ when all $A_{i,j}\leq 0$, i.e., when $\mathcal{I}=\mathcal{I}^-\sqcup \mathcal{I}^0$; such settings of $A$ were notably studied in the literature \cite{Elizabeth, Duchamp}.
\smallskip\\ (2) Theorem \ref{Theorem weight formula for nice modules} reveals (parabolic Verma type / minimal $\mathcal{I}^+$-holes singletoned) higher order Verma $\mathbb{M}(\lambda, \mathcal{H})$ for nice $\mathcal{H}$,  and their weights.
For $\lambda\in \mathfrak{h}^*$ and hole-set $\mathcal{H}$ (Definition \ref{Defn holes in BKM setting}[b$'$]), we define:
        \begin{align}\label{Eqn defn higher order Vermas}
  \mathbb{M}(\lambda,\ \mathcal{H})\ \ :=\ \   \frac{M(\lambda)}{\left\langle \prod\limits_{h\in H}f_h^{m_H(h)}M(\lambda)_{\lambda}\ \bigg|\ (H, m_H)\in \mathcal{H} \right\rangle}\qquad
  \bigg(\begin{aligned}[] \mathbb{M}(\lambda,\ \mathcal{H})\ = \ \mathbb{M}(\lambda,\ \mathcal{J})\\
  \forall\ \ \mathcal{H}^{\min} \subseteq \mathcal{J}\subseteq \mathcal{H}
  \end{aligned}\bigg).
  \end{align}
  %Note, those $\mathcal{H}$ can have arbitrary sized holes $H\subseteq \mathcal{I}^-\sqcup \mathcal{I}^0$.
 \end{cor}
 \begin{example}[Universality of higher order Vermas \& holes]
(a) $\mathcal{H}_{M(\lambda)}= \emptyset$. $\mathcal{H}_{V=0}$ and $\mathcal{H}_{L(\lambda)}$ equal the \textit{upper closures} of $\{\ \emptyset\ \}$  and resp. $\{\ \{j\}\ |\ j\in J_{\lambda} \}$.\ \  
(b) $0^{\text{th}}$-order: $\mathbb{M}(\lambda, \mathcal{H}=\emptyset)=M(\lambda)$.
(c) $1^{\text{st}}$-order: $\mathbb{M}\big(\lambda, \ 
\{\ \{j\}\ |\  j\in \mathcal{I} \} \big)=L(\lambda)$ for $\lambda\in P^+$ up to symmetrizable BKM $\mathfrak{g}$, and the parabolic Verma module $M(\lambda, J)$ (with h.w. $\lambda$ and \ integrability $J$\big) equals $\mathbb{M}\big(\lambda,\ \{\ \{j\} \ |\ j\in J  \} \big)$. \end{example}
	\begin{cor}\label{Corollary to Theorem A}
 For any simple $L(\lambda)$, $\lambda\in \mathfrak{h}^*$, (C2) for \eqref{Weight formula for nice modules} is equivalent to non-degeneracy \eqref{Eqn Weyl orbit wt-formula in KM setting}!
 Namely, $\wt L(\lambda)$ is the set of all $\mu\preceq \lambda$ that satisfy the following conditions:
 \begin{itemize}
     \item $\mu \text{ satisfies (C1) w.r.t. } I_{L(\lambda)}$.
     \item $J_t \text{ is a component in }J_{\lambda}       \text{as in (C2) }\implies 
                  \exists \ j_t\in J_t \text{ s.t. }         \lambda  (\alpha_{j_t}^{\vee})\neq 0$.
                  \item $\{j_t\}=J_t \implies  j_t\in  \mathcal{I}^+\sqcup \mathcal{I}^- \text{ and }\lambda- \Big(\textstyle{\frac{2}{A_{j_t j_t}}}\lambda\big(\alpha_{j_t}^{\vee}\big)+1\Big)\alpha_{j_t}\nsucc \mu$.
 \end{itemize}
                This formula seems un-written for $L(\lambda\in P^+)$s over BKM $\mathfrak{g}$; not in this basic form for non-integrable $L(\lambda)$s and for parabolic Vermas even in finite type. 
Theorem \ref{Theorem weight formula for nice modules} yields (distinct) weight-sets for all integrable h.w.m.s in the poset for each $\lambda\in P^+$ in Definition \ref{Defn integrables}.
%\big($f_i^{\infty}V_{\lambda}=0$ denotes no quotenting along the $i$-direction.\big)
	\end{cor}
 In a followup work contained in \cite{BKM ppr}, we extended and proved Theorem \ref{Theorem weight formula for nice modules} for all h.w.m.s $V$.
   
    \begin{thmx}\label{Theorem wt-formula by composition series simples for nice V} 
Fix any BKM $\mathfrak{g}$  and $\lambda\in \mathfrak{h}^*$, and any integrable h.w.m. $M(\lambda)\twoheadrightarrow V$.  \big(So $\mathcal{H}_V$ is nice, and holes in it along $\mathcal{I}^-\sqcup \mathcal{I}^0$-directions can have arbitrary sizes.\big)
Then-                    \begin{align}\label{Eqn wt-fromula via comp. series for V in BKM case}
             \begin{aligned}  \wt  V\  = \ \qquad \quad  \bigcup\limits_{\mathclap{(H, m_H)\ \in\  \mathrm{Indep}(J_{\lambda})\ \setminus \ \mathcal{H}_V}} \qquad \wt L\bigg( \lambda-\sum_{h\in H}m_H(h)\alpha_h\bigg).
            \end{aligned}
\quad 
\left(\begin{aligned}
&\textnormal{As in }\eqref{eqn wt-formula for all V in KM setting}, \textnormal{ these simples occur in}\\
& \text{all local Jordan--H{\" o}lder series of }V  .
\end{aligned}\right)
\end{align}
    \end{thmx}
    Our formulas are : i)~Uniform across all $\mathfrak{g}$'s and nice h.w.m.s.
   ii) Explicit and cancellation-free.
   iii)~Occur as various parabolic $W$-subgroups' orbits of weights, which lie below either $\lambda$ or its maximal 1-dim. dot-conjugates.
  The simple-factors in \eqref{Eqn wt-fromula via comp. series for V in BKM case} vary from those studied in Vermas in \cite{Naito BGG 1, Naito BGG 2}, along imaginary directions.
The analogue of Kac's local Jordan--Hölder series \cite[Lemma 9.6]{Kac book} for $V$ over BKM LAs seems unstudied in literature.
So, to strengthen \eqref{Eqn wt-fromula via comp. series for V in BKM case} following \cite{MDWF} and \cite[Theorem C]{Teja--Khare}, it may be interesting to solve Question \ref{Question composition series simple with a given weight in any V}, using the \underline{key weight $\mu'$} for $\mu$ in \eqref{Eqn canonical weight for mu}.
\begin{question}\label{Question composition series simple with a given weight in any V}
   Which simple in a local composition series of $V$, contains (any) given $\mu\in \wt V$? \end{question} 
   Following \cite{MDWF}, the first step to solve this problem is exploring it in the fundamental case of $V=M(\lambda)$ in BKM setting, for $\lambda$ in $P^+$ or in $P^{\pm}$.
   Our Theorem \ref{Theorem wt-formula by composition series simples for nice V} explores it for integrable (non-simple) $V$'s, and we aim to completely solve this problem for all $V$ in a future work.
%It might be interesting to explore such weight-formulas for over Lie superalgebras and contragredient LAs, and for indecomposable objects in category $\mathcal{O}$ over $\mathfrak{g}$ even in finite type.
\subsection{On Vermas $M(\lambda\in P^{\pm})$, characters of simples $L(\lambda\in P^{\pm})$ and $L(\rho)$}\label{Subsection character for L(lambda nice)}
From $P^{\pm}$-perspective, simple $L(\lambda\in P^+)$s are -- when thought of as modules with fewer singleton-ed real hole-relations -- parabolic or nice higher order Verma typed, possessing WKB character formulas.
Indeed when $\mathcal{I}^+=\emptyset$ and $\lambda\in P^+$ with $\lambda(\alpha_i^{\vee})>-\frac{A_{ii}}{2}$ $\forall$ $i$ (so $J_{\lambda}^c\neq \emptyset$) one sees by Kac--Kazhdan equation \eqref{Eqn dot action by any positive root} below, that $L(\lambda)=M(\lambda)$.
 Our Theorems \ref{Theorem C} and \ref{Theorem D character of V(rho)} study by-products $M(\lambda)\twoheadrightarrow  L(\lambda)$ for $\lambda\in P^{\pm}$ of our weight analysis; concerning which, recall even $\mathrm{char} \big(L(+\rho)\big)$ is not known beyond over symmetrizable case (also see the discussion below Definition \ref{Defn holes in KM setting}).
An interesting case of $J_{\lambda}^c\neq \emptyset$, is $\lambda= -\rho$ with $J_{\rho}^c=\mathcal{I}^+\sqcup \mathcal{I}^-$ discussed in Note \ref{Note -rho}.  
Our primary goals in this subsection are:
\begin{question}\label{Question characters of nice simples}
Do $L(\lambda)$ for $\lambda\in P^{\pm}$ possess WKB character type formulas? 
If so, what would be the analogue of the Weyl group and the length function in them?
  % (b) On ground level, what is the submodule structure --  i.e., maximal vectors and Verma embeddings of Vermas $M(\lambda\in P^{\pm})$ (in spirit of )? 
  % (3) What are presentations for $L(\lambda\in P^{\pm})$?
\end{question}
Recently, in \cite[Theorems E--G and Proposition 2.14]{Teja--Khare}, the authors initiated the study of higher order Verma modules $\mathbb{M}(\lambda, \mathcal{H})$ -- whose yield all $\wt V$'s --  and their Weyl--Kac type characters, using~: i) Semigroups $W_{\mathcal{H}}$ to enumerate character numerators; $\text{e.g. in } A_1\times\cdots \times A_1$ types
\begin{equation}\label{Eqn Weyl semigroup}
 (W_{\mathcal{H}}, \star ) := \Big\{s_{H_1}\star\cdots \star s_{H_r}:= s_{H_1\cup\cdots \cup H_r}\ \Big|\ H_1,\ldots,H_r\in\mathcal{H}^{\min} \Big\}\ \  \subseteq W.
\end{equation}
ii) Positive solutions to Problem \ref{Question dot orbit = norm equality sol.?} above in $A_3$-type, for the two $2^{\text{nd}}$-order Vermas $\mathbb{M}(0, \mathcal{H})$'s.

We now explore such formulas for $L(\lambda\in P^{\pm})$ over rank-2 BKM LAs (Theorem \ref{Theorem C}), and notably for characters of every $V\twoheadleftarrow M(\rho)$ over $\mathfrak{g}\big(A(n)\big)$, addressing Question~\ref{Question characters of nice simples} and Problem \ref{Question dot orbit = norm equality sol.?}.

Hereafter, let $A$ be symmetrizable. 
  We resort to the following praised technique of Kac of working with a decisive central Casimir element, 
employed in WK and WKB character formulas for integrables;
 to bypass (unavailable) Harish-Chandra's theorems for the center of $U(\mathfrak{g})$. 
 \begin{observation}
 (1) By  \cite[Proposition 9.8]{Kac book} and \cite[Proposition 2.70]{Wakimoto}, Question \ref{Question characters of nice simples} reduces to the problem of determining coefficients $c(\mu)$'s in \eqref{Eqn char formula principle 1 from norm equality} for $V=L(\lambda)$ for each $\lambda\in P^{\pm}$.
(2) Problem \ref{Question dot orbit = norm equality sol.?} is a subquestion to them.
It has the following easy partial answer which was helpful in WKB characters' derivation:  $\{\mu\prec \lambda\ |\ P^+\ni \mu \text{ satisfies }\eqref{Eqn norm equality with invariant form}\} = \{\lambda\}$ when $\lambda\in P^+$.
    \end{observation}
We focus on settings (A) and (B) of $\mathfrak{g}(A)$'s, defined in \eqref{Eqn negative settings of A} of our Introduction. 
We fix $\lambda\in P^{\pm}$ below, and set $M_i\ :=\  \frac{2}{A_{ii}}\lambda(\alpha_i^{\vee})  +1\ \in \mathbb{N} \ \ \forall\ i\in \mathcal{I}^-\sqcup \mathcal{I}^+,\  \text{with} \ M_i=1 \ \forall\ i\in \mathcal{I}^0$.

Over $\mathfrak{g}\big(A(b,a,a,b)\big)$ for $a,b\in \mathbb{N}$,\ \ \ 
    $\mu \ =\  \lambda-X\alpha_1-Y\alpha_2\ \ \text{ for } \ (X, Y)\in \mathbb{Z}_{\geq 0}^2 \  \text{ satisfies }\ \eqref{Eqn norm equality with invariant form}$ \ iff 
\begin{equation}\label{Eqn symmetric and same lengths case and any a}
\vspace*{-0.5cm}
			\ X^2\ + \ Y^2\ - \  M_1 X\ - \ M_2Y \ +\ \frac{2a}{b}XY\ = \ 0.
\end{equation}
\smallskip \\
Over $\mathfrak{g}\big(A(n)\big)$, and for $\lambda=\rho, \ \mu=\rho -\sum_{i=1}^n X_i\alpha_i$ \big(analogous to \eqref{Eqn symmetric and same lengths case and any a}\big)  \eqref{Eqn norm equality with invariant form} simplifies as
\begin{align}
           X_1^2 + \cdots + X_n^2 - 2(X_1+\cdots + X_n)+ (X_1X_2+\cdots  X_iX_{i+1}\cdots +X_{n-1}X_n) = 0 \label{Eqn norm equality for rho 1} \ \ \ \iff \\ (X_1-1)^2 + (X_1+X_2-1)^2+\cdots+ (X_{n-1}+X_n-1)^2 + (X_n-1)^2
-(n+1)\ =0.\hspace*{0.6cm}     \label{Eqn norm equality for rho 2}     
\end{align}\allowdisplaybreaks
\begin{note}\label{Note Easy holes' maximal vectors} 
$f_i^{M_i}M(\lambda)_{\lambda}= M(\lambda)_{\lambda-M_i\alpha_i}$ are maximal. 
And $(0,..,0,M_i,0,..0)$ satisfies \eqref{Eqn symmetric and same lengths case and any a}--\eqref{Eqn norm equality for rho 2}. 
Their solutions in $\mathbb{Z}_{\geq 0}^n$ are finitely many by Lemmas \ref{Lemma bddness of sols for two negative nodes}, \ref{Lemma negative type A norm equality equaion}; we shall note such parallels to $A_2$ case in due course.
 $\mathrm{char}\big(L(\lambda\in P^{\pm})\big)$s and $\mathrm{char}\big(L(\rho)\big)$ lead us to the journey of studying solutions' properties.
\end{note}
 \begin{thmx}\label{Theorem C}
				Fix any $\mathfrak{g}\big(A(b,a,a,b)\big)$, $a,b\in \mathbb{N}$, and $\lambda\in  P^{\pm}$ with powers $M_1,M_2\in \mathbb{N}$ as in \eqref{Eqn symmetric and same lengths case and any a}.
                Then $L(\lambda)$ has the following character formulas in settings (I)--(III) (in view of  Lemmas \ref{Lemma maximal vect in solution (1,n) for case (1, M2)}, \ref{Lemma (2,2) sol. characterization}):\\  
                (I) Assume $(X=1,Y=1)$ satisfies \eqref{Eqn symmetric and same lengths case and any a}.
               
				%Fix $\lambda\in \mathfrak{h}^*$, with $\lambda(\alpha_t^{\vee})=-k(M_t-1)$ for $t\in \{1,2\}$, for some two numbers $M_1, M_2\in \mathbb{N}$. 				Assume that $M_1\geq 4$ and $a=M_1+M_2-2$. 
			\begin{equation}\label{Eqn char formula with missing norm equatlity solution in (1,1) case}
			\text{If } \min \{M_1, M_2\} =1,	\qquad\qquad  	\mathrm{char} \big(L(\lambda)\big)\ = \ \frac{e^{\lambda}- e^{\lambda-M_1\alpha_1}-e^{\lambda-M_2\alpha_2}}{R}.\hspace*{3cm} 
					\end{equation}
	\begin{equation}\label{Eqn char formula with non-missing norm equatlity solution in (1,1) case}
\text{If }M_1,M_2\geq 2,\qquad \qquad\qquad\quad   	\mathrm{char} \big(L(\lambda)\big)\ = \ \frac{e^{\lambda}- e^{\lambda-M_1\alpha_1}-e^{\lambda-M_2\alpha_2}-e^{\lambda-\alpha_1-\alpha_2}}{R}.\hspace*{1cm} 
	\end{equation}
    (II) Fix any Weyl vector $\lambda=\rho$; so $M_1=M_2=2$.
    We see three differing cases of solution-sets  in  the ``interior'' $\mathbb{N}\Pi$ to \eqref{Eqn symmetric and same lengths case and any a} :  $\{(1,1)\}$ (iff $a=b$), $\{(1,2), (2,1)\}$ (iff $b=4a$) and $\emptyset$ (in all other cases).
   Then  $\mathrm{char}\big(L(\rho)\big)$ is given by either \eqref{Eqn char formula with non-missing norm equatlity solution in (1,1) case} (iff $a=b$) or \eqref{Eqn char formula with missing norm equatlity solution in (1,1) case} (in all other cases).
    \smallskip\\
    (III) Let  $(X=2,Y=2)$ satisfy \eqref{Eqn symmetric and same lengths case and any a}.
    Then four cases Lemma \ref{Lemma (2,2) sol. characterization}(A)--(D) arise, and the respective numerators of the character formulas in them (ignoring the usual denominator $R$): 
\begin{align*}
\text{Case (A)}\ : & \hspace*{1.5cm}   e^{\lambda}- e^{\lambda-M_1\alpha_1}-e^{\lambda-M_2\alpha_2}-2e^{\lambda-2\alpha_1-2\alpha_2}\\  \text{Case (B)}\ : & \hspace*{1.5cm}  e^{\lambda}- e^{\lambda-M_1\alpha_1}-e^{\lambda-M_2\alpha_2}- e^{\lambda-k_1\alpha_1-\alpha_2} - e^{\lambda-\alpha_1-k_2\alpha_2}-2e^{\lambda-2\alpha_1-2\alpha_2}\\  \text{Cases (C)\ \&\ (D)} \ : & \hspace*{1.5cm}  e^{\lambda}- e^{\lambda-M_1\alpha_1}-e^{\lambda-M_2\alpha_2}
\end{align*}
    \end{thmx}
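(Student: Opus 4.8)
The plan is to combine Kac's central‑element technique with an explicit determination of the maximal vectors of the Verma cover $M(\lambda)$ along the finitely many weights permitted by the norm equality, and then read off the integers in the Verma expansion of $\mathrm{char}\,L(\lambda)$.

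\textbf{Step 1 (Casimir reduction).} Since $A$ is symmetrizable, the Casimir operator $\Omega\in U(\mathfrak{g})$ acts on any highest weight module with highest weight $\nu$ by the scalar $(\nu+\rho,\nu+\rho)-(\rho,\rho)$, and $\mathrm{char}\,M(\mu)=e^{\mu}/R$. Writing, as in \eqref{Eqn char formula principle 1 from norm equality}, $\mathrm{char}\,L(\lambda)=\sum_{\mu\preceq\lambda}c(\mu)\,e^{\mu}/R$ with $c(\mu)\in\mathbb{Z}$ and $c(\lambda)=1$, the $\Omega$‑invariance of $\mathrm{char}\,L(\lambda)$ forces $c(\mu)\neq 0$ only when $(\mu+\rho,\mu+\rho)=(\lambda+\rho,\lambda+\rho)$, i.e. only when $\mu$ solves \eqref{Eqn norm equality with invariant form}; for $\mu=\lambda-X\alpha_1-Y\alpha_2$ over $\mathfrak{g}(A(b,a,a,b))$ this is exactly \eqref{Eqn symmetric and same lengths case and any a}. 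So it suffices to compute $c(\mu)$ for $\mu$ in the (by Lemma \ref{Lemma bddness of sols for two negative nodes} finite) solution set $\mathcal{S}$ of \eqref{Eqn symmetric and same lengths case and any a}.

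\textbf{Step 2 (the solution set).} By Note \ref{Note Easy holes' maximal vectors}, $(0,0),(M_1,0),(0,M_2)\in\mathcal{S}$ always. In case (I) the point $(1,1)$ lies in $\mathcal{S}$ by hypothesis, and substituting $X\in\{0,1\}$ or $Y\in\{0,1\}$ into \eqref{Eqn symmetric and same lengths case and any a} makes the equation factor linearly, so together with Lemma \ref{Lemma maximal vect in solution (1,n) for case (1, M2)} no further solution affects the answer. In case (II), $M_1=M_2=2$ and a direct discussion of \eqref{Eqn symmetric and same lengths case and any a} gives the stated trichotomy of interior solutions: $\{(1,1)\}$ iff $a=b$, $\{(1,2),(2,1)\}$ iff $b=4a$, and $\emptyset$ otherwise. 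In case (III), $(2,2)\in\mathcal{S}$, and Lemma \ref{Lemma (2,2) sol. characterization} organizes the rest of $\mathcal{S}$ into the four alternatives (A)--(D), with the extra solutions $(k_1,1),(1,k_2)$ present in case (B).

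\textbf{Step 3 (the coefficients).} Proceeding downward in the order $\preceq$ from $c(0,0)=1$, comparing the coefficient of $e^{\mu}$ on the two sides of $\mathrm{char}\,L(\lambda)=\sum_{\nu\in\mathcal{S}}c(\nu)\,e^{\nu}/R$ gives, for each $\mu\in\mathcal{S}$,
\begin{equation*}
\dim L(\lambda)_{\mu}\;=\;\sum_{\nu\in\mathcal{S},\ \nu\succeq\mu} c(\nu)\,p(\nu-\mu),
\end{equation*}
where $p(\beta)$ is the coefficient of $e^{-\beta}$ in $1/R$ (the Kostant‑type partition function of $\mathfrak{n}^-$, explicit here through Witt's necklace formula since $\mathfrak{n}^{\pm}$ is free/partially commutative). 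Hence, once the higher $c(\nu)$ are known, $c(\mu)$ is pinned down by $\dim L(\lambda)_{\mu}$, which I compute directly. For $\mu=\lambda-M_i\alpha_i$ only powers of $f_i$ reach $\mu$ and $f_i^{M_i}M(\lambda)_{\lambda}$ is maximal (Lemma \ref{Lemma rank-1 BKM rep. theory}), so $\dim L(\lambda)_{\mu}=0$ and $c(M_i,0)=c(0,M_i)=-1$. For $\mu=\lambda-\alpha_1-\alpha_2$ in case (I): applying $e_1,e_2$ to the two‑dimensional weight space $M(\lambda)_{\lambda-\alpha_1-\alpha_2}$ (spanned by $f_1f_2M(\lambda)_{\lambda}$ and $f_2f_1M(\lambda)_{\lambda}$) shows it carries a maximal vector precisely under the case‑(I) relation $\frac{2a}{b}=M_1+M_2-2$, namely $(M_1-1)f_1f_2M(\lambda)_{\lambda}+(M_2-1)f_2f_1M(\lambda)_{\lambda}$ up to scalar, while $f_2f_1M(\lambda)_{\lambda}$ survives in $L(\lambda)$; so $\dim L(\lambda)_{\mu}=1$, giving $c(1,1)=0$ when $\min\{M_1,M_2\}=1$ (then $(M_1,0)$ or $(0,M_2)$ already contributes to $e^{\mu}$) and $c(1,1)=-1$ when $M_1,M_2\geq 2$. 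The $(1,k_2)$‑ and $(k_1,1)$‑type solutions in (II) (for $b=4a$) and in (III.B) are treated the same way via Lemma \ref{Lemma maximal vect in solution (1,n) for case (1, M2)}, yielding coefficients $0$ and $-1$ respectively. Finally, for $\mu=\lambda-2\alpha_1-2\alpha_2$ in case (III), Lemma \ref{Lemma (2,2) sol. characterization} describes the maximal‑vector space at $\mu$: two‑dimensional in cases (A) and (B), generating two copies of $M(\lambda-2\alpha_1-2\alpha_2)$ inside the maximal submodule (the relevant Verma embeddings being injective as $A$ is symmetrizable), whence $c(2,2)=-2$; and zero in cases (C),(D), whence $c(2,2)=0$. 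Substituting these $c(\mu)$ back reproduces the numerators in \eqref{Eqn char formula with missing norm equatlity solution in (1,1) case}, \eqref{Eqn char formula with non-missing norm equatlity solution in (1,1) case} and Cases (A)--(D); part (II) is the specialization $M_1=M_2=2$ of parts (I) and (III). Rigour requires in addition checking that the maximal vectors above generate the entire maximal submodule of $M(\lambda)$, equivalently that the candidate character carries exactly the weight‑set that Theorem \ref{Theorem weight formula for nice modules} (and Corollary \ref{Corollary to Theorem A}) assigns to $L(\lambda)$, so that the quotient is simple.

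\textbf{Main obstacle.} The delicate point is the maximal‑vector count at the deep weights, above all $\lambda-2\alpha_1-2\alpha_2$ (and $\lambda-k_1\alpha_1-\alpha_2$, $\lambda-\alpha_1-k_2\alpha_2$): whether the space of maximal vectors there has dimension $0$, $1$ or $2$ turns on fine arithmetic relations among $a,b,M_1,M_2$ packaged in Lemma \ref{Lemma (2,2) sol. characterization}, and one must simultaneously keep track of the root multiplicities and partition numbers (e.g. $p(2\alpha_1+2\alpha_2)$ via Witt's formula) to extract $c(2,2)$. A secondary subtlety is excluding any further, hidden maximal vectors at the remaining solution weights, so that the short numerators in Cases (C),(D) are genuinely complete; the weight‑set formula of Theorem \ref{Theorem weight formula for nice modules} supplies the control needed for this.
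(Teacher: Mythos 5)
Your overall strategy coincides with the paper's: reduce via the Casimir to the finite solution set of \eqref{Eqn symmetric and same lengths case and any a}, read off the integers $c(\mu)$ in $R\cdot\mathrm{char}\,L(\lambda)=\sum c(\mu)e^{\mu}$ using the rank-$2$ denominator identity $R=1-e^{-\alpha_1}-e^{-\alpha_2}$, and compute $\dim L(\lambda)_{\mu}$ by locating maximal vectors. Parts (I) and (II) are handled essentially as the paper does (including the observation that the unique maximal vector at $\lambda-\alpha_1-\alpha_2$ can be written explicitly; compare \eqref{Eqn non CS presentation in rank 2}, which is proportional to your $(M_1-1)f_1f_2+(M_2-1)f_2f_1$).

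However, in part (III) your treatment of the weight $\lambda-2\alpha_1-2\alpha_2$ has a genuine gap. You claim the space of maximal vectors there is two-dimensional in Cases (A)--(B) and zero-dimensional in Cases (C)--(D), attributing this to Lemma \ref{Lemma (2,2) sol. characterization}. This is doubly mistaken. First, Lemma \ref{Lemma (2,2) sol. characterization} only classifies the solution set of \eqref{Eqn symmetric and same lengths case and any a}; it says nothing about maximal vectors. Second, and more importantly, the count is \emph{wrong}: by Proposition \ref{Prop maxl vect}(c), the space $M(\lambda)_{\lambda-2\alpha_1-2\alpha_2}$ contains exactly $2\,(=\dim\mathfrak{g}_{2\alpha_1+2\alpha_2}+\dim\mathfrak{g}_{\alpha_1+\alpha_2})$ linearly independent maximal vectors in \emph{all four} cases (A)--(D), because $(2,2)$ satisfies \eqref{Eqn symmetric and same lengths case and any a} throughout case (III). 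The reason $c(\lambda-2\alpha_1-2\alpha_2)=0$ in Cases (C)--(D) is not that maximal vectors fail to exist, but that $M_2=1$, so the quotient by $U(\mathfrak{n}^-)f_2 m_{\lambda}$ already absorbs them. The paper settles this by a direct calculation in $L(\lambda)$ (Step 2 of its proof of Theorem \ref{Theorem C}(III)): one shows $\dim L(\lambda)_{\lambda-k\alpha_1-\alpha_2}=k$ for all $k$ and then $\dim L(\lambda)_{\lambda-2\alpha_1-2\alpha_2}=3$, via explicit raising-operator computations on a PBW/Lyndon basis, whence $c=3-1-2=0$. An argument as in your proposal, which tries to deduce the coefficient directly from a count of maximal vectors in $M(\lambda)$, cannot capture this cancellation because it ignores how those maximal vectors sit inside the already-killed submodule $U(\mathfrak{n}^-)f_2 m_{\lambda}$.

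Relatedly, for Cases (A)--(B) your derivation of $c(2,2)=-2$ from ``two copies of $M(\lambda-2\alpha_1-2\alpha_2)$ inside the maximal submodule'' reaches the right value, but the justification is loose: the two Verma images need not sum directly, and one must also know that no other submodule alters $\dim L(\lambda)_{\lambda-2\alpha_1-2\alpha_2}$. The paper makes this rigorous via Observation \ref{Remark equality Verma and simple weight multiplicities} (no solutions strictly between $\lambda$ and $\lambda-2\alpha_1-2\alpha_2$ in these cases), giving $\dim L(\lambda)_{\lambda-2\alpha_1-2\alpha_2}=6-2=4$ and then $c=4-3-3=-2$. Also note that Proposition \ref{Prop maxl vect}(c), which you need here, is itself a nontrivial input established in Section \ref{Section 7} by solving a system of raising-operator equations, and is not a consequence of the norm-equality analysis.
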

    \noindent
We elaborate on follow-up questions, which are addressed in Theorem \ref{Theorem D character of V(rho)} in one stroke for $\lambda=\rho$.
\begin{observation}\label{Observation missing solution points in character numerator}
0) Theorem \ref{Theorem C}(I) applies to infinitely many cases: of  any $\lambda\in  P^{\pm} \leftarrow\!\rightarrow (M_1\geq 2,M_2)\in \mathbb{N}^2$ for $A\big(2k,\ (M_1+M_2-2)k,\ (M_1+M_2-2)k,\ 2k\big)$ $\forall$ $k\in \mathbb{N}$.
 1) Full $W\bullet\lambda$ -- satisfying \eqref{Eqn norm equality with invariant form} -- appears in the numerator of \eqref{Eqn WKB character formula}, with $c(\cdot)$-coefficients $\pm 1$ \big(also 0 for possible solutions $\mu\notin W\bullet \lambda$ to \eqref{Eqn norm equality with invariant form} when $\lambda\in P^+$\big).
2) But in \eqref{Eqn char formula with missing norm equatlity solution in (1,1) case}, the exponential $e^{\lambda-\alpha_1-\alpha_2}$ for solution $(1,1)$ to \eqref{Eqn symmetric and same lengths case and any a}, did not appear in the numerator; this happened in (II) at $b=4a$ and in (III) (C)--(D).
Furthermore, $c$-coefs. in (III)(A)--(B) of $e^{\lambda-2\alpha_1-2\alpha_2}$ are $-2$.
These did not arise for $\mathrm{char}(L(\rho))$.
   \end{observation}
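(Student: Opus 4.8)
The plan is to treat the Observation as the package of three comparative remarks that it is. Part~0) will be a direct verification that the hypothesis of Theorem~\ref{Theorem C}(I) can be met by an explicit infinite family; part~1) will be the standard rewriting of the Weyl--Kac--Borcherds numerator in \eqref{Eqn WKB character formula} into the norm-equation shape of \eqref{Eqn char formula principle 1 from norm equality}--\eqref{Eqn norm equality with invariant form}; and part~2) will simply be read off the formulas already recorded in Theorem~\ref{Theorem C}. No new machinery beyond the preceding material is needed; the only step calling for any care is the coefficient bookkeeping in part~1).

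For 0): given $(M_1,M_2)\in\mathbb{N}^2$ with $M_1\geq 2$ and any $k\in\mathbb{N}$, I would set $b:=2k$ and $a:=(M_1+M_2-2)k$. Since $M_1+M_2\geq 3$, both lie in $\mathbb{N}$, so $A(b,a,a,b)$ is a BKMC matrix with $\mathcal{I}=\mathcal{I}^-=\{1,2\}$ and Theorem~\ref{Theorem C} applies. As $\alpha_1^\vee,\alpha_2^\vee$ are linearly independent in $\mathfrak{h}$, choose $\lambda\in\mathfrak{h}^*$ with $\lambda(\alpha_i^\vee)=\tfrac{A_{ii}}{2}(M_i-1)=-k(M_i-1)$ for $i=1,2$; then $\lambda(\alpha_i^\vee)\in\tfrac{A_{ii}}{2}\mathbb{Z}_{\geq 0}$, so $\lambda\in P^{\pm}$, and $M_i=\tfrac{2}{A_{ii}}\lambda(\alpha_i^\vee)+1$ as wanted. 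Plugging $(X,Y)=(1,1)$ into \eqref{Eqn symmetric and same lengths case and any a} gives $1+1-M_1-M_2+\tfrac{2a}{b}=2-M_1-M_2+(M_1+M_2-2)=0$, so the hypothesis of Theorem~\ref{Theorem C}(I) holds. Letting $k$ and $(M_1,M_2)$ vary yields the promised infinite family, with the subcase \eqref{Eqn char formula with missing norm equatlity solution in (1,1) case} occurring precisely when $M_2=1$ and \eqref{Eqn char formula with non-missing norm equatlity solution in (1,1) case} when $M_2\geq 2$.

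For 1): I would rewrite the right-hand side of \eqref{Eqn WKB character formula} in the denominator-$R$ form of \eqref{Eqn char formula principle 1 from norm equality}, so that a typical exponent of the numerator is $\mu=w\big(\lambda+\rho-\sum_j\alpha_{i_j}\big)-\rho$, taken over $w\in W$ and over independent $\{i_1,\dots,i_k\}\subseteq\mathcal{I}^0\sqcup\mathcal{I}^-$ with $\lambda(\alpha_{i_j}^\vee)=0$, carried with sign $(-1)^{\ell(w)+k}$. By $W$-invariance of $(\cdot,\cdot)$ one has $(\mu+\rho,\mu+\rho)=(\lambda+\rho-\sum_j\alpha_{i_j},\,\lambda+\rho-\sum_j\alpha_{i_j})$; independence kills the cross terms $(\alpha_{i_j},\alpha_{i_l})$ for $j\neq l$, while $\lambda(\alpha_{i_j}^\vee)=0$ gives $(\lambda,\alpha_{i_j})=0$ and the defining relation $2(\rho,\alpha_i)=(\alpha_i,\alpha_i)$ then forces $-2(\lambda+\rho,\alpha_{i_j})+(\alpha_{i_j},\alpha_{i_j})=0$ for each $j$. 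Hence $(\mu+\rho,\mu+\rho)=(\lambda+\rho,\lambda+\rho)$: every exponent appearing in the WKB numerator solves \eqref{Eqn norm equality with invariant form}. The terms with empty imaginary subset contribute exactly the dot-orbit $\{w\bullet\lambda\mid w\in W\}$ with coefficients $(-1)^{\ell(w)}$; and for $\lambda\in P^+$ one has $(\lambda+\rho)(\alpha_i^\vee)\geq 1$ for all $i\in\mathcal{I}^+$, so $\lambda+\rho$ is regular under $W$ (generated by the real simple reflections), $w\mapsto w\bullet\lambda$ is injective, and these coefficients are genuinely $\pm1$ with no cancellation; any other solution $\mu\notin W\bullet\lambda$ of \eqref{Eqn norm equality with invariant form} is then forced to carry coefficient $0$ in the numerator. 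This is exactly the assertion of 1).

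For 2): given Theorem~\ref{Theorem C}, this is pure inspection. In \eqref{Eqn char formula with missing norm equatlity solution in (1,1) case} the only exponents with nonzero coefficient are $\lambda$, $\lambda-M_1\alpha_1$, $\lambda-M_2\alpha_2$, matching the solutions $(0,0),(M_1,0),(0,M_2)$ of \eqref{Eqn symmetric and same lengths case and any a}; the solution $(1,1)$, present by the standing hypothesis of (I), does not occur there, so $c(\lambda-\alpha_1-\alpha_2)=0$, whereas in \eqref{Eqn char formula with non-missing norm equatlity solution in (1,1) case} it occurs with coefficient $-1$. The same reading applied to Theorem~\ref{Theorem C}(II) when $b=4a$ (the interior solutions $(1,2),(2,1)$ are absent from the numerator $e^{\lambda}-e^{\lambda-2\alpha_1}-e^{\lambda-2\alpha_2}$, as $M_1=M_2=2$) and to Theorem~\ref{Theorem C}(III)(C)--(D) (where $(2,2)$ is absent) establishes the remaining ``missing-solution'' claims; and in Theorem~\ref{Theorem C}(III)(A)--(B) the coefficient of $e^{\lambda-2\alpha_1-2\alpha_2}$ is visibly $-2$, in contrast both with the $\pm1$ of part~1) and with the $\lambda=\rho$ computations in Theorem~\ref{Theorem C}(II) and Theorem~\ref{Theorem D character of V(rho)}. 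The one genuinely nontrivial point, the coefficient count in 1), reduces as noted to the regularity of $\lambda+\rho$ for $\lambda\in P^+$; everything else is bookkeeping, so I expect no real obstacle here.
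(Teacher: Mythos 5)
Your proposal is correct and follows exactly the route the paper intends: the paper offers no separate proof of this Observation, treating part 0) as the direct substitution of $(1,1)$ into \eqref{Eqn symmetric and same lengths case and any a} you perform, part 1) as the classical rewriting of the WKB numerator \eqref{Eqn WKB character formula} into the norm-equation form \eqref{Eqn char formula principle 1 from norm equality}--\eqref{Eqn norm equality with invariant form}, and part 2) as inspection of the numerators in Theorem \ref{Theorem C}. The only point deserving a sentence more of care in 1) is ruling out collisions between $w(\lambda+\rho)$ and $w'(\lambda+\rho-\sum_j\alpha_{i_j})$ for nonempty imaginary subsets, which follows since both $\lambda+\rho$ and $\lambda+\rho-\sum_j\alpha_{i_j}$ are $W$-dominant regular for $\lambda\in P^+$.
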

 \begin{question}\label{Question for simple's presentations}
    (1) For $A(2)$, can we characterize solutions to \eqref{Eqn symmetric and same lengths case and any a} for all $M_1,M_2$?
  (2)~Following Observation \ref{Observation missing solution points in character numerator}, which solutions of \eqref{Eqn symmetric and same lengths case and any a} appear in the numerator of $\mathrm{char}\big(L(\lambda)\big)$? 
(3) Do $L(\lambda)$s in Theorem \ref{Theorem C} have non CS type relations?
(4) In the spirit of \cite{Naito BGG 1, Naito BGG 2} and \cite{Shushma} (on weights and resp. roots with simple root heights $\leq 1$), it might be interesting to explore characters over higher rank $\mathfrak{g}(A_{\mathcal{I}^-\times \mathcal{I}^-})$ when \eqref{Eqn norm equality with invariant form} has solution-tuples $\big(0,.., 0,1,..,1,0,..,0\big)$ of only 0s and 1s.  
This also motivated our Setting-B ($\lambda=\rho\  \text{ i.e. }  M_i=2\  \forall\ i\in \mathcal{I}=\mathcal{I}^-$), with solution-tuples of 2s and 0s. 
 \end{question}
 Recall \eqref{Eqn norm equality with invariant form} is the necessary condition for $M(\lambda)_{\mu}$ to contain maximal vectors, which our Proposition \ref{Prop maxl vect} focuses on for Question \ref{Question for simple's presentations}(2).  
For sufficiency, we have the Harish-Chandra--BGGs' theory over contragredient $\mathfrak{g}$'s \cite{Kac--Kazhdan, Naito BGG 2}. 
It shows maximal vectors in $M(\lambda)$ to lie within the spaces of weights $\mu=\lambda- \sum_{i=1}^k n_i\beta_i$ in the ``strong-dot orbit of $\lambda$'', where $\beta_1,\ldots, \beta_k\in \Delta^+$ satisfy-
\begin{align}\label{Eqn dot action by any positive root}
\underset{(\text{as referred to in \cite[Theorem 4.2]{Malikov}})}{\textbf{Kac--Kazhdan equation :}} \qquad  \bigg( \lambda+\rho-\sum_{t=1}^i n_t\beta_t\ ,\ 2\beta_{i+1}\bigg) = n_{i+1}(\beta_{i+1}, \beta_{i+1})\ \ \forall\ i. 
\end{align}
\begin{note}\label{NOte KK maxl vect bounds}
a) Let $\mu=\lambda-\beta_1$ be the unique solution to \eqref{Eqn dot action by any positive root} in its ``interval'' $\mu\preceq \cdots \precneqq \lambda$.
Then \cite[Proposition 4.1]{Kac--Kazhdan} estimates maximal vectors' dimensions in $M(\lambda)_{\mu}$ to be  $=\sum_{t\in \mathbb{N}}\dim \mathfrak{g}_{\frac{\beta_1}{t}}$, and asks for their constructions (it seems to be unresolved even when $\lambda-\mu$ is just $\beta_1$).
a$'$) For non-minimal $\mu$'s, it is a lower bound.
b) For $\mathcal{I}=\mathcal{I}^-$ in rank-2 (by the freeness of $\mathfrak{n}^-$), any solution $\mu$ to \eqref{Eqn norm equality with invariant form} \big(by step \eqref{Eqn norm equality simplified}\big) satisfies \eqref{Eqn dot action by any positive root}.
b$'$) Following b), our Proposition \ref{Prop maxl vect} constructs  maximal vectors for non-minimal $\mu$'s in cases below, and prove the attainment of the count-bounds in a$'$).
\end{note}
\begin{prop}\label{Prop maxl vect}
Fix any $\mathfrak{g}, \lambda, M_1,M_2$ as in Theorem \ref{Theorem C}. 
We explicitly compute the maximal vectors in the weight spaces in $M(\lambda)$ of the following solutions $(X,Y)$ to \eqref{Eqn symmetric and same lengths case and any a}, in order to show- \smallskip\\
 (a) ``$A_{-2}$-theory'' : $(X,Y)=\big(M_1,\ n = M_2-\frac{2a}{b}M_1 \in \mathbb{Z}_{\geq 0}\big)$  $\implies$ $f_2^n f_1^{M_1} M(\lambda)_{\lambda}$ are maximal.\smallskip\\ 
(b) $(X,Y) = (1,n)\text{ or } (n, 1)$  $\implies$  $M(\lambda)_{\lambda-X\alpha_1-Y\alpha_2}$ has a unique maximal vector (up to scalars). 
%\big(See Remarr {\color{red} lower bound in Kac--Kazhdan ???}, also for part (c) of the proposition.\big)\smallskip\\
     %\item[(c)] $(2,2)$ satisfies \eqref{Eqn norm equality condition for general mu= (X,Y) in Case (N)} $\implies$ $M(\lambda)_{\lambda-2\alpha_1-2\alpha_2}$ has two linearly independent maximal vectors.
(c) $(X,Y)=(2,2)$ $\implies$ $M(\lambda)_{\lambda-X\alpha_1-Y\alpha_2}$ has 2  linearly independent maximal vectors.\\
In (b)--(c), these counts -- of $\mathfrak{g}$-homomorphisms of Vermas, i.e. $\dim \mathrm{Hom}_{\mathfrak{g}}$ $\big(M(\lambda-X\alpha_1-Y\alpha_2),\ M(\lambda)\big)$ -- equal $\dim \mathfrak{g}_{\alpha_1+n\alpha_2}=1$ and resp. $\dim \mathfrak{g}_{\alpha_1+\alpha_2}+ \dim \mathfrak{g}_{2\alpha_1+2\alpha_2} =2$; as mentioned in Note\ref{NOte KK maxl vect bounds}b$'$).
\end{prop}

\noindent (a) is reminiscent of the usual monomial type construction of maximal vectors, e.g. $f_2^{[0\boldsymbol{-}\alpha_1](\alpha_2^{\vee})+1} f_1\\   M(0)_0$ in $A_2$ case; see Lemma \ref{Lemma maximal vect in solution (1,n) for case (1, M2)} and Note \ref{Note negative Sl3-theory, string reversal}.
\big(For examples of cases in (a), see Proposition \ref{Prop number theory}.\big)
This procedure does not help in proving parts (b)--(c), and we solve equations on (Poincaré--Birkhoff--Witt) ordered words on Lyndon root basis. 
The seminal work in \cite{Malikov} constructs maximal vectors in affine cases, using complex powers of root vectors.
Concerning part (c), our follow-up work to \cite{BKM ppr} shows maximal vectors' counts for solutions $ (X,Y)= (2,n)$ $\forall$ $n\geq 2$, to be $\lceil \frac{n+1}{2}\rceil$.
\begin{cor}\label{Corollary presentations gfor nice simples}
   In Theorem \ref{Theorem C}, for (I)\eqref{Eqn char formula with missing norm equatlity solution in (1,1) case}, (II) at $a\neq b$, and (III)(C)--(D), $L(\lambda)$ have expected CS-type presentations.
   And the 4 remaining cases, (I)\eqref{Eqn char formula with non-missing norm equatlity solution in (1,1) case}, (II) at $a=b$ and (III)(A)--(B), involve all the maximal vectors computed above.
   E.g. case (I)\eqref{Eqn char formula with non-missing norm equatlity solution in (1,1) case} negatively solves Question \ref{Question for simple's presentations}(3) by:
   \begin{equation}\label{Eqn non CS presentation in rank 2}
        L(\lambda)\  = \ \frac{M(\lambda)}{\left\langle f_j^{\lambda-M_j\alpha_j}m_{\lambda} \ , \  2af_1f_2m_{\lambda} +b(M_2-1)[f_2,f_1]m_{\lambda}  \ \Big|\ j\in \{1,2\}   \right\rangle} .   \end{equation}
\end{cor}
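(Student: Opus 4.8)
The plan is to leverage the single structural fact that makes Theorem \ref{Theorem C}'s setting tractable: since $A(b,a,a,b)$ has $A_{12}=-a\neq 0$, the nilpotent parts $\mathfrak{n}^{\pm}$ of $\mathfrak{g}$ are free Lie algebras \cite{Elizabeth}, so every Verma $M(\mu)=U(\mathfrak{n}^-)m_\mu$ with $U(\mathfrak{n}^-)\cong\mathbb{C}\langle f_1,f_2\rangle$ the free associative algebra. Two consequences will be used repeatedly. First, if $v\in M(\lambda)_\mu$ is a nonzero maximal vector, then $v=u\,m_\lambda$ for a nonzero $u\in\mathbb{C}\langle f_1,f_2\rangle$, and since a free associative algebra is a domain the map $w\mapsto wu$ is injective; comparing graded dimensions against the surjection $M(\mu)\twoheadrightarrow U(\mathfrak{g})v=U(\mathfrak{n}^-)v$ forces $U(\mathfrak{g})v\cong M(\mu)$. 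Second, left ideals of $\mathbb{C}\langle f_1,f_2\rangle$ generated by words with distinct last letters meet only in $0$; in particular $U(\mathfrak{g})f_1^{M_1}m_\lambda\cap U(\mathfrak{g})f_2^{M_2}m_\lambda=0$ inside $M(\lambda)$. Hence the higher order Verma $N':=M(\lambda)/\langle f_1^{M_1}m_\lambda,\, f_2^{M_2}m_\lambda\rangle$ (cf.\ \eqref{Eqn defn higher order Vermas}) has $\mathrm{char}\,N'=\big(e^\lambda-e^{\lambda-M_1\alpha_1}-e^{\lambda-M_2\alpha_2}\big)/R$, where $R=\prod_{\alpha\in\Delta^+}(1-e^{-\alpha})^{\dim\mathfrak{g}_\alpha}$.

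For the ``easy'' cases --- (I)\eqref{Eqn char formula with missing norm equatlity solution in (1,1) case}, case (II) with $a\neq b$, and (III)(C)--(D) --- Theorem \ref{Theorem C} gives exactly $\mathrm{char}\,L(\lambda)=\big(e^\lambda-e^{\lambda-M_1\alpha_1}-e^{\lambda-M_2\alpha_2}\big)/R=\mathrm{char}\,N'$. Since $N'\twoheadrightarrow L(\lambda)$ and the two have equal characters, $N'=L(\lambda)$, which is the asserted Chevalley--Serre-type presentation. For the ``hard'' cases, the surjection $N'\twoheadrightarrow L(\lambda)$ has a nonzero kernel $K$ with $\mathrm{char}\,K=\mathrm{char}\,N'-\mathrm{char}\,L(\lambda)$, read off from Theorem \ref{Theorem C} as a $\mathbb{Z}_{\geq0}$-combination of terms $e^{\lambda-\nu}/R$ with $\supp\nu=\{1,2\}$. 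In (I)\eqref{Eqn char formula with non-missing norm equatlity solution in (1,1) case} and (II) with $a=b$ this is simply $e^{\lambda-\alpha_1-\alpha_2}/R=\mathrm{char}\,M(\lambda-\alpha_1-\alpha_2)$. By Proposition \ref{Prop maxl vect}(b) applied to the solution $(X,Y)=(1,1)$ of \eqref{Eqn symmetric and same lengths case and any a}, $M(\lambda)_{\lambda-\alpha_1-\alpha_2}$ carries a maximal vector unique up to scalars; a direct $e_1,e_2$ computation identifies it as $v:=2af_1f_2m_\lambda+b(M_2-1)[f_2,f_1]m_\lambda$. Its image $\bar v$ in $N'$ is nonzero --- as $M_1,M_2\geq2$, the weight $\lambda-\alpha_1-\alpha_2$ does not lie below $\lambda-M_1\alpha_1$ or $\lambda-M_2\alpha_2$, so $(U(\mathfrak{g})f_1^{M_1}m_\lambda+U(\mathfrak{g})f_2^{M_2}m_\lambda)_{\lambda-\alpha_1-\alpha_2}=0$ --- and by the first paragraph $U(\mathfrak{g})\bar v\cong M(\lambda-\alpha_1-\alpha_2)$, whence by the character count $U(\mathfrak{g})\bar v=K$. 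Therefore $L(\lambda)=N'/K=M(\lambda)/\langle f_1^{M_1}m_\lambda,\,f_2^{M_2}m_\lambda,\,v\rangle$, which is \eqref{Eqn non CS presentation in rank 2}.

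Cases (III)(A)--(B) are handled by iterating this ``peel off the top maximal vectors'' step weight-layer by weight-layer: at each new weight $\lambda-\nu$ occurring in $\mathrm{char}\,K$ one uses Proposition \ref{Prop maxl vect}(a)--(c) to list the (one, two, or three) linearly independent maximal vectors there --- taken in the monomial ``$A_{-2}$'' form $f_2^{n}f_1^{M_1}m_\lambda$ of part (a) where available --- adjoins them to the relations, and checks (again via the free normal form, now using a Lyndon--PBW basis of $U(\mathfrak{n}^-)$ to separate the resulting submodules) that the character drops by precisely the corresponding sum of Verma characters; the coefficient $-2$ on $e^{\lambda-2\alpha_1-2\alpha_2}$ in (III)(A)--(B) records that two independent maximal vectors must be killed there, matching Proposition \ref{Prop maxl vect}(c) for $(X,Y)=(2,2)$. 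When the leftover character reaches $0$ the quotient equals $L(\lambda)$, and by construction every maximal vector computed in Proposition \ref{Prop maxl vect} for the relevant solutions of \eqref{Eqn symmetric and same lengths case and any a} appears among its defining relations. The main obstacle is precisely this last bookkeeping in (III)(A)--(B): showing that the several maximal vectors, lying at several weights with multiplicity up to two and mixing monomial generators ($f_1^{M_1}$, $f_2^{M_2}$, $f_2^{n}f_1^{M_1}$) with genuinely non-monomial ones, together cut out a submodule whose character is exactly the claimed sum of Verma characters --- i.e.\ that no further collapse occurs --- which is where the freeness of $\mathfrak{n}^-$ and an explicit normal-form argument are indispensable, together with the routine but necessary verification that each non-Chevalley--Serre relation is annihilated by $\mathfrak{n}^+$.
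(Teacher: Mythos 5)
Your overall strategy is the same as the paper's in spirit --- read off $\mathrm{char}\,L(\lambda)$ from Theorem~\ref{Theorem C}, compare it to the character of the candidate quotient, and conclude equality --- but there is a genuine gap in the way you carry out the comparison in the ``hard'' cases, and your route is in fact heavier than what the paper uses.

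The gap is in the sentence ``and by the first paragraph $U(\mathfrak{g})\bar v\cong M(\lambda-\alpha_1-\alpha_2)$, whence by the character count $U(\mathfrak{g})\bar v=K$.'' The first-paragraph principle (maximal vectors generate Verma submodules via the domain property of $\mathbb{C}\langle f_1,f_2\rangle$) applies to $v\in M(\lambda)$, where $U(\mathfrak{n}^-)$ acts freely; it does \emph{not} transfer automatically to $\bar v$ in $N'=M(\lambda)/\langle f_1^{M_1}m_\lambda,f_2^{M_2}m_\lambda\rangle$, where $U(\mathfrak{n}^-)$ no longer acts freely. Concluding $U(\mathfrak{g})\bar v\cong M(\lambda-\alpha_1-\alpha_2)$ requires a further normal-form computation showing $U(\mathfrak{n}^-)v\cap\bigl(U(\mathfrak{n}^-)f_1^{M_1}m_\lambda+U(\mathfrak{n}^-)f_2^{M_2}m_\lambda\bigr)=0$, i.e.\ that the cyclic module generated by $v$ injects into $N'$. (This is true here --- writing $u:=(2a-b(M_2-1))f_1f_2+b(M_2-1)f_2f_1$, every monomial in $wu$ ends in $f_1f_2$ or $f_2f_1$, while monomials in the two CS ideals end in $f_1^{M_1}$ or $f_2^{M_2}$ with $M_1,M_2\geq 2$, so none coincide --- but you did not supply this argument, and your first paragraph only treats pairs of pure-power generators.) Without it, the subsequent ``by the character count'' is circular: a priori $U(\mathfrak{g})\bar v$ is only a quotient of $M(\lambda-\alpha_1-\alpha_2)$ sitting inside $K$, and its character could be strictly smaller than $\mathrm{char}\,K$. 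This same issue compounds in (III)(A)--(B), which you correctly flag as the main obstacle but leave unresolved; there the relation module must be checked to be a direct sum of several Verma modules with the prescribed top weights, and that requires a genuinely multi-generator normal-form argument.

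The paper avoids this difficulty entirely: it never needs to know that the relation submodule is a direct sum of Vermas. Instead it observes (implicitly, in the last lines of the proofs of Theorem~\ref{Theorem C}(I)--(II) and (III)) that the weight multiplicities of the candidate quotient $Q$ at the finitely many solution-weights $\mu$ of \eqref{Eqn symmetric and same lengths case and any a} match those already computed for $L(\lambda)$. Since $Q\twoheadrightarrow L(\lambda)$ and any nonzero kernel would have a maximal vector at some solution weight $\mu$ with $\dim Q_\mu>\dim L(\lambda)_\mu$, equality of these finitely many dimensions forces $Q=L(\lambda)$. This is strictly weaker information to check (one does not need to control $\mathrm{char}\,K$ globally), and it is what actually closes the argument in cases (III)(A)--(B). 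You should replace the ``by the first paragraph/character count'' step with this local comparison, or else supply the missing normal-form lemma on the joint ideal.
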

For $\lambda\in P^{\pm}$ and solutions $\mu$ in Proposition \ref{Prop maxl vect} of Equation \eqref{Eqn norm equality with invariant form}, $M(\lambda)_{\mu}$ has at least one maximal vector; supporting Problem \ref{Question dot orbit = norm equality sol.?}(a). 
 This is not true in general, by Theorem \ref{Theorem D character of V(rho)} for $\lambda=\rho$; see Observation \ref{Observation solutions with no maximal vectors in M(rho)} in Section \ref{Section 6 for rho}.
\begin{question}\label{Question maxl vect appearing in presentations}
 (1) To narrow down the exponential solution-powers in $\mathrm{char}(V)$, can principle \eqref{Eqn char formula principle 1 from norm equality} (for all h.w.m.s) be strengthened with solutions of \eqref{Eqn dot action by any positive root} instead of \eqref{Eqn norm equality with invariant form}? 
(2) Concerning maximal vectors, suppose $\mu$ is a minimal solution to \eqref{Eqn norm equality with invariant form}; i.e., $\mu\not\prec \mu'$ for any other solution $\mu'$ to \eqref{Eqn norm equality with invariant form}. 
When $\mathcal{I}=\mathcal{I}^-$, do the non-CS-type maximal vectors in $M(\lambda)_{\mu}$ -- similar to $(1,1)$-weighted vector in \eqref{Eqn non CS presentation in rank 2} -- appear in presentations of $L(\lambda\in P^{\pm})$?
(3) Does its answer address Question \ref{Question for simple's presentations}(2)?
 \end{question}
We will re-visit Question \ref{Question maxl vect appearing in presentations} (2)--(3).
Recall Note \ref{Note all h.w.m.s over -ve An case are higher order Vermas} on higher order Vermas. 
Below, when $\mathcal{I}=\mathcal{I}^-$, we omit the power maps in holes which are canonical (similar to in KM cases of $\mathcal{I}=\mathcal{I}^+$).  
 \begin{thmx}\label{Theorem D character of V(rho)}
     Let $A(n)$ be the negative $A_n$-type BKMC matrix in Setting-B, $\mathcal{I}=\{1,\ldots,n\}$ and $\lambda=\rho$ the Weyl vector (unique in this case). 
     Fix any h.w.m. $M(\rho)\twoheadrightarrow V$ over $\mathfrak{g}\big(A(n)\big)$.\\
(a) Let $\mu=\rho-\sum_{i=1}^nX_i\alpha_i$ be a solution to \eqref{Eqn norm equality for rho 1} or \eqref{Eqn norm equality for rho 2}.
Then $V_{\mu}$ contains a non-zero maximal vector, iff $(X_1,\ldots,X_n)$ has an independent support and has the form
\begin{equation}\label{Eqn solutions of os and 2s for rho in type A} 
       (.., 0, \underset{i_1}{2}, 0,.., 0,  \underset{i_2}{2}, 0,.., 0,\underset{i_k}{2}, 0,.. )\qquad \text{for non-adjacent }\ 1\leq i_1< \cdots  <i_k\leq n.
       \end{equation}
       (b) $V=\mathbb{M}(\rho , \mathcal{H}_V)$.
       So every h.w.m. $V$ (not only simples) here admit explicit presentations by their minimal holes. 
       In particular, $\displaystyle L(\rho)=\frac{M(\rho)}{\left\langle 
f_1^2M(\rho)_{\rho},\ldots, f_n^2M(\rho)_{\rho} \right\rangle}$ is parabolic Verma typed.\\
       (c) $\mathrm{char}\big(L(\rho)\big)\ =\  \mathrm{char}\big(\mathbb{M}\big(\rho, \ \big\{  
\{1\},\ldots, \{n\} \big\} \big)\big)$, and more generally, we have
\begin{equation}\label{Eqn char V is char of M(rho, HV)}
\mathrm{char}\big(V\big)\ = \ \mathrm{char}\big(\mathbb{M}\big(\rho, \ \mathcal{H}_V\big)\big)\ = \ \frac{\sum\limits_{\mu \text{ as in }\eqref{Eqn solutions of os and 2s for rho in type A}}\sum\limits_{\substack{H_1,\ldots,H_r\in \mathcal{H}_V^{\min} \text{ s.t. }\\ H_1\cup\cdots \cup H_r=\supp(\rho-\mu)} } (-1)^r e^{\mu}}{R}.    
\end{equation}
\end{thmx}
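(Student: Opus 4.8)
\textbf{Overview of the strategy.} The plan is to prove parts (a), (b), (c) in that logical order, with (a) being the technical heart. The key structural input is that for $\mathfrak{g}(A(n))$ we have $\mathcal{I}=\mathcal{I}^-$, so $\mathfrak{n}^-$ is a free Lie algebra on $f_1,\dots,f_n$ modulo only the commutation relations $[f_i,f_j]=0$ for $|i-j|\ge 2$ (these are the $A_{ij}=0$ Serre relations); in particular $\mathfrak{n}^-$ is the \emph{partially commutative} Lie algebra on the path graph $A_n$, and $\dim M(\rho)_\mu = \dim U(\mathfrak{n}^-)_{\rho-\mu}$ is computable combinatorially (Witt/necklace-type counts on Lyndon words avoiding the forbidden adjacencies). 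I would first record, via a short lemma, the finite list of $\mathbb Z_{\ge 0}^n$-solutions to \eqref{Eqn norm equality for rho 1}$\Leftrightarrow$\eqref{Eqn norm equality for rho 2}: writing $Y_0=Y_{n+1}=-1$ is not quite right; rather, setting $t_i := X_{i-1}+X_i-1$ for $i=1,\dots,n+1$ (with $X_0=X_{n+1}=0$), equation \eqref{Eqn norm equality for rho 2} says $\sum_{i=1}^{n+1} t_i^2 = n+1$, i.e.\ $\sum t_i^2 = \#\{t_i\}$, which forces every $t_i\in\{-1,0,1\}$ and in fact (by a parity/telescoping argument on $X_i = \sum_{j\le i}(t_{j}\text{-increments})$) forces $X_i\in\{0,2\}$ with the nonzero entries pairwise non-adjacent — exactly the tuples \eqref{Eqn solutions of os and 2s for rho in type A}. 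This is the same bounded-solution phenomenon flagged in Note~\ref{Note Easy holes' maximal vectors}, and I would lean on Lemma~\ref{Lemma negative type A norm equality equaion} for it.

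\textbf{Part (a): maximal vectors at solution weights.} The ``only if'' direction is cheap: by \cite[Proposition 9.8]{Kac book}/\cite[Proposition 2.70]{Wakimoto} (the Casimir argument the paper already invokes around \eqref{Eqn norm equality with invariant form}) a maximal vector in $V_\mu$ forces $(\mu+\rho,\mu+\rho)=(\rho+\rho,\rho+\rho)$, hence $\mu$ is a solution, hence of the form \eqref{Eqn solutions of os and 2s for rho in type A}; and the support of $\rho-\mu=\sum 2\alpha_{i_t}$ is $\{i_1,\dots,i_k\}$, independent by the non-adjacency. For the ``if'' direction I would exhibit the maximal vector explicitly. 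For a single block, $f_i^2 M(\rho)_\rho$ is maximal by Lemma~\ref{Lemma rank-1 BKM rep. theory} (since $M_i = \frac{2}{A_{ii}}\rho(\alpha_i^\vee)+1 = \frac{2}{-2}(-1)+1 = 2$) together with the fact that $e_j$ for $|i-j|\ge 2$ commutes with $f_i$ and $e_{i\pm1}f_i^2 m_\rho$ vanishes because $\rho(\alpha_i^\vee)\pm$-weight bookkeeping kills it — this is the standard $\mathfrak{sl}_2$-plus-commuting-neighbours computation. For several non-adjacent blocks $i_1<\dots<i_k$, since $[f_{i_s},f_{i_t}]=0$ for all $s\ne t$ (non-adjacency!), the product $\prod_{t=1}^k f_{i_t}^2 m_\rho$ is a well-defined weight vector, and each $e_j$ either commutes past all the $f_{i_t}$'s (if $j$ is not adjacent to any block) or interacts with exactly one block $f_{i_t}^2$ (if $j = i_t\pm1$) where it kills it by the single-block computation, or is one of the $i_t$ themselves where again the rank-1 computation gives $e_{i_t}f_{i_t}^2 m_\rho=0$. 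Hence $\prod f_{i_t}^2 m_\rho$ is maximal in $M(\rho)$, and its nonzero image in $V$ (nonzero because the weight space is $1$-dimensional — (H1)-type observation — and the generators of the kernel have support not contained in a single independent set unless they already kill this weight) gives the maximal vector. The one point needing care: that this vector is nonzero \emph{in $V$} for \emph{every} $V$ with that solution weight in $\wt V$; I would argue that $V_\mu\ne 0$ together with $\dim M(\rho)_\mu=1$ forces $M(\rho)_\mu\to V_\mu$ to be an isomorphism, and $M(\rho)_\mu = \mathbb{C}\prod f_{i_t}^2 m_\rho$ by the PBW/partial-commutativity count.

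\textbf{Parts (b) and (c).} For (b), the claim $V=\mathbb{M}(\rho,\mathcal{H}_V)$ is equivalent to saying that the maximal submodule $N\subseteq M(\rho)$ with $M(\rho)/N = V$ is generated \emph{as a $U(\mathfrak{g})$-module} by weight vectors of the form $\prod_{h\in H}f_h^{m_H(h)}m_\rho$. By part (a), every maximal vector of $V$'s kernel sits at a weight of type \eqref{Eqn solutions of os and 2s for rho in type A}, and at such a weight the (one-dimensional) space is spanned by exactly such a monomial; since any submodule of a highest weight module over $\mathfrak{g}$ (with $\mathfrak h$ acting semisimply) is generated by its maximal vectors — here I would cite the standard fact, or reprove it via the Casimir/finiteness of the relevant weight-solution set — we get $N = \langle \prod_{h\in H}f_h^{m_H(h)}m_\rho : (H,m_H)\in\mathcal H_V\rangle$, i.e.\ $V=\mathbb{M}(\rho,\mathcal H_V)$. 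Taking $\mathcal H_V=\{(\{1\},2),\dots,(\{n\},2)\}$ gives $L(\rho)$ and exhibits it as parabolic-Verma typed (each hole a singleton real-like node). For (c), now that every $V$ here is a nice higher order Verma, Theorem~\ref{Theorem wt-formula for all V in BKM setting} (or directly the setup of Theorem~\ref{Theorem C}) gives $\wt V$, and the character follows from the Casimir/linear-algebra inclusion–exclusion already used for Theorem~\ref{Theorem C}: $\mathrm{char}(V)\cdot R = \sum_\mu c(\mu)e^\mu$ with $\mu$ ranging over solutions \eqref{Eqn solutions of os and 2s for rho in type A}, and $c(\mu)$ determined by the defining relations of $\mathbb{M}(\rho,\mathcal H_V)$ via the principle that quotienting $M(\rho)$ by $\langle \prod_{h\in H}f_h^{m_H(h)}m_\rho\rangle$ subtracts $e^{\rho-\sum m_H(h)\alpha_h}\cdot R^{-1}\cdot(\text{stuff})$ and the overlaps among the $H$'s are resolved by inclusion–exclusion — giving the sign $(-1)^r$ over minimal covers $H_1\cup\dots\cup H_r=\supp(\rho-\mu)$ by elements of $\mathcal H_V^{\min}$, exactly formula \eqref{Eqn char V is char of M(rho, HV)}. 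The main obstacle I anticipate is \emph{not} the solution classification (routine) but verifying rigorously that the explicit monomials in part (a) remain nonzero in an arbitrary quotient $V$ and that no \emph{other}, non-monomial maximal vectors appear at those weights — i.e.\ pinning down $M(\rho)_\mu$ exactly as one-dimensional and spanned by $\prod f_{i_t}^2 m_\rho$ — which is where the partial-commutativity structure of $\mathfrak n^-$ and a careful PBW-basis count (forbidden adjacencies in Lyndon words) has to be used; once that is in place, (b) and (c) are formal consequences of (a) plus the machinery of Sections~\ref{Subsection wt-formula theorems} and~\ref{Subsection character for L(lambda nice)}.
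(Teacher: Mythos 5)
There is a fatal error in the very first step of your overview, and it cascades through the whole argument. You claim that writing $t_1 = X_1-1$, $t_i = X_{i-1}+X_i-1$ for $2\le i\le n$, $t_{n+1}=X_n-1$, the identity $\sum_{i=1}^{n+1}t_i^2 = n+1$ ``forces every $t_i\in\{-1,0,1\}$'' and hence forces $(X_1,\ldots,X_n)$ to be a $\{0,2\}$-tuple of the shape \eqref{Eqn solutions of os and 2s for rho in type A}. This is simply false: a sum of $n+1$ integer squares equal to $n+1$ certainly allows a $t_i$ with $|t_i|\ge 2$ so long as enough others vanish. Concretely, for $n=4$ take $(X_1,\ldots,X_4)=(2,1,0,1)$; then $(t_1,\ldots,t_5)=(1,2,0,0,0)$ and $\sum t_i^2=5=n+1$. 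The paper is completely explicit about this --- Observation~\ref{Observation solutions with no maximal vectors in M(rho)} names $(2,1,0,1)$ as a solution, Observation~\ref{Example large number of non-hole solutions for rho}(3) records $33$ solutions for $n=5$ and $93$ for $n=6$, and Remark~\ref{Remark Theorem D}(2) states that the point of Theorem~\ref{Theorem D character of V(rho)}(a) is precisely that the solution set of the norm equality \emph{strictly contains} the solution set of the Kac--Kazhdan equation. By treating the classification as ``routine'' and getting it wrong, you have trivialised the genuine content of part (a): showing that the many non-$\{0,2\}$ solution weights carry \emph{no} maximal vectors.

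That missing content is exactly what the paper's proof supplies. Lemma~\ref{Lemma at least 2 blocks in d-solutions} shows that any non-$\{0,2\}$ solution has at least two ``blocks'', with one block $(1,\ldots,1)$ and another containing an entry $>1$; this block structure is then combined in Proposition~\ref{Proposition solutions with maximal vector for L(rho)} with a Shapovalov-form duality argument (Observation~\ref{Observation dual basis by Shapovalov form}) to ``undo'' the all-ones block and derive a contradiction, and with the dimension product identity~\eqref{Eqn dimension product identity for rho}. None of this appears in your proposal. Your ``if'' direction (monomial maximal vectors $\prod f_{i_t}^2 m_\rho$ and their nonvanishing in $V$ via one-dimensionality) is fine, and your reductions of (b), (c) to (a) via the nonexistence of maximal vectors in $\ker(M(\rho)\to V)/\langle\text{hole monomials}\rangle$ and a denominator-identity inclusion--exclusion match the paper's route --- though be careful: ``any submodule of a h.w.m.\ is generated by its maximal vectors'' is \emph{not} a standard fact in general, and the paper instead argues that the specific quotient module $\ker/\langle\cdots\rangle$ would have to contain a maximal vector at a $\{0,2\}$-weight, which is then ruled out. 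But without the correct solution analysis and the block/Shapovalov argument, part (a) --- and therefore everything downstream --- is not proved.
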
 
Formula \eqref{Eqn char V is char of M(rho, HV)} for all $V\twoheadleftarrow M(\rho)$, resembles WKB characters  \eqref{Eqn WKB character formula} (for $\mathcal{I}=\mathcal{I}^-$): both run over all the unions of minimal holes in the h.w.m.s in them!  
It also resembles \cite[Formula (7.20)]{Teja--Khare},  upon using appropriate ``negative-Weyl semigroup''.
 %in $\mathrm{char}L(\lambda\in P^{\pm})$ with-in solutions of \eqref{Eqn norm equality with invariant form} to that of \eqref{Eqn dot action by any positive root}:
 Theorem \ref{Theorem D character of V(rho)} is proved in Section \ref{Section 6 for rho}.
 We turn to minimal solutions of \eqref{Eqn norm equality with invariant form} in Question \ref{Question maxl vect appearing in presentations}(2), motivated by:
 i) The dot-orbits in Lemma \ref{Lemma (2,2) sol. characterization}(D), where no solution in the positive cone $\mathbb{N}\Pi$ appears in the character numerator or in presentation denominator for $L(\lambda)$. 
 ii) A similar absence in \eqref{Eqn char V is char of M(rho, HV)}, of a majority of solutions outside \eqref{Eqn solutions of os and 2s for rho in type A}; they are also not minimal by characterization Lemma \ref{Lemma at least 2 blocks in d-solutions}.
 At the same time, Equation \eqref{Eqn char V is char of M(rho, HV)} for $L(\rho)$, and even \eqref{Eqn WKB character formula} for $L(\lambda\in P^+)$,  contains non-minimal solutions to \eqref{Eqn norm equality for rho 2} \big(e.g. $(2,0,...,0,2)$\big) with non-adjacent 2s. This phenomenon did not take place for $\lambda=\rho$ over $\mathfrak{g}\big(A(2)\big)$.

We digress and recall \cite[Theorem II]{Naito 2}, which extends to BKM setting the Jantzen's character sum formula.
It was explored for the quotients of Verma $M(\lambda)$ by its submodules that are isomorphic to $M(\lambda-\beta)$ for domestic imaginary roots (\cite{Wakimoto}) $\beta\in W \Pi_{\mathcal{I}^-}$, when $A_{ij}\neq 0$ $\forall$ $i,j\in \mathcal{I}=\mathcal{I}^+\sqcup \mathcal{I}^-$.
\begin{observation}\label{Note KK Eqn unique sol section 3}
The simplest of aforementioned quotients, is the unique solution $\lambda-\beta_1$ case quoted in Note \ref{Note KK unique solution} a).
Now, as a first step towards Question \ref{Question maxl vect appearing in presentations}, we explore in $A(b,a,c,d)$ settings unique solutions $\mu=\lambda-\beta_1$ of \eqref{Eqn norm equality with invariant form} inside the positive cone $\lambda-\mathbb{N}\Pi$.
Observe that \eqref{Eqn norm equality with invariant form} always has two solutions $\lambda-M_i\alpha_i$; we study it when it has totally three solutions.
  In view of shortcomings in Note \ref{Note KK unique solution} b) (and to showcase the non-triviality of this problem), we provide a complete number-theoretic characterization for the desired $\mu$'s in Proposition \ref{Prop number theory} over $\mathfrak{g}\big(A(2)\big)$:
 \end{observation}
   \begin{prop}\label{Prop number theory}
Fix (any) $M_1\leqslant  M_2 \in \mathbb{N}$ with $d = \text{gcd}(M_1,M_2)$.
Then in $\mathbb{N} \times \mathbb{N}$, the equation $X^2 + Y^2 - M_1X - M_2Y + XY = 0$ has a unique solution -- $(X,Y)=$ either $\big(M_1, M_2-M_1\big)$ or $\Big(\frac{2}{3}M_1, \frac{2}{3}M_1\Big)$ resp. when $M_1<M_2$ or $M_1=M_2$ -- iff all the following conditions are satisfied :
\begin{enumerate}
\item The prime factors of $d$ are $2, 3$, or of the form $6k-1$ for some $k \in \mathbb{N}$. 
\item $M_1 = M_2$ $\implies$ $3$ divides $d$.
\item  $M_2 \notin \{M_1, 2M_1\}$ $\implies$ $3$ does not divide $d$ and $\ \dfrac{M_1^2 + M_2^2 - M_1M_2}{d^2}$ is a prime of the form $6m+1$ for some $m \in \mathbb{N}$.
\end{enumerate}
\end{prop}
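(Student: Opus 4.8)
The plan is to recast the equation $Q(X,Y):=X^2+Y^2-M_1X-M_2Y+XY=0$ in a form where the positive-integer solutions are governed by the arithmetic of the norm form of the ring $\mathbb{Z}[\omega]$ of Eisenstein integers ($\omega$ a primitive cube root of unity), since $X^2+XY+Y^2$ is precisely $N(X-Y\omega)$. First I would complete the ``quadratic form'' part: multiply through by a suitable constant and substitute $u = 2X+Y-?$, $v=?$ to absorb the linear terms, obtaining an equation of the shape $u^2+uv+v^2 = c$ where $c$ is an explicit expression in $M_1,M_2$ — a short computation gives $3Q(X,Y) = (2X+Y-M_1)^2+(2X+Y-M_1)(M_2-M_1) \cdot(\ldots)$; more cleanly, setting $P=X-\frac{M_2-M_1}{3}\cdot(\ldots)$ one reduces to $P^2+PR+R^2 = \frac{M_1^2+M_2^2-M_1M_2}{3}$ or an integer multiple thereof. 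The key numerical invariant that emerges is $M_1^2-M_1M_2+M_2^2 = N(M_1-M_2\omega)$, and the solution count of $Q=0$ in $\mathbb{N}^2$ (with the two boundary solutions $(M_1,0)$ and $(0,M_2)$ always present, plus interior ones) is controlled by the number of ways $N(M_1-M_2\omega)$ factors in $\mathbb{Z}[\omega]$.

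Next I would separate the divisibility structure by writing $M_1=d m_1$, $M_2=d m_2$ with $\gcd(m_1,m_2)=1$, so $M_1^2-M_1M_2+M_2^2 = d^2(m_1^2-m_1m_2+m_2^2)$, and note that $\gcd(m_1,m_2)=1$ forces $m_1^2-m_1m_2+m_2^2$ to be coprime to $m_1$ and $m_2$ and to have no prime factor $\equiv 2\pmod 3$ (standard: a prime $p\equiv 2\pmod 3$ dividing $a^2-ab+b^2$ with $\gcd(a,b)$ prime to $p$ is impossible, since $-3$ is a non-residue mod such $p$). The uniqueness of the interior solution then translates to: the Eisenstein integer $m_1 - m_2\omega$ (up to units and conjugation) must be, together with $d$, constrained so that the total number of Eisenstein divisors of $M_1-M_2\omega$ up to units, modulo the reflection symmetry $X\leftrightarrow Y$ interchanging $M_1\leftrightarrow M_2$, is exactly what produces one interior lattice point. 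Conditions (1)–(3) are exactly the translation of this count: (1) says $d$ itself contributes no extra ``splitting'' beyond primes that are inert or ramified in $\mathbb{Z}[\omega]$ (primes $2$, $3$, or $\equiv -1 \bmod 6$ i.e. $\equiv 2 \bmod 3$ — these stay prime or ramify, contributing no new factorizations); (2) handles the ramified prime $3$ in the symmetric case $M_1=M_2$ (where the form degenerates to $3\cdot(\text{square})$); (3) forces $m_1^2-m_1m_2+m_2^2$ itself to be a single split prime $\equiv 1\bmod 3$ (equivalently $\equiv 1 \bmod 6$ since it's odd), the generic case $M_2\notin\{M_1,2M_1\}$ excluding the two degenerate configurations.

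Concretely the steps in order: (i) algebraically reduce $Q=0$ to the Eisenstein norm equation and record the explicit bijection between $\mathbb{N}^2$-solutions and a set of Eisenstein-integer divisor classes of $M_1-M_2\omega$; (ii) verify that the two boundary solutions $(M_1,0),(0,M_2)$ and the claimed interior solution $(M_1,M_2-M_1)$ or $(\tfrac23 M_1,\tfrac23 M_1)$ are always solutions, and pin down which divisor classes they correspond to; (iii) using unique factorization in $\mathbb{Z}[\omega]$, count the divisor classes in terms of the prime factorization of $d$ and of $m_1^2-m_1m_2+m_2^2$, being careful about the units ($\{\pm1,\pm\omega,\pm\omega^2\}$) and the conjugation/reflection symmetry; (iv) read off that ``exactly one interior solution'' is equivalent to conditions (1),(2),(3) by checking the three regimes $M_1=M_2$, $M_2=2M_1$, and $M_2\notin\{M_1,2M_1\}$ separately.

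The main obstacle I anticipate is step (iii): correctly bookkeeping the interplay between the six units of $\mathbb{Z}[\omega]$, complex conjugation, and the $X\leftrightarrow Y$ symmetry of the original equation, so as to get the divisor count exactly right rather than off by a factor of $2$ or $6$ — in particular the ramified prime $3$ and the special locus $M_2=2M_1$ (where $m_1-m_2\omega$ is associate to a rational integer times a unit) are the delicate edge cases that produce conditions (2) and the exclusion in (3), and these need to be checked by hand rather than by the generic counting argument. A secondary nuisance is making the reduction in step (i) clean enough that the boundary solutions and the interior solution are transparently visible; I would choose the substitution so that $(M_1,0)\mapsto$ a manifestly trivial divisor and the interior solution $\mapsto$ the ``balanced'' factorization.
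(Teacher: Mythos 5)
Your high-level strategy — pass to the Eisenstein integers $\mathbb{Z}[\omega]$, identify $M_1^2 - M_1 M_2 + M_2^2$ as a norm, strip out $d = \gcd(M_1,M_2)$, and count factorizations modulo units and conjugation — is exactly what the paper's proof does (it records it as a chain of nine equivalent statements ending in conditions (1)--(3)). So the approach is the same. Two details worth flagging, though.

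First, a sign slip: $N(a+b\omega) = a^2 - ab + b^2$, so $N(M_1 - M_2\omega) = M_1^2 + M_1M_2 + M_2^2$, which is not the invariant you want; you need $N(M_1 + M_2\omega)$, or (as the paper uses) $\gamma = M_1 + (M_1-M_2)\omega$, which also has norm $A = M_1^2 - M_1M_2 + M_2^2$. Second, and more substantively, your proposed symmetric completion of the square $P^2 + PR + R^2 = A/3$ requires shifts $a = (2M_1 - M_2)/3$, $b = (2M_2 - M_1)/3$ which are integral only when $3 \mid (M_1 + M_2)$, so the reduction you sketch is not uniformly available. The paper sidesteps this by fixing $X$ and treating the equation as a quadratic in $Y$: the discriminant $-3X^2 + 2(2M_1 - M_2)X + M_2^2$ must be a perfect square, which rewrites integrally as $4A - (3X + M_2 - 2M_1)^2 = 3z^2$, i.e.\ $4A = v^2 + 3z^2$ with $v$ constrained mod $3$ — this always makes sense over $\mathbb{Z}$ and is the bridge into $\mathbb{Z}[\omega]$ (via $\gamma\bar\gamma = \beta\bar\beta$ with the $\tau$ mod-$3$ invariant enforcing the residue condition). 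This is the cleaner way to implement step (i) of your plan. Your step (iii) is indeed the delicate part, and the paper handles the unit/conjugation bookkeeping by passing to ideals $(\beta) = (\gamma)$ or $(\bar\gamma)$ rather than elements, together with the separate ``$3 \nmid M_1 + M_2$ unless $M_2 \in \{M_1, 2M_1\}$'' clause that takes care of the ramified prime — exactly the edge cases you anticipated.
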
\noindent
Among early cases of (unique) solutions $(M_1,\ M_2-M_1>0)$ in Proposition \ref{Prop number theory}, we see infinitely many examples $(M_1=1, M_2-1\in \mathbb{N})$ for $M_2=4,6,7,9,13,15,16,...$
This could perhaps be the first interesting rank-2 setting, along $\mathcal{I}^-$-directions and also for non-domestic/alien imaginary $\beta = M_1\alpha_1+(M_2-M_1)\alpha_2\in \Delta^+$, to explore the Jantzen's character sum formulas for aforementioned quotients of $M(\lambda\in P^{\pm})$ by $M(\lambda-\beta)$.
 We conclude this section with the following question.
\begin{question} Can one characterize $\big(a,b,c,d \ ; \ M_1, M_2 \big)$, for which \eqref{Eqn symmetric and same lengths case and any a} has a unique solution in $\mathbb{N}\times \mathbb{N}$; extending Proposition \ref{Prop number theory} which was for $\big(1,2,1,2 \ ; \ M_1, M_2 \big)$?\end{question} 
  \subsection{Organization of the paper} 
1) In Section \ref{Section 4}: Subsection \ref{Subsection 2.1} lists some important notations. 
Subsection \ref{Subsection 2.2} studies root strings of real and imaginary roots through weights of h.w.m.s $V$, and concludes showing the failure of slice-formulas (key players in \cite{Khare_Ad, Teja--Khare}) in BKM case.
Subsection \ref{Subsection 2.3} extends to BKM case, the ``full weights if no holes'' result of \cite{Teja--Khare}, using freeness $J_{\lambda}^c$ in $\wt V$.\\
2) In Section \ref{Section 3}:
We build Weyl-orbit type weight-formulas (of integrable simple h.w.m.s $L(\lambda)$, known up to KM case), and 
prove Theorem \ref{Theorem weight formula for nice modules} in Subsection \ref{Subsection Theorem A proof}; extending those formulas for all nice h.w.m.s $V$ (including all $L(\lambda)$ and all parabolic Verma $V$).
The key step in this is all possible Minkowski weight-decompositions of h.w.m.s in Proposition \ref{Proposition Minkowski difference formula} in Subsection \ref{Subsection Minkowski difference wt.-form.} (following \cite{MDWF}). 
The second formula \eqref{Eqn wt-fromula via comp. series for V in BKM case} locates simples $L(\lambda')$ in a local Jordan--H{\"o}lder series of $V$, whose weight-sets patch-up to the whole of $\wt V$.
This completes our study of weight-side.
Our goals after it are, characters of new simples $L(\lambda)$, and maximal vectors and their multiplicities in $M(\lambda)$s, for $\lambda\in P^{\pm}$.
We pursue them in Settings A and B in \eqref{Eqn negative settings of A}, in Sections \ref{Section 5}, \ref{Section 7}  and resp. in Section \ref{Section 6 for rho}.\\
4) In Section \ref{Section 5}: 
We meticulously study properties of solutions to norm equality  \eqref{Eqn norm equality with invariant form} in Setting A, and then focus on those of Kac--Kazhdan equation \eqref{Eqn dot action by any positive root}.
We note parallels to finite type (Lemmas \ref{Lemma bddness of sols for two negative nodes}, \ref{Lemma (2,2) sol. characterization}).
We prove Theorem \ref{Theorem C} and Proposition \ref{Prop maxl vect} -- on $\mathrm{char}L(\lambda\in P^{\pm})$ and resp. maximal vectors in $M(\lambda\in P^{\pm})$ in spaces of solutions in Theorem \ref{Theorem C} -- progressively in Subsections \ref{Subsection Theorem C (I)--(II) proof}--\ref{Subsection Theorem C (III) proof}.\\
5) In Section \ref{Section 6 for rho}: 
We work in Setting B, on h.w.m.s $V$ with h.w. the Weyl vector $\rho\in P^{\pm}\setminus P^+$.
We meticulously study and compare solutions of \eqref{Eqn norm equality with invariant form} and \eqref{Eqn dot action by any positive root} in Proposition \ref{Proposition solutions with maximal vector for L(rho)}.
We prove Theorem \ref{Theorem D character of V(rho)}, for the character of every $V \twoheadleftarrow M(\rho)$.
6) Section \ref{Section 7} proves Proposition \ref{Prop number theory},  determining all $\lambda\in P^{\pm}$ for which \eqref{Eqn dot action by any positive root} has unique solutions in the ``interior'' $\lambda-\mathbb{N}\Pi$.

\subsection*{Acknowledgments}
 We thank A. Khare and R. Venkatesh, for encouraging us to explore weight-formulas over BKM LAs; and Shrawan Kumar for his encouraging comments and for suggesting important references.
 We thank M. Wakimoto for his invaluable feedback on our study of new simple $L(\lambda)s \ (\lambda \in P^{\pm})$.
 We are grateful to Supravat Sarkar for helping us to complete the proof of Proposition \ref{Prop number theory}.
We immensely thank K. Hariram, P. Karmakar and B. Sury for invaluable number-theoretic ideas to study norm-equalities' solutions for character formulas, and for help in sage programming.
%We thank A. Prasad and S. Viswanath for their helpful discussions during Sage-days program at IMSc and for suggesting references. 
This work was mostly done when the first author was an NBHM Postdoc (fellowship Ref. No. 0204/9/2024/R\& D-II/2965) at IISc Bangalore. 
The second author was also supported by an NBHM Postdoctoral fellowship (Ref. No. 0204/16(8)/2022/R\&D-II/11979). 

    \section{Preliminaries on freeness in weight-sets $\wt V$ of h.w.m.s $V$}\label{Section 4}
    \subsection{Notations}\label{Subsection 2.1}
    1) For $S\subseteq\mathbb{C}$ and subsets $B, C$ of a $\mathbb{C}$-vector space, the sets of $S$-linear combinations in  $B$, and of Minkowski sum (resp. differences) of $B \text{ and } C$, used in weight-formulas: 
$SB:=\Big\{\sum_{j=1}^{n}r_jb_j\text{ }\Big|\text{ }n\in \mathbb{N},\text{ } b_j\in B,\ r_j\in S\text{ }\ \forall  \text{ }1\leq j\leq n \Big\}$, $B\pm C:=\{b\pm c\ |\ b \in B,\ c\in C\}$. $x\pm C \text{ denotes } \{x\}\pm C$.
%\item  $\mathbb{R}$-convex hull of $B$ is given by 	\[ 	\conv (B):=\left\{\sum\limits_{j=1}^{n}c_j b_j \text{ }\Bigg|\text{ } n\in\mathbb{N},\text{ }b_j\in B,\text{ } c_j \in \mathbb{R}_{\geq 0}\text{ } \forall\text{ } j\in [n], \text{ and } \sum\limits_{j=1}^{n}c_j=1\right\}. 	\]
Throughout, we treat the sums in vector spaces over empty sets as 0. 
\\ 2) $[.,.]: \mathfrak{g} \rightarrow \mathfrak{g}$ denotes the Lie bracket in $\mathfrak{g}$ as well as its extended commutator bracket in $U(\mathfrak{g})$. 
The adjoint ``Lie-multiplication'' operator for $x\in \mathfrak{g}$ is denoted $\ad_x(.) = [x,.]:\mathfrak{g}$  $\rightarrow \mathfrak{g}$ \big(or $U(\mathfrak{g})\rightarrow U(\mathfrak{g})$\big). 
We work with the usual $\mathbb{Z}_{\leq 0}\Pi$-gradation (for $\mathfrak{h}$-action) on $U(\mathfrak{n}^-)$.
\\ 
3) For simplicity, we assume $\mathcal{I}$ to be finite and ordered.  
The subsets of real, imaginary, domestic and alien-imaginary roots in $\Delta$ are $\Delta^{re}=W(\pm \Pi_{\mathcal{I}^+})$, $\Delta^{im}$, $W(\pm\Pi_{\mathcal{I}^0\sqcup \mathcal{I}^-})$ and $\Delta^{im}\setminus W(\pm\Pi_{\mathcal{I}^0\sqcup \mathcal{I}^-})$. 
   \\ 4) Root space decomposition : $\mathfrak{g} = \mathfrak{h} \oplus \Big(\bigoplus\limits_{\alpha\in \Delta} \mathfrak{g}_{\alpha}\Big)$ for $\mathfrak{g}_{\alpha}:=\big\{ 
x\in \mathfrak{g}\ \big|\  [h, x]=\alpha(h)x\ \forall\ h\in \mathfrak{h}\big\}$.
 \\ 5) Parabolic analogues : In several proofs in the paper, we deal with the restrictions of $\mathfrak{g}$-h.w.m.s $V$ to BKM Lie subalgebras $\mathfrak{g}_J:=\mathfrak{g}(A_{J\times J})$ for principle submatrices $A_{J\times J}$'s of $A$ for $J\subseteq \mathcal{I}$; or more precisely we work over the parabolic subalgebra $\mathfrak{p}_J\supseteq \mathfrak{g}_J$.
 We fix for $\mathfrak{g}_J$ : Weyl group $W_J:=W_{\mathcal{I}^+\cap J}$; root system $\Delta_J:= \Delta_J^+ \sqcup \Delta_J^-$; simple systems $\Pi_J, \Pi_J^{\vee}$.
   So $\Delta_J^{\pm}=\Delta^{\pm}\cap (\mathbb{Z}\Pi_J)$; and conventions to be noted are $\Pi_{\emptyset}=\Delta_{\emptyset}:=\emptyset$, $\mathfrak{g}_{\emptyset}=\{0\}$, and $W_{\emptyset}=\{1\}$ the trivial subgroup of $W$.  
   \\ 6) Height functions for $J\subseteq\mathcal{I}$ : The $J$-projected height of root sums $x=\sum_{i\in \mathcal{I}}c_i \alpha_i$ for $c_i \in \mathbb{C}$ is $\height_{J}(x):=\sum_{j\in J}c_j$, and $\height_{\mathcal{I}}(.)$ is the usual height. \  7) Weight intervals :	For $ \mu\in\mathfrak{h}^*,\ \alpha\in \Delta, k\in \mathbb{Z}_{\geq0}$,
	\begin{equation}\label{Defn weight intervals}
  \hspace*{0.5cm} \big[\mu-k\alpha,\text{ }\mu\big]:=\{\mu-j\alpha\text{ }\big|\text{ }0\leq j\leq k\}.\qquad (\text{Not to be confused with the Lie bracket in }\mathfrak{g}.)
  \end{equation}
  8) Throughout, maximal vectors are assumed to be non-zero.  
 For $\lambda\in \mathfrak{h}^*\ \  \text{and}\ \  (H, m_H)\in \mathrm{Indep}(J_{\lambda})$, in spirit of involutions $s_H$, we set for convenience $f_{H}^{m_H}:=\ \prod_{h\in H}f_h^{m_H(h)}$.
  \\ 9) To work with holes, we use the standard notion of upper closures $\bar{C}$ for subsets $C$ of the {\it poset} $\big(\mathrm{Indep}(J_{\lambda})\ , \ \preceq \big)$; see Definition \ref{Defn partial ordering on holes}.
  Namely, $\bar{C}:=\{x\in \mathrm{Indep}(J_{\lambda})\ |\ \ c\preceq x \text{ for some }c\in C \}$.
\subsection{Root strings through weights of $V$}    \label{Subsection 2.2}
In several proofs of the paper, we will need following Lemma \ref{Lemma real root strings}, and notably its stronger version Lemma \ref{Lemma imaginary root strings}, for $e_i$-raising actions on weight spaces, and for (simple) root strings through weights in $\mathfrak{g}$-h.w.m.s.
    \begin{lemma}\label{Lemma reaching lambda from mu}
	 Fix a BKM LA $\mathfrak{g}$, $\lambda\in\mathfrak{h}^*$, and $\mathfrak{g}$-h.w.m. $M(\lambda)\twoheadrightarrow V$ and $\mu\in \wt V$ :
  \begin{itemize}
  \item[(a)] $V_{\mu}$ is the $\mathbb{C}$-span of weight vectors $f_{i_1}\ \cdots\  f_{i_n} \cdot V_{\lambda}$\  for $i_1,\ldots, i_n\in\mathcal{I}$ with $\lambda-\mu= \sum_{j=1}^{n}\alpha_{i_j}$.
  \item[(b)] In (a), the partial sums $\lambda-\sum_{t=k}^n\alpha_{i_t} \ \in\  \wt V $ \ $\forall$ $k\leq n$.
  \end{itemize}
	\end{lemma}
 	\begin{lemma}\label{Lemma real root strings}
		Fix the notations of Lemma \ref{Lemma reaching lambda from mu}.
	Let $\lceil \cdot\rceil$ be the ceiling function on $\mathbb{R}$.
		\begin{itemize}
			\item[1)]	For $\alpha\in \Delta^{re}\cap \Delta^+$, \   $\langle\mu,\alpha^{\vee}\rangle\geq  0\ \ \implies\ \ \big[\mu-\lceil\langle\mu,\alpha^{\vee}\rangle\rceil\alpha,\ \mu\big]\subseteq\wt V$.\ \      Particularly  $\langle\mu,\alpha^{\vee}\rangle\in\mathbb{Z}_{\geq 0}$ $\implies s_{\alpha}\mu=\mu-\langle\mu,\alpha^{\vee}\rangle\alpha\in \wt V$.
            \big(However $\mu(\alpha^{\vee})\in \mathbb{Z}_{<0}\ \not\!\!\!\implies s_{\alpha}\mu\in \wt V$.\big)
			\item[2)] Analogously for $i\in \mathcal{I}^-$ ($A_{ii}< 0$)\quad  $\mu(\alpha_i^{\vee})\leq 0\  \implies  \  \displaystyle \Big[\underbrace{\mu-\left\lceil\frac{2}{A_{ii}}\langle\mu,\alpha_i^{\vee}\rangle\right\rceil\alpha_i}_{=\ s_{\alpha_i}(\mu) \text{, as in Note }\ref{Note imaginary reflections in hole-weihts}},\ \mu\Big]\subseteq\wt V$.
         		%\item[3)] Therefore, for any $i\in J_{\lambda}$ with $\langle\mu,\alpha_i^{\vee}\rangle\in \frac{A_{ii}}{2}\mathbb{Z}_{>0}$, we have by parts 1) and 2) above \[   \mu- \left|\frac{2}{A_{ii}}\langle\mu,\alpha-i^{\vee}\rangle\right|\alpha_i\in \wt V.   \]
			\item[3)] Finally for $i\in \mathcal{I}^0$ ($A_{ii}=0$)\qquad  $\mu(\alpha_i)\neq 0 \ \ \implies\ \ \mu -\mathbb{Z}_{\geq 0}\alpha_i\ \subseteq\ \wt V$.
            \end{itemize}
			\end{lemma}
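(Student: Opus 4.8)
The plan is to prove all three parts by one common template: fix a nonzero weight vector $v\in V_{\mu}$, move it up inside $V$ to a highest-weight vector for the relevant rank-one subalgebra of $\mathfrak{g}$, and then read off the nonvanishing of its $f$-powers from the rank-one BKM representation theory of Lemma~\ref{Lemma rank-1 BKM rep. theory}. For part 1), with $\alpha\in\Delta^{re}$ --- which we may take positive, since the interval $[\mu-k\alpha,\mu]$ runs downward --- the relevant subalgebra is the $\mathfrak{sl}_2$-copy $\mathfrak{s}_{\alpha}:=\mathbb{C}\{e_{\alpha},\alpha^{\vee},f_{\alpha}\}$ (its root space $\mathfrak{g}_{\alpha}$ being one-dimensional). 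For part 2), it is $\mathfrak{g}_{\{i\}}\cong\mathfrak{sl}_2(\mathbb{C})$, but handled through the \emph{rescaled} $\mathfrak{sl}_2$-triple $\big(e_i,\ \tfrac{2}{A_{ii}}\alpha_i^{\vee},\ \tfrac{2}{A_{ii}}f_i\big)$, so that the $h$-eigenvalue of $v$ becomes $\tfrac{2}{A_{ii}}\mu(\alpha_i^{\vee})\geq0$ (recall $A_{ii}<0$ and $\mu(\alpha_i^{\vee})\leq0$). For part 3), it is the three-dimensional Heisenberg algebra $\mathfrak{g}_{\{i\}}$.

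The first step in each case is to observe that, since every weight of $V\twoheadleftarrow M(\lambda)$ is $\preceq\lambda$, the $e$-string through $v$ terminates: writing $\alpha=\sum_ka_k\alpha_k$ and $\lambda-\mu=\sum_kc_k\alpha_k$, membership $\lambda-\mu-j\alpha\in\mathbb{Z}_{\geq0}\Pi$ fails for large $j$, so $e_{\alpha}^{\,p+1}v=0$ (resp.\ $e_i^{\,p+1}v=0$) for some $p\geq0$. Put $v':=e_{\alpha}^{\,p}v$ (resp.\ $e_i^{\,p}v$); it is annihilated by $e_{\alpha}$ (resp.\ $e_i$) and is an $\alpha^{\vee}$-eigenvector, so $U(\mathfrak{s}_{\alpha})v'$ (resp.\ $U(\mathfrak{g}_{\{i\}})v'$) is a highest-weight module over the rank-one algebra, hence a quotient of its Verma module. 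For parts 1) and 2) this Verma has highest weight $m'=\langle\mu,\alpha^{\vee}\rangle+2p\geq0$ (resp.\ $\widetilde{m}'=\tfrac{2}{A_{ii}}\mu(\alpha_i^{\vee})+2p\geq0$ in the rescaled coordinates), and Lemma~\ref{Lemma rank-1 BKM rep. theory} --- equivalently the classical $\mathfrak{sl}_2$ fact that a Verma of highest weight $m'$ is simple unless $m'\in\mathbb{Z}_{\geq0}$, in which case every nonzero quotient retains the weights $m',m'-2,\dots,-m'$ --- gives $f_{\alpha}^{\,j}v'\neq0$ for $0\leq j\leq m'$ when $m'\in\mathbb{Z}_{\geq0}$ and for all $j\geq0$ otherwise; since $f_i$ and $\tfrac{2}{A_{ii}}f_i$ kill the same vectors, this reads back unchanged in part 2). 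For part 3), the Heisenberg identity $[e_i,f_i^{\,j}]=j\,f_i^{\,j-1}\alpha_i^{\vee}$ with $\alpha_i^{\vee}$ central, together with $e_iv'=0$ and $(\mu+p\alpha_i)(\alpha_i^{\vee})=\mu(\alpha_i^{\vee})$ (valid as $A_{ii}=0$), yields $e_if_i^{\,j}v'=j\,\mu(\alpha_i^{\vee})\,f_i^{\,j-1}v'$, so $\mu(\alpha_i^{\vee})\neq0$ forces $f_i^{\,j}v'\neq0$ for all $j$ by induction --- the phenomenon of Example~\ref{Example all Vectors in Heiesnberg-Verma are maximal}(1). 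Translating back, $v'\in V_{\mu+p\alpha}$ (resp.\ $V_{\mu+p\alpha_i}$), so nonvanishing of $f_{\alpha}^{\,p+l}v'$ (resp.\ $f_i^{\,p+l}v'$) puts $\mu-l\alpha\in\wt V$ (resp.\ $\mu-l\alpha_i\in\wt V$) for $l$ in the claimed range; the only arithmetic to verify is $p+\lceil\langle\mu,\alpha^{\vee}\rangle\rceil\leq m'$ when $m'\in\mathbb{Z}_{\geq0}$ (automatic since then $\langle\mu,\alpha^{\vee}\rangle=m'-2p\in\mathbb{Z}$), and likewise for part 2); for part 3) the range is all of $\mathbb{Z}_{\geq0}$. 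The displayed particular case of part 1) is then immediate, as $\langle\mu,\alpha^{\vee}\rangle\in\mathbb{Z}_{\geq0}$ gives $\mu-\lceil\langle\mu,\alpha^{\vee}\rangle\rceil\alpha=\mu-\langle\mu,\alpha^{\vee}\rangle\alpha=s_{\alpha}\mu$, and the failure when $\langle\mu,\alpha^{\vee}\rangle\in\mathbb{Z}_{<0}$ is witnessed already by $\mu=\lambda$.

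I expect the step requiring the most care to be part 2): one must not pair with the coroot $\alpha_i^{\vee}$ directly but use the normalized $\mathfrak{sl}_2$-triple of the real-like node --- equivalently invoke Lemma~\ref{Lemma rank-1 BKM rep. theory}, whose maximality threshold is $\tfrac{2}{A_{ii}}\lambda(\alpha_i^{\vee})+1$ rather than $\lambda(\alpha_i^{\vee})+1$ --- and then check that this rescaling is precisely what produces the ceiling of $\tfrac{2}{A_{ii}}\langle\mu,\alpha_i^{\vee}\rangle$ in the conclusion. A secondary subtlety is the reduction to a highest-weight vector for $\mathfrak{s}_{\alpha}$ when $\alpha$ is a non-simple real root: since $\wt V$ is only $W_{I_V}$-stable one cannot translate the assertion by $s_{\alpha}$, and must instead run the $\mathfrak{sl}_2$-argument inside $\mathfrak{s}_{\alpha}$ itself, which is harmless because $\dim\mathfrak{g}_{\alpha}=1$.
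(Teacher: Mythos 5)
Your proof is correct and follows essentially the same route as the paper's: raise $v\in V_\mu$ along the relevant $e$-direction to a maximal vector $v'$ for the rank-one subalgebra, note that $U(\text{rank-one})v'$ is a quotient of the rank-one Verma, and read off the non-vanishing of $f$-powers from the rank-one representation theory. Your packaging of part 2) via the rescaled $\mathfrak{sl}_2$-triple $\big(e_i,\tfrac{2}{A_{ii}}\alpha_i^\vee,\tfrac{2}{A_{ii}}f_i\big)$ is a clean way of expressing what the paper does by invoking Lemma~\ref{Lemma rank-1 BKM rep. theory} with the $\tfrac{2}{A_{ii}}$-normalization built into the threshold, and your explicit Heisenberg commutator $[e_i,f_i^j]=jf_i^{j-1}\alpha_i^\vee$ in part 3) spells out the induction the paper leaves implicit; these are cosmetic rather than substantive differences.
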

\begin{proof}[{\bf \textnormal Proof}]
We begin by fixing $n\in \mathbb{Z}_{\geq 0}$ with $\mathfrak{g}_{\alpha}^nV_{\mu}\neq 0$ but $\mathfrak{g}_{\alpha}^{n+1}V_{\mu}=0$ for $\alpha$ (or $\alpha_i$) in the statement.
We direct the reader to \cite[Proof of Lemma 3.1]{WFHWMRS} for a proof of part 1).
For showing parts 2) and 3), one can replacing $\alpha$ in that proof by $\alpha_i\in \Pi_{\mathcal{I}^-\sqcup \mathcal{I}^0}$, with noting the following steps. 
	Suppose $i\in \mathcal{I}^-$.
    Note $\langle\alpha_i,\alpha_i^{\vee}\rangle= A_{ii}<0$, and so  $\langle\mu+n\alpha_i,\alpha_i^{\vee}\rangle= \langle\mu,\alpha_i^{\vee}\rangle\ +\ nA_{ii}\leq 0$.
Proceeding as in cases (i) or (ii) of \cite[Proof of Lemma 3.1]{WFHWMRS}, when $\langle\mu,\alpha_i^{\vee}\rangle +n A_{ii}$ is in or resp. not in $\frac{A_{ii}}{2}\mathbb{Z}_{\geq 0}$, shows the desired result for $i\in \mathcal{I}^-$ as well. 
Finally, suppose $i\in \mathcal{I}^0$.
Note $\langle\mu+n\alpha_i,\alpha_i^{\vee}\rangle= \langle\mu,\alpha_i^{\vee}\rangle\ +\ nA_{ii}= \langle\mu,\alpha_i^{\vee}\rangle\neq 0$.
We see by Lemma \ref{Lemma rank-1 BKM rep. theory} that $\mathbb{C}[f_i]\cdot V_{\mu+n\alpha_i}$ is a direct sum of Verma lines over $\mathfrak{g}_{\{i\}}$.
		\end{proof}
	%\begin{remark}\label{Remark on freeness over nice directions} 		let $i\in \mathcal{I}$ and $\lambda\in \mathbb{C}\setminus \frac{A_{ii}}{2}\mathbb{Z}_{\geq 0}$. 		Then by the assertion in point (i) in the last two paragraphs in the above proof (in all the three cases $i\in \mathcal{I}^+$, $i\in \mathcal{I}^0$ and $i\in \mathcal{I}^-$), observe that the Verma module over $\mathfrak{g}_{\{i\}}$ with highest weight $\lambda$, is always simple. 		Hence, every highest weight $\mathfrak{g}_{\{i\}}$-module is infinite-dimensional. 	\end{remark}
	\begin{lemma}\label{Lemma imaginary root strings}
		Fix the notations of Lemma \ref{Lemma real root strings} (top $\lambda$ any), and imaginary $\alpha_i$ for $i\in \mathcal{I}^-\sqcup\mathcal{I}^0$.
        If $i$ has an edge (in the Dynkin graph of $\mathfrak{g}$) to a node in $\supp(\lambda-\mu)$, then $\mu-\mathbb{Z}_{\geq 0}\alpha_i\subseteq\wt V$.
	\end{lemma}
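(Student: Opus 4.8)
The plan is to reduce the assertion to a single downward step along the $\alpha_i$-string and then combine $e_i$-raising with the BKM-$\mathfrak{sl}_2$/Heisenberg dichotomy of Lemma~\ref{Lemma rank-1 BKM rep. theory}, bringing in the adjacent active node $j$ only to dispose of the one degenerate configuration that $\mathfrak{g}_{\{i\}}$ cannot settle by itself.

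\emph{Reduction to one step.} If $\mu\in\wt V$ and $i$ is joined by an edge to some $j\in\supp(\lambda-\mu)$, then, since $i\neq j$, the $\alpha_j$-coefficient of $\lambda-(\mu-\alpha_i)$ equals that of $\lambda-\mu$ and is thus still positive, and $j$ is still adjacent to $i$. So it is enough to prove $\mu-\alpha_i\in\wt V$ under these hypotheses; iterating it yields $\mu-\mathbb{Z}_{\geq 0}\alpha_i\subseteq\wt V$.

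\emph{The $e_i$-raising step.} Fix $0\neq x\in V_\mu$; since $e_i$ is locally nilpotent on the highest weight module $V$, set $y:=e_i^{\,p}x$ with $p$ maximal, so $0\neq y\in V_\nu$ with $e_iy=0$, $\nu=\mu+p\alpha_i$ and $\nu(\alpha_i^\vee)=\mu(\alpha_i^\vee)+pA_{ii}$; note $j\in\supp(\lambda-\nu)$ and $A_{ij}<0$ (there is an edge between $i$ and $j$, and off-diagonal entries of a BKM matrix are $\leq 0$). Apply Lemma~\ref{Lemma rank-1 BKM rep. theory} to $\mathfrak{g}_{\{i\}}$ acting on the $e_i$-maximal $y$: if $\nu(\alpha_i^\vee)\notin\tfrac{A_{ii}}{2}\mathbb{Z}_{\geq 0}$ --- which for $i\in\mathcal{I}^0$ is just $\mu(\alpha_i^\vee)\neq 0$, as $A_{ii}=0$ there --- then $U(\mathfrak{g}_{\{i\}})y$ is a $\mathfrak{g}_{\{i\}}$-Verma module, so $f_i^{\,k}y\neq 0$ for every $k\in\mathbb{Z}_{\geq 0}$, and $f_i^{\,p+1}y$ is a nonzero vector of weight $\mu-\alpha_i$ (in fact the whole ray appears at once, recovering the imaginary case of Lemma~\ref{Lemma real root strings}). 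It remains to handle the case $\nu(\alpha_i^\vee)\in\tfrac{A_{ii}}{2}\mathbb{Z}_{\geq 0}$, a set which is $\{0\}$ for $i\in\mathcal{I}^0$ and is real and bounded above by $0$ for $i\in\mathcal{I}^-$ (and if $\nu(\alpha_i^\vee)\notin\mathbb{R}$ we are already in the non-degenerate case).

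\emph{The degenerate case and the main obstacle.} Here the node $j$ enters: since $i\neq j$, $e_i$ commutes with $f_j$, so every $f_j^{\,t}y$ is again $e_i$-maximal, and the pairing strictly grows along the $\alpha_j$-staircase, $(\nu-t\alpha_j)(\alpha_i^\vee)=\nu(\alpha_i^\vee)+t\,|A_{ij}|$; descending far enough produces an $e_i$-maximal vector whose $\alpha_i^\vee$-pairing exceeds every element of $\tfrac{A_{ii}}{2}\mathbb{Z}_{\geq 0}$, hence by Lemma~\ref{Lemma rank-1 BKM rep. theory} has an infinite $\alpha_i$-string. The descent along $\alpha_j$ is legitimate because the needed $\alpha_j$-strings are available --- the $\alpha_j^\vee$-pairing grows along $\alpha_i$-steps, so Lemma~\ref{Lemma real root strings}(1)--(3) applies, and by Lemma~\ref{Lemma reaching lambda from mu} a spanning monomial of $V_\mu$ carries as many factors $f_j$ as the $\alpha_j$-coefficient of $\lambda-\mu$ --- and afterwards one slides back up along $\alpha_j$ (again via Lemma~\ref{Lemma real root strings}) onto the $\alpha_i$-line through $\mu$. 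To keep this coordination of $\alpha_i$- and $\alpha_j$-moves under control I would restrict $V$ to the rank-$\leq 2$ subalgebra $\mathfrak{g}_{\{i,j\}}$ and run the bookkeeping inside the $\mathfrak{g}_{\{i,j\}}$-submodule generated by $y$, where everything is two-dimensional and ``$j$ active'' can be tracked throughout. The hard part is exactly this: the $f_j$-twist unavoidably slides the vector off the $\alpha_i$-line through $\mu$, so one must interleave the $\alpha_i$- and $\alpha_j$-moves so as to return to the original $\alpha_j$-level after having taken one (hence, on iterating, every) $\alpha_i$-step downward. The use of $j$ is essential and cannot be bypassed: when $i\in\mathcal{I}^0$ and $\mu(\alpha_i^\vee)=0$, the Heisenberg subalgebra $\mathfrak{g}_{\{i\}}$ on its own gives no information about whether $\mu-\alpha_i\in\wt V$, and the adjacency to $\supp(\lambda-\mu)$ is precisely what rescues the argument.
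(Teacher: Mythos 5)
Your reduction to a single $\alpha_i$-step and your treatment of the non-degenerate case $\nu(\alpha_i^\vee)\notin\tfrac{A_{ii}}{2}\mathbb{Z}_{\geq 0}$ are both correct: taking $y=e_i^p x$ to be $e_i$-maximal and invoking Lemma~\ref{Lemma rank-1 BKM rep. theory} to see that $U(\mathfrak{g}_{\{i\}})y$ is a $\mathfrak{g}_{\{i\}}$-Verma does give the whole ray at once. The gap is in the degenerate case, and it is a genuine one. After twisting by $f_j^t$ to escape the set $\tfrac{A_{ii}}{2}\mathbb{Z}_{\geq 0}$, you obtain an infinite $\alpha_i$-string through $\nu - t\alpha_j$, i.e.\ weights of the form $\mu - t\alpha_j - k\alpha_i$ --- but the weight you actually need, $\mu-\alpha_i$, sits one $\alpha_j$-\emph{level above} these. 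Your plan to ``slide back up along $\alpha_j$ (again via Lemma~\ref{Lemma real root strings})'' does not work: Lemma~\ref{Lemma real root strings} only produces \emph{downward} strings, and there is no tool in the toolkit that lets you ascend by $\alpha_j$ from a weight of $V$ (the only available ascent mechanism is $W_{I_V}$-symmetry, which is not hypothesized here and in any case reflects rather than shifts by one). You acknowledge the problem yourself (``the $f_j$-twist unavoidably slides the vector off the $\alpha_i$-line through $\mu$''), but the proposed fix --- restricting to $\mathfrak{g}_{\{i,j\}}$ and ``interleaving'' --- is a plan, not an argument, and it is not clear it can be carried out. An additional subtlety: even the initial $\alpha_j$-descent $f_j^t y\neq 0$ is not automatic, since $\nu(\alpha_j^\vee)=\mu(\alpha_j^\vee)+pA_{ji}$ may fail the sign condition in Lemma~\ref{Lemma real root strings}(1) when $j\in\mathcal{I}^+$ and $p>0$.

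The paper's proof avoids the dichotomy entirely and works at the level of $U(\mathfrak{n}^-)$ rather than weight spaces. Assuming for contradiction that $\mu'\in\mu-\mathbb{Z}_{\geq 0}\alpha_i$ lies in $\wt V$ with $\mu'-\alpha_i\notin\wt V$, it writes $0\neq Fv_\lambda\in V_{\mu'}$ and uses $[f_i,F]v_\lambda=0$ together with $e_iv_\lambda=0$ and the Jacobi identity to get $[f_i,[e_i,F]]v_\lambda=-[\mu'-\lambda](\alpha_i^\vee)Fv_\lambda$; the edge hypothesis guarantees $[\mu'-\lambda](\alpha_i^\vee)>0$, so $\mathrm{ad}_{f_i}\mathrm{ad}_{e_i}(F)v_\lambda$ is a nonzero scalar multiple of $Fv_\lambda$. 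Iterating gives $\mathrm{ad}_{e_i}^{\circ m}(F)\neq 0$ for all $m$, contradicting the local nilpotency of $\mathrm{ad}_{e_i}$ on $U(\mathfrak{n}^-)_\beta$ (equation~\eqref{Fact e's local nilpotency on U(g)}). This bracket argument sidesteps the descent-and-return bookkeeping, which is exactly where your approach stalls; it is worth internalizing, as variants of it recur throughout the paper.
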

	\begin{proof}[{\bf \textnormal Proof.}]
		Fix a h.w. vector $v_{\lambda}\neq 0 \in V_{\lambda}$, and $\mu$, $i$ as in the lemma.
		Note $\beta(\alpha_i^{\vee})\leq 0$ $\forall$ $\beta\in \mathbb{Z}_{\geq 0}\Pi$. 
		\begin{equation}\label{Fact e's local nilpotency on U(g)}
        \textbf{Fact \ : }\ \  
	\text{For each }\beta\in \mathbb{Z}_{\leq 0}\Pi,\quad  	\ad_{e_i}^{\circ n} \big(U(\mathfrak{n}^-)_{\beta}\big)\ =\ 0\ \ \ \forall\ n \geq |\height(\beta)|.\hspace*{2.5cm}
		\end{equation}
        Above $U(\mathfrak{n}^-)_{\beta}$ is the $\beta^{\text{th}}$ graded piece of $U(\mathfrak{n}^-)$.
		Assuming the lemma to be false -- i.e., $\mu-\mathbb{Z}_{\geq 0}\alpha_i\nsubseteq \wt V$ -- we show contradiction to \eqref{Fact e's local nilpotency on U(g)}.
		So we fix $\mu'\in \wt V$ with $\mu'\in \mu-\mathbb{Z}_{\geq 0}\alpha_i$ and $\mu'-\alpha_i\notin \wt V$, and vector $Fv_{\lambda}\neq 0\in V_{\mu'}$ for some $F\in (U(\mathfrak{n^-}))_{\mu'-\lambda}$.
		We show to contradict the existence of $\mu'$
		\begin{equation}\label{e1}
		\Big(\ad_{f_i}^{\circ m} \ \circ\  \ad_{e_i}^{\circ m} \Big)(F)\ \cdot v_{\lambda}\ \neq\  0\  \in \ V_{\mu'} \quad \forall \  m\in\mathbb{Z}_{\geq 0},\qquad \text{by mathematical induction}.
		\end{equation}
		The base step $\ad_{f_i}^{\circ 0}(F)v_{\lambda}=Fv_{\lambda}\neq 0$ is clear.
		We prove \eqref{e1} at $m=1$ in the interest of clarity and  as a warm-up to the induction step.
		We observe for $[f_i,F]v_{\lambda}\in V_{\mu'-\alpha_i}=0$ (by the choice of $\mu'$) :
		\begin{align*}
		0=e_i[f_i,F]v_{\lambda}= [e_i,[f_i,F]\!]v_{\lambda}+ \cancel{[f_i,F]e_iv_{\lambda}}^{=0}=  \big([\alpha_i^{\vee} & ,F]+[f_i,[e_i,F]\!]\big)v_{\lambda}.\\
		\text{So }\ V_{\mu'} \ni \ [f_i,[e_i,F]\!]v_{\lambda}\ =  0- [\mu'-\lambda](\alpha_i^{\vee})F v_{\lambda}\ \in \mathbb{R}_{<0} Fv_{\lambda}  \neq 0, &\ \ \text{as }i \text{ has an edge to }\supp(\lambda-\mu').
		\end{align*}
		Induction step: 
		We assume Claim \eqref{e1} to be true at $m\geq 1$.
        We show for the $(m+1)$-th stage - 
		\begin{equation}\label{e2}
		\ad_{f_i}^{\circ (m+1)}\big([e_i,F']\big)v_{\lambda} \neq 0 \in V_{\mu'},\ \ \  \text{for }  F':=\ad_{e_i}^{\circ m}(F)\ \in \big(U(\mathfrak{n}^-)\big)_{\mu'-\lambda+m\alpha_i}.
		\end{equation}
		By the induction hypothesis $\big(\ad_{f_i}^{\circ m}(F')\big)v_{\lambda}\neq 0\in V_{\mu'}$ (so $F'\neq 0$), but $\big(\ad_{f_i}^{\circ (m+1)}(F')\big)v_{\lambda}\in V_{\mu'-\alpha_i} =0$ by the choice of $\mu'$.
		The following calculations similar to those for $m=1$, imply \eqref{e2} :
		\[ 
		0= e_i\big(\ad_{f_i}^{\circ (m+1)}(F')\big)v_{\lambda}=\big[e_i,\big[f_i,\ \ad_{f_i}^{\circ m}(F')\big]\!\big]v_{\lambda}=
		[\alpha_i^{\vee}, \ad_{f_i}^{\circ m}(F')]v_{\lambda}+\big[f_i,[e_i,\ad_{f_i}^{\circ m}(F')\big]\!\big]v_{\lambda}\]
        \begin{align*}
        \begin{aligned}
		&\implies&  \big[f_i,\big[e_i, \ad_{f_i}^{\circ m}(F')\big]\big]v_{\lambda}\ = &\  -[\mu'-\lambda](\alpha_i^{\vee})\ad_{f_i}^{\circ m}(F')v_{\lambda} \ \ \neq 0  \\
		&\implies&  \hspace*{-0.5cm} \big[f_i\big[f_i,\big[e_i, \ad_{f_i}^{\circ (m-1)}(F')\big]\big]\big]v_{\lambda} \ =& \ - [2(\mu'-\lambda)+\alpha_i](\alpha_i^{\vee})\ad_{f_i}^{\circ m}(F')v_{\lambda}  \ \ \neq 0 \ \ & \\
        &\ & \vdots \ & \\
		&\implies &  \hspace*{-0.5cm}\ad_{f_i}^{\circ (m+1)}\big([e_i,F']\big) v_{\lambda}\ = &\  -(m+1) \Big[\mu'-\lambda+\frac{m}{2}\alpha_i\Big](\alpha_i^{\vee})\ad_{f_i}^{\circ m}(F')v_{\lambda}\ \ \neq 0
        \end{aligned} 
       		\end{align*}
	The above non-vanishings are because $i$ has at least one edge to $\supp(\lambda-\mu')$.  \end{proof}
\begin{cor}\label{Corollary no integtrable Slice-decomp.}
By applying Lemma \ref{Lemma imaginary root strings}, we see  below that the branching-typed slice-decompositions as in \eqref{Eqn wt-formula for all simple in KM setting} \cite{Khare_JA, Dhillon_arXiv} fail to hold in BKM case, w.r.t. basic simples $L(\lambda\in P^+)$ or $L(\lambda\in P^{\pm})$  :
    \begin{example}\label{Counter examples for slice-decomp.}
    Fix $\mathcal{I}=\{1,2\}$, $A=A(2)$, the fundamental weights $\{\varpi_1, \varpi_2\}$ dual to $\Pi^{\vee}$.\\
    (a) Let $\lambda = -\varpi_2\in P^{\pm}\setminus P^+$ \big(so $\lambda(\alpha_2^{\vee})=-1$) with the non-$P^+$-dominant direction $\{2\}$.
  One begins to write the skeleton/top-weights $\lambda-\mathbb{Z}_{\geq 0}\alpha_2$ along $\{2\}$-direction for the $\{1\}$-integrable slices -- $\wt L_{\{1\}}(\lambda - k \alpha_2)$ for $k\in \mathbb{Z}_{\geq 0}$ -- but quickly notices $[\lambda-\mathbb{Z}_{\geq 0}\alpha_2\big]\cap \wt L(\lambda)=\{\lambda,\ \lambda-\alpha_2\}$ (by rank-1 theory Lemma \ref{Lemma rank-1 BKM rep. theory}) is not an infinite-ray!
   \big(Note for $L\big(\lambda'=-\frac{1}{2}\varpi_2\big)$ we see the infinite skeleton-ray.)
    So we cannot write slice-formulas for $\wt L(\lambda\in P^{\pm})$ using usual integrable simples.\\
  (b) Let $\lambda= \varpi_1-\varpi_2\in \mathfrak{h}^*\setminus P^{\pm}$; so $\lambda$ is $P^{\pm}$-dominant w.r.t. only $\{2\}$-direction.
  Then the second expected slice $\wt L_{\{2\}}(\lambda-\alpha_1)= \{\lambda-\alpha_1\}$ is finite, but $[\lambda-\alpha_1-\mathbb{Z}_{\geq 0}\alpha_2]\cap \wt L(\lambda) $ is the whole of this ray $\lambda-\alpha_1-\mathbb{Z}_{\geq 0}\alpha_2$; as nodes $1$ and $2$ have edges in-between. \\
(c) \underline{Positive example}: Let $\lambda= \varpi_1\in P^+\setminus P^{\pm}$. 
So $\lambda$ is $P^{\pm}$-dominant w.r.t. only $\{2\}$-direction, and further (for the desired skeleton) $\lambda-\mathbb{Z}_{\geq 0}\alpha_1\ \in \wt L(\lambda)$ by Lemma \ref{Lemma rank-1 BKM rep. theory}.
\[
\wt L_{\{2\}}(\lambda-n\alpha_1)\ \ = \ \ \lambda-n\alpha_1-\mathbb{Z}_{\geq 0}\alpha_2 \ \ = \ \ \big[\lambda-n\alpha_1-\mathbb{Z}_{\geq 0}\alpha_2\big]\cap \wt L(\lambda) \qquad \forall\ n\in \mathbb{Z}_{> 0}.
\]
    \end{example}
\end{cor}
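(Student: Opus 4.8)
Since the statement assembles three explicit examples over the two rank-$2$ BKM algebras of Example~\ref{Counter examples for slice-decomp.}, the plan is to verify parts (a), (b), (c) directly; the only inputs are the rank-one classification Lemma~\ref{Lemma rank-1 BKM rep. theory} and the imaginary root-string Lemma~\ref{Lemma imaginary root strings}. The common device is that along a single direction $\alpha_i$ one has $M(\lambda)_{\lambda-k\alpha_i}=\mathbb{C}\,f_i^{k}v_\lambda$ (one-dimensional), so $[\lambda-\mathbb{Z}_{\geq 0}\alpha_i]\cap\wt L(\lambda)$ is governed entirely by the $\mathfrak{g}_{\{i\}}$-submodule of $L(\lambda)$ generated by $v_\lambda$; and a slice-decomposition resembling \eqref{Eqn wt-formula for all simple in KM setting} along an ``integrable part'' $J$ would force, for every $(c_i)_{i\notin J}$ with $c_i\in\mathbb{Z}_{\geq 0}$, both $\mu:=\lambda-\sum_{i\notin J}c_i\alpha_i\in\wt L(\lambda)$ and $[\mu-\mathbb{Z}_{\geq 0}\Pi_{J}]\cap\wt L(\lambda)=\wt L_{J}(\mu)$. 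Parts (a) and (b) will contradict one of these two consequences, while (c) will confirm both.

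For (a), where $\lambda=-\varpi_2$ has $\mathcal{I}^+=\{1\}$, $\mathcal{I}^-=\{2\}$, $\lambda(\alpha_1^{\vee})=0$ and $\lambda(\alpha_2^{\vee})=-1$ (so $\lambda\in P^{\pm}\setminus P^+$): a decomposition of $\wt L(\lambda)$ into weight sets of classically-integrable parabolic simples must take $J=\{1\}$ and slice along $\{2\}$, hence would predict $\lambda-\mathbb{Z}_{\geq 0}\alpha_2\subseteq\wt L(\lambda)$. But over $\mathfrak{g}_{\{2\}}\cong\mathfrak{sl}_2(\mathbb{C})$, Lemma~\ref{Lemma rank-1 BKM rep. theory} gives maximal power $\tfrac{2}{A_{22}}\lambda(\alpha_2^{\vee})+1=2$; since $[e_1,f_2]=0$, the vector $f_2^{2}v_\lambda$ is killed by all of $\mathfrak{n}^+$, hence lies in the maximal proper submodule of $M(\lambda)$, so $f_2^{k}v_\lambda=f_2^{k-2}(f_2^{2}v_\lambda)=0$ in $L(\lambda)$ for $k\geq 2$ and $[\lambda-\mathbb{Z}_{\geq 0}\alpha_2]\cap\wt L(\lambda)=\{\lambda,\lambda-\alpha_2\}$ is finite; the aside for $\lambda'=-\tfrac12\varpi_2$ is immediate because $-\tfrac12\notin\tfrac{A_{22}}{2}\mathbb{Z}_{\geq 0}$ makes $M(\lambda')|_{\mathfrak{g}_{\{2\}}}$ simple, so that $f_2$ acts freely.

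For (b), where $\lambda=-\varpi_1-\varpi_2\notin P^{\pm}$: the $P^{\pm}$-maximal integrability is $J_\lambda=\{2\}$ (as $\lambda(\alpha_1^{\vee})=-1\notin\mathbb{Z}_{\geq 0}=\tfrac{A_{11}}{2}\mathbb{Z}_{\geq 0}$, whereas $\lambda(\alpha_2^{\vee})=-1\in -\mathbb{Z}_{\geq 0}$), so the $P^{\pm}$-analogue of \eqref{Eqn wt-formula for all simple in KM setting} slices along $\{1\}$; its level-one slice has top $\mu=\lambda-\alpha_1$ with $\mu(\alpha_2^{\vee})=\lambda(\alpha_2^{\vee})-\alpha_1(\alpha_2^{\vee})=-1-(-1)=0$, whence Lemma~\ref{Lemma rank-1 BKM rep. theory} (maximal power $1$ over $\mathfrak{g}_{\{2\}}$) gives $\wt L_{\{2\}}(\mu)=\{\mu\}$. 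However $\mu\in\wt L(\lambda)$ and node $2\in\mathcal{I}^-$ is adjacent to node $1\in\supp(\lambda-\mu)$, so Lemma~\ref{Lemma imaginary root strings} forces $\mu-\mathbb{Z}_{\geq 0}\alpha_2\subseteq\wt L(\lambda)$, contradicting the predicted singleton slice. For the positive example (c), with $A_{11}=-2$ and $\lambda=\varpi_1\in P^+\setminus P^{\pm}$, one checks $J_\lambda=\{2\}$, that $M(\lambda)|_{\mathfrak{g}_{\{1\}}}$ is simple (as $\lambda(\alpha_1^{\vee})=1\notin -\mathbb{Z}_{\geq 0}$) and hence injects into $L(\lambda)$, giving the skeleton $\lambda-\mathbb{Z}_{\geq 0}\alpha_1\subseteq\wt L(\lambda)$; and for each $n>0$ one has $(\lambda-n\alpha_1)(\alpha_2^{\vee})=0-n\,\alpha_1(\alpha_2^{\vee})=n\notin -\mathbb{Z}_{\geq 0}$, so $M(\lambda-n\alpha_1)|_{\mathfrak{g}_{\{2\}}}$ is simple, $\wt L_{\{2\}}(\lambda-n\alpha_1)=\lambda-n\alpha_1-\mathbb{Z}_{\geq 0}\alpha_2$, and this ray also equals $[\lambda-n\alpha_1-\mathbb{Z}_{\geq 0}\alpha_2]\cap\wt L(\lambda)$ by Lemma~\ref{Lemma imaginary root strings} (node $2$ adjacent to $\supp(n\alpha_1)=\{1\}$); so the slice identity genuinely holds here.

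The one step that is more than bookkeeping is the claim in (a) that $\lambda-k\alpha_2\notin\wt L(\lambda)$ for $k\geq 2$: it requires combining $\dim M(\lambda)_{\lambda-k\alpha_2}=1$ with the fact that $f_2^{2}v_\lambda$, being a maximal vector of the full $M(\lambda)$ (not merely of $M(\lambda)|_{\mathfrak{g}_{\{2\}}}$), dies in the simple quotient, and separately recording that $f_2 v_\lambda$ (which is not maximal, and at whose weight no maximal vector of $M(\lambda)$ can lie) survives in $L(\lambda)$, so that $\lambda-\alpha_2\in\wt L(\lambda)$. All remaining verifications are just matching node types against the definitions of $P^+$, $P^{\pm}$ and $J_\lambda$, together with one or two applications each of the two root-string lemmas.
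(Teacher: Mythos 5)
Your proposal is correct and follows essentially the same route as the paper: the Corollary is established by the three explicit computations in Example~\ref{Counter examples for slice-decomp.}, using only the rank-$1$ Lemma~\ref{Lemma rank-1 BKM rep. theory} and the imaginary root-string Lemma~\ref{Lemma imaginary root strings}, and you carry out exactly that verification. The only place you supply detail the paper leaves implicit is in (a), where you correctly observe that since $[e_1,f_2]=0$ the vector $f_2^2 v_\lambda$ is a maximal vector of the full Verma $M(\lambda)$ (not merely of its restriction to $\mathfrak{g}_{\{2\}}$) and hence vanishes in $L(\lambda)$, while $f_2 v_\lambda$ survives because $e_2 f_2 v_\lambda = \lambda(\alpha_2^{\vee})v_\lambda\neq 0$ forces the one-dimensional $(\lambda-\alpha_2)$-weight space to miss every proper submodule; this is exactly the content the paper compresses into the citation of Lemma~\ref{Lemma rank-1 BKM rep. theory}. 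Parts (b) and (c) likewise match the paper's reasoning step for step.
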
	
   For $\lambda\in P^+$, Example \ref{Counter examples for slice-decomp.}(c) showing $\wt L(\lambda)$ to admit slice-decompositions w.r.t. $\wt L(\mu\in P^{\pm})$s for $\mu\in \big[\lambda-\mathbb{Z}_{\geq 0}\Pi_{J_{\lambda}^c}\big]$ \big(where $J_{\lambda}^c=\{i\in \mathcal{I}^0\sqcup \mathcal{I}^-\ \big| \  \lambda(\alpha_i^{\vee})>0 \big\}$\big), supports Question \ref{Question on wt-formulas for simples}(c).

\subsection{Freeness-directions for weights of $V$} \label{Subsection 2.3}
	The largest parabolic Weyl-subgroup of $W$ containing (resp. equaling) the symmetries of any $\mathfrak{g}$-h.w.m. $V \twoheadleftarrow M(\lambda)$ (resp. of $V=L(\lambda)$) is $W_{J_{\lambda}\cap \mathcal{I}^+}$; it leaves $V, \wt V$ and $\mathrm{char}(V)$ all invariant.
I.e., $J_{\lambda}$ is the maximum set of (real and imaginary) integrable directions for all $V\twoheadleftarrow M(\lambda)$.
In the complementary directions $V$ has freeness for its weights, well-studied here for showing our main theorems (and in spirit of \cite{MDWF}) :
	\begin{lemma}\label{Freeness-directions for weights lemma}
		Fix a BKM LA $\mathfrak{g}$, $\lambda\in\mathfrak{h}^*$, and $M(\lambda)\twoheadrightarrow{ }V$. Then-
        \begin{itemize}
        \item $\mu-\sum_{i\in\mathcal{I}\setminus   J_{\lambda}}c_i\alpha_i\in\wt V\text{ for any } \mu\in \wt_{J_{\lambda}}V \text{ and for any numbers }c_i\in \mathbb{Z}_{\geq 0}\ \forall\ i \in J_{\lambda}^c$.
        \item More generally,  for any $i\in J_{\lambda}^c$, we have $\wt_{\mathcal{I}\setminus \{i\}}V\ - \mathbb{Z}_{\geq 0}\alpha_i \  \subseteq \ \wt V$.
        \end{itemize}
		%\[ \text{Therefore, }\ \wt  V -\mathbb{Z}_{\geq 0}\Pi_{\mathcal{I}\setminus J_{\lambda}}\subseteq \wt V. \]
	\end{lemma}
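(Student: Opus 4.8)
The plan is to reduce the statement to a single-node freeness claim and then bootstrap. The key observation is that the general assertion $\wt_{\mathcal{I}\setminus\{i\}}V - \mathbb{Z}_{\geq 0}\alpha_i \subseteq \wt V$ for each fixed $i\in J_\lambda^c$ already implies the first display: given $\mu\in\wt_{J_\lambda}V$, pick a weight $\widetilde\mu\in\wt V$ whose $J_\lambda$-projection is $\mu$, and peel off the coefficients $c_i$ for $i\in J_\lambda^c$ one node at a time, invoking the single-node statement repeatedly (each application only adds to the $J_\lambda^c$-part and leaves the $J_\lambda$-part untouched). So the real content is the ``more generally'' clause, and I would prove that first.

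So fix $i\in J_\lambda^c$. By Definition~\ref{Defn integrability of lambda}, $i\notin J_\lambda$ means precisely that $\lambda(\alpha_i^\vee)\notin \frac{A_{ii}}{2}\mathbb{Z}_{\geq 0}$; I would split into the three node-types. If $i\in\mathcal{I}^0$, then $A_{ii}=0$ and $i\notin J_\lambda$ forces $\lambda(\alpha_i^\vee)\neq 0$. Take any $\mu\in\wt_{\mathcal{I}\setminus\{i\}}V$ and a weight $\nu\in\wt V$ with the same $(\mathcal{I}\setminus\{i\})$-projection as $\mu$; then $\nu(\alpha_i^\vee)=\lambda(\alpha_i^\vee)-(\text{integer combination of }A_{ji}$'s$)$. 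Hmm — the subtlety is that $\nu(\alpha_i^\vee)$ need not be nonzero for an arbitrary such $\nu$. I would instead argue along the string: among all weights of $V$ with the prescribed $(\mathcal{I}\setminus\{i\})$-part, there is a ``highest'' one $\nu_0$ (largest $\alpha_i$-coefficient), and I claim $\nu_0(\alpha_i^\vee)\neq 0$. Indeed if it were $0$, then since $\nu_0$ has an edge-neighbor situation governed by $\supp(\lambda-\nu_0)$... the cleanest route is Lemma~\ref{Lemma imaginary root strings}: if $i$ has an edge to $\supp(\lambda-\nu_0)$ we get the whole ray $\nu_0-\mathbb{Z}_{\geq 0}\alpha_i\subseteq\wt V$ and are done; and if $i$ has \emph{no} edge to $\supp(\lambda-\nu_0)$, then $A_{ji}=0$ for all $j\in\supp(\lambda-\nu_0)$, so $\nu_0(\alpha_i^\vee)=\lambda(\alpha_i^\vee)\neq 0$, whence Lemma~\ref{Lemma real root strings}(3) gives $\nu_0-\mathbb{Z}_{\geq 0}\alpha_i\subseteq\wt V$. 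Either way the ray below $\nu_0$ lies in $\wt V$, and since every weight of $V$ with the given $(\mathcal{I}\setminus\{i\})$-part sits on that ray, we conclude $\mu-\mathbb{Z}_{\geq 0}\alpha_i\subseteq\wt V$. Wait, one must be slightly careful that $\mu$ itself is on the ray from $\nu_0$ downward — but that is exactly what ``$\nu_0$ is the highest weight of $V$ with that $(\mathcal{I}\setminus\{i\})$-projection'' means, so $\mu\preceq\nu_0$ along $\alpha_i$.

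For $i\in\mathcal{I}^-$ the argument is parallel: $i\notin J_\lambda$ gives $\lambda(\alpha_i^\vee)\notin\frac{A_{ii}}{2}\mathbb{Z}_{\geq 0}$; taking the highest weight $\nu_0$ of $V$ with the prescribed $(\mathcal{I}\setminus\{i\})$-projection, one has $\nu_0(\alpha_i^\vee)=\lambda(\alpha_i^\vee)+(\text{something})\cdot A_{ii}$ with the ``something'' a nonnegative integer (the $\alpha_i$-coefficient drop), so $\nu_0(\alpha_i^\vee)\leq\lambda(\alpha_i^\vee)$; if $\nu_0(\alpha_i^\vee)\leq 0$, Lemma~\ref{Lemma real root strings}(2) gives the descending ray below $\nu_0$, and if the edge-condition to $\supp(\lambda-\nu_0)$ holds, Lemma~\ref{Lemma imaginary root strings} does. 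The one case to rule out is $\nu_0(\alpha_i^\vee)>0$ with $i$ isolated from $\supp(\lambda-\nu_0)$, which as before forces $\nu_0(\alpha_i^\vee)=\lambda(\alpha_i^\vee)$, hence $\lambda(\alpha_i^\vee)>0$, and then $\lambda(\alpha_i^\vee)/(A_{ii}/2)<0\notin\mathbb{Z}_{\geq 0}$ is consistent with $i\notin J_\lambda$ — but we still need a descending ray; here I would go up one step with $e_i$ from $\nu_0$ to reach a weight $\nu_0+k\alpha_i$ with pairing $\leq 0$ and apply Lemma~\ref{Lemma real root strings}(2), exactly as in the proof of that lemma. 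Finally the case $i\in\mathcal{I}^+\cap J_\lambda^c$ means $\lambda(\alpha_i^\vee)\notin\mathbb{Z}_{\geq 0}$; then the $\mathfrak{sl}_2$-theory (Lemma~\ref{Lemma rank-1 BKM rep. theory} with $A_{ii}=2$) shows the $\mathfrak{sl}_{\alpha_i}$-module generated by the highest such $\nu_0$ is an infinite-dimensional Verma-type module, so $f_i^k$ never kills the top vector and $\nu_0-\mathbb{Z}_{\geq 0}\alpha_i\subseteq\wt V$. The main obstacle is the bookkeeping around ``the highest weight of $V$ with a fixed $(\mathcal{I}\setminus\{i\})$-projection'' and making sure in each node-type the pairing at that top weight lands in a regime where Lemma~\ref{Lemma real root strings} or Lemma~\ref{Lemma imaginary root strings} applies; once that case analysis is clean, the reduction from the single-node statement to the full statement is a routine finite induction on $|J_\lambda^c|$.
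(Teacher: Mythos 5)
Your reduction from the full statement to the single-node clause is fine, and your idea to split on node-type and invoke the root-string lemmas is a genuinely different route from the paper's. But as written it has a real gap, and the gap is exactly where the paper's argument is cleverest, so it's worth spelling out.

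The paper's proof never touches Lemmas~\ref{Lemma real root strings} or~\ref{Lemma imaginary root strings} and never splits on node type. It takes $F\in U(\mathfrak{n}^-)_{\mu-\lambda}$ with $Fv_\lambda\neq 0$; since $\supp(\lambda-\mu)\subseteq J_\lambda$, every $f_\beta$ in $F$ has $\beta\in\Delta^+_{J_\lambda}$, so $F$ commutes with $e_i$ for $i\in J_\lambda^c$. Then for $x=F\prod_{i\in J_\lambda^c}f_i^{c_i}v_\lambda$ one applies $\prod e_i^{c_i}$, pulls it through $F$ and through the $f_j$'s with $j\neq i$, and lands on $F\big(\prod e_i^{c_i}f_i^{c_i}\big)v_\lambda$. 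Each $e_i^{c_i}f_i^{c_i}v_\lambda$ is a nonzero scalar times $v_\lambda$, and the scalar is a product of factors $\lambda(\alpha_i^\vee)-(k-1)\tfrac{A_{ii}}{2}$, all nonzero precisely because $i\notin J_\lambda$. The whole point is that the rank-1 data is read off \emph{at $\lambda$}, where the hypothesis $i\in J_\lambda^c$ lives.

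Your argument instead reads the rank-1 data at $\nu_0=\mu$, and that is where it breaks. You claim (for $i\in\mathcal{I}^+$) that $\lambda(\alpha_i^\vee)\notin\mathbb{Z}_{\geq 0}$ forces the $\mathfrak{g}_{\{i\}}$-module over $Fv_\lambda$ to be Verma-type, i.e. you implicitly assume $\mu(\alpha_i^\vee)\notin\mathbb{Z}_{\geq 0}$. But $\mu(\alpha_i^\vee)=\lambda(\alpha_i^\vee)-\sum_{j\neq i}d_jA_{ij}$ and the off-diagonal contributions $-d_jA_{ij}$ are nonnegative, so $\mu(\alpha_i^\vee)$ can land in $\mathbb{Z}_{\geq 0}$ even when $\lambda(\alpha_i^\vee)$ does not: e.g.\ $\mathfrak{sl}_3$, $\lambda(\alpha_1^\vee)=-1$, $\lambda(\alpha_2^\vee)=1$, $\mu=\lambda-\alpha_2$ gives $\mu(\alpha_1^\vee)=0$. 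There the rank-1 Verma over $Fv_\lambda=f_2v_\lambda$ is \emph{not} simple, $f_1Fv_\lambda$ is $\mathfrak{g}_{\{1\}}$-maximal, and nothing in rank-1 theory prevents it from dying in some quotient $V$. (The same issue shows up in your $\mathcal{I}^-$ case, where moreover your ``go up one step with $e_i$'' has nothing to go up to, since $\nu_0=\mu$ already has $\alpha_i$-drop zero, and Lemma~\ref{Lemma real root strings}(2) only yields a finite interval, not a full descending ray.) The paper circumvents this cleanly because it shows $Ff_i^cv_\lambda\neq 0$, not $f_i^cFv_\lambda\neq 0$ — a different element of the same weight space — and the scalar governing the first is computed at $\lambda$, not at $\mu$. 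Your argument would need a separate lemma to show that the potential rank-1 maximal vector $f_i^{N+1}Fv_\lambda$ (with $N=\tfrac{2}{A_{ii}}\mu(\alpha_i^\vee)$) cannot actually vanish in any nonzero $V$; such a fix is possible but it essentially reproduces the paper's commutation argument anyway.
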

    Proof of it is very similar to that of \cite[Theorem 5.1]{Dhillon_arXiv}.
					As an immediate application of Lemma \ref{Freeness-directions for weights lemma} (also extending $J_{\lambda}^c$-freeness) we have the {\bf no holes implies full weights} result for all $V$, extending \cite[Theorem 3.14]{Teja--Khare} to BKM case.
First, a warm-up to Definition \ref{Defn holes in BKM setting} of holes in BKM case :
\begin{observation}
  Let $A=\begin{bmatrix}
                2 & 0 & 0 \\
                0 & 0 & 0 \\
                0 & 0 & -3
            \end{bmatrix}$, $\mathcal{I}^+=\{1\},\ \mathcal{I}^0 =\{2\}, \ \mathcal{I}^-= \{3\}$, and (any) fundamental weights $\varpi_1, \varpi_2, \varpi_3\in \mathfrak{h}^*$ \big($\dim \mathfrak{h}^* = 4$\big).
            Note $\varpi_1+\sqrt{2}\varpi_2+\varpi_3\in P^+\setminus P^{\pm}$ and 
$\lambda=\varpi_1- \frac{3}{2}\varpi_3\in P^{\pm}\setminus  P^+$. 
%Since $\Big\{ \  \Big(\{2\},\  f_{\{2\}}^{(n)}  \underset{ {\bf 2}\ \mapsto \ n }{: \{2\} \rightarrow \mathbb{Z}_{\geq 0}} \Big) \  \big|\ n\in \mathbb{Z}_{\geq 0} \Big\}\ \subseteq \mathrm{Indep}(J_{\lambda})$, .
Since $  \Big(\mathcal{I},\  \underset{{\bf 1}\mapsto 2 ,\ {\bf 2}\ \mapsto \ n,\ {\bf 3}\mapsto 2 }{m_{\mathcal{I}}^{(n)} : \mathcal{I} \rightarrow \mathbb{Z}_{\geq 0}} \Big)\in \mathrm{Indep}(J_{\lambda})\   \forall \ n \in \mathbb{Z}_{\geq 0}$, $\mathrm{Indep}(J_{\lambda})$ is infinite; 
each hole-space $M(\lambda)_{\lambda-n\alpha_2}$ consists of maximal vectors. 
In order to distinguish infinitely many holes along $\mathcal{I}^0$-directions, we included arbitrary power maps on fixed support of each hole; in contrast to KM setting where-in powers in holes are canonical (which is also true in BKM setting but along $\mathcal{I}^{\pm}$-directions) and are omitted.
If $\mathcal{H}_V\ni\Big(H, \underset{H\mapsto \{0\}}{\textbf{0-map}}\Big)$ for $V\twoheadleftarrow M(\lambda)$, then $V=0$.
We denote holes $\Big(\{h\}, \underset{h\mapsto k}{\textbf{k-map}} \Big)$ with singleton supports and holes $\big(H, \textbf{0-map}\big)$, by simply $(\{h\}, k)$ and resp. $(H, 0)$.
\end{observation}
\begin{prop}\label{Proposition no holes full weights}
Fix the notations of Lemma \ref{Freeness-directions for weights lemma}.
Suppose $\mathcal{H}_V=\emptyset$, i.e, $V$ has no holes (note $V$ need not be $M(\lambda)$).
Then $\wt V=\wt M(\lambda)=\lambda-\mathbb{Z}_{\geq 0}\Pi$. More generally,
		\[
		\wt V=\wt M(\lambda)\ \iff \ \mathcal{H}_{V}=\emptyset.
		\]
	\end{prop}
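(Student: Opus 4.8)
The plan is to prove the displayed equivalence; the substantive half is $\mathcal{H}_V=\emptyset\Rightarrow\wt V=\wt M(\lambda)=\lambda-\mathbb{Z}_{\geq 0}\Pi$, and everything else is immediate. For $\wt V=\wt M(\lambda)\Rightarrow\mathcal{H}_V=\emptyset$: any $(H,m_H)\in\mathcal{H}_V$ has $m_H$ not the zero map (else $V_\lambda=0$ by (H3), contradicting $V\neq 0$), so $\mu:=\lambda-\sum_{h\in H}m_H(h)\alpha_h\in\lambda-\mathbb{Z}_{\geq 0}\Pi=\wt M(\lambda)$ while $\mu\notin\wt V$ by (H3) — a contradiction. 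Since $\wt V\subseteq\wt M(\lambda)$ always, it remains to show, assuming $\mathcal{H}_V=\emptyset$, that $\lambda-\beta\in\wt V$ for every $\beta\in\mathbb{Z}_{\geq 0}\Pi$.

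First I would record two consequences of $\mathcal{H}_V=\emptyset$. \emph{Rank one}: for each $i\in J_\lambda$ the singleton hole $(\{i\},m_i)$, $m_i=|\frac{2}{A_{ii}}\lambda(\alpha_i^\vee)|+1$ (resp. every $(\{i\},k)$, $k\geq 1$, if $i\in\mathcal{I}^0$), is absent, so $f_i^{m_i}v_\lambda\neq 0$; by Lemma \ref{Lemma rank-1 BKM rep. theory} the $\mathfrak{g}_{\{i\}}$-module $U(\mathfrak{g}_{\{i\}})v_\lambda$ is then not a proper quotient of the rank-one Verma, hence equals it, so $f_i^k v_\lambda\neq 0$ for all $k\geq 0$ (for $i\notin J_\lambda$ this holds automatically, that rank-one Verma being simple); in particular $I_V=\emptyset$. \emph{Inheritance}: whenever $w:=f_j^{m}v_\lambda$ is $\mathfrak{g}$-maximal with $j$ non-adjacent to $S'\subseteq J_\lambda$ and $m$ the canonical power along $j$ (any $m$ if $j\in\mathcal{I}^0$), then $U(\mathfrak{g}_{S'})w$ again has no holes: a hole $(H,m_H)$ with $H\subseteq S'$ would give $f_H^{m_H}w=f_j^{m}f_H^{m_H}v_\lambda=0$ (as $f_j$ commutes with $\mathfrak{n}^-_{S'}$), while $(H\cup\{j\},\,m_H\cup\{j\mapsto m\})\in\mathrm{Indep}(J_\lambda)$ since $H\cup\{j\}$ is independent and the $j$-power is canonical or free — contradicting $\mathcal{H}_V=\emptyset$. (Taking $w=v_\lambda$: $U(\mathfrak{g}_S)v_\lambda$ has no holes for every $S\subseteq J_\lambda$.)

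The heart is an induction on $N=\height(\beta)$, uniform over all triples $(\mathfrak{g},\lambda,V)$ with $\mathcal{H}_V=\emptyset$, showing $\mu:=\lambda-\beta\in\wt V$; the base $\beta=0$ holds since $V\neq 0$. Let $S=\supp\beta$; for $j\in S$ write $c_j$ for the coefficient of $\alpha_j$ in $\beta$ and $\mu':=\lambda-(\beta-c_j\alpha_j)$, which lies in $\wt V$ by the inductive hypothesis (strictly smaller height) and has $\supp(\lambda-\mu')=S\setminus\{j\}$. Then: (i) if $S\not\subseteq J_\lambda$, pick $j\in S\cap J_\lambda^c$; Lemma \ref{Freeness-directions for weights lemma} gives $\mu=\mu'-c_j\alpha_j\in\wt V$. (ii) If $S=\{j\}\subseteq J_\lambda$, the rank-one consequence $f_j^{c_j}v_\lambda\neq 0$ finishes. (iii) If $|S|\geq 2$, $S\subseteq J_\lambda$, and some imaginary $j\in S$ is adjacent to $S\setminus\{j\}$, Lemma \ref{Lemma imaginary root strings} applied at $\mu'$ gives $\mu'-\mathbb{Z}_{\geq 0}\alpha_j\subseteq\wt V\ni\mu$. (iv) If $|S|\geq 2$, $S\subseteq J_\lambda\cap\mathcal{I}^+$, apply \cite[Theorem 3.14]{Teja--Khare} to the no-holes KM h.w.m.\ $U(\mathfrak{g}_S)v_\lambda$ (so $\mu\in\lambda-\mathbb{Z}_{\geq 0}\Pi_S$). (v) Otherwise $|S|\geq 2$, $S\subseteq J_\lambda$, and some imaginary $j\in S$ is isolated in $S$; put $\gamma:=\beta-c_j\alpha_j$, supported on $S':=S\setminus\{j\}\neq\emptyset$ and disjoint from $j$'s neighbours, so $(\lambda-\gamma)(\alpha_j^\vee)=\lambda(\alpha_j^\vee)$, and by Lemma \ref{Lemma reaching lambda from mu} $V_{\lambda-\gamma}$ is spanned by vectors $Gv_\lambda$ with $G\in U(\mathfrak{n}^-_{S'})$. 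If $j\in\mathcal{I}^0$: $w:=f_j^{c_j}v_\lambda$ is maximal and nonzero, $U(\mathfrak{g}_{S'})w$ has no holes (Inheritance) with top weight $\lambda-c_j\alpha_j$, and $\height(\gamma)=N-c_j<N$, so $\mu=(\lambda-c_j\alpha_j)-\gamma\in\wt U(\mathfrak{g}_{S'})w\subseteq\wt V$ by the inductive hypothesis. If $j\in\mathcal{I}^-$: take the maximal vector $w:=f_j^{m_j}v_\lambda$; the inductive hypothesis on the no-holes $U(\mathfrak{g}_{S'})w$ yields $Gw\neq 0$ for some $G\in U(\mathfrak{n}^-_{S'})_{-\gamma}$; as $Gw=f_j^{m_j}(Gv_\lambda)$ we get $Gv_\lambda\neq 0$, and then $U(\mathfrak{g}_{\{j\}})(Gv_\lambda)$ — a quotient of the non-simple rank-one Verma $M_{\mathfrak{g}_{\{j\}}}(\lambda(\alpha_j^\vee))$ containing the nonzero $f_j^{m_j}(Gv_\lambda)$ — must be that Verma (Lemma \ref{Lemma rank-1 BKM rep. theory}), so $f_j^{c_j}(Gv_\lambda)=Gf_j^{c_j}v_\lambda\neq 0$, i.e.\ $\mu\in\wt V$.

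The main obstacle is case (v): an isolated imaginary (especially negative) simple direction in $\supp(\lambda-\mu)$, where no root-string lemma applies and one must pass to the maximal vector $f_j^{m_j}v_\lambda$, verify that the no-holes property descends to the parabolic $\mathfrak{g}_{S'}$ — precisely where the asymmetry of $\mathrm{Indep}(J_\lambda)$ (free powers along $\mathcal{I}^0$, canonical powers along $\mathcal{I}^-$) is used — and then recombine a nonvanishing produced over $\mathfrak{g}_{S'}$ with rank-one BKM theory to propagate it in the $\alpha_j$-direction. Secondary care is needed to check that (i)–(v) are exhaustive and that the induction is legitimately invoked on the lower-rank modules $U(\mathfrak{g}_{S'})w$.
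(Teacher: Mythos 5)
Your proof is correct, and it handles the substantive direction by a genuinely different organization from the paper's. Both arguments induct on $\height(\lambda-\mu)$, and both, when $\supp(\lambda-\mu)$ contains an edge, drop a well-chosen node and finish by Lemma~\ref{Freeness-directions for weights lemma}, Lemma~\ref{Lemma imaginary root strings}, or $\mathfrak{sl}_2$-theory. The divergence is in the treatment of independent support: the paper settles that case in one line by observing that $\mathcal{H}_V=\emptyset$ forces every one-dimensional $M(\lambda)$-weight space (i.e.\ precisely the ones with independent support) to survive in $V$ --- an observation justified by a short $\mathfrak{sl}_2$-type calculation shrinking arbitrary powers $(c_i)_{i\in I}$ on an independent $I\subseteq J_\lambda$ down to the canonical hole-powers. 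You never invoke this; instead your ``Inheritance'' consequence (no-holes descends to $U(\mathfrak{g}_{S'})w$ for a $\mathfrak{g}$-maximal $w=f_j^m v_\lambda$ with $j$ orthogonal to $S'$) lets you recurse the induction on lower-rank parabolics with shifted top weight, and you delegate the purely real case $\supp(\lambda-\mu)\subseteq J_\lambda\cap\mathcal{I}^+$ to \cite[Thm.~3.14]{Teja--Khare} as a black box. What each buys: the paper's one-dimensional observation is more economical for this proposition and keeps the proof self-contained (it does not re-invoke the KM case); your Inheritance device is longer here but is a clean reusable structural fact --- it closely echoes the pass-to-$U(\mathfrak{n}^-)f_K^{m_K}v_\lambda$ step the authors deploy later in the proof of Theorem~\ref{Theorem wt-formula for all V in BKM setting} --- and your induction ``uniform over all triples $(\mathfrak{g},\lambda,V)$'' anticipates the stronger bookkeeping the paper eventually adopts in proving Theorem~\ref{Theorem weight formula for nice modules}.
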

	\begin{proof}[{\bf \textnormal Proof.}]
		 $\wt V=\wt M(\lambda)\implies \mathcal{H}_V=\emptyset$ follows by definitions.
		For the converse, assuming $\mathcal{H}_V=\emptyset$ we show every $\mu=\lambda-\sum_{t\in \mathcal{I}}c_t\alpha_t\in \wt M(\lambda)$ lies in $\wt V$, by inducting on $\mathrm{ht}(\lambda-\mu)=\sum_{t\in \mathcal{I}}c_t\alpha_t\geq 0$.
		Induction step: We assume $\mathrm{ht}(\lambda-\mu)>0$, i.e. $I:=\supp(\lambda-\mu)\neq\emptyset $.
		If $I$ is independent,  (i.e., $\mathrm{dim}M(\lambda)_{\mu}=1$), our assumption of all 1-dim. weight spaces of $M(\lambda)$ being intact in $V$, implies $\mu\in \wt V$.
		So we assume two nodes $a,b\in I$ to be connected by an edge(s) in the Dynkin graph.

We find in each case we proceed in below, a suitable node $i\in I$ with $\mathfrak{g}_{\{i\}}$-action on $V_{\mu+c_i\alpha_i}$ ($\neq 0$ by the induction hypothesis) yields $\mu\in \wt V$. 
Recall, $\mathfrak{g}_{\{i\}}$ is either $\mathfrak{sl}_2(\mathbb{C})$  ($\text{if }i\in \mathcal{I}^+\  \sqcup \mathcal{I}^-$), or 
$\text{3-dim. Heisenberg LA}$ ($\text{if }i\in \mathcal{I}^0$).
		Suppose $I\cap J_{\lambda}^c\neq 0$. 
	Pick any $i\in I\cap J_{\lambda}^c$, and a weight vector $Fv_{\lambda}\neq 0 \in V_{\mu+c_i\alpha_i}$ for some $F\in U(\mathfrak{n}^-)_{\mu-\lambda}$.
		Now $F\cdot f_i^{c_i}\cdot v_{\lambda}\in V_{\mu}\neq 0$ by \eqref{sl-2 theory non vanishing saclar for e-i action}, implies $\mu\in \wt V$:
		\begin{equation}\label{sl-2 theory non vanishing saclar for e-i action}    Ff_i^{c_i}v_{\lambda}=0=e_i^{c_i}Ff_i^{c_i}v_{\lambda}\implies\ 
		0= Fe_i^{c_i}f_i^{c_i}v_{\lambda} = c_i\Big[\lambda(\alpha_i^{\vee})- (c_i-1)\frac{A_{ii}}{2}\Big]Fv_{\lambda}\neq 0\in V_{\mu} \Rightarrow\!\Leftarrow.     \end{equation}
		So, we proceed assuming $J_{\lambda}\supseteq I$. 
When $\{a,b\}\cap \mathcal{I}^0\sqcup\mathcal{I}^-\neq \emptyset$, we choose any $i$ form it and apply Lemma \ref{Lemma imaginary root strings} to $\mu+c_i\alpha_i$ for $\mu\in \left(\mu+c_i\alpha_i-\mathbb{Z}_{\geq 0}\alpha_i\right)\subseteq \wt  V$.
		Finally suppose $\{a,b\}\subseteq I\cap\mathcal{I}^+\subseteq J_{\lambda}$; so $\lambda(\Pi_{\{a,b\}})\subset \mathbb{Z}_{\geq 0}$.
        W.l.o.g. let $c_b\geq c_a$.
        The proof can be completed following the standard calculations in \cite[Proof of Theorem 3.14 in Appendix-B]{Teja--Khare}, by replacing $(i',i)$ therein by $(b,a)$.	
	\end{proof}
	\section{Theorems \ref{Theorem weight formula for nice modules} \& \ref{Theorem wt-formula by composition series simples for nice V}: Weyl-orbit type and composition series based weight-formulas}\label{Section 3}
We show our first two main results Theorem \ref{Theorem weight formula for nice modules}, \ref{Theorem wt-formula by composition series simples for nice V} that yields at once uniform weight-formulas for all : i) simple h.w.m.s $L(\lambda)$ $\forall$ $\lambda\in \mathfrak{h}^*$; ii) parabolic Vermas in BKM setting, and for their generalizations $\mathbb{M}(\lambda,\text{ nice }  \mathcal{H})$ (Definition \ref{Defn holes in BKM setting}[d]); iii) moreover for all h.w.m.s $V$ with $\mathcal{H}_V$ nice.
%We reformulate the notion of nice holes using a partial order on the hole-space $\mathrm{Indep}(J_{\lambda})$:
	\begin{definition}[\bf Partial order set $\big(\mathrm{Indep}(J_{\lambda}),\ \preceq \big)$  and minimal holes]\label{Defn partial ordering on holes}
		Fix two independent subsets $H, H' \subseteq \mathcal{I}$, and power sequences $m_H:H \rightarrow \mathbb{Z}_{\geq 0}$ and $m_{H'}: H' \rightarrow \mathbb{Z}_{\geq 0}$.
        We define
		\[
		(H',m_{H'})\ \preceq \ (H,m_H) \quad\ \  \text{iff}\ \ \quad  H'\subseteq H \ \text{ and }\  m_{H'}(h')\leq m_H(h')\ \forall \ h'\in H'. 
		\]
	 $(H, {\bf 0}\text{-map})\preceq  \text{ any }(H, m_H:H\rightarrow\mathbb{Z}_{\geq 0})$.	
        The subset consisting of the minimal holes in hole-set $\mathcal{H}$:
		\[
		\mathcal{H}^{\min}:=\big\{(H,m_H)\in \mathcal{H}\ |\ (H,m_H) \succeq (H',m_{H'})\in \mathcal{H} \ \implies\  (H',m_{H'})=(H,m_H)\big\}.
		\]
	\end{definition}
	A useful note on the freeness of higher order Vermas $\mathbb{M}(\lambda,\mathcal{H})$ (towards parabolic induction):
	\begin{observation} 
		Fix $\mathcal{H}\subseteq \text{Indep}(J_{\lambda})$. 
        Suppose for some $I\subseteq \mathcal{I}$, all the minimal holes of $\mathcal{H}$ are contained in $I^c$, i.e. $(H,m_{H})\in \mathcal{H}_V^{\min} \ \implies \ H\subseteq I^c$. 
		So there are no holes in $I$;
		e.g. no hole of $V \twoheadleftarrow M(\lambda)$ is contained in $I=J_{\lambda}^c$.
		Then $\mathbb{M}(\lambda, \mathcal{H})$ is the free $U\left(\oplus_{\alpha\in \Delta^-\setminus \Delta_{I^c}^-}\mathfrak{g}_{\alpha}\right)$-module -- particularly free $U(\mathfrak{n}^-_I)$-module -- freely generated by any (PBW) vector space basis of the $\mathfrak{g}_{I^c}$-h.w.m. $U(\mathfrak{n}^-_{I^c})\cdot \mathbb{M}(\lambda,\mathcal{H})_{\lambda}$.
		This follows by the existence of the suitable PBW-monomial basis for $U(\mathfrak{n}^-)$  where-in  $I^c$-supported elements always appear first, via $U(\mathfrak{n^-})=U\left(\oplus_{\alpha\in  \Delta^-\setminus \Delta_{I^c}^-}\mathfrak{g}_{\alpha}\right)\otimes U(\mathfrak{n}^-_{I^c})$.
	\end{observation}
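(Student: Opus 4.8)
The plan is to realize $\mathbb{M}(\lambda,\mathcal{H})$ as an induced, hence free, module over $U(\mathfrak{m})$ by means of a semidirect decomposition of $\mathfrak{n}^-$, reducing the assertion to routine PBW bookkeeping. First I would reduce to the case $\mathcal{H}=\mathcal{H}^{\min}$, using the equality $\mathbb{M}(\lambda,\mathcal{H})=\mathbb{M}(\lambda,\mathcal{H}^{\min})$ noted in Corollary \ref{Corollary to Theorem A}: whenever $(H',m_{H'})\preceq (H,m_H)$ with $H'\subseteq H$ independent, the extra commuting powers of the $f_h$ factor out, so $f_H^{m_H}v_\lambda\in U(\mathfrak{n}^-)f_{H'}^{m_{H'}}v_\lambda$. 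Set $\mathfrak{m}:=\bigoplus_{\alpha\in\Delta^-\setminus\Delta^-_{I^c}}\mathfrak{g}_\alpha$, the span of the negative root spaces whose root support meets $I$. Since a sum of two such roots, if it is a root, again has support meeting $I$, and $[\mathfrak{g}_\alpha,\mathfrak{g}_\beta]\subseteq\mathfrak{g}_{\alpha+\beta}$, one checks that $\mathfrak{m}$ is an ideal of $\mathfrak{n}^-$ complemented by the subalgebra $\mathfrak{n}^-_{I^c}$, i.e. $\mathfrak{n}^-=\mathfrak{m}\rtimes\mathfrak{n}^-_{I^c}$. PBW then furnishes a linear isomorphism $U(\mathfrak{n}^-)\cong U(\mathfrak{m})\otimes_{\mathbb C}U(\mathfrak{n}^-_{I^c})$ with the $\mathfrak{m}$-factors on the left, so that $M(\lambda)=U(\mathfrak{n}^-)v_\lambda\cong U(\mathfrak{m})\otimes_{\mathbb C}M_{I^c}(\lambda)$ as a left $U(\mathfrak{m})$-module, where $M_{I^c}(\lambda):=U(\mathfrak{n}^-_{I^c})v_\lambda$ is the $\mathfrak{g}_{I^c}$-Verma module; this already exhibits $M(\lambda)$ as free over $U(\mathfrak{m})$ on any vector-space basis of $M_{I^c}(\lambda)$.

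Next I would locate the relation submodule $N:=\langle f_H^{m_H}v_\lambda\mid (H,m_H)\in\mathcal{H}\rangle$ inside this decomposition. By the hypothesis every minimal hole $(H,m_H)$ has $H\subseteq I^c$, so $f_H^{m_H}\in U(\mathfrak{n}^-_{I^c})$ and each hole vector lies in the copy $1\otimes M_{I^c}(\lambda)$. Because $\mathfrak{m}$ is an ideal, $\mathfrak{n}^-_{I^c}$ preserves $1\otimes M_{I^c}(\lambda)$ and acts there exactly as on the $\mathfrak{g}_{I^c}$-Verma module; hence $U(\mathfrak{n}^-_{I^c})\cdot\{f_H^{m_H}v_\lambda\}=1\otimes N_0$, where $N_0\subseteq M_{I^c}(\lambda)$ is the $U(\mathfrak{n}^-_{I^c})$-submodule generated by the hole vectors, and therefore $N=U(\mathfrak{m})\cdot(1\otimes N_0)=U(\mathfrak{m})\otimes_{\mathbb C}N_0$. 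Quotienting, $\mathbb{M}(\lambda,\mathcal{H})=M(\lambda)/N\cong U(\mathfrak{m})\otimes_{\mathbb C}\bigl(M_{I^c}(\lambda)/N_0\bigr)$, which is free over $U(\mathfrak{m})$ on any vector-space basis of $M_{I^c}(\lambda)/N_0$. Finally, the image of $1\otimes M_{I^c}(\lambda)$ in $\mathbb{M}(\lambda,\mathcal{H})$ is precisely the $\mathfrak{g}_{I^c}$-highest weight submodule $U(\mathfrak{n}^-_{I^c})\cdot\mathbb{M}(\lambda,\mathcal{H})_{\lambda}$, which identifies $M_{I^c}(\lambda)/N_0$ with it; and since $\mathfrak{n}^-_I\subseteq\mathfrak{m}$ with $U(\mathfrak{m})$ free over $U(\mathfrak{n}^-_I)$ by PBW, $\mathbb{M}(\lambda,\mathcal{H})$ is a fortiori free over $U(\mathfrak{n}^-_I)$.

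I expect the one delicate point to be the compatibility asserted in the second step: that restricting to the subspace $N_0$ of the second tensor factor and then applying $U(\mathfrak{m})\otimes_{\mathbb C}-$ recovers the genuine $U(\mathfrak{n}^-)$-submodule generated by the hole vectors, and that the quotient by such a ``sub-free'' submodule remains free. This rests entirely on $\mathfrak{m}$ being an ideal of $\mathfrak{n}^-$ — so that $\mathfrak{n}^-_{I^c}$ does not drag vectors out of $1\otimes M_{I^c}(\lambda)$ and acts there by the Verma action — together with the trivial exactness of $U(\mathfrak{m})\otimes_{\mathbb C}-$; both become immediate once the decomposition $\mathfrak{n}^-=\mathfrak{m}\rtimes\mathfrak{n}^-_{I^c}$ and its PBW consequence are in place. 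Everything else is standard PBW normal-form manipulation and the already-noted reduction to minimal holes.
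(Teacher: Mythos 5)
Your proposal is correct and takes essentially the same route the paper sketches: it uses the PBW factorization $U(\mathfrak{n}^-) \cong U(\mathfrak{m}) \otimes U(\mathfrak{n}^-_{I^c})$ (with $\mathfrak{m} := \oplus_{\alpha \in \Delta^- \setminus \Delta^-_{I^c}} \mathfrak{g}_\alpha$), the reduction to $\mathcal{H}^{\min}$, and the observation that the hole generators live in the $\mathfrak{n}^-_{I^c}$-factor so that the relation submodule is $U(\mathfrak{m}) \otimes N_0$. Your added explanation that $\mathfrak{m}$ is an ideal of $\mathfrak{n}^-$ (giving $\mathfrak{n}^- = \mathfrak{m} \rtimes \mathfrak{n}^-_{I^c}$) makes precise why the relation submodule factors through the second tensor slot; this is implicit in the paper's one-line appeal to a suitable PBW ordering but is a worthwhile clarification.
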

The necessity of (C1)--(C2) in the weight-formula in Theorem \ref{Theorem weight formula for nice modules} is seen below; it is a consequence of, as well as isolates, the symmetry due to singleton $\mathcal{I}^+$-holes of $V$ vs of the other holes of $V$.   
	\begin{lemma}\label{weight formula 1 forward inclusion lemma}
		Fix any BKM LA $\mathfrak{g}$,  $\lambda\in\mathfrak{h}^*$, and   $M(\lambda)\twoheadrightarrow V\neq 0$ with integrability $I_V$ (and $\mathcal{H}_V$ unconstrained). 
        The forward inclusion in formula \eqref{Weight formula for nice modules} in Theorem \ref{Theorem weight formula for nice modules} is true :
		\begin{equation}\label{Weight formula 1 - forward inclusion}
		\wt V \ \subseteq  \ W_{I_V} \{\mu \preceq \lambda\ |\ 
		\mu \text{ satisfies conditions } (C1) \text{ and }(C2) \text{ in Theorem }\ref{Theorem weight formula for nice modules} \}.
		\end{equation}
			\end{lemma}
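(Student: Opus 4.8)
The plan is to obtain the two defining conditions $(C1)$ and $(C2)$ of the right-hand side separately: $(C1)$ records a $W_{I_V}$-symmetry of $\wt V$, while $(C2)$ merely asserts that the ``collapsed'' weight $\mu'$ is reachable inside $V$ from the top $\lambda$. Niceness of $\mathcal{H}_V$ is irrelevant for the forward inclusion; only real-singleton holes (through $I_V$) and the highest weight structure of $V$ enter.

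\emph{Reduction to $(C2)$.} Since $I_V\subseteq\mathcal{I}^+$ and $V$ is integrable over $\mathfrak{p}_{I_V}$, hence over the Kac--Moody subalgebra $\mathfrak{g}_{I_V}$, the set $\wt V$ is $W_{I_V}$-stable and $\mu(\alpha_i^\vee)\in\mathbb{Z}$ for every $\mu\in\wt V$, $i\in I_V$. Given $\mu\in\wt V$, as long as $\mu(\alpha_i^\vee)<0$ for some $i\in I_V$, the weight $s_i\mu=\mu-\mu(\alpha_i^\vee)\alpha_i\succ\mu$ again lies in $\wt V$ and strictly decreases $\height(\lambda-\cdot\,)\geq 0$; iterating terminates at an $I_V$-dominant $\mu^+\in W_{I_V}\mu\cap\wt V$, which satisfies $(C1)$. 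Hence it suffices to show that \emph{every} $\nu\in\wt V$ satisfies $(C2)$, for then $\mu\in W_{I_V}\mu^+\subseteq W_{I_V}\{\nu\mid(C1)\text{ and }(C2)\}$, which is \eqref{Weight formula 1 - forward inclusion}.

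\emph{The collapse step.} The core claim is: if $\nu=\lambda-\sum_{j\in J}c_j\alpha_j\in\wt V$ with $J=\supp(\lambda-\nu)$, and $J_1$ is a non-singleton connected Dynkin component of $J$, then there is $j_1\in J_1$ with $\lambda-\alpha_{j_1}-\sum_{j\in J\setminus J_1}c_j\alpha_j\in\wt V$. To prove it, pick $0\neq x\in V_\nu$ of the PBW shape of Lemma~\ref{Lemma reaching lambda from mu}(a); since $J_1$ is a component of $J$ there are no edges between $J_1$ and $J\setminus J_1$, so the negative root vectors indexed by $J_1$ commute with those indexed by $J\setminus J_1$, and we may rewrite $x=P\,G\,v_\lambda$ with $P\in U(\mathfrak{n}^-_{J_1})$ of $J_1$-degree $-\sum_{j\in J_1}c_j\alpha_j\neq 0$ and $G\in U(\mathfrak{n}^-_{J\setminus J_1})$. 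The same commutations give $e_jGv_\lambda=Ge_jv_\lambda=0$ for all $j\in J_1$, so $Gv_\lambda\neq 0$ is a highest weight vector for $\mathfrak{g}_{J_1}$; if all $f_j$ $(j\in J_1)$ killed it, then $U(\mathfrak{n}^-_{J_1})Gv_\lambda=\mathbb{C}Gv_\lambda$ would be concentrated in weight $\nu+\sum_{j\in J\setminus J_1}c_j\alpha_j$, contradicting $P(Gv_\lambda)=x\neq 0$. So $f_{j_1}Gv_\lambda\neq 0$ for some $j_1\in J_1$, and this vector has the asserted weight.

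\emph{Induction for $(C2)$, and the obstacle.} Fix $\nu\in\wt V$ and induct on the number of non-singleton Dynkin components of $J=\supp(\lambda-\nu)$. If there are none, $\mu'=\nu\in\wt V$. Otherwise apply the collapse step to a non-singleton component $J_1$, obtaining $\tilde\nu\in\wt V$ with $\lambda-\tilde\nu=\alpha_{j_1}+\sum_{j\in J\setminus J_1}c_j\alpha_j$; as $J_1$ is a component, $j_1$ is isolated in $\supp(\lambda-\tilde\nu)$, so $\tilde\nu$ has exactly one fewer non-singleton component. By the inductive hypothesis there are representatives $j_k$ $(k\geq 2)$ for which $\tilde\nu'\in\wt V$, and comparing coefficients shows $\tilde\nu'$ is precisely the canonical weight $\nu'$ for the choice ``$j_1$ on $J_1$, $j_k$ on $J_k$ for $k\geq 2$'', so $\nu$ satisfies $(C2)$. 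The main obstacle is making the collapse step airtight -- the commutation rewriting $x=PGv_\lambda$, the observation that $Gv_\lambda$ is a new highest weight vector for $\mathfrak{g}_{J_1}$, and the forced non-vanishing of some $f_{j_1}Gv_\lambda$ -- together with the coefficient bookkeeping in the induction; the $W_{I_V}$-reduction is routine integrability.
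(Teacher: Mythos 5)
Your proof is correct and follows essentially the same route as the paper's: reduce to an $I_V$-dominant representative via $W_{I_V}$-invariance, then exploit the commutativity of negative root vectors across disconnected Dynkin components of $\supp(\lambda-\nu)$ to factor a nonzero PBW monomial $x = P\,G\,v_\lambda$ and extract a collapsed weight; the paper peels off the rightmost factor of each component simultaneously, whereas you package the same non-vanishing observation as an induction on the number of non-singleton components, but the underlying mechanism is identical. (A harmless typo: the weight of $Gv_\lambda$ is $\nu + \sum_{j\in J_1}c_j\alpha_j = \lambda - \sum_{j\in J\setminus J_1}c_j\alpha_j$, not $\nu+\sum_{j\in J\setminus J_1}c_j\alpha_j$.)
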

	\begin{proof}[{\bf \textnormal Proof.}]
		Fix $\mu\in \wt V$, with property (C1) (by the $W_{I_V}$-invariance of 
 $V$).
 		For (C2), we assume $\mu\precneqq \lambda$, and fix  $c_j$s, support $J$ and components $J_k\ni j_k$, $\mu'$, all as therein.
		By Lemma \ref{Lemma reaching lambda from mu}(a), we fix 
		\begin{center}
		$x=(f_{i_1}\cdots f_{i_r})\cdot v_{\lambda}\neq 0 \in V_{\mu}\quad \text{ for nodes }\ 
		i_1,\ldots, i_r\in J \text{ s.t. } \mu=\lambda-\sum_{j\in J}c_j\alpha_j=\lambda-\sum_{t=1}^{r}\alpha_{i_t}$.
		\end{center}
        Above, $i_1,\ldots, i_r$ need not be distinct.
		If $n=1$ (i.e. $J$ is connected) and $|J_1|=1$, the result is manifest as $\mu'=\mu$.
		If $n=1$ and $|J_1|>1$, we set $\mu'=\lambda-\alpha_{i_r}$ lying in $\wt V$ by $f_{i_r}v_{\lambda}\neq 0$.
		For simplicity, we show the result for $n=2$; the proof in the general case $2\leq n\in \mathbb{N}$ can be completed analogously.
		We set $d:=\sum_{j\in J_1}c_j\ \leq \ r=\sum_{j\in J_1\sqcup J_2}c_j$.
		By permuting $\{1,\ldots,r\}$ -- since $f_p$ and $f_q$ commute $\forall$ $p\in J_1,\ q\in J_2$ -- we assume  $i_{1},\ldots, i_{d}\in J_1$ \ and \ $i_{d+1},\ldots, i_{r}\in J_2$, and thereby we set 
		\[
       \ F_1:= (f_{i_{1}}\cdots f_{i_{d}})\in U\big(\mathfrak{n}^-_{J_1}\big) \ \text{ and }\ F_2:=(f_{i_{d+1}}\cdots f_{i_r}) \in U\big(\mathfrak{n}^-_{J_2}\big); \quad \text{so that }\   
		x=F_1F_2 \cdot v_{\lambda}\ \neq 0.
		\]
		For $t\in \{1,2\}$, if $|J_t|=1$, the $t^{\text{th}}$-monomial  above is  $f_{j_t}^{c_{j_t}}$.
		Set 
		$b_t=\begin{cases}
		1 \  & \text{if }|J_t|>1,\\
		c_{j_t}\  & \text{if }|J_t|=1.
		\end{cases}
		$  $x=(F_1\cdot f_{i_d}^{-b_1})\\  (F_2\cdot f_{i_r}^{-b_2})\cdot f_{i_d}^{b_1}f_{i_r}^{b_2}\cdot v_{\lambda}\neq 0$ (in spirit of \cite{Malikov}) shows $\mu'=\lambda-b_1\alpha_{i_d}-b_2\alpha_{i_r}\in \wt V$ as desired.
	\end{proof}
    \subsection{All h.w.m.s $V$ with Minkowski difference weight-formulas}\label{Subsection Minkowski difference wt.-form.}
Lemma \ref{weight formula 1 forward inclusion lemma} naturally raises the question of finding all $V$ for which the reverse inclusion (i.e., $\wt V $ equaling the $W$-orbits) in \eqref{Weight formula 1 - forward inclusion} is true.
	Using nice-holes (Definition \ref{Defn holes in BKM setting}[d], extending integrability) solves it, and proves Theorem \ref{Theorem weight formula for nice modules}. 
   For our study of the $I_V$-symmetry side of $\wt V$ (for Theorem \ref{Theorem weight formula for nice modules}), crucial is the understanding of the complementary freeness aspect : Minkowski difference formulas, Proposition \ref{Proposition Minkowski difference formula} below.
	We now extend the results from KM case \cite{MDWF} to BKM setting; for freeness $\mathcal{I}\setminus J_V$ of $\wt V$ below.
	\begin{lemma}\label{Lemma freeness outside holes support}
		Let $\mathfrak{g}, V$ be as in Theorem \ref{Theorem weight formula for nice modules} ($\mathcal{H}_V$ is nice). 
		Set $J_V:=\bigcup\limits_{(H,m_H)\in \mathcal{H}_V^{\min}}H$; 
		so $J_V\cap \mathcal{I}^+\subseteq I_V$.
		We have refining the result 
for $J=J_{\lambda}$ in Lemma \ref{Freeness-directions for weights lemma}, now to $J=J_V$ (and moreover for all $J \supseteq J_V$):
		\[
		\big[\wt V\cap (\lambda-\mathbb{Z}_{\geq 0}\Pi_{J_V})\big] \ -\  \mathbb{Z}_{\geq 0}\Pi_{J_V^c}\ \ \subseteq\  \wt V.
		\]
		%{\color{red} In particular, $\wt V -\mathbb{Z}_{\geq 0}\Pi_{J_V^c}\subseteq \wt V$.}
	\end{lemma}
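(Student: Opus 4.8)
The plan is to argue by downward induction on heights, mirroring the strategy of Proposition~\ref{Proposition no holes full weights} and Lemma~\ref{Freeness-directions for weights lemma}, but localized to the subgraph $J_V$. First I would fix a highest weight vector $v_\lambda\in V_\lambda$ and a weight $\nu\in \wt V\cap(\lambda-\mathbb{Z}_{\geq 0}\Pi_{J_V})$; I want to show $\nu-\sum_{i\in J_V^c}c_i\alpha_i\in\wt V$ for all $c_i\in\mathbb{Z}_{\geq 0}$. The observation on freeness of higher order Vermas tells us that since every minimal hole of $V$ is supported inside $J_V$, there are no holes supported in $J_V^c$; more precisely $\mathbb{M}(\lambda,\mathcal{H}_V)$ is a free $U(\mathfrak{n}^-_{J_V^c})$-module over the $\mathfrak{g}_{J_V}$-type piece $U(\mathfrak{n}^-_{J_V})\mathbb{M}(\lambda,\mathcal{H}_V)_\lambda$. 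Since $\wt V=\wt\mathbb{M}(\lambda,\mathcal{H}_V)$ will be available once Theorem~\ref{Theorem wt-formula for all V in BKM setting} is in place --- but to avoid circularity I would instead work directly with $V$ --- the cleaner route is the $\mathfrak{sl}_2/$Heisenberg propagation argument already used twice above.

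The key steps, in order: (1)~Reduce to showing $\wt_{\mathcal{I}\setminus\{i\}}V - \mathbb{Z}_{\geq 0}\alpha_i\subseteq \wt V$ for each single $i\in J_V^c$, exactly as in Lemma~\ref{Freeness-directions for weights lemma}, and then iterate; the point that only makes sense now is that a weight $\mu\in\wt_{J_V}V$ is a special case of $\wt_{\mathcal{I}\setminus\{i\}}V$ for $i\in J_V^c$. (2)~Given $\mu$ with $0\neq F v_\lambda\in V_\mu$ for $F\in U(\mathfrak{n}^-)_{\mu-\lambda}$ and $i\in J_V^c$, I would show $(F f_i^{c})v_\lambda\neq 0$ by applying $e_i^{c}$: commuting $e_i$ past $F$ (legal since $i\notin\supp(\lambda-\mu)$ modulo the usual care — actually one commutes $e_i^c$ to the right picking up only $[\alpha_i^\vee,\cdot]$ terms and $e_i v_\lambda=0$), one gets $e_i^c(Ff_i^c)v_\lambda = F e_i^c f_i^c v_\lambda = c\big(\lambda(\alpha_i^\vee)-(c-1)\tfrac{A_{ii}}{2}\big)\cdots$ up to the standard $\mathfrak{sl}_2$ scalar chain, which is nonzero precisely because $i\in J_V^c\subseteq J_\lambda^c$ means $\lambda(\alpha_i^\vee)\notin\frac{A_{ii}}{2}\mathbb{Z}_{\geq 0}$, hence no scalar in the chain vanishes; the Heisenberg case $i\in\mathcal{I}^0\cap J_V^c$ cannot occur since $\mathcal{I}^0\subseteq J_\lambda$, so $i\in\mathcal{I}^+\sqcup\mathcal{I}^-$ and the $\mathfrak{sl}_2$ computation \eqref{sl-2 theory non vanishing saclar for e-i action} applies verbatim. (3)~Conclude $\mu-c\alpha_i\in\wt V$, then induct on $|\{i: c_i>0\}|$ to peel off one free direction at a time, using at each stage that the intermediate weight still lies in $\wt V$ and still has no $\alpha_i$ in the relevant support for the next index. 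The ``moreover for all $J\supseteq J_V$'' clause is immediate since enlarging $J_V$ only shrinks $J_V^c$ and enlarges the set of allowed starting weights on the left, while the right side $\wt V$ is unchanged — one just notes the inclusion for $J$ follows from the one for $J_V$ because $\wt V\cap(\lambda-\mathbb{Z}_{\geq0}\Pi_J)-\mathbb{Z}_{\geq0}\Pi_{J^c}\subseteq \wt V\cap(\lambda-\mathbb{Z}_{\geq0}\Pi_{J_V})-\mathbb{Z}_{\geq0}\Pi_{J_V^c}$ after reorganizing the subtracted directions, using that $J^c\subseteq J_V^c$.

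The main obstacle I anticipate is \emph{not} the propagation step (which is routine $\mathfrak{sl}_2$-theory) but justifying that $J_V^c\subseteq J_\lambda^c$, i.e.\ that $J_V\subseteq J_\lambda$: this needs the niceness hypothesis together with condition (H1)--(H2) in Definition~\ref{Defn holes in BKM setting}, namely that every $(H,m_H)\in\mathcal{H}_V^{\min}$ has $H\subseteq J_\lambda$, so that $J_V=\bigcup_{(H,m_H)\in\mathcal{H}_V^{\min}}H\subseteq J_\lambda$ and consequently $J_V\cap\mathcal{I}^+\subseteq I_V$ (the latter because a minimal real hole must be a singleton $(\{i\},\lambda(\alpha_i^\vee)+1)$ by niceness, which is exactly membership in $I_V$). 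Once this containment is in hand, the argument is a direct transcription of the two earlier proofs, and the only subtlety is bookkeeping the order in which free directions are removed so that the hypothesis ``$\mu\in\wt_{\mathcal{I}\setminus\{i\}}V$'' is maintained — which is automatic since we only ever add negative multiples of $\alpha_i$ for $i\in J_V^c$ and never touch the $J_V$-coordinates.
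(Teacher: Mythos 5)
Your proposal has a fundamental error in the containment direction, and this error is precisely where the real work of the lemma lies. You write ``$i\in J_V^c\subseteq J_\lambda^c$ means $\lambda(\alpha_i^\vee)\notin\frac{A_{ii}}{2}\mathbb{Z}_{\geq 0}$,'' but the inclusion is backwards. Condition (H1) in Definition~\ref{Defn holes in BKM setting} gives $H\subseteq J_\lambda$ for every hole, hence $J_V\subseteq J_\lambda$, hence $J_\lambda^c\subseteq J_V^c$. So $J_V^c$ can, and in general does, contain nodes of $J_\lambda$: namely, nodes $i$ with $\lambda(\alpha_i^\vee)\in\frac{A_{ii}}{2}\mathbb{Z}_{\geq 0}$ that simply do not appear in the support of any minimal hole of $V$. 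For such $i$ the $\mathfrak{sl}_2$ scalar chain in \eqref{sl-2 theory non vanishing saclar for e-i action} \emph{does} vanish once $c_i$ exceeds $\frac{2}{A_{ii}}\lambda(\alpha_i^\vee)$, so the propagation step on which your whole proof rests collapses exactly on the hard subcase. Your remark that ``the Heisenberg case $i\in\mathcal{I}^0\cap J_V^c$ cannot occur since $\mathcal{I}^0\subseteq J_\lambda$'' is wrong for the same reason --- $\mathcal{I}^0\subseteq J_\lambda$ does not force $\mathcal{I}^0\subseteq J_V$.

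In other words, the only case where your reduction to Lemma~\ref{Freeness-directions for weights lemma} works is $\supp(x)\subseteq J_\lambda^c$, and that is already covered by that lemma. The genuine content of the present lemma is the case $\supp(x)\subseteq J_\lambda\setminus J_V$, and there one must replace the scalar computation by a structural argument: the paper proceeds by induction on height, handles the case of independent supports by observing that $\dim M(\lambda)_{\mu-x}=1$ and no hole-weight can sit between $\lambda$ and $\mu-x$ (because holes are supported in $J_V$ whereas $\supp(x)\subseteq J_V^c$, so a hole dominating $\mu-x$ would already dominate $\mu$), uses Lemma~\ref{Lemma imaginary root strings} for imaginary directions, and for real directions in $J_V\cap\mathcal{I}^+\subseteq I_V$ invokes the $W_{I_V}$-symmetry of $\wt V$ together with the niceness hypothesis that real minimal holes are singletons. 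None of these ingredients appear in your sketch, and they are not replaceable by the $\mathfrak{sl}_2$ scalar trick. Correcting the argument essentially requires reproducing the paper's Steps~1--3.
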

	\begin{proof}[{\bf \textnormal Proof.}]
   We prove the lemma for $J_V$; this result for any $J\supseteq J_V$ is seen by Proposition \ref{Proposition Minkowski difference formula}\eqref{Eqn Minkowski difference formula all V}.
		We prove for an arbitrary weight $\mu\in \wt V\cap [\lambda-\mathbb{Z}_{\geq 0}\Pi_{J_V}]$ and sum $x\in \mathbb{Z}_{\geq 0}\Pi_{J_V^c}$ that $\mu-x\in \wt V$ by inducting on 
		$\mathrm{ht}(\lambda-\mu+x)\geq 0$.
		In the base step $\mu-x=\lambda$ (so $x=0$) the result is manifest.
		\smallskip\\
		Induction step: $\mu-x\in \wt V$ trivially when $x=0$, and by  Proposition~\ref{Proposition no holes full weights} when $\mu=\lambda$.
        We assume henceforth $x\neq 0$ and $\mu\precneqq \lambda$, and write $x=\sum_{t\in \supp(x)}x_t\alpha_t$ for $x_t\in \mathbb{Z}_{>0}$ $\forall$ $t\in \supp(x)$ 
		and $\mu=\lambda-\sum_{t\in \supp(\lambda-\mu)}c_t\alpha_t$ for $c_t\in \mathbb{Z}_{>0}$ $\forall$ $t\in \supp(\lambda-\mu)$. 
		Recall $J_{\lambda}=\left\{j\in \mathcal{I}\ \Big|\ 
		\lambda(\alpha_j^{\vee})\in \frac{A_{jj}}{2}\mathbb{Z}_{\geq 0} \right\}.$
		
        When $i\in \supp(x)\cap J_{\lambda}^c\neq  \emptyset$, by the induction hypothesis $\mu'=\mu- \sum_{t\in \supp(x)\setminus \{i\}}x_t\alpha_t\in\wt V$, and so $\mu'-x_i\alpha_i\in \wt V$ by Lemma \ref{Freeness-directions for weights lemma}.
So assume below that $\supp(x)\subseteq J_{\lambda}$; indeed $\supp(x)\subseteq J_{\lambda}
		\setminus J_V$.  
		\smallskip\\
		\underline{Step 1}: Suppose the Dynkin subdiagram on $\supp(\lambda-\mu + x)= \supp(\lambda-\mu)\sqcup \supp(x)$ has no edges, and $\mu- x\notin \wt V$.
	 %$\dim \big(M(\lambda)_{\mu-x}\big) =\dim\big(M(\lambda)_{\mu}\big)=1$.
		There exists a hole $(H,m_H)\in \mathcal{H}_V^{\min}$ s.t. $\mu-x\preceq \lambda-\sum_{h\in H}m_H(h)\alpha_h$.
		Now $H\subseteq J_V$ by definitions and $\supp(x)\subseteq J_V^c$ together force $H\subseteq \supp(\lambda-\mu)$, and moreover $\mu\preceq \lambda-\sum_{h\in H}m_H(h)\alpha_h$.
		Finally $V_{\mu}=0$ (as $\dim M(\lambda)_{\mu}=1$) contradicts $\mu\in \wt V$ we began with.
		\smallskip\\
		\underline{Step 2}: 
        %Now let there be two nodes $a,b\in \supp(\lambda-\mu)\sqcup \supp(x)$ with at least one edge between them in the Dynkin diagram. 
		Suppose there are edges between say $a,b\in \supp(x)\subseteq J_V^c$; and let $x_b\geq x_a>0$.
		Firstly, $\mu'=\mu-\sum_{t\in \supp(x)\setminus \{a\}}x_t\alpha_t\in \wt V$ by the induction hypothesis.
If $a\in \mathcal{I}^0\sqcup\mathcal{I}^-$, Lemma \ref{Lemma imaginary root strings} applied to $\mu'$ and $a$ yields $\mu\in \mu'- \mathbb{Z}_{\geq 0}\alpha_a\subseteq \wt  V$.
		So we let, $a\in J_{\lambda}\cap \mathcal{I}^+$ (by the assumption above Step 1); thus $\lambda(\alpha_a^{\vee})\in \mathbb{Z}_{\geq 0}$.
        We will be done by following the conclusive lines of proof in Proposition \ref{Proposition no holes full weights}.		\smallskip\\
		\underline{Step 3}: We assume w.l.o.g. in this concluding step that node $a\in\supp(\lambda-\mu)\neq \emptyset$; and $b\in \supp(\lambda-\mu)\sqcup \supp(x)$.
	Now, either $\{a,b\}\subseteq \supp(\lambda-\mu)$, or $\{a,b\}\cap \supp(\lambda-\mu)=\{a\}$ and $b\in \supp(x)$.
		Let $T$ be the Dynkin graph (connected) component of $\supp(\lambda-\mu)$ containing $a$. 
		As $U\left(\mathfrak{n}^-_{\supp(\lambda-\mu)}\right)=U\left(\mathfrak{n}^-_T\right)\otimes U\left(\mathfrak{n}^-_{\supp(\lambda-\mu)\setminus T}\right)= U\left(\mathfrak{n}^-_{\supp(\lambda-\mu)\setminus T}\right)\otimes U\left(\mathfrak{n}^-_T\right)$, we pick a node $j_1\in T$ with $\mu+\alpha_{j_1}\in \wt V\cap [\lambda-\mathbb{Z}_{\geq 0}\Pi_{J_V}]$.
		Note, $(\mu+\alpha_{j_1})-x\in \wt V$ by the induction hypothesis.
		Next, the Dynkin subgraph component in $\supp(\lambda-\mu+x)$ containing $T\cup \{b\}$ is non-singleton, and so $j_1$ has edge(s) to some node in $\supp(\lambda-\mu+x)$.
		When $j_1\in \mathcal{I}^-\sqcup \mathcal{I}^0$, Lemma~\ref{Lemma imaginary root strings} applied to $\mu+\alpha_{j_1}-x$ and $\alpha_{j_1}$ yields $\mu-x\in \mu+\alpha_{j_1}-x-\mathbb{Z}_{\geq 0}\alpha_{j_1}\subseteq  \wt V$ as required.
The leftover case now is when $j_1\in \mathcal{I}^+$; our assumption that the minimal holes along $J_V\cap \mathcal{I}^+$-directions are singletons is needed here.
		Since $j_1\in J_V\cap \mathcal{I}^+\subseteq I_V$,  $\wt V$ is $s_{j_1}$-invariant; and recall both $\mu ,\mu+\alpha_{j_1}\in \wt V$.
        So there exists maximum weight $\widetilde{\mu}$ in the $\alpha_{j_1}$-root string $(\mu+\mathbb{Z}_{>0}\alpha_{j_1})\cap \wt V$ through $\mu$ s.t. $\widetilde{\mu}(\alpha_{j_1}^{\vee})>0$ and $\mu\in [s_{j_1}\widetilde{\mu},\ \widetilde{\mu}]\subseteq \wt V$.
		Note  $\widetilde{\mu}\succeq \mu+\alpha_{j_1} \succneqq \mu$, and so $\widetilde{\mu}-x\in \wt V$ by the induction hypothesis. 
		Next, $x(\alpha_{j_1}^{\vee})\leq 0$ since $j_1\notin \supp(x)\subseteq J_V^c$, and so $[\widetilde{\mu}-x](\alpha_{j_1}^{\vee})\geq \widetilde{\mu}(\alpha_{j_1}^{\vee})>0$.
		We apply Lemma~\ref{Lemma real root strings} to $\widetilde{\mu}-x$ and $\alpha_{j_1}$, which implies $\mu-x\in [s_{j_1}\widetilde{\mu}, \widetilde{\mu}]\subseteq \wt V$, completing the proof.
	\end{proof}
We immediately have all possible uniform Minkowski difference formulas for every $\wt V$. \begin{prop}\label{Proposition Minkowski difference formula}
		Let $\mathfrak{g}, V$ and $J_V$ be as in the above lemma ($\mathcal{H}_V$ is nice).
		Then
		\begin{equation}\label{Eqn Minkowski difference formula all V}
		\wt V \ \  =\ \   \big(\wt V\cap [\lambda-\mathbb{Z}_{\geq 0}\Pi_{J_V}]\big) \ -\  \mathbb{Z}_{\geq 0}\big(\Delta^+\setminus \Delta_{J_V}^+\big).
		\end{equation}
		\begin{equation}\label{Eqn freeness inclusion}
		\text{In particular,}\quad \wt V \ -\ \mathbb{Z}_{\geq 0}\Pi_{J_{\lambda}^c}\ \  \subseteq\ \  \wt V \ -\ \mathbb{Z}_{\geq 0}\Pi_{J_V^c}\ \  \subseteq \ \ \wt V.
		\end{equation}
		The analogous results hold true for any $J\supseteq J_V$ in place of $J_V$ above.
	\end{prop}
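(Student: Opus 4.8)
The plan is to prove the displayed identity \eqref{Eqn Minkowski difference formula all V} for $J_V$ itself; the ``in particular'' inclusion \eqref{Eqn freeness inclusion} and the version for an arbitrary $J\supseteq J_V$ are then formal. Indeed every hole-support of $V$ lies in $J_\lambda$, so $J_V\subseteq J_\lambda$ and hence $\Pi_{J_\lambda^c}\subseteq\Pi_{J_V^c}\subseteq\Delta^+\setminus\Delta^+_{J_V}$, which turns \eqref{Eqn Minkowski difference formula all V} into \eqref{Eqn freeness inclusion}; and for $J\supseteq J_V$ one has $\Delta^+\setminus\Delta^+_J\subseteq\Delta^+\setminus\Delta^+_{J_V}$ and $\lambda-\mathbb{Z}_{\geq 0}\Pi_J\supseteq\lambda-\mathbb{Z}_{\geq 0}\Pi_{J_V}$, so the $J$-version follows by combining the $J_V$-case with the forward inclusion below run for $J$ in place of $J_V$. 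Fix a highest weight vector $v_\lambda\in V_\lambda$ and set $W':=U(\mathfrak{n}^-_{J_V})v_\lambda\subseteq V$, a highest weight $\mathfrak{p}_{J_V}$-module. The first step I would record is that $\wt W'=\wt V\cap[\lambda-\mathbb{Z}_{\geq 0}\Pi_{J_V}]$, the ``$J_V$-slice'' appearing in the formula: ``$\subseteq$'' is clear, and conversely, if $\mu=\lambda-\sum_{j\in J_V}c_j\alpha_j\in\wt V$ then by Lemma \ref{Lemma reaching lambda from mu}(a) the space $V_\mu$ is spanned by weight vectors $f_{i_1}\cdots f_{i_n}v_\lambda$ with $\sum_t\alpha_{i_t}=\lambda-\mu\in\mathbb{Z}_{\geq 0}\Pi_{J_V}$, which forces every $i_t\in J_V$ and hence $V_\mu=W'_\mu\neq 0$.

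For the forward inclusion in \eqref{Eqn Minkowski difference formula all V}, I would use the ideal $\mathfrak{a}:=\bigoplus_{\alpha\in\Delta^-\setminus\Delta^-_{J_V}}\mathfrak{g}_\alpha$ of $\mathfrak{n}^-$, complementary to $\mathfrak{n}^-_{J_V}$; by the PBW factorization recalled in the Observation preceding Lemma \ref{weight formula 1 forward inclusion lemma} (taken with $I^c=J_V$), $U(\mathfrak{n}^-)=U(\mathfrak{a})\,U(\mathfrak{n}^-_{J_V})$, so $V=U(\mathfrak{n}^-)v_\lambda=U(\mathfrak{a})\cdot W'$. Given $\mu\in\wt V$, choose $0\neq v\in V_\mu$ and write $v=\sum_j x_jw_j$ with the $x_j\in U(\mathfrak{a})$ and $w_j\in W'$ weight-homogeneous and each $x_jw_j$ of weight $\mu$; some summand $x_jw_j$ is nonzero, and for it $w_j$ has weight $\nu_j\in\wt W'$ while $x_j$ has weight $-\delta_j$ with $\delta_j\in\mathbb{Z}_{\geq 0}(\Delta^+\setminus\Delta^+_{J_V})$ (every factor of $x_j$ lies in some $\mathfrak{g}_{-\beta}$ with $\beta\in\Delta^+\setminus\Delta^+_{J_V}$). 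Thus $\mu=\nu_j-\delta_j$ lies in the right-hand side of \eqref{Eqn Minkowski difference formula all V}.

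The reverse inclusion $\big(\wt V\cap[\lambda-\mathbb{Z}_{\geq 0}\Pi_{J_V}]\big)-\mathbb{Z}_{\geq 0}(\Delta^+\setminus\Delta^+_{J_V})\subseteq\wt V$ is the substantive part, and the step I expect to be the main obstacle: unlike Lemma \ref{Lemma freeness outside holes support}, which only supplies freeness in the simple-root directions $\Pi_{J_V^c}$, here one must subtract the whole cone generated by the positive roots meeting $J_V^c$, and these two cones cut out different subsets of $\wt V$ already for a parabolic Verma over $\mathfrak{sl}_3$. I would fix $\nu$ in the slice $\wt V\cap[\lambda-\mathbb{Z}_{\geq 0}\Pi_{J_V}]$ and $y\in\mathbb{Z}_{\geq 0}(\Delta^+\setminus\Delta^+_{J_V})$, and prove $\nu-y\in\wt V$ by induction on $\height(y)$, peeling one simple root at a time from $\nu-y$. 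The inductive step splits according to which simple root to peel, and each case is settled by exactly the arguments already carried out in the proofs of Lemma \ref{Lemma freeness outside holes support} and Proposition \ref{Proposition no holes full weights}: when a simple root $\alpha_i$ with $i\in J_V^c$ can be peeled while keeping the remaining multiset of roots inside the cone, Lemma \ref{Lemma freeness outside holes support} applies directly; otherwise one peels a simple root $\alpha_j$ with $j\in J_V$ chosen so that the truncation still lies in $\mathbb{Z}_{\geq 0}(\Delta^+\setminus\Delta^+_{J_V})$, and the single descent step $\mu\to\mu-\alpha_j$ is handled by Lemma \ref{Lemma imaginary root strings} when $j\in\mathcal{I}^0\sqcup\mathcal{I}^-$ (connectedness of the support of the root being peeled supplies a neighbour of $j$ inside $\supp(\lambda-\mu)$), and by the real root-string argument of Step 3 in the proof of Lemma \ref{Lemma freeness outside holes support} when $j\in\mathcal{I}^+\cap J_V\subseteq I_V$ -- if necessary replacing $\mu$ first by its $s_j$-image, which lies higher in the slice, so that Lemma \ref{Lemma real root strings} becomes applicable. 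The combinatorial heart of this last part is precisely the choice, at each stage, of which simple root to peel so as never to leave $\wt V$ while keeping the remaining part of $y$ inside $\mathbb{Z}_{\geq 0}(\Delta^+\setminus\Delta^+_{J_V})$; once this bookkeeping is arranged the induction closes and \eqref{Eqn Minkowski difference formula all V} follows.
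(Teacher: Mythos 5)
Your broad strategy coincides with the paper's: reduce to $J_V$; prove the forward inclusion by the PBW factorization $U(\mathfrak{n}^-)=U(\mathfrak{a})U(\mathfrak{n}^-_{J_V})$; and prove the reverse inclusion by an induction that peels off one simple root at a time, splitting by node type and invoking Lemmas \ref{Lemma imaginary root strings} and \ref{Lemma real root strings}. Your reduction from a general $J\supseteq J_V$ to the $J_V$-case is correct (the $J_V$-forward inclusion expresses a weight in the $J$-slice in terms of the $J_V$-slice, and the two Minkowski cones nest), as is the slice identification $\wt W'=\wt V\cap[\lambda-\mathbb{Z}_{\geq 0}\Pi_{J_V}]$ and the forward inclusion. (The paper's induction is on $\mathrm{ht}_{J_V}(y)$ rather than $\height(y)$, but the two are interchangeable here.)

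The genuine gap is in the real-node descent step $j_m\in\mathcal{I}^+\cap J_V\subseteq I_V$. You propose to ``replace $\mu$ first by its $s_{j_m}$-image, which lies higher in the slice, so that Lemma \ref{Lemma real root strings} becomes applicable.'' This is not enough, and the reason is precisely the difference from Step 3 of Lemma \ref{Lemma freeness outside holes support} that you flagged at the start: there the subtracted vector lies in $\mathbb{Z}_{\geq 0}\Pi_{J_V^c}$ and hence pairs $\leq 0$ with $\alpha_{j_m}^\vee$ automatically, so dominance of $\widetilde{\mu}$ alone yields dominance of $\widetilde{\mu}-x$; here the subtracted vector is $\sum_t\gamma_t'$ with $\gamma_t'\in\Delta^+\setminus\Delta_{J_V}^+$, which can pair \emph{positively} with $\alpha_{j_m}^\vee$. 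So $\langle\mu-\sum_t\gamma_t',\alpha_{j_m}^\vee\rangle$ can stay $\leq 0$ even after replacing $\mu$ by $s_{j_m}\mu$, and Lemma \ref{Lemma real root strings} is not applicable to the resulting weight. The paper's proof handles this by reflecting $\mu$ \emph{and} each $\gamma_t'$ to ``top-of-string'' representatives $\widetilde{\mu}$ and $\widetilde{\gamma}_t$ (defined as $\gamma_t'$, $s_{j_m}\gamma_t'$, or $s_{j_m}(\gamma_1'+\alpha_{j_m})$ depending on the sign of the $\alpha_{j_m}^\vee$-pairing), and then verifies simultaneously that: each $\widetilde{\gamma}_t$ is still a root in $\Delta^+\setminus\Delta^+_{J_V}$ with $\mathrm{ht}_{J_V}\big(\sum\widetilde{\gamma}_t\big)<\mathrm{ht}_{J_V}\big(\sum\gamma_t\big)$ (so the induction applies to $\widetilde{\mu}-\sum\widetilde{\gamma}_t$); that $\langle\widetilde{\mu}-\sum\widetilde{\gamma}_t,\alpha_{j_m}^\vee\rangle>0$ strictly; and that $\mu$, $\gamma_1'+\alpha_{j_m}$, and $\gamma_t'$ all lie on the respective $\alpha_{j_m}$-intervals $[\,\cdot\,,s_{j_m}(\cdot)\,]$, so that $\mu-\sum\gamma_t'-\alpha_{j_m}$ sits inside the $\alpha_{j_m}$-root string through $\widetilde{\mu}-\sum\widetilde{\gamma}_t$. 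Your sketch omits the reflection of the $\gamma_t'$s and the consistency checks, and without them the inductive step does not close.

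Two smaller points. First, the existence of a node $j_m\in J_V$ that can be peeled ``while the remaining multiset stays in the cone'' is a non-trivial combinatorial fact; the paper obtains the required peeling sequence $j_1,\ldots,j_{m-1}\in J_V^c$, $j_m\in J_V$ with each truncation $\gamma_1-(\alpha_{j_1}+\cdots+\alpha_{j_k})\in\Delta^+\setminus\Delta^+_{J_V}$ from \cite[Lemma 3.4]{Teja-ArXiv} or \cite[Lemma 4.6]{Venkatesh}, and your write-up should cite this. Second, when you say ``when a simple root $\alpha_i$ with $i\in J_V^c$ can be peeled $\ldots$ Lemma \ref{Lemma freeness outside holes support} applies directly,'' note that Lemma \ref{Lemma freeness outside holes support} requires the base weight to lie in the slice $[\lambda-\mathbb{Z}_{\geq 0}\Pi_{J_V}]$; it applies directly only to the base case of the induction, where \emph{all} of $y$ lies in $\Pi_{J_V^c}$, not at intermediate stages where $\nu-(y-\alpha_i)$ generally leaves the slice.
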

	\begin{proof}[{\bf \textnormal Proof.}]
    We completely adopt the proof lines of \cite[Theorem A]{MDWF}, with writing necessary additional steps, and show Proposition \ref{Proposition Minkowski difference formula}.
    The strategy here is the same as the one in the proof of \cite[Equation (3.3)]{MDWF} presently for $J=J_V$,  $\gamma_1,\ldots, \gamma_n\in \Delta^+\setminus \Delta_{J_V}^+$ and $\mu\in \wt V$ with $\supp(\lambda-\mu)\subseteq J_V$; in the place of $J=J_{\lambda}$, $\gamma_t$s in $\Delta^+\setminus \Delta_{J_{\lambda}}^+$ (or more precisely, $\Delta_{J_{\lambda}^c,1}$) all as therein.
    In this, we induct on $\height_{J_V}(\gamma_1+\cdots + \gamma_n)\geq 0$; the proof for the base case follows by the previous lemma. 
   For the induction step, assuming w.l.o.g. $\height_{J_V}(\gamma_1)>0$, we pick by \cite[Lemma 3.4]{Teja-ArXiv} or \cite[Lemma 4.6]{Venkatesh}, a node $j_1$ with $\gamma_1-\alpha_{j_1}\in \Delta^+\setminus \Delta_{J_V}^+$. 
 When $j_1\notin J_V$, we fix in turn by those lemmas
		\begin{itemize}
			\item nodes $j_1,\ldots,j_{m-1}\in J_V^c$ and $j=j_m\in J_V$ -- 
			 stage `$m$' exists since $\mathrm{ht}_{J_V}(\gamma_1)>0$ -- s.t.
			\item $\gamma_1- (\alpha_{j_1}+\cdots +\alpha_{j_k})\in \Delta^+\setminus \Delta_{J_V}^+$ for every $1\leq k\leq m$.
		\end{itemize}
	\[ \text{We set }\ \ \   
 		\gamma_1':=\gamma_1-(\alpha_{j_1}+\cdots+ \alpha_{j_m})\ ;\quad \gamma_t':=\gamma_t\ \ \forall\ 2\leq t\leq n \ ; \quad    \gamma_{n+t}':=\alpha_{j_t} \ \in \Pi_{J_V^c}\ \ \forall\ 1\leq t\leq m-1 .
		\]
		Note $\mu-\sum_{t=1}^n\gamma_t=\left(\mu-\sum_{t=1}^{m+n-1}\gamma_t'\right)-\alpha_{j_m}$.
		Now $\mathrm{ht}_{J_V}(\sum_{t=1}^{n+m-1}\gamma_t')=\mathrm{ht}_{J_V}(\sum_{t=1}^{n}\gamma_t)-1$ implies by the induction hypothesis $\mu'=\mu-\sum_{t=1}^{n+m-1}\gamma_t'\in \wt V$.
		Showing $\mu'-\alpha_{j_m}\in \wt V$ completes the proof, for which we analyze the $\alpha_{j_m}$-root string through $\mu'$ below; in two cases  $j_m\in (\mathcal{I}^-\sqcup \mathcal{I}^0)\cap J_V$ or $j_m\in \mathcal{I}^+\cap J_V$.
		Suppose $j_m\in (\mathcal{I}^-\sqcup \mathcal{I}^0)\cap J_V$.
        Recall $\supp(\gamma_1)$ is non-singleton and is connected. 
        Then $j_m$ has edge(s) to some node in $\supp(\gamma_1)\setminus \{j_m\}\subseteq \supp(\lambda-\mu')=\supp(\lambda-\mu)\cup \supp\left(\sum_{t=1}^{n+m-1}\gamma'_t\right)$, and we are done by Lemma \ref{Lemma imaginary root strings} applied to $\mu',\alpha_{j_m}$.
        In the case $j_m\in \mathcal{I}^+\cap J_V$, we will be done by imitating the proof lines in the case $j\in J_{\lambda}$ in \cite[ Theorem A]{MDWF}.
        And this requires as in that proof: the top elements for   $\mathfrak{g}_{\{j_m\}}$-action on $V_{\mu}$ and $\mathfrak{g}_{\gamma_1'-\alpha_{j_m}}$ and on other $\mathfrak{g}_{\gamma_t}$s, namely, setting up 
$\widetilde{\mu}$ and $\widetilde{\gamma}_1,\ldots, \widetilde{\gamma}_{n+m-1}$, with four properties (a)--(d) there-in.
The first inclusion in \eqref{Eqn freeness inclusion} is true as $J_{\lambda}^c\subseteq J_V^c$.
The last assertion for any $J\supseteq J_V$, can be showed by repeating the proof above for $J$. 
	\end{proof}
    \subsection{All h.w.m.s $V$ with Weyl orbit weight-formulas, including $V=L(\lambda)\ \forall\ \lambda\in \mathfrak{h}^*$}\label{Subsection Theorem A proof}
	Equipped with Proposition \ref{Proposition Minkowski difference formula}, importantly freeness \eqref{Eqn freeness inclusion} of $\wt V$, we prove our first main result.
   \begin{proof}[{\bf Proof of Theorem \ref{Theorem weight formula for nice modules}:}]
   Weight-formula in Corollary \ref{Corollary to Theorem A} for all $L(\lambda)$ follows by Theorem \ref{Theorem weight formula for nice modules}, via  noting : i)~In the notations as there-in, for any component $J_t\subseteq J_{\lambda}$ of $\supp(\lambda-\mu)$, $f_{j_t}L(\lambda)_{\lambda}\neq 0\iff \lambda(\alpha_{j_t}^{\vee})\neq 0$. 
   ii)~Moreover while $J_t=\{j_t\}$, observe $\lambda-\sum_{j\in \supp(\lambda
   -\mu)\setminus J_t}c_j\alpha_j\in \wt L(\lambda)\iff \mu\in \wt L(\lambda)\iff c_{j_t}\leq \frac{2}{A_{j_t j_t}}\lambda(\alpha_{j_t}^{\vee})$ by Lemmas \ref{Lemma rank-1 BKM rep. theory} and \ref{Lemma real root strings}, and we are done. 
   Now we prove the result for all nice $V$.
		%Given a $\mathfrak{g}$-h.w.m.  $M(\lambda)\twoheadrightarrow V$, we define $S(V)$ to be the set on the r.h.s. of formula \eqref{Weight formula for nice modules} \big($W_{I_V}$-orbits  with properties (C1) \& (C2)\big).
		
		Our strategy in the proof is to work uniformly with {\it all triples} $(\mathfrak{g},V,\nu)$ of : 
BKM LA $\mathfrak{g}$, any $\mathfrak{g}$-h.w.m. $V\twoheadleftarrow M(\lambda)$ with nice $\mathcal{H}_V$, and $\nu\preceq \lambda\in \wt V$ with properties (C1)--(C2) in Theorem \ref{Theorem weight formula for nice modules}.
	
        The theorem is clearly true for all  $(\mathfrak{g},V,\lambda)$ in that triples' space.
We assume that the theorem is false for a triple $(\mathfrak{g},V,\mu)$ -- i.e., $M(\lambda)\twoheadrightarrow V$ and $\mu\preceq \lambda$ satisfies (C1) and (C2), but $\mu\notin \wt V$ -- and exhibit contradictions in several steps we proceed-in below; and w.l.o.g. let $(\mathfrak{g},V,\mu)$ have the least $\mathrm{ht}(\lambda-\mu)>0$ among such counter examples.
		Write $\mu=\lambda-\quad \  \sum\limits_{\mathclap{j\in J= J_1\sqcup \cdots \sqcup J_n}}\quad c_j\alpha_j$ for $c_j\in \mathbb{Z}_{>0}$ and components $J_t$s as in (C2).
		By $\mu\notin \wt V$ we fix a weight $\eta\in \wt V$ with the least $\mathrm{ht}(\eta-\mu)$ s.t. $\mu\precneqq \eta \preceq  \lambda$. 
		So $\mathrm{ht}(\eta-\mu)\leq \mathrm{ht}(u-\mu)$ for all $u\in \wt V$ that are above $\mu$.
		Note as $J=\supp(\lambda-\mu)\neq \emptyset$, $\lambda\succneqq \mu'$ which is
		the element associated to $\mu$ \big(i.e. to nodes $J$ and sequence $(c_j)_{j\in J}$\big) in (C2).
	Thus $\mathrm{ht}(\lambda-\eta)\geq \mathrm{ht}(\lambda-\mu')>0$ says $J\supseteq I:=\supp(\lambda-\eta)\neq \emptyset$.
    Next let $K:=\supp(\eta-\mu)\subseteq J$, and $\eta-\mu=\sum_{k\in K}d_k\alpha_k$ for $0<d_k\leq c_k$ $\forall$ $k\in K$.
    Note $J\setminus I\subseteq K$, and  $K\cap I$ is $=\emptyset$ or $\neq\emptyset$.
       The weights and supports extensively used in the proof below are pictured for convenience-
       \begin{center}
       \hspace*{2cm}
\begin{tikzpicture}[scale=0.4]
\draw (0,3) node[anchor=south]{$\lambda$};
\draw[thick, black] (0, 3) -- (1, 1.5) ;
\draw[thick, black] (1, 1.5) -- (0, -1.5) ;
\draw[thick, black] (-1,2) -- (0, -1.5) ;
\draw (1,1.5) node[anchor=west]{$\eta$ (closest to $\mu$) supported on $I$};
\draw[thick, black] (0, 3) -- (-1, 2) ;
\draw (-1,2) node[anchor=east]{$\mu'$} ;
\draw (-0.8,1.1) node[anchor=east]{\text{in (C2)}} ;
\draw[dashed, gray] (-1,2) -- (1, 1.5) ;
\draw (0,2.5) node[anchor=north]{\tiny ?} ;
\draw (0,-1.5) node[anchor=west]{$\mu=\lambda-\sum_{j\in J}c_j\alpha_j$\  supported on $J=J_1\sqcup \cdots \sqcup J_n\ = \ 
I\cup K $};
\draw[->] (2,0.3) --  (2,1.2) ;
\draw[->] (2,-0.3) --  (2,-1) ;
\draw (0.8,0) node[anchor=west]{$\eta-\mu=\sum_{k\in K}d_k\alpha_k$\  supported on $K$};
\end{tikzpicture}
\end{center}
If $J$ is independent, then $\mu=\mu'\in \wt V$ contradicts our assumption that the theorem is not true  for $(\mathfrak{g},V, \mu)$.
		Indeed, we will eventually see proceeding in the below five steps that $J$ cannot have any edge in its Dynkin subdiagram, and then the proof concludes by the previous line.

        We note on Steps 1--5 before we start: i) The plan for  each step is quoted at its beginning, and all the cases occurring in it are completely solved. 
		ii) The notations in a step are independent of the other steps' \big(ex: $\delta, \gamma, J_1,V',V''$ in Steps 5, 6\big).
		iii) Our goal(s) is to find a node $k\in K$ with $\eta-\alpha_k\in \wt V$.
        Then $\mu\preceq \eta-\alpha_k \precneqq \eta$ contradicts the minimality of $\eta\in \wt V$ in covering $\mu$. 
		\smallskip\\ 
		\underline{Step 1}. {\it Necessarily $K\subseteq J_V=\ \ \ \ \bigcup\limits_{\mathclap{(H,m_H)\in \mathcal{H}_V^{\min}}}\ H\ \    \subseteq J_{\lambda}$, and moreover $V$ is $\mathfrak{g}_{K\cap \mathcal{I}^+}$-integrable}: 
	On the contrary suppose there is a node $k\in K\setminus J_V$.
		Observe by the freeness Proposition \ref{Proposition Minkowski difference formula} \eqref{Eqn freeness inclusion}, that $\eta-(c_k-d_k)\alpha_k\in \wt V -\mathbb{Z}_{\geq 0}\Pi_{J_{V}^c}\subseteq \wt V$; in particular $\eta-\alpha_k\in \wt V$ $\Rightarrow\!\Leftarrow$. 
	The niceness of $\mathcal{H}_V$ implies $\big(\{k\}, \lambda(\alpha_k^{\vee})+1\big)\in \mathcal{H}_V^{\min}$ $\forall$ $k\in K\cap\mathcal{I}^+$; proving the second claim in this step.
		\smallskip\\
		\underline{Step 2}. {\it  $I$ (as well as $J\setminus I=K\setminus I$) is the union(s) of some of the sets among $J_1,\ldots ,J_n$}:
		This claim in particular says that no two nodes (one each) from $I$ and $J\setminus I$ have any edges in-between in the Dynkin diagram.
		On the contrary suppose nodes $j\in J\setminus I$ and $i\in I$ have at least one edge in-between.
		Note that $j\in K=\supp(\eta-\mu)\subseteq J_{\lambda}$, and so $\lambda(\alpha_j^{\vee})\in \frac{A_{ii}}{2}\mathbb{Z}_{\geq 0}$.
		When $j\in \mathcal{I}^+$, observe
        $\eta(\alpha_j^{\vee})\geq \lambda(\alpha_j^{\vee})-\sum_{t\in I}\alpha_t(\alpha_j^{\vee})\geq \lambda(\alpha_j^{\vee})-\alpha_i(\alpha_j^{\vee})\geq -\alpha_i(\alpha_j^{\vee})>0$.
        So Lemma \ref{Lemma real root strings} when $j\in \mathcal{I}^+$, or Lemma \ref{Lemma imaginary root strings} when $j\in \mathcal{I}^-\sqcup\mathcal{I}^0$, applied for $\eta$ and $\alpha_j$ shows the contradiction $\eta -\alpha_j\in \wt V$.
		So there are no edges between $I$ and $J\setminus I$.
		Hence, by the connectedness of each $J_t$, we have that either $J_t\cap I=\emptyset$ or $J_t\subseteq I$ $\forall$ $1\leq t\leq n$; i.e., $J_t\cap I \neq \emptyset \implies J_t\subseteq I$.
		Now we study $K\cap I$.
		\smallskip\\ 
		\underline{Step 3}. {\it No two nodes each in $K$ and $I$ have any edges in-between, and so $K\cap I$ (when non-empty) is the union of a sub-collection of the singleton $J_t$s among $J_1,\ldots, J_n$}:
		We begin with an arbitrary component $K_1$ of $K$.  
		For showing the disconnectedness of $K_1$ (thereby of $K$) with $I$, we assume on the contrary a node $k\in K_1$ to have an edge to some $i\in I$; possibly $i\in K_1$.
		$K_1\cup\{i\}$ must be contained in a connected component of $J$, say $J_1$.
		Thus, $|J_1|>1$ and moreover $i\in I$ forces $J_1\subseteq I$ by Step 2; in particular $K_1\subseteq I$.
		If $k'\in K_1\cap (\mathcal{I}^-\sqcup\mathcal{I}^0)\neq \emptyset$, Lemma \ref{Lemma imaginary root strings} for $\eta$ and $k'$ leads to the contradiction $\eta-\alpha_{k'}\in \wt V$.
		So $K_1\subseteq \mathcal{I}^+$; and $\mathfrak{g}_{K_1}$ is an indecomposable (as $K_1$ is connected) KM subalgebra of $\mathfrak{g}$.
		We proceed by types of $\mathfrak{g}_{K_1}$- finite/affine/indefinite.\\
		(i) $\mathfrak{g}_{K_1}$ is of affine or indefinite type:
		We have $x\in \mathbb{Z}_{>0}\Pi_{K_1}^{\vee}$ with $\alpha_t(x)\leq 0$ $\forall$ $t\in K_1$, also $\forall\ t\in I$, by \cite[Theorem 4.3]{Kac book}.
		Either i) $K_1\subsetneqq J_1$, or ii) $K_1=J_1\subseteq J=\supp(\lambda-\mu)$.
		In case ii), there exists $k'\in K_1$ s.t. $\lambda-\alpha_{k'}\in \wt V$ (by Lemma \ref{weight formula 1 forward inclusion lemma} or (C2) for $\mu\in \wt V$),  equivalently $\lambda\big(\alpha_{k'}^{\vee}\big)>0$ by the $\mathfrak{g}_{K_1}$-integrability of $V$.
		So in cases i)--ii), the first term of $\eta(x)$ on the r.h.s. below is $>0$.
		\begin{center}
		$\eta(x) \ \ =\  \ \underbrace{\Big[ \lambda-\sum\nolimits_{j\in J_1\setminus K_1} \big(c_j-d_j\big) \alpha_j\Big](x)}_{>0} \ - \   
		\sum_{j\in K_1\sqcup \big(I\setminus J_1  \big)}\underbrace{(d_t-c_t)\alpha_t(x)}_{\leq 0}\ \ >0$.
		\end{center}
So $\eta(\alpha_{k'}^{\vee})>0$ for some $k'\in K_1$, and then Lemma \ref{Lemma real root strings} shows $\eta-\alpha_{k'}\in \wt V$ $\Rightarrow\!\Leftarrow$.\\
		(ii) $\mathfrak{g}_{K_1}$ is of finite type:
		We have $x\in \mathbb{Z}_{> 0}\Pi_{K_1}^{\vee}$ s.t. $\alpha_t(x)>0$ $\forall$  $t\in K_1$, and thus $[\eta-\mu] (x)=\sum_{k\in K_1}d_k\alpha_k(x)+\sum_{k\in K\setminus K_1}d_k\cancel{\alpha_k(x)}^{=0}>0$.
		So we fix $k'\in K_1$ s.t. $[\eta-\mu](\alpha_{k'}^{\vee})=\eta(\alpha_{k'}^{\vee})-\mu(\alpha_{k'}^{\vee})>0$.
		As $K\subseteq I_V$ by Step 1 and $\mu$ satisfies (C1), we have $\mu(\alpha_{k'}^{\vee})\in \mathbb{Z}_{\geq 0}$, and so $\eta(\alpha_{k'}^{\vee})>0$.  
		The desired contradiction is again seen.
		Hence, there are no edges between the nodes of $K$ and of $I$.
		Step 4 runs through several arguments, to prove the independence of $K$.
		\smallskip\\
		\underline{Step 4}. {\it $K$ is the union of some singleton $J_t$s among $J_1,\ldots, J_n$}:
		We assume on the contrary for the entirety of this step, that $K$ has a Dynkin subgraph component $K_1$ with $|K_1|>1$.
		Now observe by Step 3 that $K_1\cap I=\emptyset$, and importantly $K_1$ equals some Dynkin subdiagram component of $J=\supp(\lambda-\mu)=I\cup K$, say $J_1$.
		So, $\alpha_i(\alpha_k^{\vee})=0$ for all $i\in I$ and $k\in K_1$.
		Thus, $\eta(\alpha_k^{\vee})=\lambda(\alpha_k^{\vee})-0=\lambda(\alpha_k^{\vee})$ $\forall$ $k\in K_1$.
		Indeed crucially $\lambda(\alpha_k^{\vee})=0=\eta(\alpha_k^{\vee})$ $\forall$ $k\in K_1$ explained as follows:
		For $k\in K_1\cap \mathcal{I}^0\neq \emptyset$, $\lambda(\alpha_k^{\vee})\neq 0\implies \eta(\alpha_k^{\vee})\neq 0$ yields the contradiction $\eta-\alpha_k\in \wt V$ by Lemma~\ref{Lemma real root strings}(3).
		Similarly, for $k\in K_1\cap \big(\mathcal{I}^-\sqcup \mathcal{I}^+\big)\subseteq J_{\lambda}$ (by Step 1),  $\lambda(\alpha_k^{\vee})=\eta(\alpha_k^{\vee})\in \frac{A_{ii}}{2}\mathbb{Z}_{> 0}$ implies by Lemma~\ref{Lemma real root strings}(1)--(2) the contradiction $\eta-\alpha_k\in \wt V$.
			
		Hereafter, $|K_1=J_1|>1$ and moreover $\lambda(\alpha_k^{\vee})=0$ $\forall$ $k\in K_1$. 
		We begin studying the $\mathfrak{g}_{K_1}$-h.w.m. $V'=U(\mathfrak{g}_{K_1})v_{\lambda}$;  $v_{\lambda}$ also treated as a h.w. vector in $V'_{\lambda}$. 
		Note by definitions that $\mathcal{H}_{V'}^{\min}\subseteq \mathcal{H}_V^{\min}$.
		The independence of $\supp(\lambda-\mu')$ of $\mu'\in \wt V$ \big(defined for $\mu$, or for $J$ and $c_j$s\big) in (C2), yields for $V'$ a node $j_1\in K_1=J_1$ with $\lambda-\alpha_{j_1}\in \wt V\cap [\lambda-\mathbb{Z}_{\geq 0}\Pi_{K_1}]=\wt V'$.
		Now $f_k V_{\lambda}'=0$ $\forall$ $k\in K_1\cap \mathcal{I}^+$ (by the $\mathfrak{g}_{K_1\cap \mathcal{I}^+}$-integrability in Step 1 of 
		$V$, thereby of $V'$), forces $j_1\in K_1\cap(\mathcal{I}^0\sqcup \mathcal{I}^-)$.
		
		For each $k\in K_1$,  $f_k v_{\lambda}$ is maximal as $\lambda(\alpha_k^{\vee})=0$; and $f_{j_1}^rv_{\lambda}$ is maximal $\forall$ $r$ if $j_1\in \mathcal{I}^0$.
	\begin{center}
	$\text{We consider the }\mathfrak{g}_{K_1}\text{-h.w.m.}\ 	  \ \displaystyle V''\ \ :=\ \ \frac{V'}{\sum_{k\in K_1\setminus \{j_1\}}U(\mathfrak{n}^-)f_k v_{\lambda}\ +\ \sum_{k\in \{j_1\}\cap \mathcal{I}^0}U(\mathfrak{n}^-)f_{k}^2v_{\lambda}}\ \ \ \twoheadleftarrow\ \ V'$.
		\end{center}
		So $f_{j_1}^2V_{\lambda}''=0$ if $j_1\in K_1\cap \mathcal{I}^0$, and $f_{j_1}^rV_{\lambda}''\neq 0$ $\forall\ r$ if $j_1\in K_1\cap \mathcal{I}^-$ as $\big( \{j_1\}, 1\big)\notin \mathcal{H}_{V'}^{\min}$. Hence 
			\begin{center}
		$\mathcal{H}_{V''}^{\min}\  =  \     \begin{cases} 
		\ \big\{ \big(\{k\}, 1\big)\ \big|\ k\in K_1\setminus \{j_1\} \big\}  & \text{if }j_1\in K_1\cap \mathcal{I}^-,\\
		\ \big\{ \big(\{k\}, 1\big)\ \big|\ k\in K_1\setminus \{j_1\} \big\}\sqcup \big\{  \big(\{j_1\}, 2\big)   \big\}  & \text{if }j_1\in K_1\cap \mathcal{I}^0.
		\end{cases} \qquad\big(\{j_1\}, 1\big)\ \notin\ \mathcal{H}_{V''}^{\min}$.
		\end{center}
		So $\lambda-\alpha_{j_1}\in \wt V''$.
		We consider the ($\lambda$-shifted) $K_1$-projection $\gamma$ of $\mu$, which we show to lie in $\wt V''$:
		\begin{center}
		$\gamma\ \ :=\ \ \lambda\ -\ \sum_{k\in K_1}(c_k=d_k)\alpha_k\ \ \precneqq\ \  \lambda$.   
		\end{center}
		Note, $\gamma(\alpha_k)=\mu(\alpha_k)\in \mathbb{Z}_{\geq 0}$ $\forall$ $k\in K_1\cap \mathcal{I}^+$. 
		Next, similar to $\mu'$ associated to $\mu$ in  (C2), we set $\gamma'=\lambda-\alpha_{j_1}\in \wt V''$.
		Hence, $\gamma$ satisfies both conditions (C1) and (C2) (with $\gamma'$) in $V''$; $\mathcal{H}_{V''}^{\min}$ being nice by $I_{V''}=K_1\cap \mathcal{I}^+$.
		Therefore- i) $\mathrm{ht}(\lambda-\gamma)<\mathrm{ht}(\lambda-\mu)$ (since $I\neq\emptyset$), ii)
		the minimality of the triple $(\mathfrak{g}, V, \mu)$ we began with forces  the theorem to be true for $(\mathfrak{g}_{K_1}, V'', \gamma)$.
		So $\gamma\in \wt V''$; and $\wt V''\subseteq  \wt V'\subseteq \wt V\implies \gamma\in \wt V$.
		Now fix a node $p\in K_1=\supp(\lambda-\gamma)$ s.t. $\gamma+\alpha_p\in\wt V''$ by Lemma \ref{Lemma reaching lambda from mu}.
		Observe  $0<\mathrm{ht}_{\{j_1\}}(\lambda-\gamma-\alpha_p)$ as $f_iV_{\lambda}''=0$ $\forall$ $i\in K_1\setminus \{j_1\}$.
        For $\gamma+\alpha_{j_1}\in \wt V''$, either $p\neq j_1$, or $p=j_1$ with $d_{j_1}>1$ .
		While $\gamma+\alpha_p$ not dominant integral along $K_1\cap \mathcal{I}^+$-directions,
		pick a suitable $w\in W_{K_1\cap \mathcal{I}^+}$, s.t. $\wt V''\ni w(\gamma+\alpha_p)$  is dominant integral along $K_1\cap \mathcal{I}^+$-directions. 
		Note $w(\gamma+\alpha_p)\precneqq \lambda$ (because $j_1\in \mathcal{I}^0\sqcup \mathcal{I}^-$) and satisfies (C1).
		%; importantly, we will observe below that it satisfies (C2).
		To study $w(\gamma+\alpha_p)$, we set $T:=\supp\big(\lambda-w(\gamma+\alpha_p)\big)\supseteq \{j_1\}$.
		Observe $T$ is connected, as for Dynkin subgraph components $T'$ of $T$  with  $j_1\notin T'$, $\big(\{i\}, 1 \big)\in \mathcal{H}_{V''}^{\min}$ $\forall$ $j_1\neq i \in K_1\implies \mathfrak{n}^-_{T'}V''_{\lambda}=0$.
		We note before passing on-
		\begin{center}
		 $1\ \leq \ \  c_{j_1}'\  :=\  \mathrm{ht}_{\{j_1\}}\big(\lambda-w(\gamma+\alpha_p)\big)\ \ =\ \ \begin{cases}
		c_{j_1} & \text{if }p\neq j_1,\\
		c_{j_1}-1 & \text{if }p=j_1.
		\end{cases}$
		\end{center}
		With the above observations, we now return to the original $\mathfrak{g}$-h.w.m. $V$ and consider the element 
		\begin{center}
		$\delta\ \ :=\ \ \eta\ -\ \big(\lambda-w(\gamma+\alpha_p)\big)\ \  \precneqq\  \ \eta \ \  \precneqq\  \  \lambda.$
		\end{center}
		$\delta$ satisfies (C1), as $\eta$ and $w(\gamma+\alpha_p)$ with orthogonal supports individually satisfy (C1).
		We see below   importantly $\delta$ to satisfy (C2), so that $\delta\in \wt V$; after showing it, we will explain how it leads us to the desired contradiction.
		For this, we have to identify a candidate $\delta'\in \wt V$ associated to $\delta$; similar to $\mu'$ for $\mu$ and $\gamma'$ for $\gamma$.  
		We begin by recalling from Step 2 that $I$ is a union of some components among $J_1,\ldots, J_n$, precisely among $J_2,\ldots, J_n$ (since $J_1=K_1$ and $K_1\cap I=\emptyset$).
		W.l.o.g. let $I=J_2\sqcup\cdots\sqcup J_m$ for some $m\leq n$.
		Note by the previous step, $|J_k|=1$ when $J_k\cap K\neq \emptyset$ for $2\leq k\leq m$.
		We write by this and by the previous steps
		\[
		\eta\ \ =\ \ \lambda-\sum\limits_{\substack{j\text{ in the union of }J_k\text{s} \\ \text{for }k\in \{2,\ldots,m\}\ \&\  |J_k|>1}}c_j\alpha_j-\sum\limits_{\substack{k\in \{2,\ldots,m\}\\ \text{s.t. } |J_k|=1\ \&\  J_k\not\subseteq K}}c_{j_k}\alpha_{j_k}- \sum\limits_{\substack{k\in \{2,\ldots, m\}\\ \text{s.t. }|J_k|=1\ \&\  J_k\subseteq K }}(c_{j_k}-d_{j_k})\alpha_{j_k}.
		\]
		For the sought-for weight $\delta'$, we first consider the below defined $\widetilde{\delta}$ (differing from $\eta$ by $-\alpha_{j_1}$\big):
		\[
		\widetilde{\delta} :=\ \lambda-\underbrace{\alpha_{j_1}}_{\text{in }K_1=J_1}-\sum\limits_{\substack{k\in \{2,\ldots,m\}\\ \text{s.t. } |J_k|>1}}\alpha_{j_k}-\sum\limits_{\substack{k\in \{2,\ldots,m\}\\ \text{s.t. } |J_k|=1\ \&\  J_k\not\subseteq K }}c_{j_k}\alpha_{j_k}- \sum\limits_{\substack{k\in \{2,\ldots, m\}\\ \text{s.t. }|J_k|=1\ \&\  J_k\subseteq K}}(c_{j_k}-d_{j_k})\alpha_{j_k}.
		\]
		Recall from \eqref{Eqn canonical weight for mu},  $\mu'=  \lambda-\sum\limits_{\substack{k\in \{1,\ldots, n\}\\  \text{s.t. }|J_k|=1}}c_{j_k}\alpha_{j_k} - \sum\limits_{\substack{k\in \{1,\ldots, n\}\\ \text{s.t. }|J_k|>1}}\alpha_{j_k}\  \in\ \wt V$; i.e., $\prod\limits_{k\text{ s.t. }|J_k|=1}f_{j_k}^{c_k}\cdot \prod\limits_{k\text{ s.t. }|J_k|>1}f_{j_k}\cdot V_{\lambda}\neq 0$.
		Now by the independence of $\supp(\lambda-\mu')$, observe that $\widetilde{\delta}\succeq \mu'$ implies $\widetilde{\delta}\in \wt V$.
		So when $|T|>1$, it suffices to set $\delta'=\widetilde{\delta}$.
		Henceforth we assume $T=\{j_1\}$.
		Note, we require the coef. of $\alpha_{j_1}$ in the expansion of $\delta'$ to be $c_{j_1}'$, 
		but this coef. in $\widetilde{\delta}$ is 1.
		So, 
		\begin{center}
		$\text{we set}\ \ \  \delta'\ \  =\  \ \widetilde{\delta}\ -\ (c_{j_1}'-1)\alpha_{j_1}\qquad \text{ and }\ \text{ show below that }\ \ \delta'\ \in\  \wt V$.
		\end{center}
		By  assumption $|T|=1$, when $j_1\in K_1\cap \mathcal{I}^0$,  $w(\gamma+\alpha_p)\in \wt V''\cap [\lambda-\mathbb{Z}_{\geq 0}\alpha_{j_1}]$ and $f_{j_1}^2V_{\lambda}''=0$ together force $c'_{j_1}=1$, as required.
		So we assume $j_1\in K_1\cap \mathcal{I}^-$; and $c_{j_1}'$ is arbitrary.
		Note $[\widetilde{\delta}+\alpha_{j_1}](\alpha_{j_1}^{\vee})= \lambda(\alpha_{j_1}^{\vee})=0$, and moreover $V_{\widetilde{\delta}} = f_{j_1}V_{\widetilde{\delta}+\alpha_{j_1}} \  \neq 0$ is  maximal for $e_{j_1}$-action.
		By Lemma~\ref{Lemma rank-1 BKM rep. theory}, $\widetilde{\delta}(\alpha_{j_1}^{\vee})=-A_{j_1 j_1}\notin \frac{A_{j_1 j_1}}{2}\mathbb{Z}_{\geq 0}\implies \widetilde{\delta}-\mathbb{Z}_{\geq 0}\alpha_{j_1}\subset \wt V$.
		In particular, $\delta'=\delta- (c_{j_1}'-1)\alpha_{j_1}\in \wt V$ as required. 
		This shows that $\delta$ (with its associated weight $\delta'$ above) satisfies both (C1) and (C2). 
		Now $\mathrm{ht}\big(\lambda-w(\gamma+\alpha_p)\big)\leq \mathrm{ht}(\lambda-\gamma-\alpha_p)<\mathrm{ht}(\lambda-\gamma)$ implies that $\mathrm{ht}(\lambda-\delta)<\mathrm{ht}(\lambda-\mu)$.
		Since $(\mathfrak{g}, V, \mu)$ is the smallest counter example, the theorem must hold true for the triple $(\mathfrak{g}, V, \delta)$, i.e., $\delta\in \wt V$.
		But $j_1\in \supp(\eta-\delta)$ implies $\mu\prec\delta\precneqq \eta$, and now $\delta\in \wt V$ contradicts our assumption that $\eta\in \wt V$ is the closest to $\mu$ from top. 
		Hence, the Dynkin subdiagram on $K$ has no edges.
		\smallskip\\
		\underline{Step 5}. {\it $I$ (similar to $K$ in Step 4) is also the union of some singleton components among $J_1,\ldots, J_n$}:
		The below proof runs somewhat parallel to that of Step 4.
		Here as well we assume $I$ to have a Dynkin subgraph component say $J_1\subseteq I$ of $J$ (by Step 2) with $|J_1|>1$, and show  this leads to a contradiction to $(\mathfrak{g}, V, \mu)$ being a minimal counter example.
		Note from Step 3 $K\cap J_1=\emptyset$.
		Observe as $J_1$ is a component of $J$ :
		(i)~as in the beginning, the coef. in $\lambda-\eta$ of each $\alpha_j$ is $c_j$ $\forall$ $j\in J_1$; (ii)~moreover $\mu$ being dominant integral w.r.t. $J_1\cap I_V$-directions, element $\gamma$ defined below is so, \begin{center}
        $\gamma:=\lambda-\sum_{j\in J_1}c_j\alpha_j\ \in \wt \big(U(\mathfrak{n}^-_{J_1})v_{\lambda}\big)\ \subseteq \wt V \qquad \text{(not to be confused with }\gamma\text{ in Step 4})$.\end{center}
      			Note that $\eta\preceq\gamma\precneqq \lambda$.
		Recall by the independence of the support of $\mu'\in \wt V$ in (C2) of $\mu$, that for $j_1\in J_1$ we have $\lambda-\alpha_{j_1}\in \wt V$.
		Next by definitions and   by our niceness assumption on $\mathcal{H}_V^{\min}$, $J_V\cap \mathcal{I}^+=I_V$, and so  $U(\mathfrak{g}_{J_1})$-h.w.m. $U(\mathfrak{n}^-_{J_1})v_{\lambda}$ is $\mathfrak{g}_{J_1\cap J_V\cap \mathcal{I}^+}$-integrable.
		We use the $J_V^c$-freeness (Proposition \ref{Proposition Minkowski difference formula}) of $\wt V$ .
	We note either $j_1\in J_V$ or $j_1\notin J_V$.
		We consider the $U(\mathfrak{g}_{J_1})$-submodule $X=\sum\limits_{\substack{i\in J_V\cap J_1\cap(\mathcal{I}^+\sqcup \mathcal{I}^-)\\ 
				i\neq 
				j_1}}U(\mathfrak{n}^-_{J_1})f_i^{\frac{2}{A_{ii}}\lambda(\alpha_j^{\vee})+1}v_{\lambda}+ \qquad\qquad\quad \sum\limits_{\mathclap{j\in \{j_1\}\cap J_V\cap  (\mathcal{I}^+\sqcup \mathcal{I}^-) \text{ s.t. } \lambda(\alpha_j^{\vee})\neq 0}}\qquad\qquad U(\mathfrak{n}_{J_1}^-)f_j^{\frac{2}{A_{jj}}\lambda(\alpha_j^{\vee})+1}v_{\lambda}+ \sum\limits_{\substack{i\in J_V\cap J_1\cap \mathcal{I}^0\\ i\neq j_1 }}\\ U(\mathfrak{n}^-_{J_1})f_iv_{\lambda}+\sum_{j\in \{j_1\}\cap J_V\cap \mathcal{I}^0} U(\mathfrak{n}^-_{J_1}) f_j^2 v_{\lambda}$ of the $U(\mathfrak{g}_{J_1})$-h.w.m. $U(\mathfrak{n}^-_{J_1})v_{\lambda}$.
                As in Step 4 we work in the further quotient $V'=\frac{U(\mathfrak{n}^-_{J_1})v_{\lambda}}{X}$.
                Note when the 2-nd summation is non-zero in the penultimate line that $\lambda(\alpha_{j_1}^{\vee})\in \frac{A_{j_1j_1}}{2}\mathbb{Z}_{>0}$; when the 4-th sum is non-zero, $\lambda(\alpha_{j_1}^{\vee})=0$.
		For $i\in (J_1\cap J_V\cap \mathcal{I}^0)\setminus \{j_1\}$, observe $f_iV'_{\lambda}=0\implies (\{i\}, 1)\in \mathcal{H}_{V'}^{\min}$;	similarly, $\left(\{i\}, \frac{2}{A_{ii}}\lambda(\alpha_i^{\vee})+1\right)\in \mathcal{H}_{V'}^{\min}$ $\forall$ $i\in \big(J_1\cap J_V\cap (\mathcal{I}^+\sqcup\mathcal{I}^-)\big)\setminus \{j_1\}$. 
		Importantly $\lambda-\alpha_{j_1}\in \wt V'$, i.e. $(\{j_1\}, 1)\notin \mathcal{H}_{V'}$.
		This line and point (ii) in the above paragraph together imply that $\gamma=\lambda-\sum_{j\in J_1}c_j\alpha_j$ satisfies both (C1)--(C2), with $\gamma'=\lambda-\alpha_{j_1}$.
		Moreover, $\mathcal{H}_{V'}^{\min}$ is nice, and $\mathrm{ht}(\lambda-\gamma)<\mathrm{ht}(\lambda-\mu)=\mathrm{ht}(\lambda-\eta)+\sum_{k\in K}d_k$.
		So the minimality of $(\mathfrak{g},V,\mu)$ forces $\gamma\in \wt V'\subset \wt V$ by the theorem for the triple $(\mathfrak{g}_{J_1}, V', \gamma)$.
		Now we fix a pair $p\in J_1$ and  $w\in W_{J_1\cap I_V}\subseteq W_{J_1\cap I_{V'}}$ (possibly $w=1$), with $\gamma+\alpha_p\in \wt V'$ and $\wt V'\ni w(\gamma+\alpha_p)$ dominant integral along $J_1\cap I_V$-directions. 
		Let $T=\supp\big(\lambda-w(\gamma+\alpha_p)\big)$.
		We observe the two cases $j_1\in T$ or $j_1\notin T$; with two further cases in each of them that $|T|\leq 1$ or $|T|>1$.
        Let us first define (as in Step 4)  element $\delta$ supported over $T\sqcup J_2\sqcup \cdots\sqcup J_n$ below; by showing $\delta\in \wt V$ we see $\mu\in \wt V$. 
		\begin{center}
		$\delta\ \  := \ \  w (\gamma +\alpha_p)\ -\  \sum_{j\in J\setminus J_1}c_j\alpha_j$.
		\end{center}
	Note $\mu=\lambda-\sum_{j\in J}c_j\alpha_j=\gamma-\sum_{j\in J\setminus J_1}c_j\alpha_j$; and $\delta$ satisfies (C1). 
We show $\delta$ satisfies (C2) -- by finding a suitable weight $\delta'\in \wt V$ --
		so that 
		the minimality of $(\mathfrak{g}, V,\mu )$ forces $\delta\in \wt V$.
		Assuming $\delta\in \wt V$, we first see how to reach the contradiction $\mu \in \wt V$ :
		Note $w^{-1}\delta= \gamma +\alpha_p-\sum_{j\in J\setminus J_1}c_j\alpha_j=\mu+\alpha_p\in \wt V$, since $w\in W_{J_1\cap J_V\cap \mathcal{I}^+}$ leaves $\wt V$ invariant and fixes $\Pi_{J\setminus J_1}$.
	 Next $p$ has at least one edge to $\supp(\lambda-\gamma -\alpha_p)\subseteq J_1$.
		So by Lemma \ref{Lemma imaginary root strings}, $(\mu+\alpha_p)-\alpha_p=\mu\in \wt V$ when $p\in \mathcal{I}^0\sqcup \mathcal{I}^-$ as required. 
		When $p\in \mathcal{I}^+\cap J_V^c$, observe $(\mu+\alpha_p)-\alpha_p\in \wt V$ by freeness \eqref{Eqn freeness inclusion} of $\wt V$.
	When $p\in \mathcal{I}^+\cap J_V \subseteq I_V$, $[\mu+\alpha_p](\alpha_p^{\vee})\geq 2 >0$ \big(as $\mu(\alpha_p^{\vee})\geq  0$ by (C1)\big) implies $\mu\in \wt V$ by Lemma \ref{Lemma real root strings}.
		
		Now, it only remains to show that $\delta$ satisfies (C2) -- namely finding $\delta'$ -- and we begin by setting \begin{center}
		$\delta''\ \ :=\ \ \lambda\ -\alpha_{j_1} -\ \sum\limits_{\substack{k\in \{2,\ldots, n\} \  \text{s.t. }|J_k|>1 }}\alpha_{j_k}-\sum\limits_{\substack{k\in \{2,\ldots, n\} \  \text{s.t. }|J_k|=1 
			}}c_{j_k}\alpha_{j_k}\ \ \ \in\  \wt V$.
			\end{center}
			$\delta''\in \wt V$ since both of $\delta''\succneqq \mu'\in \wt V$ have independent supports.
			Recall  $T=\supp\big(\lambda-w(\gamma+\alpha_p)\big)$ (possibly independent), and we let $T=T_1,\ldots, T_m$ to be the Dynkin subgraph components of $T$ (when $T\neq\emptyset$); $m=1$ if $T$ is connected.
            When $T=\emptyset$, $\delta=\lambda-\sum_{j\in J\setminus J_1}c_j\alpha_j$ with (C1), and setting $\delta'=\delta''+\alpha_{j_1}$ suffices. Henceforth we assume $T\neq\emptyset$, and let
			\begin{center}
			 $w(\gamma+\alpha_p)\ =\ \lambda-\sum_{t\in T}c_t'\alpha_t
			\quad \text{for  }\ c_t'\in \mathbb{Z}_{> 0}\ \forall\ t$.
			\end{center}
%			Suppose $T_t\cap J_V^c\neq \emptyset$ for every $1\leq t\leq m$.
			While $T_t\cap J_V^c\neq \emptyset$ $\forall$ $1\leq t\leq m$, picking any (set of) nodes $j'_t\in T_t\cap J_V^c$ $\forall$ $t$, it suffices to set
			\begin{center}
			$\delta'\ \ =\ \ (\delta''+\alpha_{j_1})\ -\ \sum\limits_{\substack{t\in \{1,\ldots, m\}\  \text{s.t. }|T_t|=1}}c_{j_t'}\alpha_{j_t'}\ -\ \sum\limits_{\substack{t\in \{1,\ldots, m\}\ \text{s.t. }|T_t|>1 }}\alpha_{j_t'} \qquad \underset{(\text{by Proposition } \ref{Proposition Minkowski difference formula})}{\in \wt V}$.
			\end{center}
            We henceforth assume w.l.o.g. for some $1< r\leq m$ that $T_t\subseteq J_V$ $\forall$ $t\leq r$ and $T_d\nsubseteq J_V$ $\forall$ $r<d\leq m$; $r=m$ if all $T_t\subseteq J_V$.
			Note $j_1$ $\in\supp(\lambda-\mu')\cap J_1$ can belong to at most one $T_t$; or no $T_t$.
			We fix by $w(\gamma+\alpha_p)\in \wt V'$,  nodes $\tau_t\in T_t$ $\forall$ $1\leq t\leq r$ s.t. $\lambda-\alpha_{\tau_t}\in \wt V'$ (by Lemmas \ref{weight formula 1 forward inclusion lemma} or \ref{Lemma reaching lambda from mu}).
            Recall, $w(\gamma+\alpha_p)$ is $J_1\cap I_V$-dominant, but nevertheless $\tau_t$s can be chosen from its $J_1\cap I_{V'}$-dominant conjugate.
			We extend this sequence $\tau_1,\ldots,\tau_r$ by choosing (any) $\tau_{r+1}\in T_{r+1}\cap J_V^c, \ldots, \tau_m\in T_m\cap J_V^c$. 
			Recall, $f_{j_1}V'_{\lambda}\neq 0$ in $V'$.
           i) For simplicity, we assume $t=1$ and $\tau_1=\{j_1\}$  whenever $j_1\in T_t$.
			For $1\leq t\leq r$, whenever $\tau_t\neq j_1$, necessarily $\tau_t\in J_V\cap (\mathcal{I}^+\sqcup \mathcal{I}^-)$ and $\lambda(\alpha_{\tau_t}^{\vee})\neq 0$.
            This is because $\tau_t\in J_V\cap \mathcal{I}^0\setminus \{j_1\} \text{ says } f_{\tau_t}V'_{\lambda}=0$, and $f_{\tau_t}^{\frac{2}{A_{\tau_t\tau_t}}\lambda(\alpha_{\tau_t}^{\vee})+1}V'_{\lambda}=0\neq f_{\tau_t}V'_{\lambda}$\  $\forall$ $\tau_t\in J_V\cap (\mathcal{I}^+\sqcup \mathcal{I}^-)\setminus \{j_1\}$.\\
			ii) Observe by the same reasoning $c'_{\tau_t}< \frac{2}{A_{\tau_t\tau_t}}\lambda(\alpha_{\tau_t}^{\vee})+1$, when $T_t=\{\tau_t\neq j_1\}\subseteq \mathcal{I}^+\sqcup \mathcal{I}^-$ for $ t\leq r$, or when $T_1= \{\tau_1=j_1\}\subseteq \mathcal{I}^+\sqcup \mathcal{I}^-
            $ with $\lambda(\alpha_{\tau_1}^{\vee})\neq 0$; recall $\lambda-\sum_{t\in T}c'_t\alpha_t=w(\gamma+\alpha_p)\in \wt V'$.
			All of the above observations help us identify the sought for weight $\delta'$ as follows (lastly if  $T\cap J_V =T_1\sqcup \cdots \sqcup T_r$):
			\begin{equation}\label{Eqn delta' in the final sub case}
			\delta'\ \ =\ \ \Bigg( \widetilde{\delta}=   \Bigg[\delta''-\sum_{\substack{t\in \{1,\ldots,r\}\\ \text{s.t. }|T_t|>1 }}\alpha_{\tau_t}-\sum_{\substack{t\in \{1,\ldots,r\}\\ \text{s.t. }|T_t|=1 }}c'_{\tau_t}\alpha_{\tau_t}\Bigg]\Bigg) -\ \sum_{\substack{t\in \{r+1,\ldots,m\}\\ \text{s.t. } |T_t|>1}}\alpha_{\tau_t}\ -\ \sum_{\substack{t\in \{r+1,\ldots,m\} \\ \text{s.t. }|T_t|=1 }}c'_{\tau_t}\alpha_{\tau_t}.
			\end{equation}
			We will be done up on showing that $\delta'$ defined in \eqref{Eqn delta' in the final sub case} is in $\wt V$, and in view of \eqref{Eqn freeness inclusion} it suffices to show the claim $\widetilde{\delta}\in \wt V$ (i.e., coming down from $\delta''$ in $\alpha_{\tau_1}, ..., \alpha_{\tau_r}$ directions).
            For this, we observe: 
		(i) $\delta''\in \wt V$, and when $j_1\notin T$ we work with $\delta''+\alpha_{j_1}\in \wt V$ instead of $\delta''$. 
        (ii) $\alpha_j(\alpha_{\tau_{t}}^{\vee})=0$ $\forall$ $j\in J_2\sqcup\cdots \sqcup J_n\sqcup \{\tau_1,\ldots,\cancel{\tau_t}, \ldots, \tau_r\}$, and so $\delta''(\alpha_{\tau_t}^{\vee})= [\delta''+\alpha_{j_1}](\alpha_{\tau_t}^{\vee})=\lambda(\alpha_{\tau_t}^{\vee})\neq 0$ $\forall$ $\tau_t\neq j_1$. 
        (iii) So by point ii) in the above paragraph and Lemma \ref{Lemma real root strings}, we can iteratively come down being in $\wt V$ from $\delta''$ or $\delta''+\alpha_{j_1}$ to $\widetilde{\delta}$, by each $c_{\tau_t}'\alpha_{\tau_t}$ (if $|T_t|=1$) or by $\alpha_{\tau_t}$ (if $|T_t|>1$) for $\tau_t\neq j_1$.  
        (iv) Now let $\tau_1=j_
        1\in T_1$.
        If $|T_1|>1$, then $\widetilde{\delta}$ with one $\alpha_{j_1}$ works.
        So we assume $T_1=\{j_1\}$.
        When $j_1\in \mathcal{I}^0$ we have $f_{j_1} V_{\lambda}'\neq 0 = f_{j_1}^2V_{\lambda}'$ and so $c_{j_1}'=1= \height_{\{j_1\}}(\lambda-\widetilde{\delta})$; and $\widetilde{\delta}$ works.
        Suppose $j_1\in \mathcal{I}^+\sqcup \mathcal{I}^-$ (recall $j_1\in T_1\subseteq  J_V\subseteq J_{\lambda}$);  we  go below $\widetilde{\delta}$ which is in $\wt V$ by previous lines.
        When $\lambda(\alpha_{j_1}^{\vee})=0$ by $\mathbb{C}[f_{j_1}]\cdot V_{\widetilde{\delta}}$ being $\mathfrak{g}_{\{j_1\}}$-simple-Verma, and when $[\widetilde{\delta}+\alpha_{j_1}](\alpha_{j_1}^{\vee})=\lambda(\alpha_{j_1}^{\vee})<0$ similar to in point (iii), we have $[\widetilde{\delta}+\alpha_{j_1}]- c'_{j_1}\alpha_{j_1}\in \wt V$.
		Hence, the proofs of Step 5 and theorem, are complete.
		\end{proof}
        \begin{proof}[{\textnormal \bf Proof of Theorem \ref{Theorem wt-formula by composition series simples for nice V}}]
    Fix an integrable h.w.m. $V\twoheadleftarrow M(\lambda)$.
 The reverse inclusion in \eqref{Eqn wt-fromula via comp. series for V in BKM case} is true, as maximal vectors with weights the surviving holes in $V$, generate non-zero h.w. submodules.
 
        For the forward inclusion, we show $\mu\in \wt V$ lies in the union on the r.h.s. of \eqref{Eqn wt-fromula via comp. series for V in BKM case}, by inducting on $\height(\lambda-\mu)\geq 0$; it is manifest if $\mu=\lambda$. 
        For the induction step, let $\mu=\lambda-\sum_{i\in I}c_i\alpha_i$  for $c_i\in \mathbb{Z}_{>0}$ $\forall$ $i\in I=\supp(\lambda-\mu)$.
       Suppose $I$ is independent.
       We set $I_{\mu}:=\big\{ i\in I\cap J_{\lambda}\cap (\mathcal{I}^+\sqcup \mathcal{I}^-)\ \big|\ c_i>\frac{2}{A_{ii}}\lambda(\alpha_i^{\vee}) \big\}$ and $\xi=\lambda-\sum_{i\in I_{\mu}} \big(\frac{2}{A_{ii}}\lambda(\alpha_i^{\vee})+1\big)\alpha_i-\sum_{i\in I\cap J_{\lambda}\cap \mathcal{I}^0}c_i\alpha_i$. 
       We see $\mu\in \wt L(\xi)$, via observing $\mu_1=\lambda - \sum_{j\in I_{\mu}\cup (I\cap J_{\lambda}\cap \mathcal{I}^0 ) }c_j\alpha_j \in \wt L(\xi)$, using Lemmas \ref{Lemma real root strings}, \ref{Freeness-directions for weights lemma} for pair \big($\mu_1$, $L(\xi)$\big) along directions $ I\cap J_{\lambda}\cap (\mathcal{I^+}\sqcup \mathcal{I}^- \setminus I_{\mu})\subseteq  J_{\xi}$ and resp. $(I\cap J_{\lambda}^c)\cup I_{\mu}\subseteq J_{\xi}^c$.
       
     We assume  $I$ to be non-independent.      If $\mu(\alpha_j^{\vee})<0$ for a $j\in I_V=\mathcal{I}^+$ (as $V$ is integrable), by the induction hypothesis  $s_j\mu$ lies in the r.h.s. of \eqref{Eqn wt-fromula via comp. series for V in BKM case}, and so does $\mu$ by Lemma \ref{Lemma real root strings}.
      So we assume $\mu\in \wt V$ to satisfy (C1) along $I\cap \mathcal{I}^+$, and let $I=J_1\sqcup \cdots\sqcup J_n$ be the Dynkin subgraph component decomposition as in (C2), with w.l.o.g. $|J_1|>1$.
We pick a node $i\in J_1$ (by Lemma \ref{Lemma reaching lambda from mu}) for which the maximal weight $\mu'$ in the $\alpha_i$-string $(\mu+\mathbb{Z}_{>0}\alpha_i)\cap \wt V$ though $\mu$ -- in particular, its ``$J_1$-projection'' $\big(\lambda-\sum_{t\in J_1}c_t\alpha_t\big)+ \height_{\{i\}}(\mu'-\mu)\alpha_i$ --  lies in $\wt V$.
By the induction hypothesis, we fix a hole $(H, m_H)\in \mathrm{Indep}(J_{\lambda})\setminus \mathcal{H}_V$ with $\xi= \lambda-\sum_{h\in H}m_H(h)\alpha_h\in \wt V$ and $\mu'\in \wt L(\xi)\subseteq \wt V$.
So $\mu'$ is also at the top of its $\alpha_i$-string in $\wt L(\xi)$.
If $i\in \mathcal{I}^+$, as $\mu, \mu'$ are $\{i\}$-dominant, and as $V$ is integrable, Lemma \ref{Lemma real root strings} for $\big(\mu', i, L(\xi)\big)$ shows $\mu\in \wt L(\xi)$ as required.
So we assume $i\in \mathcal{I}^0\sqcup \mathcal{I}^-$.
%and that it has no edges to nodes in $\supp(\xi-\mu')$ as we are done otherwise by Lemma \ref{Lemma imaginary root strings}.
%As all the neighbors of $\{i\}$ from $I$ are contained in $\supp(\lambda-\xi)$, by its independence it cannot contain $i$.
If $\lambda(\alpha_i^{\vee})\geq 0$, then $\mu'(\alpha_i^{\vee})>0$ as imaginary $i$ is adjacent to some node in $J_1$, and then $\mu'\succneqq \mu\in \wt L(\xi)$ as $\mathbb{C}[f_i]\cdot  L(\xi)_{\mu'}$ are $\mathfrak{g}_{\{i\}}$-Verma lines by Lemma \ref{Lemma rank-1 BKM rep. theory}.
We assume that $\lambda(\alpha_i^{\vee})<0$, and note $\lambda - \alpha_i\in \wt V$.
Moreover, the pairing of $\alpha_i^{\vee}$ with (sub)non-hole weight $\gamma=\lambda - \sum_{h\in H\setminus J_1}m_H(h)\alpha_h\in \wt V$ is equal to $\lambda(\alpha_i^{\vee})<0$, and so $\gamma - \alpha_i$ is also a non-hole weight in $V$.
We show $\mu\in \wt L(\gamma)$, by proving $\mu$ to satisfy (C2) w.r.t. the top $\gamma$; clearly $\mu\preceq \gamma$ and (C1) is satisfied along the directions of $J_{\gamma}\cap \mathcal{I}^+\subseteq \mathcal{I}^+=I_V$.
For this, we begin by noting by $\mu'$ satisfying (C2) w.r.t. $\xi$, that $\lambda- \sum_{t\in I\setminus J_1}c_t\alpha_t$ satisfies (C2) w.r.t. $\gamma$.
Next, $J_1$ is a component in $\supp(\mu-\gamma)$, and fix the desired node $j_1\in J_1$ to be $i$.
In the decomposition $\supp(\gamma-\mu)$ into components say as $J_1\sqcup K_2\sqcup \cdots \sqcup  K_m$ -- observe $K_2,...,K_m$ are also components in $\xi-\mu'$ and by Lemma \ref{weight formula 1 forward inclusion lemma} for $\mu'\in \wt L(\xi)$ -- we fix the desired nodes from $K_2,\ldots, K_m$ to be $j_2,\ldots,j_m$ . 
Let the supporting 1-dim. weight for $\lambda - \sum_{t\in I\setminus J_1}c_t\alpha_t$ in $\wt L(\gamma)$ \big(given its property (C2)\big) involving $j_2,\ldots, j_m$ be $\nu$.
Finally, for observing $\mu\in \wt L(\gamma)$, the desired 1-dim. weight for its (C2) is $\nu-\alpha_i$; and $\nu-\alpha_i\in \wt L(\gamma)$ by $\mathfrak{sl}_2$-theory as $\nu(\alpha_i^{\vee})= \lambda(\alpha_i^{\vee})<0$.  
\end{proof}      
\section{Theorem \ref{Theorem C} : Characters of simples $L(\lambda\in P^{\pm})$ over rank-2 BKM~$\mathfrak{g}$}\label{Section 5}
            We study  Vermas $M(\lambda)$, and simples $L(\lambda)$ and their characters, for $\lambda\in P^{\pm}$. 
            We comupte maximal vectors in $M(\lambda)_{\mu}$ for solutions $\mu$ to norm-equality \eqref{Eqn norm equality with invariant form}, in the cases in Theorem \ref{Theorem C}.
            We study, solution-data to \eqref{Eqn norm equality with invariant form} in Subsections \ref{Subsection 5.1}--\ref{Subsection Theorem C (I)--(II) proof}.
            Recall the discussion above \eqref{Eqn dot action by any positive root}.
 In the settings of $\mathcal{I}=\mathcal{I}^-$ with (widely treated case in the literature of) all $A_{ij}\neq 0$ \cite{Elizabeth, Naito 2} : 1) The whole ``interior'' $\mathbb{Z}_{\geq 0}\Pi\setminus \big(\bigcup_{i\in \mathcal{I}}  \mathbb{Z}_{\geq 0}\{\alpha_i\}\big)\text{ equals }\Delta^+\setminus \Pi$. 
2) See Note \ref{NOte KK maxl vect bounds}a$'$) for the lower bound from \cite{Kac--Kazhdan}, for the number of linearly independent maximal vectors in $M(\lambda)_{\mu}$ for $\mu\preceq \lambda$ satisfying \eqref{Eqn dot action by any positive root}.
\subsection{Norm equality in rank-2, and monomial maximal vectors in Proposition \ref{Prop maxl vect}(a)}\label{Subsection 5.1}
       The rank-2 settings that we work inside in the entirety of this Section, are as follows:
			\begin{itemize}
				\item $A=A(b,a,c,d)$ defined in Setting-A of \eqref{Eqn negative settings of A} for $b,d\in \mathbb{Z}_{\geq 0}\sqcup\{-2\}$, $a,c\in \mathbb{Z}_{\geq 0}$; widely, \underline{when $b,a=c,d\in \mathbb{N}$}, see Lemma \ref{Lemma norm equations via Casimir action}.
                 Such $A$ are symmetrizable upon scaling $\Pi^{\vee}$.
            \item Fix any $\lambda\in \mathfrak{h}^*$ with $\lambda(\alpha_i^{\vee})\ = \ \frac{A_{ii}}{2} (M_{i}-1)$, for (powers) $M_i\in \mathbb{N}$, with $M_i=1$ if $A_{ii}=0$.
				\item Over $A(2)$, $\rho(\alpha_i^{\vee})=-1$ (negative) $\forall\ i$; differing from KM cases of $\rho(\alpha_i^{\vee})=2$ $\forall$ $i\in \mathcal{I}=\mathcal{I}^+$.
                \end{itemize}
                Lemma \ref{Lemma norm equations via Casimir action} reformulates \eqref{Eqn norm equality with invariant form}, in those rank-2 cases. 
               The elementary forms (N)--(R) of $A$ in it, can help build intuitions on structures of $M(\lambda)$ and $L(\lambda)$s for $\lambda\in P^{\pm}$; here, if $\mathcal{I}^-=\emptyset$, we are back in integrable case $P^{\pm}=P^+$.  
            Even in rank-2, if $\{a,b\}\not\subseteq\mathbb{Z}_{\geq 0}$, the analysis could be more involved bringing-in strong dot-action of arbitrary positive roots; while the remaining cases in $\mathbb{N}$ proceed similar to in $A_2$-type. 
           The simplest case $a=c=0$:\  $\mathfrak{g}$ =  $\mathfrak{sl}_2^{\oplus 2}$ ($A_1\times A_1$)~or~$\mathfrak{sl}_2(\mathbb{C}) \oplus\text{[3-}\text{dim. Heisenberg LA]}$, and $\mathfrak{g}$-Vermas are direct sums of individual Verma lines over rank-1 $\mathfrak{g}_{\{i\}}$s.
            For general $A_{n\times n}$, we have to deal with 2-nd (total monomial) degree equations in $n$ variables.    
			\begin{lemma}\label{Lemma norm equations via Casimir action} Fix a symmetric BKMC matrix $A=A(b,a,c=a,d)_{2\times 2}$ that is not a generalized Cartan matrix; so the BKM LA $\mathfrak{g}=\mathfrak{g}(A)$ is not any KM LA.
         Assume $\mathcal{I}^-\neq \emptyset$.
         Fix weights $\lambda\in  P^{\pm} $ and $\mu=\lambda-X\alpha_1-Y\alpha_2\preceq \lambda$, for $(X,Y)\in \mathbb{Z}_{\geq 0}\times \mathbb{Z}_{\geq 0}$.
            If $M(\lambda)_{\mu}$ has a maximal vector, then one of equations \eqref{Eqn norm equality condition for general mu= (X,Y) in Case (N)}--\eqref{Eqn norm equality condition for general mu= (X,Y) in Case (R)} below must be satisfied by the pair $(X,Y)$ in the respective settings when
             \begin{center}
           $(N)\ \  A = \begin{bmatrix}
				-b\ &\ -a\\
				-a\ &\ -d
				\end{bmatrix} \ \  ; \ \ (H)\ \  A = \begin{bmatrix}
				-b\ &\ -a\\
				-a\ &\ 0
				\end{bmatrix}\ \  ;  \ \ (R)\  \  A = \begin{bmatrix}
				-b\ &\ -a\\
				-a\ &\ 2
				\end{bmatrix}\quad \text{for }a,b,d\in \mathbb{N}\  :$
                \end{center}
                 (N)--(R) arise exactly when $\mathcal{I}=\{1,2\}$ with $\{1\}\subseteq \mathcal{I}^-$, and $\{2\}$ resp. negative/Heisenberg/real.
           \begin{align}
			 \tag{Eq N}\ b X^2\ + \ dY^2\ - \  bM_1 X\ - \ dM_2Y \ +\ 2aXY\ & = \ 0.
             \label{Eqn norm equality condition for general mu= (X,Y) in Case (N)}
			\\ 
			  \tag{Eq H} \ \ X\Big(X\ - \ M_1 \ +\ \frac{2a}{b}Y\Big)\ & = \ 0.
           \label{Eqn norm equality condition for general mu= (X,Y) in Case (H)}
			\\
			    \tag{Eq R} \ b X^2\ - \ 2 Y^2\ - \  bM_1 X\ + \ 2M_2Y \ +\ 2aXY\ & = \ 0.
         \label{Eqn norm equality condition for general mu= (X,Y) in Case (R)}
			\end{align}
                        \end{lemma}
          \begin{proof}[{\bf Proof}]
            By the norm equality due to Casimir action 
            \eqref{Eqn norm equality with invariant form}: \begin{equation}\label{Eqn norm equality simplified}
                (\lambda +2\rho +\mu\ , \ \lambda-\mu)\ = \ 0
            \end{equation}
           \begin{align*}
			 \implies &\  2\left(\lambda+\rho\ , \ X\alpha_1+ Y\alpha_2\right) - \left(X\alpha_1+Y\alpha_2\ , \ X\alpha_1+Y\alpha_2\right) \\
	 = \ & \frac{2(\alpha_1\ , \ \alpha_1)}{A_{11}} \left(\lambda+\rho\ , \  \frac{A_{11}}{(\alpha_1\  , \ \alpha_1)} \alpha_1\right)X\  + \  \underset{(\text{When }A_{22}\neq 0)\qquad \qquad \quad}{\frac{2(\alpha_2\ , \ \alpha_2)}{A_{22}} \left(\lambda+\rho\ , \  \frac{A_{22}}{(\alpha_2\  , \ \alpha_2)} \alpha_2\right)Y} \  -   (\alpha_1 , \ \alpha_1)X^2 \ \\
			&\ \ \ \ - \ (\alpha_2\ , \ \alpha_2)Y^2  \ - \ \frac{2(\alpha_1\ , \  \alpha_1)}{A_{11}}\left(\alpha_2\ , \ \frac{A_{11}}{(\alpha_1\ , \ \alpha_1)} \alpha_1\right)XY\  \\
			=  \ & (\alpha_1\ , \  \alpha_1)X^2\ + \ (\alpha_2\  , \  \alpha_2)Y^2\ - \  (\alpha_1\ , \ \alpha_1)M_1 X\ - \ (\alpha_2\ , \  \alpha_2)M_2Y \ +\ 2(\alpha_1\ , \  \alpha_1)\frac{A_{12}}{A_{11}}XY  =  0 .
			\end{align*}
            Now the equations in the lemma are easily seen to arise.
            \end{proof}
          \begin{itemize}
          \item \underline{Easy solutions to \eqref{Eqn norm equality condition for general mu= (X,Y) in Case (N)}--\eqref{Eqn norm equality condition for general mu= (X,Y) in Case (R)}} 
           $(X,Y)\ =\ (0,0),\ (M_1,0),\ (0,M_2)$, i.e. $\mu=\lambda,\        \lambda    -M_1\alpha_1, \ \lambda-  M_2\alpha_2$.
           \item The solution-set for  \eqref{Eqn norm equality condition for general mu= (X,Y) in Case (H)} = $\big\{(0,Y)\ \big|\  Y\in \mathbb{N}\big\}\cup \Big\{ \big(  M_1-\frac{2a}{b}Y ,\  Y\big) \ \Big|\ \mathbb{N}\ni Y \leq \frac{b}{2a}M_1 \Big\}$.
           \item In several small cases of \eqref{Eqn norm equality condition for general mu= (X,Y) in Case (R)} we checked, it had infinitely many solutions.\smallskip\\
           \underline{Example }: Consider $\lambda=0$ over $\mathfrak{g}\big(A(2,2,2,-2)\big)$. \eqref{Eqn norm equality condition for general mu= (X,Y) in Case (R)} in this case reads as  $X^2-Y^2-X+Y+2XY=0$. 
           So $-(X-Y)^2- (X-Y)+ 2X^2= 0 $ implies $X= \sqrt{\frac{(X-Y)(X-Y+1)}{2}}$.
           Now $X\in \mathbb{N}$ if both $(X-Y)$ and $\frac{X-Y+1}{2}$ are perfect squares.
           This brings us to the famous negative \underline{Pell's equation} $u^2-2v^2=-1$ with infinitely many solutions $(u,v)\in \mathbb{Z}\times \mathbb{Z}$.
           For each such $u$, finding $(X,Y)$ using $X=uv$ and $X-Y=u^2$ suffices for the claim.
           It might be interesting to explore the arising of some other prominent equations in number theory from the norm-equality. 
        \item \eqref{Eqn norm equality condition for general mu= (X,Y) in Case (N)} always has finitely many solutions, which the rest of the paper carefully studies.
        %\big(we study solutions of \eqref{Eqn norm equality condition for general mu= (X,Y) in Case (H)} and \eqref{Eqn norm equality condition for general mu= (X,Y) in Case (R)} in future works\big):   
        \end{itemize}            
            \begin{lemma}\label{Lemma bddness of sols for two negative nodes}
                All the $\mathbb{Z}_{\geq 0}\times \mathbb{Z}_{\geq 0}$-solutions to \eqref{Eqn norm equality condition for general mu= (X,Y) in Case (N)} lie within the square $\Big[0,\  \max\Big(\frac{b}{2a},  \frac{d}{2a} , 1\Big)\times \max\big(M_1,M_2\big) \Big]^2$. \big(It is also the smallest square containing all the solutions when $b=d=2a$.\big)
            \end{lemma}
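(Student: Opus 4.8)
The plan is to treat \eqref{Eqn norm equality condition for general mu= (X,Y) in Case (N)}, namely $bX^2 + dY^2 - bM_1X - dM_2Y + 2aXY = 0$ with $a,b,d,M_1,M_2 \in \mathbb{N}$, as a constraint forcing both $X$ and $Y$ to be small, by isolating one variable and using positivity. First I would complete the square in $X$: rewrite the equation as $bX^2 + (2aY - bM_1)X + (dY^2 - dM_2Y) = 0$, so that $X\bigl(bX + 2aY - bM_1\bigr) = dY(M_2 - Y)$. Since $X \geq 0$ and $b > 0$, the left-hand side factor $X$ is nonnegative; if $X > 0$ we may divide and get $bX = bM_1 - 2aY + \tfrac{dY(M_2-Y)}{X}$. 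The key observation is that the cross term $2aXY$ is nonnegative, so from the original equation $bX^2 + dY^2 \leq bM_1X + dM_2Y$, which already gives $bX^2 \leq bM_1X + dM_2Y$ and $dY^2 \leq bM_1X + dM_2Y$; I would leverage these two inequalities symmetrically.

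The cleanest route: from $bX^2 + dY^2 + 2aXY = bM_1X + dM_2Y$ and $a,b,d > 0$, both $X$ and $Y$ satisfy $bX^2 \leq bM_1X + dM_2Y$ and $dY^2 \leq bM_1X + dM_2Y$. Set $L := \max(M_1, M_2)$ and $c := \max\bigl(\tfrac{b}{2a}, \tfrac{d}{2a}, 1\bigr)$; the claim is $X, Y \leq cL$. Suppose for contradiction that, say, $X > cL$. Then $X > L \geq M_1$, so $X - M_1 > 0$, and the equation rearranges to
\[
X(X - M_1)b + 2aXY = d M_2 Y - dY^2 = dY(M_2 - Y),
\]
forcing $M_2 - Y > 0$ (the left side being strictly positive since $X > 0$), hence $Y < M_2 \leq L$. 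Then $dY(M_2 - Y) \leq dY M_2 \leq dL^2$ while $X(X-M_1)b + 2aXY \geq 2aXY$; but this needs $Y$ bounded below, so instead I would bound $X(X-M_1)b \geq bX(X - M_1)$ and note $X > cL \geq L \geq M_1$ gives $X - M_1 \geq X - L > cL - L = (c-1)L \geq 0$, which is too weak if $c = 1$. The fix is to also use $2aX > 2a\cdot cL \geq dL \geq dY$ (since $c \geq \tfrac{d}{2a}$ and $Y < L$), so $2aXY > dY^2$, whence $X(X-M_1)b + 2aXY > 0 + dY^2 \geq 0$; combined with $= dY(M_2-Y) < dYM_2$ and the strict inequality $2aXY > dY \cdot Y$ we instead compare $2aXY$ against $dM_2Y$: since $2aX > dL \geq dM_2$, we get $2aXY \geq dM_2 Y$ when $Y \geq 1$, and then $X(X-M_1)b + 2aXY - dM_2Y \geq bX(X-M_1) > 0$ contradicting that the left side of the rearranged equation equals $dY(M_2 - Y) - dM_2 Y + dM_2 Y$... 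I would clean this chain up so that $X > cL$ directly yields a strictly positive left side versus a bounded right side. The symmetric argument with $Y$ in place of $X$ (using $c \geq \tfrac{b}{2a}$) handles $Y > cL$.

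For the parenthetical remark, when $b = d = 2a$ one has $c = \max(1,1,1) = 1$, so the bounding square is exactly $[0, L]^2$ with $L = \max(M_1, M_2)$, and minimality follows because the easy solutions $(M_1, 0)$ and $(0, M_2)$ already realize the coordinate value $L$, so no smaller square can contain all solutions. The main obstacle I anticipate is getting the inequality bookkeeping tight enough to cover the boundary case $c = 1$ (i.e.\ $2a \geq \max(b,d)$) cleanly, where the naive estimate $X - M_1 > 0$ alone is insufficient and one must simultaneously exploit the cross-term bound $2aXY$ dominating $dY^2$; organizing the case analysis on whether $Y = 0$ or $Y \geq 1$ (and symmetrically for $X$) should make this routine but it requires care to avoid circular bounds.
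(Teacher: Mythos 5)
Your proposal is correct and takes essentially the same approach as the paper: rearrange (Eq N) so that, when a coordinate exceeds $c\max(M_1,M_2)$ with $c=\max\bigl(\tfrac{b}{2a},\tfrac{d}{2a},1\bigr)$, the resulting terms are manifestly nonnegative with at least one strictly positive, contradicting the equation. The paper's version is a tighter three-case split (both coordinates large, only $y$ large, only $x$ large) with a grouping tailored to each case, e.g.\ $dy^2 + bx(x-M_1) + dy\bigl(\tfrac{2a}{d}x - M_2\bigr) > 0$ when $x$ is the large coordinate, and your eventual chain $2aX > dM_2 \Rightarrow 2aXY > dM_2 Y \geq dY(M_2-Y)$ is exactly that positivity packaged into a single symmetric argument, so once the false starts are stripped out the two proofs coincide.
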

            \begin{proof}[{\bf \textnormal Proof.}]
                We fix a solution $(x,y)\in \mathbb{Z}_{\geq 0}^2$ to \eqref{Eqn norm equality condition for general mu= (X,Y) in Case (N)}, and assume w.l.o.g. $M_1\geq M_2$. 
                Let  $c:=\max\Big(\frac{b}{2a}, \frac{d}{2a},1\Big)\geq 1$.                If $\min(x, y)\ >\ cM_1\geq M_1$, then $x(x-M_1)+\frac{d}{b} y(y-M_2)+ \frac{2a}{b}xy>0$ contradicts  \eqref{Eqn norm equality condition for general mu= (X,Y) in Case (N)}.
           Next we assume $\max(x,y)\ > \ cM_1\ \geq \ \min(x,y)$.
                If $\max(x,y)=y$ and  resp. if $\max(x,y)=x$,  by $bx^2+dy(y-M_2)+bx\Big(\frac{2a}{b}y-M_1\Big)>0$ and resp. $dy^2+bx(x-M_1)+dy\Big(\frac{2a}{d}x-M_2\Big)>0$, we see the same contradiction.
                So we must have $0\leq x,y\leq c\max(M_1,M_2)$, as desired. 
                         \end{proof}
			\begin{question}
			    Can one  characterize the solutions to \eqref{Eqn norm equality condition for general mu= (X,Y) in Case (R)} in rank-2?
Can one find some bigger example-cases of $\mathcal{I}=\mathcal{I}^-\sqcup \mathcal{I}^0$ where-in solutions admit explicit descriptions as in $A(b,a,c,0)$ case?
When $b=d=2a$ are all (finitely many) solutions to \eqref{Eqn norm equality condition for general mu= (X,Y) in Case (N)} explicitly describable?
                  The illuminating subcase for it might be the negative-$\mathfrak{sl}_3(\mathbb{C})$ with  for $A(2)$ \big(which Proposition \ref{Prop number theory} studies\big) involving following  simplification \eqref{Eqn symmetric and same lengths case} of \eqref{Eqn norm equality condition for general mu= (X,Y) in Case (N)}, which we focus upon.			\begin{equation}\label{Eqn symmetric and same lengths case}
			\ X^2\ + \ Y^2\ - \  M_1 X\ - \ M_2Y \ +\ XY\ = \ 0. 
			\end{equation} 
			\end{question}
We show Proposition \ref{Prop maxl vect} (a) now, more generally over $\mathfrak{g}\big(A(b,a,c=a,d)\big)$ to build some insight; parts (b) and (c) are proved in Subsections \ref{Subsection Theorem C (III) proof}.
		\begin{lemma}[{Negative $\mathfrak{sl}_3$-theory}]
\label{Lemma maximal vect in solution (1,n) for case (1, M2)}
  		Fix $\mathfrak{g}=\mathfrak{g}\big(A(b,a,a,d)\big)$ for $a\in \mathbb{Z}_{\geq 0}$, $d\in \mathbb{Z}_{\geq 0}\sqcup \{-2\}$ and $b\in \mathbb{N}$; $\mathcal{I}=\{1,2\}$; and $\lambda\in  P^{\pm} $ with powers $M_1,M_2\in \mathbb{N}$.
Note here: $a=0$ includes the two decomposable ($A_1\times A_1$, ``$A_1\times A_0$'' typed) cases of $A_{2\times  2}$; $1\in \mathcal{I}^-$; and $M_2=1$ if $d=0$ (Heisenberg case).          When $d\neq 0$ (i.e., $2\in \mathcal{I}^-\sqcup \mathcal{I}^+$), the following two statements (a) and (b) are equivalent:
            \begin{itemize}
					\item[(a)] $(n, M_2)$  is a solution to norm equality \eqref{Eqn norm equality condition for general mu= (X,Y) in Case (N)}, or  to \eqref{Eqn norm equality condition for general mu= (X,Y) in Case (R)}, for $n =   M_1-\frac{2a}{b}M_2  \in \mathbb{N}$; since $(0, M_2)$ is already solution, notice such an `$n$' is unique.
                   \item[(b)]
                    $f_1^n f_2^{M_2} m_{\lambda}$ is maximal in  $M(\lambda)$; in other words $\dim\big[\mathrm{Hom}_{\mathfrak{g}}\big(M(\lambda-n\alpha_1-M_2\alpha_2),\ M(\lambda)\big)\big]\geq 1$.
	\end{itemize}
Fix any $k\in \mathbb{N}$; note $(0,k)$ satisfies \eqref{Eqn norm equality condition for general mu= (X,Y) in Case (H)}.
For $(n, k)$ satisfying \eqref{Eqn norm equality condition for general mu= (X,Y) in Case (H)}, the equivalence of statements (a) and (b) above is true with $k$ in place of $M_2=1$. 
\end{lemma}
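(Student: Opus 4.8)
\textbf{Proof plan for Lemma \ref{Lemma maximal vect in solution (1,n) for case (1, M2)}.}
The strategy is to treat the three node-type settings $(N)$, $(R)$, $(H)$ uniformly, reducing everything to a rank-$1$ plus ``one extra step'' computation inside $M(\lambda)$, in the spirit of the monomial maximal vectors \eqref{Eqn maximal vector example in dot orbits} and of the classical $\mathfrak{sl}_3$ calculation recalled in Note \ref{Note negative Sl3-theory, string reversal}. First I would record the elementary implication (a)$\Rightarrow$(b) in the direction that is forced by the structure: since $\lambda\in P^{\pm}$ with power $M_2$ along node $2$, Lemma \ref{Lemma rank-1 BKM rep. theory} gives that $f_2^{M_2}m_\lambda = M(\lambda)_{\lambda-M_2\alpha_2}$ is maximal, i.e. $\mathfrak{n}^+ f_2^{M_2}m_\lambda = 0$; equivalently $U(\mathfrak{n}^-)f_2^{M_2}m_\lambda$ is a highest weight submodule with highest weight $\mu_0:=\lambda - M_2\alpha_2$. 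Now restrict attention to node $1$: because $\{1\}$ is a single node, $U(\mathfrak{n}^-_{\{1\}})f_2^{M_2}m_\lambda$ is a rank-$1$ BKM Verma-type module over $\mathfrak{g}_{\{1\}}$ with highest weight $\mu_0$, and $\mu_0(\alpha_1^\vee) = \lambda(\alpha_1^\vee) - M_2 A_{21} = \tfrac{A_{11}}{2}(M_1-1) + M_2 a$. Plugging into Lemma \ref{Lemma rank-1 BKM rep. theory}, $f_1^n f_2^{M_2}m_\lambda$ is maximal \emph{for the $\mathfrak{g}_{\{1\}}$-action} exactly when $n = \tfrac{2}{A_{11}}\mu_0(\alpha_1^\vee) + 1 = (M_1-1) + \tfrac{2a}{-b}M_2 + 1 = M_1 - \tfrac{2a}{b}M_2$, which is the ``$n$'' in statement (a); and one checks directly that this is precisely the condition that $(n,M_2)$ solve \eqref{Eqn norm equality condition for general mu= (X,Y) in Case (N)} (resp. \eqref{Eqn norm equality condition for general mu= (X,Y) in Case (R)}, resp. \eqref{Eqn norm equality condition for general mu= (X,Y) in Case (H)} with $M_2=1$) — this is a one-line substitution into \eqref{Eqn norm equality condition for general mu= (X,Y) in Case (N)}, using $2(\lambda+\rho,\cdot)$ as in the proof of Lemma \ref{Lemma norm equations via Casimir action}. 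So the real content is upgrading ``$\mathfrak{g}_{\{1\}}$-maximal'' to ``$\mathfrak{g}$-maximal'', i.e. checking $e_2$ also kills $f_1^n f_2^{M_2}m_\lambda$.

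The key computation is therefore: show $e_2 \cdot f_1^n f_2^{M_2}m_\lambda = 0$. Here I would commute $e_2$ past $f_1^n$ using that $[e_2,f_1]=0$ (off-diagonal Serre/commutation relation, as $A_{12}\in\mathbb Z_{\le 0}$ gives $[e_i,f_j]=0$ for $i\ne j$), obtaining $e_2 f_1^n f_2^{M_2}m_\lambda = f_1^n e_2 f_2^{M_2}m_\lambda = f_1^n (\mathfrak{n}^+ \text{-action on the node-}2\text{ maximal vector}) = 0$, since $f_2^{M_2}m_\lambda$ was already $\mathfrak{n}^+$-maximal. Combined with the $\mathfrak{g}_{\{1\}}$-maximality (which handles $e_1$ and $\alpha_1^\vee$) and the obvious weight computation for $\mathfrak h$, this gives $\mathfrak{n}^+ f_1^n f_2^{M_2}m_\lambda = 0$, i.e. (b), and produces the nonzero Verma map $M(\lambda - n\alpha_1 - M_2\alpha_2)\hookrightarrow M(\lambda)$. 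The nonvanishing of $f_1^n f_2^{M_2}m_\lambda$ itself is exactly the rank-$1$ statement of Lemma \ref{Lemma rank-1 BKM rep. theory} applied to the $\mathfrak{g}_{\{1\}}$-Verma $U(\mathfrak n^-_{\{1\}})f_2^{M_2}m_\lambda$ together with freeness of $M(\lambda)$ over $U(\mathfrak n^-_{\{1\}})$ along the independent node $1$.

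For (b)$\Rightarrow$(a): if $f_1^n f_2^{M_2}m_\lambda$ is $\mathfrak{g}$-maximal and nonzero, then in particular it is $\mathfrak g_{\{1\}}$-maximal inside the $\mathfrak g_{\{1\}}$-Verma $U(\mathfrak n^-_{\{1\}})f_2^{M_2}m_\lambda$, and by the uniqueness clause of Lemma \ref{Lemma rank-1 BKM rep. theory} (a rank-$1$ BKM Verma with a given non-maximal highest weight has maximal vectors only at the single height $\tfrac{2}{A_{11}}(\text{h.w.})(\alpha_1^\vee)+1$, when $A_{11}\ne 0$) this forces $n = M_1 - \tfrac{2a}{b}M_2$, hence $(n,M_2)$ solves the relevant norm equation; the asserted uniqueness of such $n$ (``since $(0,M_2)$ is already a solution'') is just the observation that a one-variable quadratic with leading coefficient $b\ne 0$ (resp. the degree-$1$ equation \eqref{Eqn norm equality condition for general mu= (X,Y) in Case (H)}) has at most one other root besides $X=0$ once $Y=M_2$ is fixed. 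The Heisenberg variant (node $2$ with $A_{22}=0$, $M_2=1$, arbitrary $k$) runs identically, except that along node $2$ every vector $f_2^k m_\lambda$ is maximal by Example \ref{Example all Vectors in Heiesnberg-Verma are maximal}(1), so $f_2^k m_\lambda$ replaces $f_2^{M_2}m_\lambda$ throughout and $\mu_0(\alpha_1^\vee) = \tfrac{A_{11}}{2}(M_1-1) + ka$ gives $n = M_1 - \tfrac{2a}{b}k$.

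\textbf{Main obstacle.} Everything above is soft once one believes the two technical inputs: (i) that $e_2$ really commutes past $f_1^n$ with no correction terms — safe here because $\mathcal I=\{1,2\}$ and $[e_i,f_j]=0$ for $i\ne j$ in any BKM LA — and (ii) the \emph{uniqueness} half of Lemma \ref{Lemma rank-1 BKM rep. theory}, which is what makes (b)$\Rightarrow$(a) work and which is the only place the hypothesis $d\ne 0$ (so that $A_{22}\ne 0$ and there is a single honest maximal-vector height) is genuinely used; in the $d=0$ case the companion statement with ``$k$ in place of $M_2$'' correctly replaces this by the arbitrary-height phenomenon. The mildly delicate point to get right in the writeup is the bookkeeping of the two different norm equations \eqref{Eqn norm equality condition for general mu= (X,Y) in Case (N)} and \eqref{Eqn norm equality condition for general mu= (X,Y) in Case (R)}: one must verify that the single algebraic identity ``$n = M_1 - \tfrac{2a}{b}M_2 \iff (n,M_2)$ is a root'' holds for both, which reduces to noting that in the derivation of Lemma \ref{Lemma norm equations via Casimir action} the coefficient of $X^2$ was $(\alpha_1,\alpha_1)$ and of $XY$ was $2(\alpha_1,\alpha_1)A_{12}/A_{11}$ regardless of the type of node $2$, so the ``$X$-direction'' factor of the quadratic is type-$2$-independent — exactly the statement that the quadratic in $X$ (with $Y=M_2$ fixed) factors as $X\big(X - M_1 + \tfrac{2a}{b}M_2\big)$ up to the overall positive scalar $(\alpha_1,\alpha_1)$.
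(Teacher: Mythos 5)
Your proposal is correct and in the forward direction (a)$\Rightarrow$(b) it is literally the paper's argument: show $e_2$ slides past $f_1^n$ because $[e_2,f_1]=0$, then handle $e_1$ by the rank-$1$ computation — the paper does this computation inline, whereas you cite Lemma~\ref{Lemma rank-1 BKM rep. theory} applied to the $\mathfrak g_{\{1\}}$-Verma $U(\mathfrak n^-_{\{1\}})f_2^{M_2}m_\lambda$ with highest weight $\mu_0(\alpha_1^\vee)=-\tfrac{b}{2}(M_1-1)+aM_2$, which is the same calculation packaged one level up. The only genuine divergence is in (b)$\Rightarrow$(a): the paper invokes the Casimir necessity (the norm equality \eqref{Eqn norm equality with invariant form} of Lemma~\ref{Lemma norm equations via Casimir action}) and factors the resulting one-variable quadratic, while you instead invoke the \emph{uniqueness} clause of the rank-$1$ Lemma~\ref{Lemma rank-1 BKM rep. theory} (valid since $A_{11}=-b\neq 0$) to pin $n$ directly and then verify the norm equality by substitution. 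Both are valid and of comparable length; your route is slightly more self-contained (no invariant form or Casimir operator needed for this lemma), while the paper's keeps the Casimir viewpoint uniform across the section's study of solutions to \eqref{Eqn norm equality with invariant form}. One tiny inaccuracy worth fixing in the write-up: the commutation $[e_i,f_j]=0$ for $i\neq j$ is a defining relation of any BKM Lie algebra, not a consequence of $A_{12}$ being a non-positive integer — drop that parenthetical justification.
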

\begin{note}\label{Note negative Sl3-theory, string reversal} 
For $A = A(2,a,a,d)$, (a)$\implies$(b) in Lemma \ref{Lemma maximal vect in solution (1,n) for case (1, M2)} says 
				$f_1^{M_1-M_2 
|\alpha_2(\alpha_1^{\vee})| }f_2^{M_2}m_{\lambda}$ is maximal. 
It is similar to the usual  $\mathfrak{sl}_3(\mathbb{C})=\mathfrak{g}\big(A(-2,1,1,-2)\big)$ theory saying  $f_1^{M_1+M_2|\alpha_2(\alpha_1^{\vee})|}f_2^{M_2}m_{\lambda}$ is maximal, but with the reversal/shrink in the way the string of $f_1$ or $-\alpha_1$ proceeds. 
Next $\frac{2a}{b}$ in $n$ need not be in $\mathbb{Z}_{\geq 0}$.
Note $\lambda-n\alpha_1-M_2\alpha_2$ resembles $s_{\alpha_1}\bullet (s_{\alpha_2}\bullet \lambda) = s_{\alpha_1}\bullet (\lambda-M_2\alpha_2)$. 
In part (b), $\dim \mathrm{Hom}_{\mathfrak{g}}\big(M(\mu), M(\lambda)\big) \geq \dim\mathfrak{g}_{\lambda-\mu}$ (as $\mu$ is non-minimal) by \cite[Section 4]{Kac--Kazhdan}.
And Proposition \ref{Prop maxl vect} computes these dimensions when one of the components of $\mu$ is bounded by $2$.
Are there maximal vectors appearing as a product of negative root vectors in addition to $\mathbb{C}f_1^nf_2^k m_{\lambda}$ in $M(\lambda)_{\lambda-n\alpha_1-k\alpha_2}$? 
			\end{note}
        \begin{proof}[{\bf \textnormal Proof of Lemma \ref{Lemma maximal vect in solution (1,n) for case (1, M2)}, thereby of Proposition \ref{Prop maxl vect}(a).}]
		Suppose $f_1^nf_2^{M_2}m_{\lambda}\neq 0$ is maximal.
         $(b)$ $\implies (a)$ in the lemma, follows by below implication due to 
        $(n, M_2)$ (by Casimir-action) satisfying one of \eqref{Eqn norm equality condition for general mu= (X,Y) in Case (N)}--\eqref{Eqn norm equality condition for general mu= (X,Y) in Case (R)}:
                \begin{center} 				$bn^2\pm  dM_2^2 -   bM_1n\mp  dM_2^2  \pm 2aM_2n =  \ 0 \ \     
				\implies \ \    n\big(bn - bM_1 \pm 2aM_2\big) \ =  \ 0 \ \   \implies  \ \ n \ =\ M_1 \mp \frac{2a}{b}M_2$   
				\end{center}
                 Conversely, fix $n$ in (a).
				Note $e_2f_2^{M_2}m_{\lambda}=0= e_2 f_1^n f_2^{M_2}m_{\lambda}$.
				Now $e_1 \cdot f_1^n f_2^{M_2}m_{\lambda}\  = \ n\big[\lambda(
				\alpha_1^{\vee})- M_2\alpha_2(\alpha_1^{\vee}) - \big(\frac{n-1}{2}\big)\alpha_1(\alpha_1^{\vee})\big]f_1^{n-1}f_2^{M_2}m_{\lambda}\ = \ n\big[\frac{b}{2}(n-M_1)+aM_2 \big]f_1^{n-1}f_2^{M_2}m_{\lambda}=0$ by the value of $n$, proves $(a)\implies (b)$.
                The result in Heisenberg case is similarly provable.
	\end{proof}     

\subsection{$\mathrm{\bf char}\big(L(\lambda\in  P^{\pm})\big)$ over $\mathfrak{g}\big(A(b,a,c=a,d=b)\big)$, $a,b\in \mathbb{N}$, when $(1,1)\text{ or } (2,2)$ satisfy~\eqref{Eqn symmetric and same lengths case and any a}}\label{Subsection Theorem C (I)--(II) proof}
Using the counts of maximal vectors in Proposition \ref{Prop maxl vect} (a)--(c), we quickly show character formulas in Theorem~\ref{Theorem C} of simples $L(\lambda\in P^{\pm})$ after the following observation that motivated the cases in it. 
\begin{lemma}\label{Lemma (k,k) solutions, consequences}
Assume $(k,k)$ to satisfy \eqref{Eqn symmetric and same lengths case and any a}, for a rational number $k>0$.
				Suppose $(X,Y)$ is a solution to \eqref{Eqn symmetric and same lengths case and any a} with both $X, Y\geq k$. Then $X=Y=k$.   
                In particular, such a $k$ is unique.
			\end{lemma}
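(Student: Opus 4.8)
The equation \eqref{Eqn symmetric and same lengths case and any a} reads $X^2+Y^2-M_1X-M_2Y+\frac{2a}{b}XY=0$; write $c:=\frac{2a}{b}>0$ for brevity. The hypothesis is that $(k,k)$ is a solution, i.e. $2k^2-(M_1+M_2)k+ck^2=0$, equivalently $M_1+M_2=(2+c)k$ (using $k>0$). The plan is to rewrite the quadratic form controlling the difference of two solutions and exploit positivity. Concretely, if $(X,Y)$ is any solution, subtract the $(k,k)$-relation from the $(X,Y)$-relation to get
\[
(X^2-k^2)+(Y^2-k^2)-M_1(X-k)-M_2(Y-k)+c(XY-k^2)=0.
\]
First I would substitute $X=k+s$, $Y=k+t$ with $s,t\ge 0$ (this is exactly the hypothesis $X,Y\ge k$). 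Expanding and using $M_1+M_2=(2+c)k$ to cancel the terms linear in $k$, the relation collapses to an identity purely in $s,t$ together with the ``linear leftover'' involving $M_1-M_2$; I expect it to take the shape
\[
s^2+t^2+cst + \big((2+c)k - M_1\big)s + \big((2+c)k-M_2\big)t = 0,
\]
and since $M_1=(2+c)k-M_2$ and $M_2=(2+c)k-M_1$, the two bracketed coefficients are $M_2-?$ — here is where care is needed: they become $(2+c)k-M_1 = M_2 -? $. Let me instead keep them as $\beta_1:=(2+c)k-M_1$ and $\beta_2:=(2+c)k-M_2$, noting $\beta_1+\beta_2=(2+c)k>0$, so \emph{at least one of $\beta_1,\beta_2$ is positive}, but they need not both be. This is the subtlety, and handling it is the main obstacle.

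To get around the possible negativity of one $\beta_i$, the key step is to symmetrize differently: instead of comparing to $(k,k)$, compare a hypothetical solution $(X,Y)$ with $X,Y\ge k$ directly against the original equation by bounding each term. Rewrite \eqref{Eqn symmetric and same lengths case and any a} as
\[
X(X-M_1)+Y(Y-M_2)+cXY=0.
\]
The plan is to show that under $X\ge k$, $Y\ge k$ with $k$ as above, each of the three summands is forced to be such that the total is $\ge 0$, with equality only at $X=Y=k$. Using $M_1=(2+c)k-M_2$ we have $X-M_1 = X-(2+c)k+M_2$; this still mixes $M_2$. The cleaner route, which I would actually pursue, is the change of variables above combined with the observation that solutions of \eqref{Eqn symmetric and same lengths case and any a} automatically satisfy $X\le M_1$ and $Y\le M_2$ when they lie in the nonnegative quadrant near the ``diagonal'' (an argument in the spirit of Lemma \ref{Lemma bddness of sols for two negative nodes}): indeed if $X>M_1$ and $Y\ge 0$ then $X(X-M_1)+cXY\ge 0$ and $Y(Y-M_2)$ can be negative only for $Y<M_2$, so one needs the finer bookkeeping of Lemma \ref{Lemma bddness of sols for two negative nodes}. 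Given $X,Y\ge k$, I would show $X\le M_1$ and $Y\le M_2$; then $\beta_1=(2+c)k-M_1\le (2+c)k - X$ and similarly for $\beta_2$, which together with $s=X-k\ge0$, $t=Y-k\ge 0$ makes the identity $s^2+t^2+cst+\beta_1 s+\beta_2 t=0$ a sum I can sign-control.

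The cleanest finish, which I would prioritize writing up, avoids signs of individual $\beta_i$ entirely: add the two forms for $(X,Y)$ and for $(k,k)$ \emph{and} use that $(X,k)$-type comparisons give monotonicity. Specifically, the function $\phi(X,Y):=X^2+Y^2-M_1X-M_2Y+cXY$ is a positive-definite-plus-cross-term quadratic (the matrix $\begin{psmallmatrix}1 & c/2\\ c/2 & 1\end{psmallmatrix}$ is positive definite iff $c<2$, i.e. $2a<b$; it is positive semidefinite / indefinite otherwise, which is why the hypothesis that a diagonal solution \emph{exists} is doing real work). When $c<2$, $\phi$ is a strictly convex quadratic vanishing at $(k,k)$ and at $(0,0)$ and at $(M_1,0),(0,M_2)$; its zero set is an ellipse, and the portion of that ellipse in the closed region $\{X\ge k, Y\ge k\}$ is exactly the single point $(k,k)$ because $(k,k)$ is the point of the ellipse maximizing $X+Y$ — one checks the outward normal to the ellipse at $(k,k)$ is a positive multiple of $(1,1)$ using $\nabla\phi(k,k)=(2k-M_1+ck,\,2k-M_2+ck)=(\beta_1',\beta_2')$ with $\beta_1'+\beta_2'= (4+2c)k-(M_1+M_2)=(4+2c)k-(2+c)k=(2+c)k>0$, and in fact $\beta_1'=M_2-? $. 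The main obstacle throughout is confirming that the supporting-line/normal direction at $(k,k)$ is strictly into the third-quadrant-complement, equivalently that \emph{both} partials of $\phi$ at $(k,k)$ are strictly negative (not merely their sum); I expect this follows from $M_i\le (2+c)k$ strictly, which in turn follows from $M_1,M_2\ge 1$ and $k\ge 1$ — but establishing $k\ge 1$, or more precisely ruling out $0<k<1$ with $k$ rational forcing both partials negative, is the delicate point and where I would spend the bulk of the argument. Uniqueness of $k$ is then immediate: two diagonal solutions $(k,k),(k',k')$ with, say, $k'\ge k$ would by the just-proved statement force $k'=k$.
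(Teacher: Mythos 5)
Your central difficulty is a phantom. After substituting $X=k+s$, $Y=k+t$ and expanding, you correctly land on
\[
s^2+t^2+cst+\beta_1 s+\beta_2 t=0,\qquad \beta_1=(2+c)k-M_1,\ \ \beta_2=(2+c)k-M_2,
\]
but then claim only that $\beta_1+\beta_2>0$ ``so at least one of $\beta_1,\beta_2$ is positive, but they need not both be.'' That claim is false and it is where your argument derails: since $(k,k)$ satisfies the equation, $(2+c)k=M_1+M_2$, hence $\beta_1=M_2>0$ and $\beta_2=M_1>0$ outright. You even wrote ``$\beta_1=M_2-?$'' and stopped; the ``$?$'' is nothing. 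With $\beta_1=M_2$ and $\beta_2=M_1$, the relation becomes
\[
s^2+t^2+cst+M_2\,s+M_1\,t=0
\]
with $s,t\ge 0$, $c>0$, $M_1,M_2\ge 1$, so every term is nonnegative and the only solution is $s=t=0$. Uniqueness of the diagonal solution then follows immediately. Your subsequent detour through positive-definiteness of $\begin{pmatrix}1&c/2\\ c/2&1\end{pmatrix}$, the ellipse/supporting-line argument, and the attempt to establish $k\ge 1$ is unnecessary, and moreover does not close the case $c\ge 2$ (i.e.\ $2a\ge b$), which the lemma must cover; as written, the proposal does not actually prove the statement.

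For comparison, the paper avoids the change of variables entirely by eliminating $c$ via $c=\frac{M_1+M_2}{k}-2$ and rewriting the equation as the identity
\[
(X-Y)^2+M_1 X\Big(\tfrac{Y}{k}-1\Big)+M_2 Y\Big(\tfrac{X}{k}-1\Big)=0,
\]
in which each summand is manifestly nonnegative once $X,Y\ge k$. This is the same positivity mechanism your $(s,t)$-expansion yields after the correction above; the two are essentially equivalent, with the paper's version slightly slicker because the nonnegativity is visible term-by-term without a substitution.
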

			\begin{proof}[{\bf \textnormal Proof.}]
				Fix solutions $(X, Y)$ and $(k,k)$ as in the lemma; note in \eqref{Eqn symmetric and same lengths case and any a} the $XY$-coefficient is  $\frac{M_1+M_2}{k}-2$.
		Re-writing \eqref{Eqn symmetric and same lengths case and any a} as $(X-Y)^2+ M_1X\Big(\frac{Y}{k}-1\Big)+ M_2Y\Big(\frac{X}{k}-1\Big)=0$, the result is easily seen. 
			\end{proof}
\begin{cor}\label{Corollary (1,1) is unique sol in 1st Quadrant}
Assume that $(1,1)$ is a solution to \eqref{Eqn symmetric and same lengths case and any a}.
Then it is the only solution in $\mathbb{Z}_{>0}\times \mathbb{Z}_{>0}$.
Furthermore $\max\{M_1, M_2\}>1$, as $2a/b>0$.
If we take $a=0$, then
$(1,0),  (0,1),  (1,1)$ are solutions to \eqref{Eqn symmetric and same lengths case and any a}; we are in setting (N).
There are infinitely many cases where-in $(1,1)$ satisfies \eqref{Eqn symmetric and same lengths case and any a} : for any $b\in 2\mathbb{N}$, $M_1>M_2\in \mathbb{N}$ (any large) and $a=\frac{b}{2}(M_1+M_2-2)$; differing from $\mathfrak{sl}_3(\mathbb{C})$-case.
\end{cor}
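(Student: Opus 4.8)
The plan is to derive every assertion as a one-step consequence of the already-proved Lemma \ref{Lemma (k,k) solutions, consequences} applied with $k=1$, combined with a direct substitution of $(X,Y)=(1,1)$ into \eqref{Eqn symmetric and same lengths case and any a}. First I would record the elementary reformulation: the pair $(1,1)$ solves \eqref{Eqn symmetric and same lengths case and any a} if and only if the scalar identity $M_1+M_2=2+\tfrac{2a}{b}$ holds. Granting this, the uniqueness claim is immediate: any solution $(X,Y)\in\mathbb{Z}_{>0}\times\mathbb{Z}_{>0}$ satisfies $X,Y\geq 1=k$, so Lemma \ref{Lemma (k,k) solutions, consequences} (whose hypothesis is exactly that $(k,k)=(1,1)$ is a solution) forces $X=Y=1$; in passing this reconfirms that $1$ is the unique positive rational $k$ with $(k,k)$ a solution.

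Next I would treat the two side remarks. For $\max\{M_1,M_2\}>1$: since we are in the setting of \eqref{Eqn symmetric and same lengths case and any a} we have $a\in\mathbb{N}$, hence $a\geq 1$, and therefore $M_1+M_2=2+\tfrac{2a}{b}>2$; as $M_1,M_2\in\mathbb{N}$ they cannot both equal $1$, so one of them exceeds $1$. For the degenerate case $a=0$: plugging $a=0$ into the identity above gives $M_1+M_2=2$, i.e.\ $M_1=M_2=1$, and then \eqref{Eqn symmetric and same lengths case and any a} collapses to $X(X-1)+Y(Y-1)=0$, whose nonnegative-integer solutions are precisely the four points of $\{0,1\}^2$; in particular $(1,0),(0,1),(1,1)$ are all solutions, and the governing equation among \eqref{Eqn norm equality condition for general mu= (X,Y) in Case (N)}--\eqref{Eqn norm equality condition for general mu= (X,Y) in Case (R)} is \eqref{Eqn norm equality condition for general mu= (X,Y) in Case (N)} with $d=b$, contrasting the $a>0$ situation where $(1,1)$ is the only interior solution.

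Finally, for the family of examples, given $b\in 2\mathbb{N}$ and $M_1>M_2$ in $\mathbb{N}$ I would put $a:=\tfrac{b}{2}(M_1+M_2-2)$. Since $\tfrac{b}{2}\in\mathbb{N}$ and $M_1\geq 2$ gives $M_1+M_2-2\geq 1$, the number $a$ is a positive integer, so $A(b,a,a,b)$ is a symmetric BKMC matrix; as $A_{11}=A_{22}=-b<0$ both nodes lie in $\mathcal{I}^-$, so $\mathfrak{g}\big(A(b,a,a,b)\big)$ is genuinely a BKM (not Kac--Moody) Lie algebra, which justifies the phrase ``varying from $\mathfrak{sl}_3(\mathbb{C})$-case''. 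Moreover $\tfrac{2a}{b}=M_1+M_2-2$ makes $M_1+M_2=2+\tfrac{2a}{b}$ hold, i.e.\ $(1,1)$ solves \eqref{Eqn symmetric and same lengths case and any a}, and there are infinitely many such choices of $(b,M_1,M_2)$. The argument is short enough that there is no real obstacle; the only point needing a line of verification is the last one, namely that $a$ is a positive integer and the associated algebra is non-Kac--Moody, both of which follow at once from $b$ being even and $M_1\geq 2$.
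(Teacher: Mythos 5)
Your proof is correct and follows the route the paper intends: the uniqueness claim is the $k=1$ case of Lemma \ref{Lemma (k,k) solutions, consequences}, and the remaining remarks fall out of the scalar identity $M_1+M_2=2+\tfrac{2a}{b}$ together with routine substitution, which is exactly how the paper leaves it (the corollary carries no separate proof and leans entirely on that lemma).
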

			\begin{proof}[{\bf \textnormal Proof of Theorem \ref{Theorem C} parts (I) and (II)}]
            Fix $\lambda\in P^{\pm}$ with powers $M_1,M_2\in \mathbb{N}$.
            Assume that $(1,1)$ satisfies \eqref{Eqn symmetric and same lengths case and any a} for part (I); 
				Corollary \ref{Corollary (1,1) is unique sol in 1st Quadrant} says $(X,Y)=(0,0), (M_1, 0), (0,M_2), (1,1)$ are all the solutions to \eqref{Eqn symmetric and same lengths case and any a}.
                For formulas \eqref{Eqn char formula with missing norm equatlity solution in (1,1) case} and \eqref{Eqn char formula with non-missing norm equatlity solution in (1,1) case}, we compute the $c$-coefs. on the r.h.s. of : 
				\begin{align}\label{R times char L = solutions condition}
				\begin{aligned}
				R\cdot  \mathrm{char} L(\lambda)\
				\end{aligned}\ = \ 
				\begin{aligned}
				\sum\limits_{\substack{\mu=\lambda-X\alpha_1-Y\alpha_2 \\ \text{satisfies }\eqref{Eqn symmetric and same lengths case and any a}}}c(\mu)e^{\mu}\ 
				\end{aligned}\ = \ 
				\begin{aligned} 
				& c(\lambda) e^{\lambda} + c(\lambda-M_1\alpha_1) e^{\lambda-M_1\alpha_1}+ c(\lambda-M_2\alpha_2)\\
				\quad & e^{\lambda-M_2\alpha_2} + c(\lambda-\alpha_1-\alpha_2)e^{\lambda-\alpha_1-\alpha_2}.
				\end{aligned}
				\end{align}
				%We have two cases $M_2=1$ and $M_2>1$ to begin with.
			As in WKB character formula $c(\lambda)=1 \text{ and }c(\lambda- M_1\alpha_1)=c(\lambda-M_2\alpha_2)=-1$; since $e^{-\alpha_t}$ for $t\in \{1,2\}$, occurs exactly $-1$-times in $R$ and $e^{\lambda+(1-M_t)\alpha_t}$ exactly once in $\mathrm{char} L(\lambda)$.
				For $c(\lambda-\alpha_1-\alpha_2)$ here -- as well as for the $c$-coefs. of bigger root sums in the proofs of (II) and (III) -- we appeal to the celebrated denominator identity by \eqref{Eqn WKB character formula}, which reads in our rank-2 setting of (N) as : 
                \begin{equation}\label{Eqn denominator identity}
                    R \ \ =\ \  \prod\limits_{\alpha\in \Delta^+}\big(1-e^{-\alpha}\big)^{\dim \mathfrak{g}_{\alpha}} \ \ = \ \ 1 \ -\  e^{-\alpha_1}\ - \ e^{-\alpha_2}.
                \end{equation}
			So, $c(\lambda-\alpha_1-\alpha_2)\ = \ [1]\dim L(\lambda)_{\lambda-\alpha_1-\alpha_2}\  +\ [-1]\dim L(\lambda)_{\lambda-\alpha_1} \ + \ [-1]\dim L(\lambda)_{\lambda-\alpha_2}$.
			Fix $i\in \{1,2\}$ for which $M_i =\max\{ M_1,M_2\}\geq 2$; notice $\dim L(\lambda)_{\lambda-\alpha_i}=1$.
           Next $\dim L(\lambda)_{\lambda-\alpha_1-\alpha_2}\leq 1$, since $M(\lambda)_{\lambda-\alpha_1-\alpha_2}$ has a unique (up to scalars) maximal vector by Lemma \ref{Lemma maximal vect in solution (1,n) for case (1, M2)}.
	Now $\lambda-\alpha_i\in \wt L(\lambda)$ implies by Lemma \ref{Lemma imaginary root strings} that $\dim L(\lambda)_{\lambda-\alpha_1-\alpha_2}=1$.
				Hence $c(\lambda-\alpha_1-\alpha_2)=
				\begin{cases}
				\ \ \ 0 \ & \text{if }\min\{M_1,M_2\}=1 \\
				-1\ & \text{if }\min\{M_1,M_2\}>1
				\end{cases}$. 
				Plugging-in the $c$-coefs. in \eqref{R times char L = solutions condition} shows the character formulas in (I).

            For part (II), we fix any Weyl vector $\lambda=\rho\in P^{\pm}$, with powers $M_1=M_2=2$. 
            Observe here by \eqref{Eqn symmetric and same lengths case and any a}, $(X-1)^2+(Y-1)^2+\frac{2a}{b}XY =2$ forces $X,Y\in \{0,1,2\}$ to begin with.
            Next while $X,Y\in \mathbb{Z}_{>0}$, either $(X-1)^2$ or $(Y-1)^2$ in the previous line must vanish, as $a>0$.
           When both vanish we see $a=b$, and when exactly one vanishes we see $b=4a$.
            i) When $a=b$ visibly \eqref{Eqn symmetric and same lengths case and any a} has the solution-set $\{(0,0), (1,1), (2,0), (0,2)\}$ ii) when $b=4a$ the solution-set is $\{(0,0), (2,0), (1,2), (2,1), (0,2)\}$;
            iii)~and in all the other cases the solution-set is $\{(0,0), (2,0),(0,2)\}$.
            In case i), $(1,1)$  visibly being a minimal solution --  i.e., there is no other solution $(X, Y)$  with $X,Y\leq 1$ -- Proposition \ref{Prop maxl vect}(b) says $\dim L(\rho)_{\rho-\alpha_1-\alpha_2}=1$, and so by the computations in part (I) we see that $c(\rho-\alpha_1-\alpha_2)=-1$.
            In the rest of the proof we work in case ii).
          We begin by seeing that  $\dim L(\rho)_{\rho-\alpha_1-\alpha_2}=2$ as there are no non-zero solutions to \eqref{Eqn symmetric and same lengths case and any a} below or equaling $(1,1)$.
            By Proposition \ref{Prop maxl vect}(b) $\dim L(\rho)_{\rho-\alpha_1-2\alpha_2}\leq 2$, and we show equality here; by symmetry, the dimensions for $(2,1)$ is also 2.
            Then $c(\rho-\alpha_1-2\alpha_2)= [1]\dim L(\rho)_{\rho-\alpha_1-2\alpha_2} + [-1]\dim L(\rho)_{\rho-2\alpha_2}+ [-1]\dim L(\rho)_{\rho-\alpha_1-\alpha_2}=0$.
            Recall that $M(\rho)_{\rho -\alpha_1-2\alpha_2}= \mathbb{C}\big\{ f_1f_2^2 m_{\rho}, [f_2,f_1]f_2 m_{\rho}, \big[f_2, [f_2,f_1]\big]m_{\rho} \big\}$ the span of PBW monomial Lyndon word root basis vectors.
            Fix a h.w. vector $0\neq v_{\rho}\in L(\rho)_{\rho}$, and note $f_2^2 v_{\rho}=0$.
            Now for the linear independence of $\big\{ [f_2,f_1]f_2 v_{\rho}, \big[f_2, [f_2,f_1]\big]v_{\rho} \big\}\subset L(\rho)_{\rho-\alpha_2-2\alpha_2}$,
            we see for any $c_1,c_2\in \mathbb{C}$ that $e_2 \big(c_1[f_2,f_1]f_2 v_{\rho}+ c_2 \big[f_2, [f_2,f_1]\big]v_{\rho} \big)= ac_1 f_1f_2 v_{\rho} + \big(-c_1\frac{b}{2}+c_2(2a+b)\big)[f_2,f_1]v_{\rho} = 0 $ in $L(\rho)_{\rho-\alpha_1-\alpha_2}$ (which is 2-dim.) by $a,b>0$ forces $c_1=c_2=0$.
            Thus in both cases of ii) and iii) the desired character-numerator is $e^{\rho} - e^{\rho-2\alpha_1}-e^{\rho-2\alpha_2}$, completing the proof of \ref{Theorem C}(II).
            Finally for Corollary \ref{Corollary presentations gfor nice simples} for presentations, the weight-multiplicities of \eqref{Eqn symmetric and same lengths case and any a}-solution spaces of the below quotients equaling those of $L(\lambda)$ computed in the above proofs of both parts (I) and (II), indeed show :\\
       $L(\lambda)\  = \ \underset{\big(\text{for (I) }\min\{M_1,M_2\}=1\ , \ \text{ or (II) }a\neq b\big)}{\frac{M(\lambda)}{\big\langle f_j^{M_j}m_{\lambda}\ \Big|\ j\in \{1,2\}   \big\rangle} }  \ \    ; \ \     L(\lambda)\  = \ \underset{ \big(\text{in all other cases in (I) and (II)}\big)}{\frac{M(\lambda)}{\big\langle f_j^{M_j}m_{\lambda} \ , \  2af_1f_2m_{\lambda} +b(M_2-1)[f_2,f_1]m_{\lambda}  \ \Big|\ j\in \{1,2\}   \big\rangle}}.$ 
       The proof of Proposition \ref{Prop maxl vect}(b) implies the maximality of $2af_1f_2m_{\lambda}+b(M_2-1)[f_2,f_1]m_{\lambda}$.
       \end{proof}
     
       \subsection{Proofs of Theorem \ref{Theorem C}(III) and Proposition \ref{Prop maxl vect}(b)}\label{Subsection Theorem C (III) proof}
       A useful characterization of solutions to \eqref{Eqn symmetric and same lengths case and any a}, for computing c-coefs. in the proof of part (III):			\begin{lemma}\label{Lemma (2,2) sol. characterization}
Fix $A=A(b,a,a,b)$ $a,b\in \mathbb{N}$, and let $M_1\geq M_2$ (mutatis mutandis is the below result if $M_2\geq M_1$).  Assume $(2,2)$ to satisfy \eqref{Eqn symmetric and same lengths case and any a}; so $\frac{2a}{b}=\frac{M_1+M_2}{2}-2$ by Lemma \ref{Lemma (k,k) solutions, consequences}.
				Then:
            
				\begin{center}
					\begin{figure}[H]
						\begin{subfigure}[t]{0.22\textwidth}
							\begin{tikzpicture}[scale=0.5] 
							\draw[step=1, gray, ultra thin] (-0.2,-0.2) grid (4.5,4.5) ;
							\draw [fill=black] (0, 0) circle (5pt) node[anchor=north]{{\color{black}{\tiny{ $(0,0)$ }}}};
							\draw [fill=black] (2, 2) circle (5pt) node[anchor=west]{{\color{black}{\tiny{ $(2,2)$ }}}};
							\draw (1.9, -0.3) node[anchor=west]{{\color{black}{ $\boldsymbol{\cdots}$ }}};
							\draw [fill=black] (4, 0) circle (5pt) node[anchor=north]{{\color{black}{ \tiny{\bf$(M_1,  0)$ }}}};
							\draw (-0.3, 4) node[anchor=north]{{\color{black}{ $\boldsymbol{\vdots}$ }}};
							\draw [fill=black] (0, 4) circle (5pt) node[anchor=south]{{\color{black}{\ \ \ \ \tiny{\bf$(0, M_2)$ }}}};
							\end{tikzpicture}
							\caption{$M_1,M_2\geq 2$}
						\end{subfigure} ~ 
						\begin{subfigure}[t]{0.22\textwidth}
							\begin{tikzpicture}[scale=0.5] 
							\draw[step=1, gray, ultra thin] (-0.2,-0.2) grid (4.3,4.3) ;
							\draw [fill=black] (0, 0) circle (5pt) node[anchor=north]{{\color{black}{\tiny{ $(0,0)$ }}}};
							\draw [fill=black] (2, 2) circle (5pt) node[anchor=south]{{\color{black}{\tiny{ $(2,2)$ }}}};
							\draw (2.6, -0.3) node[anchor=west]{{\color{black}{ $\boldsymbol{\cdots}$ }}};
							\draw (1.3, -0.3) node[anchor=west]{{\color{black}{ $\boldsymbol{\cdots}$ }}};
							\draw (1.3, 0.9) node[anchor=west]{{\color{black}{ $\boldsymbol{\cdots}$ }}};					\draw [fill=black] (4, 0) circle (5pt) node[anchor=south]{{\color{black}{ \tiny{\ \\ \bf$(M_1,  0)$ }}}};
							\draw (-0.3, 3.4) node[anchor=north]{{\color{black}{ $\boldsymbol{\vdots}$ }}};
                            \draw (1.1, 3.4) node[anchor=north]{{\color{black}{ $\boldsymbol{\vdots}$ }}};
							\draw (-0.3, 4.6) node[anchor=north]{{\color{black}{ $\boldsymbol{\vdots}$ }}};
							\draw [fill=black] (0,4) circle (5pt) node[anchor=south]{{\color{black}{\ \ \ \ \tiny{\bf$(0,M_2)$ }}}};
							\draw [fill=black] (1,3) circle (5pt) node[anchor=south]{{\color{black}{ \tiny{\bf$(1,k_2)$ }}}};
							\draw [fill=black] (3,1) circle (5pt) node[anchor=south]{{\color{black}{\tiny{\bf$(k_1,1)$ }}}};
							\end{tikzpicture}
							\caption{$2<k_i<M_i$\\  
                            \hspace*{1cm}$k_2\leq k_1$}
						\end{subfigure}~
						\begin{subfigure}[t]{0.22\textwidth}
							\begin{tikzpicture}[scale=0.5] 
							\draw[step=1, gray, ultra thin] (-0.2,-0.2) grid (4.2,3.3) ;
							\draw [fill=black] (0, 0) circle (5pt) node[anchor=north]{{\color{black}{\tiny{ $(0,0)$ }}}};
							\draw [fill=black] (2, 2) circle (5pt) node[anchor=south]{{\color{black}{\ \ \ \ \  \ \ \   \tiny{ $(2,2)$ }}}};
								\draw [fill=black] (4, 0) circle (5pt) node[anchor=north]{{\color{black}{ \tiny{\bf$(M_1,  0)$ }}}};
			\draw [fill=black] (0, 1) circle (5pt) node[anchor=south]{{\color{black}{\tiny{\bf$(0, 1)$ }}}};
            \draw [fill=black] (1.5, 3) node[anchor=south]{{\color{black}{ \tiny{\bf$s_1$}}}};
							\draw [fill=black] (1, 2) circle (5pt) node[anchor=south]{{\color{black}{\tiny{\bf$(1, 2)$ }\ \ \ \ \ \  \  }}};
							\path[every node/.style={font=\sffamily\small}]
							(2,2) edge[bend left = 90] node [right] {} (2,2)
							(2,2) edge[bend right = 90] node [left] {} (1.4,3)
							(1.6,3) edge[bend right = 90] node [left] {} (1,2);
                            \draw (1.8, -0.2) node[anchor=west]{{\color{black}{ $\boldsymbol{\cdots}$ }}} ;
						\end{tikzpicture}
							\caption{ $4\leq M_1$ even \ \  \ \\ \hspace*{0.5cm} \big( so $\frac{2a}{b}\notin \mathbb{N}$ \big) \  \ \ }
						\end{subfigure}~    
\begin{subfigure}[t]{0.22\textwidth}
\begin{tikzpicture}[scale=0.5] 
							\draw[step=1, gray, ultra thin] (-0.2,-0.2) grid (5.2,3.3) ;
							\draw [fill=black] (0, 0) circle (5pt) node[anchor=north]{{\color{black}{\tiny{ $(0,0)$ }}}};
							\draw [fill=black] (2, 2) circle (5pt) node[anchor=south]{{\color{black}{\ \  \  \  \  \ \tiny{ $(2,2)$ }}}};
							\draw (3.3, -0.2) node[anchor=west]{{\color{black}{ $\boldsymbol{\cdots}$ }}};
							\draw (1.8, -0.2) node[anchor=west]{{\color{black}{ $\boldsymbol{\cdots}$ }}};
                            \draw (1.2, 1.2) node[anchor=west]{{\color{black}{ $\boldsymbol{\cdots}$ }}};
							\draw [fill=black] (5, 0) circle (5pt) node[anchor=north]{{\color{black}{ \tiny{\bf$(M_1,  0)$ }}}};
							\draw [fill=black] (4, 1) circle (5pt) node[anchor=west]{{\color{black}{ \tiny{\bf$\Big(\frac{M_1+3}{2}, 1 \Big)$ }}}};
							\draw [fill=black] (0, 1) circle (5pt) node[anchor=south]{{\color{black}{\tiny{\bf$(0, 1)$ }}}};
							\draw [fill=black] (1, 2) circle (5pt) node[anchor=south]{{\color{black}{\tiny{\bf$(1, 2)$ \ \ \ \ \ \ \  }}}};
							\path[every node/.style={font=\sffamily\small}]
							(2,2) edge[bend left = 90] node [right] {} (2,2)
							(2,2) edge[bend right = 90] node [left] {} (1.4,3)
							(1.6,3) edge[bend right = 90] node [left] {} (1,2);
	\draw [fill=black] (1.5, 3) node[anchor=south]{{\color{black}{ \tiny{\bf$s_1$}}}};
	\path[every node/.style={font=\sffamily\small}]
		(0,1) edge[bend right= 20] node [left] {} (4,1);
		%(2.5,0.8) edge[bend left = 90] node [right] {} (5,1)
				\draw [fill=black] (2.5, 1.2) node[anchor=north]{{\color{black}{ \tiny{\bf$s_1$}}}};
							\end{tikzpicture}
							\caption{$5\leq M_1$ odd\\ \hspace*{0.5cm}$\big( \text{ so }\frac{2a}{b}\in \mathbb{N}\ \big)$ }
						\end{subfigure}              
					\end{figure}
				\end{center}
                are all the possible plots of  the solutions to \eqref{Eqn symmetric and same lengths case and any a} (not drawn to a scale).
                Conversely: i) The characterizations for Cases (C), (D) above is complete and explicit.
               ii) If $\sqrt{M_1}=\sqrt{M_2}=k\in \mathbb{N}$, the solution-plot is as in (B). 
               iii) For any $k\in \mathbb{N}$, when $M_2=3k$ and $M_1=3k+2$, we land in (A). 
			\end{lemma}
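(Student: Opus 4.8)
The statement to prove is Lemma \ref{Lemma (2,2) sol. characterization}, which classifies all $\mathbb{Z}_{\geq 0}^2$-solutions to \eqref{Eqn symmetric and same lengths case and any a} under the hypothesis that $(2,2)$ is a solution, into the four plots (A)--(D), together with the converse statements (i)--(iii).

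\textbf{The plan.} First I would record the normalization forced by the hypothesis: by Lemma \ref{Lemma (k,k) solutions, consequences}, $(2,2)$ being a solution makes $k=2$ the unique ``diagonal'' solution, and reading off the $XY$-coefficient of \eqref{Eqn symmetric and same lengths case and any a} gives $\frac{2a}{b} = \frac{M_1+M_2}{2} - 2$; in particular $M_1+M_2$ is even, $M_1 + M_2 \geq 6$ (since $a \geq 1$), and $\frac{2a}{b} \in \mathbb{N}$ exactly when $M_1, M_2$ have the same parity is automatic — both are $\equiv M_1+M_2 \pmod 2$ — so I should instead track whether $M_1$ (hence $M_2$) is even or odd, which is what distinguishes Cases (C) vs (D). Next, using Lemma \ref{Lemma (k,k) solutions, consequences} itself: \emph{any} solution $(X,Y)$ with $X,Y \geq 2$ must equal $(2,2)$. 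So every solution other than $(0,0), (M_1,0), (0,M_2), (2,2)$ has $\min(X,Y) \leq 1$, i.e. lies on one of the two axes $X \in \{0,1\}$ or $Y \in \{0,1\}$ (and by Lemma \ref{Lemma bddness of sols for two negative nodes} the solution set is finite, so this is a genuine finite case-check). This reduces the whole classification to solving three one-variable sub-problems: $X=0$, $X=1$, and (symmetrically) $Y=0$, $Y=1$.

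\textbf{Key steps in order.} (1) The $X=0$ line: \eqref{Eqn symmetric and same lengths case and any a} becomes $Y^2 - M_2 Y = 0$, giving $Y \in \{0, M_2\}$; symmetrically $Y=0$ gives $X \in \{0, M_1\}$. So the only axis solutions beyond the origin are $(M_1,0)$ and $(0,M_2)$. (2) The $X=1$ line: substituting $X=1$ and $\frac{2a}{b} = \frac{M_1+M_2}{2}-2$ into \eqref{Eqn symmetric and same lengths case and any a} yields a quadratic in $Y$, namely $1 + Y^2 - M_1 - M_2 Y + \left(\frac{M_1+M_2}{2}-2\right)Y = 0$, i.e. $Y^2 - \left(\frac{M_2 - M_1}{2} + 2\right)Y - (M_1 - 1) = 0$. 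I would solve this by the quadratic formula, observe its discriminant, and determine for which $(M_1, M_2)$ it has a positive integer root $Y = k_2$; note $Y=2$ is a root precisely when $4 - (M_2-M_1) - 4 - (M_1-1) = 0$, i.e. $M_2 = M_1 - $ wait, recompute: plugging $Y=2$: $4 - 2\left(\frac{M_2-M_1}{2}+2\right) - (M_1-1) = 4 - (M_2 - M_1) - 4 - M_1 + 1 = 1 - M_2$, so $Y=2$ is a root iff $M_2 = 1$, which is excluded since $M_2 \geq 1$ but here $M_2 \geq 2$ when $M_1 = M_2$; this subtlety — whether the $X=1$ solution is distinct from $(2,2)$ — is exactly what separates Case (A) (no such solution, or it coincides) from Cases (B),(C),(D). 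Symmetrically handle $Y=1$. (3) Assemble: Case (A) is when neither the $X=1$ nor $Y=1$ quadratic contributes a new solution (this happens e.g. for $M_1, M_2 \geq 2$ with the discriminants non-square or roots out of range); Case (B) when both contribute points $(1,k_2)$ and $(k_1,1)$ with $2 < k_i < M_i$; Cases (C)/(D) are the ``degenerate'' regime, occurring when $M_2 = 1$ — then $(0,1)$ is a solution — and one traces the orbit picture under the imaginary reflections $s_1$ as drawn, distinguishing $M_1$ even (picture (C), $\frac{2a}{b} \notin \mathbb{N}$) from $M_1$ odd (picture (D), with the extra point $\big(\frac{M_1+3}{2}, 1\big)$). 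For the converse (i)--(iii): (i) in Cases (C),(D) the above forces $M_2 = 1$ and the explicit $M_1$-parity dichotomy, so the characterization is tight; (ii) set $M_1 = M_2 = k^2$, check $\frac{2a}{b} = k^2 - 2 \in \mathbb{N}$ and that the $X=1$ quadratic $Y^2 - 2Y - (k^2-1) = 0$ has root $Y = k+1 =: k_2$ with $2 < k+1 < k^2$ for $k \geq 2$ (and for $k=1$ it degenerates, consistent), landing in (B); (iii) set $M_2 = 3k$, $M_1 = 3k+2$, verify $\frac{2a}{b} = 3k - 1 \in \mathbb{N}$ and that the $X=1$, $Y=1$ quadratics have no admissible new roots, landing in (A).

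\textbf{Main obstacle.} The genuinely delicate part is not the axis computations but \emph{bookkeeping the coincidences and ranges}: ensuring that the putative solutions $(1,k_2)$, $(k_1,1)$, $(0,1)$, $\big(\tfrac{M_1+3}{2},1\big)$ are (a) actually integral, (b) actually non-negative and distinct from the already-listed solutions $(0,0),(2,2),(M_1,0),(0,M_2)$, and (c) correctly sorted into the four pictures with the stated constraints ($k_1 \leq k_2$, $k_i \in (2, M_i)$, $M_1$ even vs odd). This requires carefully analyzing when the discriminant $\left(\frac{M_2-M_1}{2}+2\right)^2 + 4(M_1-1)$ of the $X=1$ quadratic is a perfect square and whether the resulting root lies in the open interval $(2, M_1)$ — essentially a small Diophantine argument per case. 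I expect the converse direction (iii), pinning down that $M_2 = 3k, M_1 = 3k+2$ yields \emph{no} stray solutions on the $X=1$ or $Y=1$ lines, to need the most care, since one must rule out accidental integer roots of the two quadratics rather than just exhibit solutions; I would handle it by showing the relevant discriminants are strictly between consecutive squares.
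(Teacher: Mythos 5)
Your strategy — normalize via Lemma \ref{Lemma (k,k) solutions, consequences}, then exhaust the four lines $X\in\{0,1\}$, $Y\in\{0,1\}$ — is the same decomposition the paper uses, so the overall architecture is sound. However there are a few genuine gaps and one error worth naming.

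First, the claim ``in particular $M_1+M_2$ is even'' is false, and the paper flags this explicitly: it writes $\frac{2a}{b}=\frac{M_1+M_2}{2}-2>0$ ``(possibly half-integer).'' The identity only forces $\frac{M_1+M_2}{2}-2$ to be a positive rational of the form $2a/b$, not an integer. In fact Case (C) of the lemma has $M_2=1$ with $M_1$ even, so $M_1+M_2$ is odd there. Your subsequent remark ``hence $M_2$'' (same parity as $M_1$) inherits this error; you instead need to work with $M_2=1$ fixed in Cases (C)/(D) and let $M_1$'s parity alone do the distinguishing. This is more than a slip: if you believe $M_1+M_2$ is forced even you will not even consider the parameter regime of Case (C).

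Second, in Case (B) you treat the $X=1$ and $Y=1$ quadratics as independent and say you would ``symmetrically handle $Y=1$.'' But the picture requires both $(1,k_2)$ and $(k_1,1)$ to appear \emph{simultaneously}, and this pairing is not free: it comes from the observation that the two quadratics $Y^2+(\frac{M_1-M_2}{2}-2)Y+(1-M_1)=0$ and $X^2+(\frac{M_2-M_1}{2}-2)X+(1-M_2)=0$ share the discriminant $(\frac{M_1-M_2}{2})^2+2M_1+2M_2$, together with a parity check on the linear coefficient, so one has an integer root iff the other does. The paper makes this explicit (``It can be checked that $X=k_1^{(+)}$\ldots satisfies\ldots''). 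Without it, you have not ruled out a hypothetical scenario where only one of the two off-axis points exists. Relatedly, for uniqueness of each $k_i$ the paper uses that the product of roots of the $X=1$ quadratic is $1-M_1<0$ for $M_1>1$, so at most one root is nonnegative; you should include this.

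Third, for the converse (iii) your ``show the relevant discriminants are strictly between consecutive squares'' is a restatement of the claim, not a method — you give no way to locate $12k+5$ between squares uniformly in $k$. The paper uses a congruence: with $M_2=3k$, $M_1=3k+2$, the discriminant equals $4M_2+5=12k+5\equiv 2\pmod 3$, while squares are $\equiv 0,1\pmod 3$, so it is never a perfect square. That is the argument to use.

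Finally, the range bookkeeping you flag as the ``main obstacle'' ($2<k_i<M_i$, $k_1\le k_2$, and that $M_2\ne 2$ in Case (B)) is exactly where the paper spends most of its Step 2: it assumes e.g.\ $k_1=M_1$ or $k_2\ge M_2$ and derives numerical contradictions from $M_1\ge M_2\ge 2$ and $\frac{2a}{b}>0$. Your proposal acknowledges but does not carry out these estimates, so as written it is incomplete at precisely the point you identify.
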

			\begin{proof}[{\bf \textnormal Proof.}]
				We fix $a, b , M_1\geq M_2\in \mathbb{N}$ with $\frac{2a}{b}=\frac{M_1+M_2}{2}-2>0$ (possibly half-integer), and fix without further mentioning one solution of \eqref{Eqn symmetric and same lengths case and any a} to be $(2,2)$.
				 \smallskip\\
				\underline{Step 1}. {\it $(1,2)$ is a solution to \eqref{Eqn symmetric and same lengths case and any a} iff $M_2=1$ \big(Cases (C) \& (D)\big)} :
				The reverse implication in this claim is easily seen by substituting $(1,2)$ into \eqref{Eqn symmetric and same lengths case and any a}.
				Conversely, suppose $(1,2 )$ satisfies 
                %\big(the case when $(2,1)$ satisfies
             \eqref{Eqn symmetric and same lengths case and any a}.
				Equation \eqref{Eqn symmetric and same lengths case and any a} at $Y=2$ is $X^2 + (M_2-4)X+ (4-2M_2)=0$, and $X=1,2$ satisfying this equation forces  $M_2=1$. 
                Note, there are no solutions on the $Y$-axis other than $(0,0)$ and $(0,M_2=1)$.
                Suppose \eqref{Eqn symmetric and same lengths case and any a} has a solution with $X=1=M_2$, i.e. $Y^2+\Big(\frac{M_1-3}{2}-1\Big)Y+(1-M_1)=0$ for some $Y= y\in \mathbb{Z}_{\geq 0}\setminus \{2\}$.
                Then observe, $2y=1-M_1\geq 0$ forces $M_1=1$, which cannot happen since the coef. of $XY$ is positive.
                Finally consider $X=2$ line; i.e., solutions to $Y^2+(M_1-4)Y+(4-2M_1)=0$.
                As before, if this equation has a solution in $\mathbb{Z}_{\geq 0}\setminus \{2\}$, then $4-2M_1\geq 0$ contradicts $\frac{2a}{b}=\frac{M_1-3}{2}>0\implies M_1\geq 4$. 
                Now for any solution $(X,Y)$ to \eqref{Eqn symmetric and same lengths case and any a} with $X>2$, Lemma \ref{Lemma (k,k) solutions, consequences} forces $Y\leq 1$.
                Indeed the point $\big(\frac{M_1+3}{2}\in \frac{1}{2}\mathbb{Z}_{>0}\ ,\ 1\big)$ satisfies \eqref{Eqn symmetric and same lengths case and any a} and is the only solution on $Y=1$ line other than $(0,1)$.
                Observe $\frac{M_1+3}{2}\in \mathbb{Z}_{>0}$ iff $5\leq M_1$ is odd, showing Cases (D) as well as (C).
				\smallskip\\
				\underline{Step 2}. {\it For $M_2>1$, suppose $(1, k_2)$ is a solution for some $k_2>2$} \big(both in view of Step~1\big):
				At $X=1$, \eqref{Eqn symmetric and same lengths case and any a} is $Y^2+\left(\frac{M_1-M_2}{2} -2 \right)Y + (1-M_1)=0$.
				So $k_2$ has two possible values $k_2^{(\pm)}\ :=\ \frac{1}{2}\Big[ \Big(2-\frac{M_1-M_2}{2}\Big)\pm \sqrt{\Big(\frac{M_1-M_2}{2}\Big)^2+2M_1+2M_2} \Big]$.
				If both $k_2^{(\pm)}\in \mathbb{Z}_{\geq 0}$, then $k_2^{(+)}k_2^{(-)}
                \geq 0\implies (1-M_1)\geq 0$ forces $M_1=1$, contradicting $M_1\geq M_2>1$.
				So, $\big(1,\ k_2=k_2^{(+)}\big)$ is the only solution to \eqref{Eqn symmetric and same lengths case and any a} with its components in $\mathbb{N}$, but not $\big(1,k_2^{(-)}\big)$.
				We write $k_1^{(+)}$ swapping $M_1,M_2$ in the definition of $k_2^{(+)}$.  
				It can be checked that $X=k_1^{(+)}$ \big(which is $>2$ by Step 1\big)  satisfies $X^2+\left(\frac{M_2-M_1}{2} -2 \right)X + (1-M_2)=0$, and thereby  $\big(k_1=k_1^{(+)}, \ 1\big)$ satisfies \eqref{Eqn symmetric and same lengths case and any a}. 
				%Check for solution $\big(k_1^{(-)}, \ 1\big)$ \big(defined reversing $M_1,M_2$ in $k_2^{(-)}$\big) of \eqref{Eqn symmetric and same lengths case and any a} that $k_1^{(-)}\notin \mathbb{N}$.
                The uniqueness of positive $(k_1,1)$ is seen similar to that of $(1,k_2)$.
            We are in (B) when $M_2>1$ and some $(1, k_2)$ \big(equivalently when $(k_1, 1)$\big) satisfies \eqref{Eqn symmetric and same lengths case and any a}. 
                    
In Case (B) it remains to compare $k_i$ and $M_i$ for each $i$.
We first observe that $M_2> 2$ for the existence of solution $(1,k_2>2)$: $M_2=2\implies k_2= \frac{1}{2}\bigg[ 3-\frac{M_1}{2}  \pm \sqrt{ \Big(\frac{M_1}{2}+1\Big)^2 + 4}\bigg]\implies \Big(2k_2+\frac{M_1}{2}-3\Big)^2= \Big(\frac{M_1}{2}+1\Big)^2+4\implies (k_2-2)\Big(k_2-2 +\frac{M_1}{2}+1\Big)=1$, which is not possible as $k_2>2$ and $M_1>0$.
Suppose $k_2\geq M_2$; so $\dim L(\lambda)_{\lambda-k_2\alpha
                _2}=0$.
                Then $\frac{1}{2} \bigg[(2-\frac{M_1-M_2}{2}\Big)\pm \sqrt{\Big(\frac{M_1-M_2}{2}\Big)^2+2M_1+2M_2} \bigg]\geq M_2$ implies $\Big(\frac{M_1-M_2}{2}\Big)^2+(2)M_1+(2)M_2\ \geq \  \Big(\frac{M_1+(M_2-4)}{2}\Big)^2+(M_2)M_1 + (2M_2-4)M_2$, which on term-wise comparison leads to contradiction $M_2\leq 2$.
Note now $\frac{2a}{b}= \frac{M_1+M_2}{2}-2\geq 3-2\geq 1$.
So by Lemma~\ref{Lemma bddness of sols for two negative nodes}, $k_1\leq M_1=\max(M_1,M_2)$.
Suppose $k_1=M_1$; recall $2\leq M_2\leq M_1\in \mathbb{N}$.
                So $Y- M_2+ \Big(\frac{M_1+M_2}{2}-2\Big)M_1=0$ \big(which is \eqref{Eqn symmetric and same lengths case and any a} at $X=M_1$\big) satisfied by $Y=1$ forces $1>\frac{M_2-1}{M_1}=\frac{M_1+M_2}{2}-2=\frac{1}{2}$, forcing $M_2=2$ and $M_1=3$ and thereby $\frac{1}{3}=\frac{1}{2}\Rightarrow\!\Leftarrow$.  
                So $k_1<M_1$.  
                      \smallskip\\
                \underline{Step 3}.
 {\it Suppose there are no solutions $(X,Y)$ to \eqref{Eqn symmetric and same lengths case and any a} with $X=1$,  or with $Y=1$} : 
 By Step 1, $M_1\geq M_2\geq 2$, and by Lemma \ref{Lemma bddness of sols for two negative nodes} the solution-plot is as in Case (A). 
 If $M_1=M_2\geq 2$, then  $\sqrt{M_1}\notin \mathbb{N}$ as there are no solutions of the form $(1, k_2^{(+)})$.
When $M_2=2$, $\frac{2a}{b}>0\implies M_1>2$.

Finally we are left to check claims i)--iii) in the lemma.
In ii), we see By Step 2, $k_1^{(+)}=k_2^{(+)}=k+1$.
Observe in iii), the term inside the square root in the formulas for $k_{1 \text{ or }2}^{(+)}$ is $4M_2+5$, which being congruent to $2 \ \mathrm{mod}\ 3$ cannot be a perfect square, and hence we are in Case (A).
\end{proof}
The proof of Theorem \ref{Theorem C}(III) involves stepping down from $(2,n)$-space, and doing raising computations in lower $(1,k)$-spaces.
Helpful for the latter is proving Proposition \ref{Prop maxl vect}(b)--(c)
			\begin{proof}[{\bf \textnormal Proof of Proposition \ref{Prop maxl vect}(b).}]
            Similar to in the proof of part (a), we work over $\mathfrak{g}(b,a,a,d)$ for $a,..,d\in \mathbb{N}$.
	Fix $\lambda\in P^{\pm} $ and powers $M_1, M_2\in \mathbb{N}$, and assume $(1, n)$ to satisfy \eqref{Eqn symmetric and same lengths case and any a}; inside whose weight space in $M(\lambda)$ we construct a unique (up to scalars) maximal vector.
		Recall, $M(\lambda)_{\lambda-\alpha_1-n\alpha_2}$ has the below basis; where $\ad_{f_2}^{\circ i} (f_1)$  is the adjoint operator  $\ad_{f_2}$ composed $i$-times and acted on $f_1$:
				\[
				\left\{ 
				f_1f_2^n\cdot m_{\lambda}\ , \ \ \ad_{f_2}(f_1)\cdot f_2^{n-1}\cdot m_{\lambda}\ ,\  \ \ldots \ , \  \ \ad_{f_2}^{\circ i}(f_1)\cdot f_2^{n-i}\cdot m_{\lambda}\ ,\ \  \ldots \  , \ \   \ad_{f_2}^{\circ n}(f_1)\cdot m_{\lambda} \right\},
				\]
	by the PBW theorem and using any PBW ordering $\Big\{f_2; \ f_1;\  [f_2, f_1];\  \big[f_2, [f_2, f_1]\big]; \ldots; \ \ad_{f_2}^{\circ n}( f_1); \ldots \big\}$ for $\mathfrak{n}^-$. 
				Let $x=\sum_{i=0}^n c_i [\ad_{f_2}^{\circ i}(f_1)]f_2^{n-i}m_{\lambda}$, for some scalars $c_0,\ldots, c_n\in \mathbb{C}$.
				We solve the simultaneous equations $e_1x = e_2 x =0$ for the solution vectors $0\neq \big(c_0, \ldots , c_n)\in \mathbb{C}^{n+1}$ :
				\begin{align}\label{Eqns e1 action showing c1=0}
				\begin{aligned}[]
				c_0 f_1f_2^n m_{\lambda} \ \  \xrightarrow{e_1}\ \  c_0 \big(\lambda(\alpha_i^{\vee})-n A_{12}\big)f_2^n m_{\lambda}, \qquad 
				c_1 [f_2, f_1]f_2^{n-1} m_{\lambda}  \ \ \xrightarrow{e_1} \ \ c_1(A_{12})f_2^{n}m_{\lambda} \\
				c_i \ad_{f_2}^{\circ i}(f_1)f_2^{n-i} m_{\lambda}  \ \ \xrightarrow{e_1} \ \ c_i(A_{12})\underset{=0}{\cancel{\ad_{f_2}^{\circ i}(\alpha_1^{\vee})}} f_2^{n-i}m_{\lambda}\ = 0 \ \ \ \forall\ i\geq 2.  
				\end{aligned}
				\end{align}
				Thus for $0= e_1 x\ = \ \Big[c_1A_{12}+c_0 \big(\lambda(\alpha_i^{\vee})-n A_{12}\big)\Big]f_2^n m_{\lambda}$, we observe  
				\begin{equation}\label{Eqn c0 c1 relation for e1x=0 for (M1, 1) case}
				c_2,\ldots, c_n\  \text{can be arbitrary } \quad \text{ and }\quad     c_1A_{12}+c_0 \big(\lambda(\alpha_1^{\vee})-n A_{12}\big) =0 .
				\end{equation}              \begin{align*}\begin{aligned}[]
				c_{i}\ad_{f_2}^{\circ i}(f_1)\ f_2^{n-i}m_{\lambda}\ \ &\xrightarrow{e_2}\ \ c_i(-i)\Big(A_{21}+\frac{i-1}{2}A_{22}\Big)\ad_{f_2}^{\circ (i-1)}(f_1)\ f_2^{n-i}m_{\lambda}\\
				& \hspace{3.5cm}+ c_i(n-i)\Big(  \lambda(\alpha_2^{\vee})-\frac{n-i-1}{2}A_{22}\Big)\ad_{f_2}^{\circ i }(f_1)\ f_2^{n-i-1}m_{\lambda},
				\end{aligned}
				\end{align*}
                for all $0\leq i\leq n$. Above, at $i=0$ the first term vanishes, and at $i=n$ the second term vanishes. 
				The vanishing of the coefficients of the summands $\ad_{f_2}^{\circ (i-1)}(f_1)f_2^{n-i}m_{\lambda}$ in $e_2x=0$ force 
				\begin{equation}\label{Eqn c0 c1 relation for e2x=0 for (M1, 1) case}
				c_{i}(-i)\Big(A_{21}+\frac{i-1}{2}A_{22}\Big)+c_{i-1}(n-i+1)\Big(\lambda(\alpha_2^{\vee})-\frac{n-i}{2}A_{22}\Big)\ = \ 0\qquad \forall\ 1\leq i\leq n.
				\end{equation}
				Using $A_{12},A_{22}<0$, iteratively by equation(s) \eqref{Eqn c0 c1 relation for e2x=0 for (M1, 1) case}, we see $c_0=0$ to force $c_i=0$ $\forall\ i\geq 1$.
				So w.l.o.g. we assume $c_0=1$, and choose by \eqref{Eqn c0 c1 relation for e1x=0 for (M1, 1) case} $c_1=n-\frac{\lambda(\alpha_1^{\vee})}{A_{12}}= n-\frac{b}{2a}(M_1-1)$ for $e_1x=0$.
				Next, again as $A_{21}+\frac{i-1}{2}A_{22}<0$ $\forall\ i\geq 1$, observe iteratively by  \eqref{Eqn c0 c1 relation for e2x=0 for (M1, 1) case} for $i\geq 2$, that $c_2,\ldots, c_n$ can be uniquely found.
				 But \eqref{Eqn c0 c1 relation for e2x=0 for (M1, 1) case} for \big($i=1$, $c_0=1$, $M_2$ any fixed\big) says $-c_1(-a)-n \frac{d}{2}(M_2-n)=0$, implying $c_1=\frac{d}{2a}n (M_2-n)$.
		So it remains to see if the value $n-\frac{b}{2a}(M_1-1)$ of $c_1$ found above equals $\frac{d}{2a}n (M_2-n)$, equivalently if $2an-b M_1 +b=dnM_2-dn^2$?
                This is true as we assumed $(1,n)$ to satisfy \eqref{Eqn symmetric and same lengths case and any a}.
				Hence, the above choices of $c_i$s -- depending on the initial $c_0$ -- yield the sought-for (unique) non-zero maximal vector in $M(\lambda)_{\lambda-\alpha_1-n\alpha_2}$, completing the proof of part (b).
            \end{proof}
            The proofs of Proposition \ref{Prop maxl vect}(c) and Theorem \ref{Theorem C}(III), require working with the following bases for weight spaces $M(\lambda)_{\lambda-n\alpha_1-\alpha_2}$ $\forall$ $n\in \mathbb{N}$;  for $M(\lambda)_{\lambda-\alpha_1-2\alpha_2}$; and for  $M(\lambda)_{\lambda-2\alpha_1-2\alpha_2}$ respectively:\\
	(i) $f_1^n f_2 m_{\lambda}\ , \ [f_1, f_2]f_1^{n-1} m_{\lambda}\ , \ \ldots \ ,\  \ \Big[f_1,\big[  \ldots, \big[f_1, [f_1, f_2]\big]\ldots\big] \Big] m_{\lambda}$;
					\\ (ii)  $ 
					f_1f_2^2 m_{\lambda}\ , \ [f_2, f_1]f_2 m_{\lambda}\ , \ \big[f_2, [f_2, f_1]\big] m_{\lambda}$;  
					\\ (iii)   
					$f_1^2f_2^2m_{\lambda}$, $[f_1,f_2]f_1f_2 m_{\lambda}$, $\big[f_2,[f_1,f_2]\big]f_1 m_{\lambda}$,
					$\big[f_1,[f_1,f_2]\big]f_2 m_{\lambda}$, $\Big[f_1,  \big[f_2, [f_1,f_2]\big]\Big] m_{\lambda}$, $([f_1,f_2])^2 m_{\lambda}$.
            \begin{proof}[{\textnormal \bf Proof of Proposition \ref{Prop maxl vect}(c)}]
    Fix a vector $x\in M(\lambda)_{\lambda-2\alpha_1-2\alpha_2}$ with coefficients $c_1, \ldots, c_6$ w.r.t. the basis in (iii) above. 
    We see conditions \eqref{Eqn coefs cond for maxl vect in (2,2) space when (M1, 1)} below on $c_1,\ldots, c_6$, for the  maximality of $x$ via:\\
    $0 \ =\  e_2\cdot x\ = \   c_1\Big( \big(2\lambda(\alpha_2^{\vee})-(-b)\big)f_1^2f_2 m_{\lambda}\ = \ b(2-M_2)f_1^2f_2m_{\lambda}\Big)  + c_2\Big( -a f_1^2f_2 m_{\lambda} \ -\ \frac{b}{2}(M_2-1)   [f_1,f_2]f_1 m_{\lambda}\Big)  + c_3 \Big(  \big(b+2a\big)[f_1,f_2]f_1m_{\lambda}\Big) 
 + c_4\Big(\underset{=0}{\cancel{\big[f_1,[f_1,\alpha_2^{\vee}] \big]}} f_2 m_{\lambda}\ - \ \frac{b}{2}(M_2-1) \big[f_1, [f_1, f_2]\big] m_{\lambda}\Big)  +\\ c_5\Big( - \big(2a+b\big)\big[f_1,[f_1,f_2]\big] m_{\lambda}\Big) + c_6\Big( -af_1 [f_1, f_2]m_{\lambda}\ - \ a[f_1, f_2]f_1 m_{\lambda}\ = \ -a\big[ f_1, [f_1, f_2]\big]m_{\lambda}\ -\ 2a [f_1, f_2]f_1 m_{\lambda}\Big)$. \ And\\
 $0\ = \ e_1\cdot x\ = \ c_1\Big(\big(-b(M_1-1)+4a+b)\big)f_1 f_2^2 m_{\lambda}\ = \ (4a+2b-bM_1)f_1f_2^2 m_{\lambda}\Big)  \ +\  c_2 \Big( a f_2f_1f_2 m_{\lambda}\ - \ \left(a-\frac{b}{2}(M_1-1)\right)[f_2, f_1]f_2 m_{\lambda}\  =\  af_1f_2^2 m_{\lambda} + \frac{b}{2}(M_1-1)[f_2, f_1]f_2 m_{\lambda} \Big)\  +\  c_3\Big( \frac{b}{2}(M_1-1) \big[f_2, [f_2, f_1]\big]m_{\lambda}\Big) 
 \ + \ c_4\Big( - (a+b)[f_2, f_1]f_2m_{\lambda}\ + \  a[f_1,f_2]f_2m_{\lambda} \ = \ - (2a+b)[f_2, f_1]f_2 m_{\lambda} \Big) \ +\ c_5\Big(-(2a+b)\big[f_2, [f_2, f_1]\big]m_{\lambda}\Big) \ +\ c_6\Big(- af_2 [f_2, f_1]m_{\lambda}\ - \ a[f_2, f_1]f_2 m_{\lambda}\ = \ -a\big[ f_2, [f_2, f_1]\big]m_{\lambda}\ -\ 2a [f_2, f_1]f_2 m_{\lambda}\Big)$.
			\begin{align}\label{Eqn coefs cond for maxl vect in (2,2) space when (M1, 1)}
                \begin{aligned}[]
				\ \ &0 =   (4a+2b-bM_1)c_1  +a c_2   =     \frac{b}{2}(M_1-1)c_2-(2a+b)c_4-2ac_6   =    \frac{b}{2}(M_1-1)c_3-(2a+b)c_5  -ac_6  \\
                \ \ & =   b(2-M_2)c_1-ac_2  =  -\frac{b}{2}(M_2-1)c_2+(b+2a)c_3 -2ac_6     =     -\frac{b}{2}(M_2-1)c_4-(2a+b)c_5-ac_6  .  
				\end{aligned}\end{align}
                \begin{equation}\label{Eqn (2,2) norm-cond}
		\text{By }(2,2)\text{ satisfying }\eqref{Eqn symmetric and same lengths case and any a}\quad 	4 - M_1-M_2 + 4\frac{a}{b}\ = \ 0 \implies \frac{b}{2}(1-M_1)+ (2a+b) \ = \ -\frac{b}{2}(1-M_2).
				\end{equation}
				Adding the first and fourth terms in \eqref{Eqn coefs cond for maxl vect in (2,2) space when (M1, 1)} for $c_1,c_2$, yields $\big(4a+4b-bM_1-bM_2\big)c_1=0$, which holds true irrespective of the value of $c_1$ via equation \eqref{Eqn (2,2) norm-cond}.
				
				Suppose $c_1=0$.
				So $c_2=0$.
				Next by the second and fifth terms in \eqref{Eqn coefs cond for maxl vect in (2,2) space when (M1, 1)}, $c_3=-c_4$. 
				Finally, $c_6= \left(1+\frac{b}{2a}\right)c_3$, and $\frac{b}{2}(M_2-1)c_3-(2a+b)c_5-a\left(1+\frac{b}{2a}\right)c_3=0$ implies $c_5=\frac{-1}{2a+b}\left(a+b-\frac{b}{2}M_2\right)c_3$.
				So we got coefs. $(c_1, \ldots, c_6)\neq \overline{0}$ with $c_1=c_2=0$ and $c_3,c_5,c_6$ depending on $c_3$.
				Now assume w.l.o.g. $0\neq c_1=1$.
				In turn as above, fixing any value of $c_3$, we can find all $c_4, c_5, c_6$ as functions of $c_3$ uniquely.
				So, choices of $(c_1, c_3)\in \mathbb{C}^2$ determine the  desired 2-dim. space of maximal vectors.              
            \end{proof}
            \begin{proof}[{\bf \textnormal Proof of Theorem \ref{Theorem C}(III)}]
As in the proofs of parts (I) and (II), it suffices to describe $c$-coefs. here, using the denominator identity \eqref{Eqn denominator identity}. 
Clearly $c(\lambda-M_i\alpha
                _i)=-1$ $\forall$ $i$ in Cases (A)--(D).\smallskip\\
                \underline{Step 1}. {\it For Cases (A) and (B) }: 
              Here there are no  solutions $\mu$ to \eqref{Eqn symmetric and same lengths case and any a} in the poset $\lambda\preceq \mu \precneqq \lambda
                -2\alpha_1-2\alpha_2$.
                So no quotienting happens within $M(\lambda)_{\mu}$ during $M(\lambda)\twoheadrightarrow L(\lambda)$; since if $V_1$ is the quotient of $M(\lambda)$ obtained by killing all the maximal vectors in all the solution spaces (except for $\lambda$) to \eqref{Eqn norm equality with invariant form}, then there are no further maximal vectors in $\big(V_1\big)_{\mu}$ for $\mu$ within that poset.
               These observations proceed similar to \cite[Section 4]{Kac--Kazhdan}.
               So $\dim M(\lambda)_{\mu}=\dim L(\lambda)_{\mu}$ for all such $\mu$.
               Indeed, $\dim M(\lambda)_{\nu}=\dim L(\lambda)_{\nu}$ iff there does not exist a solution $\mu'$ to \eqref{Eqn norm equality with invariant form} with
 $\lambda\precneqq \mu'\precneqq \nu$.
            So by Proposition \ref{Prop maxl vect}(c), we have 
                $c(\lambda-2\alpha
                _1-2\alpha
                _2)= \Big[\underbrace{\dim M(\lambda)_{\lambda
                -2\alpha_1-2\alpha_2}}_{=6}-2\Big]- \dim M(\lambda)_{\lambda-\alpha_1-2\alpha_2}- \dim M(\lambda)_{\lambda-2\alpha_1-\alpha_2} = -2$, showing (A).
           Similarly by Proposition \ref{Prop maxl vect}(b), $c(\lambda-\alpha_1-k_2\alpha_2)=\dim L(\lambda)_{\lambda-\alpha_1-k_2\alpha_2}-\dim M(\lambda)_{\lambda-\alpha_1-(k_2-1)\alpha_2}-\dim M(\lambda)_{\lambda-k_2\alpha_2}= k_2-k_2-1=-1\ =c(\lambda-k_1\alpha_1-\alpha_2)$, showing Case (B).\smallskip\\
            \underline{Step 2}. The rest of the proof is  commonly written for Cases (C) and (D).
            Fix a h.w. vector $0\neq v_{\lambda}\in L(\lambda)_{\lambda}$. Observe $f_2 v_{\lambda}=0$, $L(\lambda)_{\lambda-\alpha_1-2\alpha_2}=\mathbb{C}\big[f_2,[f_2,f_1]\big]v_{\lambda}$, $L(\lambda)_{\lambda-\alpha_1-\alpha_2}=\mathbb{C}[f_1,f_2] v_{\lambda}$.
       So $ c(\lambda-\alpha_1-2\alpha_2)= 1-0-1=0$.
       This shows the phenomenon quoted in Observation \ref{Observation missing solution points in character numerator}, and below we show it for the two other possible solutions $\big((M_1+3)/2,\ 1\big)$ and $(2,2)$ as well.

We fix $k\in \mathbb{N}$ and compute $\dim L(\lambda)_{\lambda-k\alpha_1-\alpha_2}$ \big(for Case (D)\big), which is required for writing the next two $c$-coefs.
%For this, we consider the subspace $V_k :=\bigoplus\limits_{t=0}^{k} M(\lambda)_{\lambda-t\alpha_1-\alpha_2}$.
Below we will make use of the PBW monomial basis (omitting $f_1^kf_2m_{\lambda}$) in the proof of Proposition \ref{Prop maxl vect}(b) with the order of (nodes 1 and 2, i.e.) $f_1$ and $f_2$ interchanged, and use the computations for $e_i$-actions there-in.
Note by comparing weights and using this PBW basis, that $\big(U(\mathfrak{n}^-)f_2m_{\lambda}\big)\cap M(\lambda)_{\lambda-t\alpha_1-\alpha_2} = \mathbb{C}\big\{ f_1^tf_2m_{\lambda} \big\}$ $\forall$ $t\in \mathbb{Z}_{\geq 0}$. 
Let us assume in h.w.m. $V':=\frac{M(\lambda)}{\big\langle f_2 m_{\lambda}\ , \ f_1^{M_1}m_{\lambda}\big\rangle} = U(\mathfrak{n}^-)v'_{\lambda}$ (say), that there is a $\mathfrak{g}$-submodule 
non-trivially intersecting $V'_{\lambda-k\alpha_1-\alpha_2}$ for some $1\leq k<M_1$; so $\dim L(\lambda)_{\lambda-k\alpha_1-\alpha_2}< \dim V'_{\lambda-k\alpha_1-\alpha_2} \leq  \dim M(\lambda)_{\lambda-k\alpha_1-\alpha_2}$.
Then there exists an $x=\sum_{t=1}^n c_t\ad_{f_1}^{\circ t}(f_2)\ f_1^{n-t}m_{\lambda}\in M(\lambda)_{\lambda-n\alpha_1-\alpha_2}$ for some $n\leq k$ and $c_t\in \mathbb{C}$, s.t. its projection $x'=\sum_{t=1}^n c_t\ad_{f_1}^{\circ t}(f_2)\ f_1^{n-t}v'_{\lambda}\neq 0$ is maximal in
 $V'_{\lambda-n\alpha_1-\alpha_2}$.
So $e_2 x'=0\in V'$ $\implies c_1=0$ \big(as in \eqref{Eqns e1 action showing c1=0}\big) since $f_1^n m_{\lambda}\notin U(\mathfrak{n}^-)\big\{f_2m_{\lambda},\ f_1^{M_1}m_{\lambda}\big\}$.
Importantly for $e_1 x'= 0\ \mod U(\mathfrak{n}^-)f_2m_{\lambda}$ -- note $\big(U(\mathfrak{n}^-)f_2 m_{\lambda}\big)_{\lambda-k\alpha_1-\alpha_2}\neq 0$, and so we do the below  computations in $M(\lambda)$, for which recalling $n\leq k < M_1$ -- we write  $e_1 x=  c_0 f_1^{n-1} f_2 m_{\lambda}$ for some $c_0\in \mathbb{C}$.
As in the $e_2$-action \big(showing $c_1=0$, and as in Proposition \ref{Prop maxl vect}(b)'s proof\big), observe \   $\ad_{f_1}^{\circ i}(f_2)\ f_1^{n-i}m_{\lambda}\ \ \xrightarrow{e_1}$
\begin{center}
 $   \ i\Big(a+\frac{i-1}{2}b\Big)\ad_{f_1}^{\circ (i-1)}(f_2)\ f_1^{n-i}m_{\lambda} \ - \  (n-i)\frac{b}{2}\big(  M_1-(n-i)\big)\ad_{f_1}^{\circ i }(f_2)\ f_1^{n-i-1}m_{\lambda}\ \ \forall\ i\geq 2.$
\end{center}
So in the resulting expansion of  $e_1x$, all the summands involve iterated brackets $\big[f_1,\ldots,[f_1, f_2]\ldots\big]$.
Finally, linear independence of the monomial basis $\left\{ f_1^{n-1}f_2m_{\lambda},\ \ad_{f_1}^{\circ i}(f_2)f_1^{n-i-1}m_{\lambda}\ \big|\ n-1\geq i\geq 1  \right\}$ (and the positivity/non-vanishing of all the first coefs. in the expressions for $e_1$-actions above for each $i$) force $c_0=c_2=\cdots =c_n=0$.
So, there is no non-trivial submodule of $V'$ intersecting $V'_{\lambda-k\alpha_1-\alpha_2}$ for any $k\geq 1$.
Thus, even after the passage $V'\twoheadrightarrow L(\lambda)$, we have 
\begin{equation}\label{Eqn dim L(lambda) for (1,k)-wt spaces in cases (C) and (D)}
\dim L(\lambda)_{\lambda-k\alpha_1-\alpha_2}\ = \  \dim V'_{\lambda-k\alpha_1-\alpha_2}\ = \ \dim M(\lambda)_{\lambda-k\alpha_1-\alpha_2}-1\ =\  k \quad \forall\ k\in \mathbb{Z}_{\geq 0}.
\end{equation} 
Hence when $5\leq M_1$ is odd, we have $c\Big(\lambda-\frac{M_1+3}{2}\alpha_1-\alpha_2\Big)\ =\ \frac{M_1+3}{2}-\frac{M_1+1}{2}-1=0$.

\noindent 
It remains to compute $c(\lambda-2\alpha_1-2\alpha_2)$.
One can see $L(\lambda)_{\lambda-2\alpha_1-2\alpha_2}$ = $\mathbb{C}\big\{\big[f_2,[f_1,f_2]\big]f_1 v_{\lambda},\ [f_1,f_2]^2 v_{\lambda},\\ \Big[f_2,\ \big[f_1, [f_1,f_2]\big]\Big]v_{\lambda}\big\}$, by:  1) $f_2v_{\lambda}=0$. 2) And for the linear independence of those 3 vectors, see-
	\begin{align*}
				\begin{aligned}[]
	        \big[f_2, [f_1,f_2]\big]f_1 v_{\lambda}\ \ \xrightarrow{e_2}\ \ \big(b+2a\big)[f_1,f_2]f_1 v_{\lambda}, \quad   
         				\Big[f_2,\   \big[f_1, [f_1, f_2]\big]\Big] v_{\lambda}\ \ \xrightarrow{e_2}\ & \  \big(b+2a\big)\big[f_1,[f_1,f_2]\big]v_{\lambda},\\	[f_1,f_2][f_1, f_2]v_{\lambda}\ \ \xrightarrow{e_2}\ \ -af_1 [f_1, f_2]v_{\lambda}\  - \ a[f_1, f_2]f_1 v_{\lambda}\ = \ - a\big[ f_1,\  [f_1 & , f_2]\big]v_{\lambda}\ - \ 2a [f_1, f_2]f_1 v_{\lambda}.
				\end{aligned}
				\end{align*}
3) Using $[f_1,f_2]f_1v_{\lambda
}$ and $\big[f_1,[f_1,f_2]\big]v_{\lambda}$ to be linearly independent in $L(\lambda)_{\lambda-2\alpha_1-\alpha_2}$ by \eqref{Eqn dim L(lambda) for (1,k)-wt spaces in cases (C) and (D)}.

So by \eqref{Eqn dim L(lambda) for (1,k)-wt spaces in cases (C) and (D)},  $c(\lambda-2\alpha_1-2\alpha_2)= \dim L(\lambda)_{\lambda-2\alpha_1-2\alpha_2}- \dim L(\lambda)_{\lambda-\alpha_1-2\alpha_2}-\dim L(\lambda)_{\lambda-2\alpha_1-\alpha_2}=3-1-2=0$.
Finally, the weight-multiplicities in quotients $V_1$ and $V'$ dealt with in Steps 1--2, equaling those in $L(\lambda)$, which is explained in the proof, presentations of $L(\lambda)$ in Corollary \ref{Corollary presentations gfor nice simples} are seen to be true as follows: 
1) In Cases (A) and (B), all the maximal vectors in the weight spaces of all the solutions to \eqref{Eqn symmetric and same lengths case and any a} in the plots in Lemma \ref{Lemma (2,2) sol. characterization}, minimally generate the maximal submodule of $M(\lambda)$.
2) In Cases (C) and (D), the minimal generators are $f_1^{M_1}m_{\lambda}, \  f_2m_{\lambda}$.
 \end{proof}	
\section{Theorem \ref{Theorem D character of V(rho)} : Characters of $V$ for top weight the Weyl vector}\label{Section 6 for rho}
We show Theorem \ref{Theorem D character of V(rho)} for $A=A(n)$ \big(rank-$n$ type-$A$ Cartan matrix with all diagonal entries $-2$ instead of $2$s\big).
Note $A(n)$ is non-degenerate.
First, the versions of norm equality \eqref{Eqn norm equality with invariant form} here:
     \begin{lemma}\label{Lemma negative type A norm equality equaion}
          In the notations of Theorem \ref{Theorem D character of V(rho)}, fix any $\mu=\rho-X_1\alpha_1-\cdots-X_n\alpha_n$ satisfying  \eqref{Eqn norm equality with invariant form}. Then $(X_1,\ldots, X_n)$ satisfy \eqref{Eqn norm equality for rho 1}, equivalently \eqref{Eqn norm equality for rho 2}.     
          Moreover, such $\mu$ are finitely many.
       \end{lemma}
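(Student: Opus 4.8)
The plan is to mirror the Casimir computation carried out in the proof of Lemma \ref{Lemma norm equations via Casimir action}, now specialized to $\lambda=\rho$, to the tridiagonal matrix $A(n)$, and to all $n$ simple root directions at once. First I would rewrite the norm equality \eqref{Eqn norm equality with invariant form} in its simplified form \eqref{Eqn norm equality simplified}: putting $\beta:=\lambda-\mu=\sum_{i=1}^nX_i\alpha_i$ and using $\mu+\rho=(\lambda+\rho)-\beta$, the identity $(\lambda+\rho,\lambda+\rho)=(\mu+\rho,\mu+\rho)$ is equivalent to $2(\lambda+\rho,\beta)=(\beta,\beta)$, and with $\lambda=\rho$ this reads $4(\rho,\beta)=(\beta,\beta)$.

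Next I would record the values of the standard invariant form on the simple roots of $\mathfrak{g}\big(A(n)\big)$. Since $A(n)$ is symmetric, the form may be normalized so that $(\alpha_i,\alpha_j)=A(n)_{ij}$; thus $(\alpha_i,\alpha_i)=-2$, $(\alpha_i,\alpha_{i\pm1})=-1$, and $(\alpha_i,\alpha_j)=0$ for $|i-j|\geq2$. Exactly as in the proof of Lemma \ref{Lemma norm equations via Casimir action}, the relation $\rho(\alpha_i^{\vee})=A(n)_{ii}/2$ together with $\nu(\alpha_i^{\vee})=\frac{A(n)_{ii}}{(\alpha_i,\alpha_i)}\alpha_i$ gives $(\rho,\alpha_i)=(\alpha_i,\alpha_i)/2=-1$. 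Substituting these into $4(\rho,\beta)=(\beta,\beta)$ yields
\[
-4\sum_{i=1}^nX_i \ =\ -2\sum_{i=1}^nX_i^2-2\sum_{i=1}^{n-1}X_iX_{i+1},
\]
which is precisely \eqref{Eqn norm equality for rho 1}. Expanding $(X_1-1)^2+\sum_{i=1}^{n-1}(X_i+X_{i+1}-1)^2+(X_n-1)^2$ and comparing term by term (each $X_i^2$ acquires coefficient $2$, each adjacent product $X_iX_{i+1}$ coefficient $2$, each linear term coefficient $-4$, and the constants $n+1$ cancel the subtracted $n+1$) shows this is the same equation as \eqref{Eqn norm equality for rho 2}.

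For the finiteness statement: since every $X_i\in\mathbb{Z}_{\geq0}$, the cross terms satisfy $\sum_{i=1}^{n-1}X_iX_{i+1}\geq0$, so \eqref{Eqn norm equality for rho 1} forces $\sum_{i=1}^nX_i^2\leq 2\sum_{i=1}^nX_i$, hence $\sum_{i=1}^n(X_i-1)^2\leq n$. In particular $0\leq X_i\leq 1+\sqrt{n}$ for every $i$, so there are only finitely many admissible tuples $(X_1,\ldots,X_n)$, and therefore finitely many such $\mu$.

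All of these steps are routine, and I do not expect a genuine obstacle. The only point that needs care is the normalization of the invariant form on $\mathfrak{h}^*$ and the resulting value $(\rho,\alpha_i)=(\alpha_i,\alpha_i)/2$ — a sign slip there would corrupt the final equation — but this is exactly the computation already performed for the rank-two matrices (N)--(R) in the proof of Lemma \ref{Lemma norm equations via Casimir action}, now with $M_1=\cdots=M_n=2$ and the tridiagonal $A(n)$ in place of $A(b,a,a,d)$.
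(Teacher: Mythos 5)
Your proposal is correct and follows essentially the same route as the paper: simplify the norm equality to $4(\rho,\beta)=(\beta,\beta)$, substitute the explicit inner products $(\alpha_i,\alpha_j)=A(n)_{ij}$ and $(\rho,\alpha_i)=-1$ to get \eqref{Eqn norm equality for rho 1}, complete the square to get \eqref{Eqn norm equality for rho 2}, and bound each coordinate to conclude finiteness. The only (inessential) difference is that you obtain the slightly sharper bound $X_i\leq 1+\sqrt{n}$ directly from \eqref{Eqn norm equality for rho 1} by discarding the nonnegative cross terms, whereas the paper reads off $X_i\leq 1+\sqrt{n+1}$ from \eqref{Eqn norm equality for rho 2}.
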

       \begin{proof}[{\bf \textnormal Proof}]
      The ``norm'' of simples $\alpha_1,...,\alpha_n$ is $-2$.
   Note, $4(\rho\ ,\  X_1\alpha_1+\cdots +X_n\alpha_n) - (X_1\alpha_1+\cdots +X_n\alpha_n \ , \ X_1\alpha_1+\cdots +X_n\alpha_n)= \ 0 \ = \sum\limits_{i=1}^n 4\frac{(\alpha_1,\alpha_1)}{A_{11}}\Big(\rho\ , \ \frac{A_{ii}}{(\alpha_i,\alpha_i)}\alpha_i\Big) X_i -\sum\limits_{i=1}^{n-1}\frac{2(\alpha_1,\alpha_1)}{A_{11}}\Big(\alpha_{i+1},\frac{A_{ii}}{(\alpha_i,\alpha_i)}\alpha_i\Big)X_iX_{i+1}  \\ -   (\alpha_1,\alpha_1)\big(X_1^2+\cdots +X_n^2\big)$, which imply \eqref{Eqn norm equality for rho 1}.
By writing the sum on the l.h.s. of 2-times \eqref{Eqn norm equality for rho 1} as $X_1^2 + X_n^2+ (X_1+X_2)^2+\cdots (X_{n-1}+X_n)^2 - 4(X_1+\cdots +X_n)= X_1^2-2X_1 +1 + X_n^2-2X_n+1+ \big((X_1+X_2)^2- 2(X_1+X_2)+1\big)+\cdots \big((X_{n-1}+X_n)^2- 2(X_{n-1}+X_n)+1\big)- (n+1)$, we see \eqref{Eqn norm equality for rho 2}.
The finiteness of $(X_1,\ldots, X_n)$ satisfying \eqref{Eqn norm equality for rho 2} follows by $X_i\leq \sqrt{n+1}+1$ $\forall i$ . 
       \end{proof}
       \begin{observation}\label{Observation on d-functions}
 Hereafter, we work with the restriction of quadratic form \eqref{Eqn norm equality for rho 2}:  $d^{(n)}:\mathbb{Z}_{\geq 0}^{\times n}\rightarrow \mathbb{Z}$, defined as follows \big(and use its crucial properties (1)--(2) further below, setting $d^{(0)}(.):=0$\big) -\begin{equation}\label{Eqn decomposition of dn functions}
        d^{(n)}(X_1,\ldots,X_n)\ :=\ (X_1-1)^2 + (X_1+X_2-1)^2+\cdots + (X_{n-1}+X_n-1)^2 + (X_n-1)^2-(n+1).
        \end{equation}  
(1) $d^{(n)}(X_1,\ldots, X_{i-1}, X_i=0, X_{i+1}, \ldots, X_n)=d^{(i-1)}(X_1,$ $\ldots,X_{i-1})+d^{(n-i)}(X_{i+1}, \ldots, X_n)$.\\
(2) $d^{(j)}(1,\ldots,1)=-2$ for all $j$.
We now note the easy, important solutions of $d^{(n)}(X_1,\ldots,X_n)=~0$.        
         \end{observation}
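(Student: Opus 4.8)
The plan is to prove the two additivity/evaluation identities (1) and (2) in Observation \ref{Observation on d-functions}, and then read off from them the announced list of ``easy'' solutions of $d^{(n)}=0$, all by a single bookkeeping trick: pad the tuple by boundary zeros. Setting $X_0:=X_{n+1}:=0$, the defining formula \eqref{Eqn decomposition of dn functions} becomes the boundary-symmetric form
\[
d^{(n)}(X_1,\dots,X_n)\ =\ \sum_{j=0}^{n}(X_j+X_{j+1}-1)^2\ -\ (n+1),
\]
because the extreme summands $j=0,n$ reproduce $(X_1-1)^2,(X_n-1)^2$ and the $n-1$ middle summands reproduce $(X_j+X_{j+1}-1)^2$, so there are exactly $n+1$ squares. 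This reformulation is the only step that is not a one-line substitution, and even it is elementary.

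For (1), set $X_i=0$. The two summands containing $X_i$ — those with $j=i-1$ and $j=i$ — degenerate to $(X_{i-1}-1)^2$ and $(X_{i+1}-1)^2$, so the $n+1$ squares break cleanly into a left block over $j\in\{0,\dots,i-1\}$ and a right block over $j\in\{i,\dots,n\}$. Comparing with the boundary-symmetric form again, the left block equals $d^{(i-1)}(X_1,\dots,X_{i-1})+i$ (it has $i$ squares) and the right block equals $d^{(n-i)}(X_{i+1},\dots,X_n)+(n-i+1)$; since $(n+1)=i+(n-i+1)$, subtracting the constant leaves exactly $d^{(i-1)}(X_1,\dots,X_{i-1})+d^{(n-i)}(X_{i+1},\dots,X_n)$, with the convention $d^{(0)}(\,)=0$ handling $i=1$ and $i=n$. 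For (2), putting every $X_k=1$ makes the squares $0,1,\dots,1,0$ (one interior $1$ for each of the $n-1$ middle indices), so the sum is $n-1$ and $d^{(n)}(1,\dots,1)=(n-1)-(n+1)=-2$.

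Next I would record the two base evaluations $d^{(0)}(\,)=0$ and $d^{(1)}(2)=(2-1)^2+(2-1)^2-2=0$, and combine them with (1): for a tuple that is $2$ at a set of pairwise non-adjacent coordinates $i_1<\dots<i_k$ and $0$ elsewhere, iterating (1) at each zero coordinate decomposes $d^{(n)}$ into a sum of blocks, each block being either empty (value $d^{(0)}=0$) or the single entry $2$ (value $d^{(1)}(2)=0$), hence the whole sum vanishes; this is exactly the family \eqref{Eqn solutions of os and 2s for rho in type A}. The remaining ``important'' solutions of small height — e.g.\ $(2,0,\dots,0)$ and its translates, and more degenerate ones like $(2,1,0,1)$ whose non-independent support will matter later — are then cut out by a short finite check, using the a priori bound $X_i\le\sqrt{n+1}+1$ from Lemma \ref{Lemma negative type A norm equality equaion}, while $(1,\dots,1)$ is excluded by (2).

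I do not expect any genuine obstacle \emph{inside} the Observation: it is purely a completing-the-square identity plus base cases. The real difficulty, postponed to the body of the section and needed for Theorem \ref{Theorem D character of V(rho)}(a), is the converse — that among solutions with independent support the patterns \eqref{Eqn solutions of os and 2s for rho in type A} are the only ones — which will require feeding the additivity (1) into a case analysis over the connected blocks of $\supp(\rho-\mu)$, and that is where the argument actually has to work.
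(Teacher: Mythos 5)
Your proof is correct, and since the paper leaves this Observation unproved (it is stated as self-evident bookkeeping), the only meaningful comparison is whether the verification you supply is sound and complete — it is. The padding trick $X_0:=X_{n+1}:=0$, which rewrites $d^{(n)}(X_1,\dots,X_n)=\sum_{j=0}^{n}(X_j+X_{j+1}-1)^2-(n+1)$ as a sum of exactly $n+1$ boundary-symmetric squares, is a clean way to expose both claims: it makes the split at $X_i=0$ literal (the $j=i-1$ and $j=i$ squares collapse to the two new boundary squares, and the constant $n+1$ splits as $i+(n-i+1)$), and it makes the all-ones evaluation an immediate count of $n-1$ interior ones against $n+1$. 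Your handling of the edge cases $i=1$, $i=n$ via $d^{(0)}(\,)=0$ is exactly what the paper's convention is for, and your verification of $d^{(1)}(2)=0$ plus iterated application of (1) correctly recovers all of the $\{0,2\}$-patterns \eqref{Eqn solutions of os and 2s for rho in type A} as solutions; you also rightly flag that the converse (that these are the only solutions with maximal vectors, Theorem~\ref{Theorem D character of V(rho)}(a) / Proposition~\ref{Proposition solutions with maximal vector for L(rho)}) is where the real work lies and is not part of this Observation. One cosmetic caution: $(2,1,0,1)$ does satisfy $d^{(4)}=0$, so it is not ``cut out'' as a non-solution — it is a genuine solution of \eqref{Eqn norm equality for rho 2} that happens to carry no maximal vector; your phrasing there is slightly loose but you do note the non-independent support, so no mathematical error is introduced.
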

       \begin{observation}\label{Example large number of non-hole solutions for rho}
     (1) \eqref{Eqn norm equality for rho 1} is satisfied by all the tuples in \eqref{Eqn solutions of os and 2s for rho in type A}; e.g. $(0,2,0,0,2,0,0,0,2)$ for $n=9$.
       Their weight spaces in $M(\rho)$ are all 1-dim. and consist of  maximal vectors.
       (2) When $n=3$, one sees only 4 solutions other than $(0,0,0)$ --  for e.g. by the bounds in Lemma \ref{Lemma negative type A norm equality equaion}'s proof --  and moreover all are of the form \eqref{Eqn solutions of os and 2s for rho in type A}.
(3) We see many solutions in addition to \eqref{Eqn solutions of os and 2s for rho in type A}, even for (small) $n\geq 4$-
       For e.g. 33 solutions when $n=5$ (by a Python program); wherein $(0,1,0,1,2)$ and $(2,1,1,0,1)$ are important in Observation \ref{Observation solutions with no maximal vectors in M(rho)}.
       And 93 solutions when $n=6$. 
    Also, suitable permutations of components in each solution-tuple yield more solutions.
   (4) The solution-components grow large for large $n$, e.g. $(X_1,\ldots,X_n)=\big(m,\boxed{0,1},\ldots,\boxed{0,1},0,\ldots,0\big)$ with $m^2-2m$ many $\boxed{0,1}$ and $(2m^2-4m+1)\leq n$.
 A common important property of all the solutions:
 \end{observation}
    \begin{lemma}\label{Lemma at least 2 blocks in d-solutions}
        Fix a solution $(X_1,\ldots, X_n)$ to \eqref{Eqn norm equality for rho 1} or \eqref{Eqn norm equality for rho 2}.
        We call a subsequence $(X_{i}, \ldots, X_{i+k})$ for interval $[i, i+k]\subseteq \{1,\ldots, n\}$ as a block of $(X_1,\ldots, X_n)$, if: i)~$X_t>0$ $\forall$ $t\in [i, i+k]$ and ii)~$X_{i-1}=X_{i+k+1}=0$ when $i-1\geq 1$ and resp. $i+k+1\leq n$.
        Then any solution to \eqref{Eqn norm equality for rho 2} other than ``hole-solutions'' \eqref{Eqn solutions of os and 2s for rho in type A}, must have at least two blocks- all ones $(1,\ldots,1)$ block and another block with at least one entry $>1$.
        In particular, when $n>1$, each solution has at least one 0 entry. 
    \end{lemma}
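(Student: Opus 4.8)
The plan is to split a solution into its \emph{blocks} --- the maximal runs of consecutive positive entries --- and to estimate $d^{(n)}$ one block at a time, using the additivity recorded in Observation~\ref{Observation on d-functions}(1). So, given a solution $(X_1,\dots,X_n)$ to \eqref{Eqn norm equality for rho 1} (equivalently \eqref{Eqn norm equality for rho 2}), I would write its blocks as $B_1,\dots,B_m$ of lengths $\ell_1,\dots,\ell_m$, every position outside the $B_j$ carrying a $0$. Repeatedly applying Observation~\ref{Observation on d-functions}(1), with the convention $d^{(0)}(\cdot)=0$ absorbing the leading, trailing and separating zeros, turns the defining equation $d^{(n)}(X_1,\dots,X_n)=0$ into $\sum_{j=1}^{m} d^{(\ell_j)}(B_j)=0$, where $d^{(\ell_j)}(B_j)$ denotes the form in \eqref{Eqn decomposition of dn functions} of length $\ell_j$ evaluated at the (all positive) entries of $B_j$. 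Everything then reduces to understanding the values taken by $d^{(\ell)}$ on a tuple all of whose entries are $\geq 1$.

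The crux is the following block estimate. For $B=(Y_1,\dots,Y_\ell)$ with each $Y_t\geq 1$, expanding the squares in \eqref{Eqn decomposition of dn functions} yields the identity
\[
d^{(\ell)}(Y_1,\dots,Y_\ell)+2 \;=\; (Y_1-1)^2+(Y_\ell-1)^2+\sum_{t=1}^{\ell-1}(Y_t+Y_{t+1})(Y_t+Y_{t+1}-2).
\]
Since $Y_t+Y_{t+1}\geq 2$, every summand on the right is a nonnegative integer, so $d^{(\ell)}(B)\geq -2$, with equality forcing $Y_1=Y_\ell=1$ and $Y_t+Y_{t+1}=2$ for every $t$, i.e.\ $B=(1,\dots,1)$ (consistent with Observation~\ref{Observation on d-functions}(2)). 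If $B$ is not all ones, then either $\ell=1$, so $B=(m)$ with $m\geq 2$ and $d^{(1)}(B)=2(m-1)^2-2\in\{0\}\cup\mathbb{Z}_{\geq 6}$; or $\ell\geq 2$ and some $Y_{t_0}\geq 2$, whence an adjacent pair term $(Y_{t_0}+Y_{t_0\pm 1})(Y_{t_0}+Y_{t_0\pm 1}-2)\geq 3$ and hence $d^{(\ell)}(B)\geq 1$. In summary: each block contributes $\geq -2$ to the sum; it contributes exactly $-2$ precisely when it is all ones, exactly $0$ precisely when it equals $(2)$, and $\geq 1$ in every other case.

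To finish, let $a,z,p$ count respectively the all-ones blocks, the blocks equal to $(2)$, and the remaining blocks of the given solution. The reduced equation gives $0=\sum_j d^{(\ell_j)}(B_j)\geq -2a+p$. If $a=0$, then $p=0$, so every block equals $(2)$ --- which is exactly the shape \eqref{Eqn solutions of os and 2s for rho in type A} (the all-zero tuple being the empty case), excluded by hypothesis; hence $a\geq 1$, so the solution has at least one all-ones block. If moreover $z=p=0$, then $0=-2a<0$, a contradiction; hence $z+p\geq 1$, so some block is not all ones, and any such block --- being $(2)$ or one of the remaining blocks --- carries an entry $>1$. Thus the solution has at least two blocks: an all-ones one and one with an entry exceeding $1$, as asserted. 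For the last claim, if $n>1$ and a solution had no zero entry it would be a single block of length $n\geq 2$, which by the block estimate has $d^{(n)}\in\{-2\}\cup\mathbb{Z}_{\geq 1}$ and hence cannot vanish --- so every solution must have a zero entry.

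The only genuine obstacle is the block estimate of the second paragraph; the displayed identity is a routine expansion, and once it is in hand the sign analysis and the trichotomy $-2$ / $0$ / $\geq 1$ for a single block are immediate, with the remainder of the argument being bookkeeping built on the additivity in Observation~\ref{Observation on d-functions}(1).
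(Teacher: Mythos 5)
Your proof is correct, and it follows the same high-level plan as the paper's: split the solution into maximal positive runs (blocks), push the sum $d^{(n)}=0$ through the additivity in Observation~\ref{Observation on d-functions}(1) to a sum of per-block contributions, and then bound each block. Where you differ is in \emph{how} you bound a block. The paper locates the first coordinate $\geq 2$ at position $i$ and estimates the two halves of the square-sum separately, arriving at the displayed bounds \eqref{Eqn d-value lower bounds}; read literally, those bounds are a bit delicate at the edge $i=1$ (the left partial sum is empty there, so the stated $\geq i+2$ does not hold verbatim and has to be patched using the $(X_1-1)^2$ and the first term of the right sum), and the conclusion ``hence at least two blocks'' is asserted rather tersely. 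Your identity
\[
d^{(\ell)}(Y_1,\dots,Y_\ell)+2 \;=\; (Y_1-1)^2+(Y_\ell-1)^2+\sum_{t=1}^{\ell-1}(Y_t+Y_{t+1})(Y_t+Y_{t+1}-2)
\]
is a routine but genuinely useful reorganization: it makes the per-block contribution manifestly $\geq -2$ with a transparent equality case, and it yields the exact trichotomy (all ones $\mapsto -2$, the singleton $(2)\mapsto 0$, everything else $\mapsto\geq 1$) with no boundary cases to worry about. That trichotomy in turn makes the bookkeeping with the counts $a,z,p$ completely mechanical, including the ``at least two blocks'' claim that the paper leaves implicit. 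So: same decomposition and same invocation of Observation~\ref{Observation on d-functions}, but your key estimate is cleaner and closes a small gap in the original exposition.
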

    \begin{proof}[{\bf \textnormal Proof}]
    Beginning by a warm-up, note that $(2,1,1,0,1)$ has blocks $(2,1,1)$ and $(1)$, and $(0,1,0,1,2)$ has $(1)$ and $(1,2)$. 
Let $(X_1,\ldots, X_n)$ be as in the lemma that is outside \eqref{Eqn solutions of os and 2s for rho in type A}.
Note \eqref{Eqn norm equality for rho 2} has $n+1$ many square terms (in $X_i$s) and the constant term $-(n+1)$.
We assume $n>1$, since $n=1$ has only two solutions $(X_1)=(0)$ and $(2)$. 
Suppose $X_i>0$ $\forall$ $i\in \{1,\ldots, n\}$ i.e. $(X_1,\ldots, X_n)$ is a block; we prove the last claim in the lemma by the method of contradiction.
So $(X_i+X_{i+1}-1)^2>0$ $\forall\ 1\leq i\leq n-1$, and possibly $(X_1-1)^2$ and $(X_n-1)^2$ are 0.
Next $(X_1,\ldots,X_n)\neq (1,\ldots,1)$ by Observation \ref{Observation on d-functions}(2).
So we fix the least stage $i$ for which $X_t=1$ $\forall$ $t\leq i-1$ and $X_i\geq 2$. 
We observe
\begin{equation}\label{Eqn d-value lower bounds}
(X_1+X_2-1)^2+\cdot\cdot+ (X_{i-1}+X_i-1)^2\geq i+2\  \ \&\   \ (X_i+X_{i+1}-1)^2+\cdot\cdot +(X_{n-1}+X_n-1)^2\geq  (n-i).
\end{equation}
So we see the contradiction $0=d^{(n)}(X_1,\ldots, X_n)\geq i+2 + n-i - (n+1)=1$.
Hence $(X_1,\ldots,X_n)$ must have at least 2 blocks.
In view of Observation \ref{Observation on d-functions} (1)--(2), all the blocks in $(X_1,\ldots,X_n)$ cannot be $(1,\ldots,1)$.
Similarly by bounds \eqref{Eqn d-value lower bounds} applicable for non-$(1,\ldots,1)$ blocks above, not all the blocks can be different from $(1,\ldots,1)$ as well.
Hence the proof of the lemma is complete.  
\end{proof}
Following Observation  \ref{Example large number of non-hole solutions for rho}(2)--(4), we explore in the small case $n=4$, if the space of $(2,1,0,1)\\  \leftarrow\!\rightarrow \eta=\rho-2\alpha_1-\alpha_2-\alpha_4$ solution in $M(\rho)$ has maximal vectors?
The answer is no:
    \begin{observation}\label{Observation solutions with no maximal vectors in M(rho)}
Fix any $\big[\eta=(2,1,0,1)\big]$-weighted vector $x=f_4Fm_{\rho}=Ff_4m_{\rho}$ in $M(\rho)$ for $0\neq F\in U(\mathfrak{n}^-)_{-2\alpha_1-\alpha_2}$ (as $f_4$ commutes with $f_1,f_2$), and fix any quotient $V=U(\mathfrak{n}^-)v_{\rho}\twoheadleftarrow M(\rho)$.
(1)~If the projection $0\neq Ff_4v_{\rho}\in V_{\rho-2\alpha_1-\alpha_2-\alpha_4}$ of $x$ in $V$ is maximal, then for $i\in \{1,2\}$ observe $0=e_iFf_4v_{\rho}= f_4(e_iFv_{\rho})$ and $e_4 f_4(e_iFv_{\rho})= -e_iFv_{\rho}$ together force the maximality of $Fv_{\rho}\in V_{\rho-2\alpha_1-\alpha_1}$, and thereby the contradiction that $(2,1,0,0)$ satisfies \eqref{Eqn norm equality for rho 1} at $n=4$.
(2) For $V=M(\rho)$ and $x$ maximal, we explain this failure in more basic viewpoints.
%that leads to presentations for $L(\rho)$s,
By the $U(\mathfrak{n}^-)$-freeness of $M(\rho)$ we have $0=e_iFf_4m_{\rho}= f_4(e_iF m_{\rho})=  (f_4e_iF)m_{\rho}$ $\forall$ $i\in \{1,2\}$ forces $\mathfrak{n}^+Fm_{\rho}=0$, thereby the same contradiction as in the previous way.
(3) Alternately and more easily, observe following \cite[Theorem 2]{Kac--Kazhdan} that $\eta$ cannot be expressed as a sum $\rho-n_1\beta_1-\cdots -n_k\beta_k$ with properties in \eqref{Eqn dot action by any positive root}: As otherwise, $\alpha_4$ must necessarily equal some $\beta_t$ in that sum, but then $n_t=2> \height_{\{4\}}(\rho-\eta)=1$. 
\end{observation}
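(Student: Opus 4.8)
The plan is to first check that $\eta=\rho-2\alpha_1-\alpha_2-\alpha_4$ genuinely satisfies the norm equality \eqref{Eqn norm equality with invariant form}, and then to show in three independent (progressively more elementary) ways that $M(\rho)_\eta$ — and in fact $V_\eta$ for \emph{every} quotient $M(\rho)\twoheadrightarrow V$ — carries no maximal vector; this is exactly what makes $\eta$ a witness that the solution sets of \eqref{Eqn norm equality with invariant form} and of the Kac--Kazhdan equation \eqref{Eqn dot action by any positive root} differ in this setting. For the first point one simply substitutes $(X_1,X_2,X_3,X_4)=(2,1,0,1)$ into \eqref{Eqn norm equality for rho 1}: $4+1+0+1-2(2+1+0+1)+(2\cdot1+1\cdot0+0\cdot1)=6-8+2=0$, so $\eta$ solves \eqref{Eqn norm equality with invariant form} by Lemma \ref{Lemma negative type A norm equality equaion}.

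Next I would set up the structural decomposition of the weight space $M(\rho)_\eta$. In $A(4)$ the node $4$ is joined only to node $3$, so the subdiagrams $\{1,2\}$ and $\{4\}$ are disjoint and unjoined; hence $\mathfrak{g}_{\{1,2,4\}}\cong\mathfrak{g}_{\{1,2\}}\times\mathfrak{g}_{\{4\}}$ and $\mathfrak{n}^-_{\{1,2,4\}}=\mathfrak{n}^-_{\{1,2\}}\oplus\mathbb{C}f_4$ with $[f_4,\mathfrak{n}^-_{\{1,2\}}]=0$. Since the root $2\alpha_1+\alpha_2+\alpha_4$ has $\alpha_3$-coefficient zero and every positive root has connected support, a PBW argument shows every element of $M(\rho)_\eta$ equals $Ff_4m_\rho$ for a unique $F\in U(\mathfrak n^-_{\{1,2\}})_{-2\alpha_1-\alpha_2}$, with $Ff_4m_\rho\ne0\iff F\ne0$ by freeness of $M(\rho)$; in an arbitrary quotient $V=U(\mathfrak n^-)v_\rho$, uniqueness of $F$ may fail, but one still has $Ff_4v_\rho=f_4Fv_\rho$, so $Ff_4v_\rho\ne0$ forces $Fv_\rho\ne0$.

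The cleanest argument is then a one-line $\mathfrak{sl}_2$-calculation at node $4$: using $[e_4,F]=0$ (as $F$ is a word in $f_1,f_2$ and $[e_4,f_1]=[e_4,f_2]=0$) together with $e_4f_4v_\rho=\alpha_4^\vee v_\rho=\rho(\alpha_4^\vee)v_\rho=-v_\rho$, we get $e_4(Ff_4v_\rho)=Fe_4f_4v_\rho=-Fv_\rho$; if $Ff_4v_\rho$ were a nonzero maximal vector, then $Fv_\rho\ne0$, contradicting $e_4(Ff_4v_\rho)=0$. One may equivalently run the longer route of the Observation: from $e_i(Ff_4v_\rho)=0$ for $i\in\{1,2\}$ one rewrites this as $f_4(e_iFv_\rho)=0$, applies $e_4$, and uses $(\rho-2\alpha_1-\alpha_2+\alpha_i)(\alpha_4^\vee)=-1$ to deduce $e_iFv_\rho=0$; since $e_kFv_\rho=0$ automatically for $k\notin\{1,2\}$ (as $[e_k,F]=0$ and $e_kv_\rho=0$), $Fv_\rho$ is maximal and nonzero, whence the Casimir necessary condition forces $(2,1,0,0)$ to solve \eqref{Eqn norm equality for rho 1} — but $d^{(4)}(2,1,0,0)=d^{(2)}(2,1)+d^{(1)}(0)=2+0=2\ne0$ by Observation \ref{Observation on d-functions}(1), a contradiction.

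Finally I would record the Kac--Kazhdan viewpoint: by \cite[Theorem 2]{Kac--Kazhdan} a maximal weight strictly below $\rho$ must be $\rho-\sum_t n_t\beta_t$ with $\beta_t\in\Delta^+$ satisfying \eqref{Eqn dot action by any positive root}; every such $\beta_t\preceq\rho-\eta=2\alpha_1+\alpha_2+\alpha_4$ has connected support, hence lies in $\{1,2\}$ or in $\{4\}$, and the only positive root supported on $\{4\}$ is $\alpha_4$, so some $\beta_t=\alpha_4$. At that step the earlier $\beta_s$ pair to $0$ with $\alpha_4$, so \eqref{Eqn dot action by any positive root} reads $(2\rho,2\alpha_4)=n_t(\alpha_4,\alpha_4)$, i.e.\ $n_t=2$, which exceeds $\height_{\{4\}}(\rho-\eta)=1$ — impossible. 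There is no serious obstacle in this argument; the only points needing care are the validity of the PBW factorization of $M(\rho)_\eta$ (the disjointness of $\{1,2\}$ and $\{4\}$ in $A(4)$), keeping everything uniform over all quotients $V$, and the correct normalization of the invariant form in the pairings (recall $(\rho,\alpha_i)=\tfrac12(\alpha_i,\alpha_i)=\tfrac{A_{ii}}2$, so $\rho(\alpha_i^\vee)=-1$ here).
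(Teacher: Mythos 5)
Your proposal is correct, and your ``cleanest argument'' is a genuinely shorter route than the one the paper takes in parts (1)--(2). The paper's argument passes through two stages: it first upgrades the maximality of $x=Ff_4v_\rho$ to the maximality of $Fv_\rho$ (via $e_if_4=f_4e_i$ and one application of $e_4$), and then invokes the Casimir necessary condition \eqref{Eqn norm equality with invariant form} to reach the contradiction that $(2,1,0,0)$ solves \eqref{Eqn norm equality for rho 1}. Your main argument short-circuits this entirely: $e_4(Ff_4v_\rho)=F\,\alpha_4^\vee v_\rho=\rho(\alpha_4^\vee)Fv_\rho=-Fv_\rho\neq 0$ (using $[e_4,F]=0$ and $f_4Fv_\rho=x\neq 0$), so $x$ is simply not maximal --- no appeal to the norm equality for $(2,1,0,0)$ is needed, only $\rho(\alpha_4^\vee)=-1\neq 0$. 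This is the rank-one statement of Lemma \ref{Lemma rank-1 BKM rep. theory} in disguise ($f_4^n m_\rho$ is maximal only for $n=2$), and it is both more elementary and more transparent; what the paper's longer route buys is the explicit identification of the ``wrong'' solution $(2,1,0,0)$, which connects the failure to the norm-equality bookkeeping used throughout Section \ref{Section 6 for rho}. Your reproductions of the paper's routes (1) and (3) are accurate (including the correct evaluation $(\rho-2\alpha_1-\alpha_2+\alpha_i)(\alpha_4^\vee)=-1$, the check $d^{(4)}(2,1,0,0)=2\neq 0$, and the orthogonality of the earlier $\beta_s$ to $\alpha_4$ in the Kac--Kazhdan recursion forcing $n_t=2>1$), and your PBW justification that every vector of weight $\eta$ has the form $Ff_4m_\rho$ with $F\in U(\mathfrak{n}^-_{\{1,2\}})_{-2\alpha_1-\alpha_2}$ correctly fills in the premise that the observation takes as given.
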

 Proposition \ref{Proposition solutions with maximal vector for L(rho)} shows the results in this observation, in general case of blocks of types  $(1,\ldots,1)$ and others.
For raising-up along $(1,\ldots,1)$-block-directions, we rely on the non-degeneracy of (the matrix of) the restrictions $F_{\gamma}$ of the Shapovalov form $F:U\big(\mathfrak{g}(A)\big)\rightarrow U(\mathfrak{h})$ to each homogeneous space in $U(\mathfrak{n}^-)=\bigoplus_{\xi\in \mathbb{Z}_{\geq 0 }\Pi }U(\mathfrak{n}^-)_{-\xi}$.
 \cite[Theorem 1]{Kac--Kazhdan} determines over  Contragredient $\mathfrak{g}(A)$s, the determinant of matrix $[f_{\gamma}]_{\mathcal{P}(\gamma)\times \mathcal{P}(\gamma)}$ for the generalized Kostant partition function (involves root multiplicities) $\mathcal{P}(.)$: 
 \big(See \cite[Subsection 4.1]{Naito 2}, and fix $(.,.)$ preserving identification $\nu:\mathfrak{h}\rightarrow \mathfrak{h}^*$.\big)
\begin{center}
$\text{det}\big([f_{\gamma}]\big)\ = \  \prod_{\beta\in \Delta^+} \prod_{r\in \mathbb{N}}\big[2 \nu^{-1}(\beta) + (\rho, \beta) -r(\beta,\beta)  \big]^{\mathcal{P}(\gamma-r\beta)} $.    \end{center} 
\begin{observation}\label{Observation dual basis by Shapovalov form}
If there is no root $\beta\in \Delta^+$ with $\gamma-\beta\succeq 0$ and with $2[\lambda+\rho](\beta)\neq  j (\beta, \beta)$  $\forall\ j\in \mathbb{N}$, then $[F]_{\gamma}$ is non-degenerate and thereby $M(\lambda)_{\lambda-\gamma} \twoheadrightarrow V_{\lambda-\gamma}\twoheadrightarrow L(\lambda)_{\lambda-\gamma}$ all have the same dimensions for every module $M(\lambda)\twoheadrightarrow V$.
In such a case, given a basis $F_1v_{\lambda},\ldots, F_mv_{\lambda}$ of $V_{\lambda-\gamma}$ for $F_i\in U(\mathfrak{n}^-)_{-\gamma}$, we have its dual basis $E_1v_{\lambda},\ldots, E_m v_{\lambda}$ in $V_{\lambda-\gamma}$ with $F_{\gamma}(F_iv_{\lambda}, E_jv_{\lambda})=\delta_{i,j}$ $\forall$ $1\leq i,j\leq m$.
Equivalently, for the involute anti-automorphism $\sigma$ of $U\big(\mathfrak{g}(A)\big)$ \cite[Section 4]{Naito 2} -- recall, $e_t\xleftrightarrow{\sigma}f_t$ and $h\xrightarrow{\sigma} h\ \forall\ h\in \mathfrak{h}$ -- we have $\sigma(E_i)F_j v_{\lambda}\in  \delta_{i,j}     (V_{\lambda}\setminus \{0\})$.
Important in the present negative setting, is applying these to the weights $\mu=\rho- \sum_{j\in J}\alpha_j\ \leftrightarrow (1,\ldots, 1)_{j\in J}$. 
\end{observation}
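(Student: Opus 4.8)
The plan is to reduce the statement to the non-degeneracy of the contravariant (Shapovalov) form on the single weight space $M(\lambda)_{\lambda-\gamma}$, to read that off from the Kac--Kazhdan determinant formula under the stated hypothesis, and then to harvest the dimension equalities and the dual basis from the standard radical/maximal-submodule dictionary together with elementary linear algebra.

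First I would recall that, under the identification $M(\lambda)_{\lambda-\gamma}\xrightarrow{\ \sim\ }U(\mathfrak n^-)_{-\gamma}$, $xv_\lambda\mapsto x$, the contravariant form on $M(\lambda)_{\lambda-\gamma}$ is exactly the matrix $[f_\gamma]_{\mathcal P(\gamma)\times\mathcal P(\gamma)}$ evaluated at $\lambda$, namely $[F]_\gamma$. By the determinant formula displayed above, $\det[F]_\gamma$ is, up to a nonzero scalar, the product over $\beta\in\Delta^+$ and $r\in\mathbb N$ of $\big(2[\lambda+\rho](\beta)-r(\beta,\beta)\big)^{\mathcal P(\gamma-r\beta)}$. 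A factor indexed by $(\beta,r)$ appears with a positive exponent only when $\mathcal P(\gamma-r\beta)>0$, which forces $\gamma-r\beta\succeq 0$, hence, since $r\geq 1$ and $\beta\succeq 0$, already $\gamma-\beta\succeq 0$. Thus the hypothesis — that no $\beta\in\Delta^+$ with $\gamma-\beta\succeq 0$ satisfies $2[\lambda+\rho](\beta)=j(\beta,\beta)$ for any $j\in\mathbb N$ — precisely kills the factors that could possibly vanish at $\lambda$, so $\det[F]_\gamma\neq 0$. I expect this bookkeeping, especially the reduction from all $r\in\mathbb N$ to the single condition $\gamma-\beta\succeq 0$ via vanishing of the partition function, to be the main — and essentially only non-formal — point; everything after it is routine.

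Next, to pass from $M(\lambda)$ to an arbitrary quotient $M(\lambda)\twoheadrightarrow V$: the contravariant form $F$ on $M(\lambda)$ is graded (distinct weight spaces are orthogonal) and its radical is a submodule on which $F$ induces a non-degenerate contravariant form on the quotient, so $\operatorname{rad}(F)$ is the unique maximal proper submodule $M^{\max}(\lambda)$. Hence $M^{\max}(\lambda)_{\lambda-\gamma}=\operatorname{rad}\big(F|_{M(\lambda)_{\lambda-\gamma}}\big)=0$ by the previous paragraph, giving $\dim L(\lambda)_{\lambda-\gamma}=\dim M(\lambda)_{\lambda-\gamma}$; since $V$ also surjects onto $L(\lambda)$, the chain $M(\lambda)_{\lambda-\gamma}\twoheadrightarrow V_{\lambda-\gamma}\twoheadrightarrow L(\lambda)_{\lambda-\gamma}$ then forces all three dimensions to agree and both maps to be isomorphisms.

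Finally, for the dual basis, let $N$ be the kernel of $M(\lambda)\twoheadrightarrow V$; then $N\subseteq M^{\max}(\lambda)=\operatorname{rad}(F)$, so $F$ descends to a contravariant form $F_\gamma$ on $V$, and $N_{\lambda-\gamma}\subseteq M^{\max}(\lambda)_{\lambda-\gamma}=0$ identifies $F_\gamma|_{V_{\lambda-\gamma}}$ with the non-degenerate form on $M(\lambda)_{\lambda-\gamma}$. So, given a basis $F_1v_\lambda,\dots,F_mv_\lambda$ of $V_{\lambda-\gamma}$ with $F_i\in U(\mathfrak n^-)_{-\gamma}$, its Gram matrix $G=\big[F_\gamma(F_iv_\lambda,F_jv_\lambda)\big]$ is invertible, and taking the columns of $G^{-1}$ to form the linear combinations $E_jv_\lambda:=\sum_i (G^{-1})_{ji}F_iv_\lambda$ — which again lie in $U(\mathfrak n^-)_{-\gamma}\cdot v_\lambda$ — yields $F_\gamma(F_iv_\lambda,E_jv_\lambda)=\delta_{i,j}$, completing the argument.
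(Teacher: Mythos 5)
Your proposal is correct and is exactly the standard argument the paper intends: the Observation is stated without proof immediately after the Kac--Kazhdan determinant formula, and your three steps (only factors with $\mathcal P(\gamma-r\beta)>0$, hence $\gamma-\beta\succeq 0$, can contribute a vanishing factor; the radical of the contravariant form is the maximal submodule, so the $(\lambda-\gamma)$-piece survives in every quotient; invert the Gram matrix for the dual basis) are precisely what is being invoked. The only caveat is that the hypothesis as printed in the paper has its negation garbled (the ``$\neq\ \forall\ j$'' should read ``$=$ for some $j$''), and you have correctly read it in the intended sense.
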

\begin{prop}\label{Proposition solutions with maximal vector for L(rho)}
         Fix $\mathfrak{g}\big(A(n)\big) \text{ and }\rho $ as in Lemma \ref{Lemma negative type A norm equality equaion}, and any $\mathfrak{g}\big(A(n)\big)$-h.w.m. $V\twoheadleftarrow M(\rho)$.  
    Fix a solution tuple $(X_1,\ldots, X_n)\leftrightarrow \mu=\rho-X_1\alpha_1-\cdots -X_n\alpha_n$ to 
\eqref{Eqn norm equality for rho 2}. 
         \begin{itemize}
         \item[(a)]  $V_{\mu}$ has a maximal vector iff $(X_1,\ldots,X_n)$ is a $\{0,2\}$-sequence in \eqref{Eqn solutions of os and 2s for rho in type A}.\\
On useful property (to the proof of Theorem \ref{Theorem D character of V(rho)}) of the complimentary tuples is as follows.
         \item[(b)]
       Assume $(X_1,\ldots, X_n)$ is not any $\{0,2\}$-tuple in \eqref{Eqn solutions of os and 2s for rho in type A}, and $J$ be the union of the supports of all of its $(1,\ldots, 1)$-blocks.
       Let $\widehat{J}=J\sqcup \big\{i\ |\ X_i=0\big\}\ \subsetneqq \{1,\ldots, n\}$; so $\widehat{J}$ contains all the nodes adjacent to $J$.
       \big(E.g. $J=\{4,6\}$ and $\widehat{J}=\{3,5,7\}$ for $(2,1,0,1,0,1,0,2)$.\big) Then
            \begin{equation} \label{Eqn dimension product identity for rho}
\dim V_{\rho -\eta - \sum_{j\in J}\alpha_j}\ =\   \dim V_{\rho -\eta} \times  \dim V_{\rho- \sum_{j\in J}\alpha_j} \qquad \text{for all }\ \eta\in \mathbb{Z}_{\leq 0}\Pi_{\supp(\rho-\mu)\setminus \widehat{J}}.
\end{equation}
        \end{itemize}
     \end{prop}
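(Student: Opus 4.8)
\textbf{Proof plan for Proposition \ref{Proposition solutions with maximal vector for L(rho)}.}

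\emph{Part (a).} The ``if'' direction is immediate from Note \ref{Note Easy holes' maximal vectors} and the surrounding discussion: for a $\{0,2\}$-sequence $(X_1,\dots,X_n)$ in \eqref{Eqn solutions of os and 2s for rho in type A}, the support $H=\{i_1,\dots,i_k\}$ is independent, $M(\rho)_{\mu}$ is one-dimensional and spanned by $f_{i_1}^2\cdots f_{i_k}^2 m_{\rho}$, and each $f_{i_t}^2 m_{\rho}$ is maximal by Lemma \ref{Lemma rank-1 BKM rep. theory} (since $\rho(\alpha_{i_t}^\vee)=\tfrac{A_{i_t i_t}}{2}=-1$, so $M_{i_t}=2$); as the $f_{i_t}$ mutually commute, the product is maximal, and it survives in any quotient $V\ne 0$. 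For the ``only if'' direction, suppose $\mu=\rho-\sum X_i\alpha_i$ satisfies \eqref{Eqn norm equality for rho 2} but is \emph{not} a $\{0,2\}$-tuple in \eqref{Eqn solutions of os and 2s for rho in type A}. By Lemma \ref{Lemma at least 2 blocks in d-solutions}, $(X_1,\dots,X_n)$ has at least one block of all ones and at least one block with an entry $>1$. The plan is to exploit the block structure exactly as in Observation \ref{Observation solutions with no maximal vectors in M(rho)}: pick a $(1,\dots,1)$-block supported on an interval $J_0$, and write any weight vector $x\in M(\rho)_\mu$ in the factored form $x = F\cdot\big(\prod_{j\in J_0} f_j\big)\cdot m_\rho$ where $F\in U(\mathfrak{n}^-)$ is supported away from $J_0$ apart from the two nodes adjacent to $J_0$ (using that $A(n)$ is a path graph, so $f_j$ for $j\in J_0$ commutes with $f_\ell$ for $\ell$ not adjacent to $J_0$). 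Assume $x$ projects to a maximal vector in some $V\twoheadleftarrow M(\rho)$. Applying $e_i$ for $i$ in the interior of $J_0$ and at the two ends, and using the $\mathfrak{sl}_2$-relations $[e_i,f_i]=\alpha_i^\vee$, $[e_i,f_\ell]=0$ for non-adjacent $\ell$, one peels off the $f_j$'s one at a time and deduces that $F m_\rho$ (resp.\ its image in $V$) is itself a maximal vector --- i.e.\ that the tuple obtained from $(X_1,\dots,X_n)$ by zeroing out the coordinates in $J_0$ also satisfies \eqref{Eqn norm equality for rho 2}. But by Observation \ref{Observation on d-functions}(1) that zeroed tuple has $d^{(n)}$-value equal to $d^{(n')}(\text{non-}J_0 \text{ part}) + d^{(|J_0|)}(1,\dots,1) = 0 + (-2) = -2 \ne 0$, a contradiction. (Equivalently, via Observation \ref{Observation solutions with no maximal vectors in M(rho)}(3) using \cite[Theorem 2]{Kac--Kazhdan}: if such a block exists, $\mu$ cannot be written in the strongly linked form \eqref{Eqn dot action by any positive root} because a $\beta_t=\alpha_j$ step along $j\in J_0$ would require coefficient $1$, which is consistent, but then iterating forces the contradiction with the norm equality; I would use the peeling argument as the primary route since it is self-contained.) The only subtlety is the bookkeeping when a non-ones block is adjacent to the ones-block $J_0$ --- the commutation of $f_j$, $j\in J_0$, with $F$ fails precisely at one interface node --- so I would choose $J_0$ to be a \emph{terminal} ones-block (one adjacent on only one side to the rest of the support), which always exists because there are $\ge 2$ blocks and they are separated by zeros; this reduces the interface to a single node and the $\mathfrak{sl}_2$-calculation goes through cleanly.

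\emph{Part (b).} Here the idea is a product/tensor decomposition of weight spaces along the independent ``ones-directions''. Write $\widehat J = J \sqcup Z$ where $Z=\{i : X_i=0\}$, and note that by construction no node of $J$ is adjacent to any node of $\supp(\rho-\mu)\setminus\widehat J$ (the zeros in $Z$ insulate the ones-blocks). Fix $\eta\in\mathbb{Z}_{\le 0}\Pi_{\supp(\rho-\mu)\setminus\widehat J}$. The plan is: first, for the Verma $M(\rho)$ itself, the PBW/triangular-decomposition argument (as in the Observation following Definition \ref{Defn partial ordering on holes}, using $U(\mathfrak{n}^-)=U(\oplus_{\alpha\in\Delta^-\setminus\Delta^-_{J}}\mathfrak{g}_\alpha)\otimes U(\mathfrak{n}^-_{J})$, with $\Delta^-_J$ being the roots supported on $J$) shows $M(\rho)_{\rho+\eta-\sum_{j\in J}\alpha_j}\cong M(\rho)_{\rho+\eta}\otimes \big(U(\mathfrak{n}^-_J)\big)_{-\sum_{j\in J}\alpha_j}$ as vector spaces, and since $J$ is independent the last factor is $1$-dimensional, so the identity \eqref{Eqn dimension product identity for rho} holds trivially for $V=M(\rho)$ because both sides multiply a $1$-dimensional factor. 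The real content is to pass to an arbitrary quotient $V$. For this I would use the Shapovalov-form machinery of Observation \ref{Observation dual basis by Shapovalov form}: the weight $\rho-\sum_{j\in J}\alpha_j \leftrightarrow (1,\dots,1)_{j\in J}$ is, by the determinant formula of \cite[Theorem 1]{Kac--Kazhdan}, one at which the Shapovalov matrix $[F]_{\sum_{j\in J}\alpha_j}$ is non-degenerate --- one checks that no $\beta\in\Delta^+$ with $\sum_{j\in J}\alpha_j-\beta\succeq 0$ can satisfy $2[\rho+\rho](\beta)=j(\beta,\beta)$ for $j\in\mathbb N$ (this is essentially the $d^{(|J|)}(1,\dots,1)=-2\ne 0$ computation broken up block by block, since the relevant $\beta$ are sums of simple roots inside the ones-blocks). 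Hence $\dim V_{\rho-\sum_{j\in J}\alpha_j}=\dim M(\rho)_{\rho-\sum_{j\in J}\alpha_j}=1$ for every $V\twoheadleftarrow M(\rho)$, and more importantly the maximal vector $\prod_{j\in J}f_j\, v_\rho$ generates a copy of behaviour that does not interfere with the $\eta$-directions. Concretely, I would show that the map $V_{\rho+\eta}\to V_{\rho+\eta-\sum_{j\in J}\alpha_j}$ given by $w\mapsto \big(\prod_{j\in J}f_j\big)w$ is an isomorphism: injectivity follows by applying the dual raising operator $\prod_{j\in J}e_j$ (available by non-degeneracy of the Shapovalov form on the insulated ones-block) and using $[e_j,f_\ell]=0$ for $j\in J$, $\ell$ not adjacent to $J$ (so the $e_j$'s slide past anything supported away from $\widehat J$) to recover $w$ up to a nonzero scalar; surjectivity follows because $\supp(\rho - (\rho+\eta-\sum_{j\in J}\alpha_j)) = \supp(-\eta)\sqcup J$ and every vector of that weight, written in a PBW basis with $J$-supported factors pulled to the right, is a $U(\mathfrak n^-)$-combination landing in the image. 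Since $\dim V_{\rho-\sum_{j\in J}\alpha_j}=1$, the right-hand side of \eqref{Eqn dimension product identity for rho} equals $\dim V_{\rho+\eta}$, and the isomorphism gives the left-hand side equals $\dim V_{\rho+\eta}$ as well.

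\emph{Main obstacle.} The genuinely delicate point is part (a)'s ``only if'' direction when the ones-block is flanked by a non-trivial block: making the $e_i$-peeling rigorous requires controlling the extra terms that appear because $[e_i, f_{\text{interface node}}]\ne 0$. My plan to circumvent this --- always choosing a \emph{terminal} ones-block so there is exactly one interface node, and running a short induction on the length of that block --- should work, but it needs the combinatorial fact (from Lemma \ref{Lemma at least 2 blocks in d-solutions}) that a non-$\{0,2\}$-solution has at least two blocks separated by zeros, hence a terminal ones-block exists; I would state and use this explicitly. The secondary technical cost is invoking the Kac--Kazhdan determinant formula to get non-degeneracy of the Shapovalov form at the ones-weights in part (b); this is a clean application but I should double-check that the relevant positive roots $\beta\preceq\sum_{j\in J}\alpha_j$ are exactly the connected sub-intervals of the ones-blocks and that for each the quantity $2(\rho,\beta)-j(\beta,\beta)$ is nonzero for all $j\ge1$, which reduces to $-2\ne 0$ after the standard normalization $(\alpha_i,\alpha_i)=-2$, $(\rho,\alpha_i)=-1$.
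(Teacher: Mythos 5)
Your plan follows essentially the same route as the paper's proof, so the high-level structure is fine; but there are two substantive misconceptions in part (a) that would derail the write-up if left in.

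First, you repeatedly worry about ``the interface node'' where $f_j$ ($j\in J_0$) fails to commute with $F$, and you propose choosing a terminal ones-block to control it. There is no interface. In any Kac--Moody or BKM Lie algebra the Chevalley--Serre generators satisfy $[e_i,f_\ell]=\delta_{i,\ell}\,\alpha_i^\vee$, so $[e_i,f_\ell]=0$ for \emph{every} $\ell\neq i$, not just non-adjacent $\ell$. Moreover, since blocks of a solution are by definition maximal runs of nonzero entries, the neighbors of $J_0$ carry $X_i=0$, hence do not appear in $\supp(\rho-\mu)$ at all; your factored $F$ is therefore supported strictly off $\widehat J_0$, not merely ``apart from the two adjacent nodes.'' The paper exploits exactly these two facts: it writes $x=F_2F_1 v_\rho$ with $F_1=\prod_{j\in J}f_j$ (all ones-blocks at once), uses that $F_1$ and $F_1'\in U(\mathfrak{n}^+_J)$ commute with $F_2$ and with $e_t$ ($t\in\supp(F_2)$), slides $F_1'$ past to get $e_tF_2v_\rho=0$, and then gets the contradiction from the $d$-value of $\mu+R_J$. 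Your one-block-at-a-time peeling works too, but only once the commutation is understood to be unconditional; the terminal-block device is unnecessary.

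Second, your $d$-value arithmetic is off. If $d^{(n)}(X)=0$ and $J_0$ is a ones-block with $d^{(|J_0|)}(1,\dots,1)=-2$, then by Observation \ref{Observation on d-functions}(1) the zeroed tuple $X'$ has $d^{(n)}(X')=d^{(n)}(X)-d^{(|J_0|)}(1,\dots,1)=0-(-2)=2$ (and the paper, zeroing all ones-blocks, gets $2\times\#\{\text{ones-blocks}\}$), not $-2$. The conclusion ``$\neq 0$, hence $\mu+R_{J_0}$ is not a solution, contradiction'' is correct either way, but the written formula is wrong and should be fixed.

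For part (b) your plan agrees with the paper in substance: both show $\dim V_{\rho-\eta-R_J}\ge\dim V_{\rho-\eta}\cdot\dim V_{\rho-R_J}$ by sliding a raising operator for the $(1,\dots,1)$-weight past $G_j$, using the insulation by zeros (the paper packages the raising operator as the Shapovalov-dual $F_i'$; your direct $\prod_{j\in J}e_j$ with $e_jf_jv_\rho=-v_\rho$ is equivalent and arguably more elementary here since $\dim V_{\rho-R_J}=1$). The paper does not spell out the reverse inequality; your surjectivity via PBW factorization is the right complement, but you should say explicitly that it uses the fact that positive roots of $\mathfrak{g}(A(n))$ have connected support, so no root straddles $J$ and $\supp(\eta)$ — this is what makes the PBW monomials factor cleanly, and it is the same disconnectedness that drives the whole argument.
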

\begin{proof}[{\bf \textnormal Proof}]
Fix a solution $\mu\leftrightarrow (X_1,\dots, X_n)$ to \eqref{Eqn norm equality for rho 2} outside \eqref{Eqn solutions of os and 2s for rho in type A}.
%, and one of its $(1,\ldots,1)$-block to be supported on $\emptyset\neq J\subsetneqq \supp(\rho-\mu) \subseteq \{1,\ldots, n\}$. This is to uniformly write the proofs of both part (a) (by method of contradiction) and (b) of the proposition. 
We first discuss some crucial properties of $(1,\ldots, 1)$-blocks, to go into the proof.
We define for convenience $R_{J}:=\sum_{j\in J}\alpha_j$; notice $R_J$ is a root if and only if the Dynkin subgraph on $J$ is connected.

Fix binary $(Y_1,\ldots,Y_n)\neq (0,\ldots,0)$ with its support contained in $J$; so $\sum_{t=1}^n Y_t\alpha_t\preceq R_J$.
By Lemma \ref{Lemma at least 2 blocks in d-solutions} $(Y_1,\ldots,Y_n)$ \big(and every non-zero rootsum lying above it, in below lines\big) does not satisfy  \eqref{Eqn norm equality for rho 2}. 
Hence: 1)~$V_{\rho-\sum_{t=1}^n Y_t\alpha_t}$ does not have any maximal vectors; 2)~so no proper submodule of $M(\rho)$ contains the weight $\rho-\sum_{t=1}^n Y_t\alpha_t$; 3) whence,  the multiplicity of $\rho-\sum_{t=1}^n Y_t\alpha_t$ remains the same across $M(\rho)\twoheadrightarrow V\twoheadrightarrow L(\rho)$.
So, given $0\neq z\in V_{\rho-R_J}$, there exists $z'\in U\big(\mathfrak{n}^+_J\big)_{R_J}$ s.t. $0\neq z'z\in V_{\rho}$; $z'$ is constructable iteratively as an ordered product of $e_j$ $\forall$ $j\in J$ in view of point 1).

Now we show part (a); the reverse implication is manifest by definitions.
For the forward implication for $\mu$ (non-$\{0,2\}$-tuple), we assume $0\neq x=F_2 F_1v_{\rho}\in V_{\mu}$ to be maximal, for homogeneous elements $F_1\in U(\mathfrak{n}^-_J)_{ -R_J}$ and $F_2\in U\big(\mathfrak{n}^-_{\supp(\rho-\mu)\setminus \widehat{J}}\big)_{-\sum_{i\notin \widehat{J}}X_i\alpha_i}$; to show a contradiction.
Note $F_1$ commutes with $F_2$, as $X_i=0$ for every $i\in \supp(\rho-\mu)\setminus J$ that has at least one edge to $J$.
Fix a suitable $F'_1\in U\big(\mathfrak{n}^+_{J}\big)_{R_J}$ with $F_1'F_1v_{\rho}=v_{\rho}$.
Now for $t\in \supp(\rho-\mu)\setminus \widehat{J}$, by the maximality of $x$, we see $e_tF_2F_1v_{\rho}=0= F'_1e_tF_2F_1v_{\rho}=e_tF_2(F_1'F_1v_{\rho})=e_tF_2v_{\rho}$ to show the maximality of $F_2v_{\rho}$.
But it leads to the contradiction that $\rho- 
 \sum_{i\in \supp(\rho-\mu)\setminus \widehat{J}} X_i\alpha_i\ =\mu+R_J$ satisfies \eqref{Eqn norm equality for rho 2}, as its $d^{(n)}$-value is $d^{(n)}(X_1,\ldots,X_n) - d^{(|\widehat{J}|)}\big((X_j)_{j\in \widehat{J}}\big)=-\big[-2\times (\text{the number of Dynkin graph components of }J) \big]\neq 0$. 
 
For part (b), we fix any $\eta$ supported over $\supp(\rho-\mu)\setminus \widehat{J}$ as in the statement; for $J$ as above.
The result is manifest if $|J|=1$.
Fix two linearly independent sets of vectors $G_jv_{\rho}\in V_{\rho-\eta}$ and $F_iv_{\rho}\in V_{\rho -R_J}$ for  $1\leq j\leq \dim V_{\rho-\eta}\text{ and }  1\leq i\leq \dim V_{\rho -R_J}$. 
Suppose $\sum_{(j,i)}d_{j,i}G_jF_i v_{\rho}=0$ in $V$ for some $d_{j,i}$s in $\mathbb{C}$.
Observation \ref{Observation dual basis by Shapovalov form} yields $F_i'\in U(\mathfrak{n}^+)_{+R_J}$ with $F_i'F_jv_{\rho}\in \delta_{i,j}(V_{\rho}\setminus \{0\})$ $\forall$ $1\leq i,j\leq \dim V_{\rho-R_J}$.
For each $i$, by the linear independence of $G_j$s, $0=F_i'\sum_{(j,t)} d_{j,t}G_j F_t v_{\rho}=\sum_{j}d_{j,i}G_{j}v_{\rho}=0 \implies$ $d_{j,i}=0$ $\forall$ $j$.
So $G_jF_iv_{\rho}$ $\forall$ $(j,i)$ are linearly independent, showing \eqref{Eqn dimension product identity for rho}.
\end{proof}

  \begin{proof}[{\bf \textnormal Proof of Theorem \ref{Theorem D character of V(rho)}}]
Fix any h.w.m. $0\neq V\twoheadleftarrow M(\rho)$ as in the theorem.
We simplify $\mathcal{H}_V= \Big\{(H,m_H)\in \mathrm{Indep}(\mathcal{I}) \Big|\  \Big(\prod_{h\in H} f_h^{m_H(h)=2 }\Big)v_{\rho}=0\Big\}$  as $\mathcal{H}_V=\Big\{  H\subseteq \mathcal{I}= \{1,\ldots, n\}  \Big|\ H \text{ is independent} \\ \text{with }\Big(\prod_{h\in H}f_h^2\Big) v_{\rho}=0 \Big\}$.
By Proposition \ref{Proposition solutions with maximal vector for L(rho)} and holes' definition, the module $\frac{\mathrm{ker\big(M(\rho)\twoheadrightarrow V\big)}}{\sum\limits_{H\in \mathcal{H}_V}U(\mathfrak{n}^-)\prod\limits_{h\in H}f_h^{2} \cdot v_{\rho}}$ has no maximal vectors.
So $V=\mathbb{M}(\rho, \mathcal{H}_V)$, and in particular $L(\rho)=\mathbb{M}\big(\rho,\ \big\{ \{1\},\ldots,\{n\}\big\}\big)$; and so they all have explicit (possibly higher length) CS-type presentations.

\noindent
For proving the character formula in Theorem \ref{Theorem D character of V(rho)}, we follow idea \eqref{Eqn char formula principle 1 from norm equality}, and denominator identity:
\begin{equation}\label{Eqn denominator identity for rho setting over -A-n}
\prod_{\alpha\in \Delta^+}\big(1-e^{-\alpha}\big)^{\dim\mathfrak{g}_{\alpha}}\ =\  R(\mathcal{I})\ = \ {\bf S}_0,
\end{equation}
where-in $R(J)\ :=\qquad\quad\ \sum\limits_{\mathclap{\text{independent }\{i_1,\ldots,i_k\}\subseteq J}}\qquad(-1)^ke^{-\alpha_{i_1}-\cdots-\alpha_{i_k}}$ \quad $\forall
\ J\subseteq \mathcal{I}=\mathcal{I}^-$.\smallskip\\
\underline{Step 1} {\it The computations of $c(\mu)$s  for tuples $\mu= \rho-X_1\alpha_1-\cdots -X_n\alpha_n \ \leftrightarrow \  (X_1,\ldots,X_n)$ of 2s and 0s in \eqref{Eqn solutions of os and 2s for rho in type A}} :
We fix such a $\mu$ with $I:=\supp(\rho-\mu)$ independent, and set $\{H\in \mathcal{H}_V^{\min}\ |\ H\subseteq I \}=\{H_1,\ldots,H_k\}$.
Note, $\mu\in \wt V$ or $\mu\notin \wt V$; and $U(\mathfrak{n}^-_I)$ is the polynomial algebra $\mathbb{C}[\{f_i\ |\ i\in I\}]$.
\begin{center}
The goal of Step 1\ \  is the formula\ \ \  $c(\mu)\ =\  
\sum\limits_{\substack{\{i_1,\ldots,i_r\}\subseteq \{1,\ldots, k\}\\ H_{i_1}\cup\cdots\cup H_{i_r}=I }}(-1)^r$, \ \ proved as follows: \end{center}
The strategy in cases below, and in Step 2: a) we work with partial sums in $\mathrm{char}(V)=\  \sum\limits_{\mathclap{\eta\in \wt V}}\ \ \dim (V_{\eta})e^{\eta}$ that are supported $I$; which b) when multiplied by $R(\mathcal{I})$ (or its suitable factors) contain $c(\mu)e^{\mu}$ from $\mathrm{char}(V)$.
Below, we leave easy checks to see b) for the sums chosen by a), to the reader. \\ 
i) Suppose $\mu\in \wt V$.
We immediately see 0 on the r.h.s. of our goal-formula, and we have to show $c(\mu)=0$.
We note $\dim V_{\eta}=1$ for every $\rho\succeq \eta\succeq \mu$, by which observe $\sum\limits_{\rho\succeq \eta\succeq \mu}(1)e^{\eta}=e^{\rho}\prod\limits_{i\in I}\big(1+e^{-\alpha_i}+e^{-2\alpha_i}\big)=e^{\rho}\prod\limits_{i\in I}\frac{\big(1-e^{-3\alpha_i}\big)}{\big(1-e^{-\alpha_i}\big)}$.
Now $e^{\mu}$ does not occur in $e^{\rho}\prod\limits_{i\in  I}\frac{\big(1-e^{-3\alpha_i}\big)}{\big(1-e^{-\alpha_i}\big)}\times R(\mathcal{I})$ since it cannot 
 occur in $e^{\rho}\prod\limits_{i\in I}\frac{\big(1-e^{-3\alpha_i}\big)}{\big(1-e^{-\alpha_i}\big)} R(I)=e^{\rho}\prod\limits_{i\in I}\frac{\big(1-e^{-3\alpha_i}\big)}{\big(1-e^{-\alpha_i}\big)} \prod\limits_{i\in I}(1-e^{-\alpha_i})= e^{\rho}\prod\limits_{i\in I}\big(1-e^{-3\alpha_i}\big)$, as required.\smallskip \\
ii) Suppose $I\in \mathcal{H}_V^{\min}$; so $\mu\notin \wt V$.
Then every $\rho\succeq \eta\succneqq \mu$ has $\dim V_{\eta}=1$.
Proceeding similar to in case i) with replacing $\sum_{\rho\succeq \eta\succeq\mu}(1)e^{\eta}$ by $\Big(\sum_{\rho\succeq \eta\succeq\mu}(1)e^{\eta}\Big)-e^{\mu}$ in the equations therein, shows $e^{\mu}$ to occur in the resulting sum $-1$ times.
Now $c(\mu)=-1$ by the minimality of $I$, and we are done.\\
iii) Now we assume $I\in \mathcal{H}_V\setminus \mathcal{H}_V^{\min}$; so $\mu\notin \wt V$.
Consider\ \ \   $\sum\limits_{\mu\precneqq\eta\in \wt V}e^{\eta} \ = \  \bigg(\sum\limits_{\mu\preceq \eta\preceq \rho}e^{\eta}\bigg)\ - \ \sum\limits_{\mu\preceq \eta\notin \wt V}e^{\eta}\ =\ 
\bigg(\sum\limits_{\mu\preceq \eta\preceq \rho}e^{\eta}\ - \ \sum\limits_{I\supsetneqq  J\in \mathcal{H}_V,\ J_1\subsetneqq I\setminus J}e^{\rho- \sum_{j\in J_1}\alpha_j - \sum_{j\in J}2\alpha_j}\bigg)\ - \ 
\sum\limits_{I\supseteq J\in \mathcal{H}_V} e^{\rho- \sum_{i\in I\setminus J}\alpha_i-\sum_{j\in J}2\alpha_j}$.
Each of the 3 summations in the 2 parenthesis (on either sides of the second equality) contribute to only 0 in $c(\mu)$; by case i) and as $R(\mathcal{I})$ does not contain $e^{-2\alpha_i}$s for $i\in I\setminus (J\sqcup J_1)$.
The sets $J\in \mathcal{H}_V$ in the terminating sum over them in the previous equation, can be systematically replaced by the unions $H(i_1,\ldots, i_r):= H_{i_1}\cup \cdots\cup H_{i_r}$ for all increasing sequences $1\leq i_1<\cdots < i_r\leq k$ by- i) either the inclusion-exclusion principle for supports $I\setminus J$ of single $\alpha_i$s; ii) or taking the Euler characteristic of $\Big(\prod_{i\in I}f_i\Big)$ multiple of the Taylor resolution \cite{Taylor} for monomial ideal $\frac{U(\mathfrak{n}_{I}^-)}{\sum\limits_{t=1}^k U(\mathfrak{n}_I^-)\Big(\prod_{h\in H_t}f_h\Big)}$.
Namely \\
\hspace*{.5 cm} $ \sum\limits_{\mathclap{J\in \mathcal{H}_V,\  J\subseteq I}}\ \ -e^{\rho- \sum_{i\in I\setminus  J}\alpha_i-\sum_{j\in J}2\alpha_j} = \sum\limits_{\substack{0\preceq \gamma \preceq \ \sum_{h\in I\setminus H(i_1,..,i_r)}\alpha_h \\ \emptyset\neq \{i_1<\cdot\cdot<i_r\}\subseteq \{1,..,k\}}}(-1)^re^{\rho - \gamma -\sum_{h\in I\setminus H(i_1,..,i_r)}\alpha_h- 2\sum_{h\in H(i_1,..,i_r)}\alpha_h}$. \\
For each fixed $H(i_1,..,i_r) \subsetneqq I$, observe \ \ $R(I)\qquad \quad\sum\limits_{\mathclap{0\preceq \gamma \preceq \ \sum_{h\in I\setminus H(i_1,..,i_r)}\alpha_h}} \quad e^{\rho - \gamma -\sum_{h\in I\setminus H(i_1,..,i_r)}\alpha_h- 2\sum_{h\in H(i_1,..,i_r)}\alpha_h}\\ 
\hspace*{0.5cm}=\Big(\prod_{i\in I}(1-e^{-\alpha_i})\Big) \Big(\prod_{h\in I\setminus H(i_1,..,i_r)}(1+e^{-\alpha_h})\Big)e^{\rho -\sum_{h\in I\setminus H(i_1,..,i_r)}\alpha_h- 2\sum_{h\in H(i_1,..,i_r)}\alpha_h}\\
\hspace*{0.5cm} =\Big(\prod_{i\in H(i_1,..,i_r)}(1-e^{-\alpha_i})\Big) \Big(\prod_{h\in I\setminus H(i_1,..,i_r)}(1-e^{-2\alpha_h})\Big)e^{\rho -\sum_{h\in I\setminus H(i_1,..,i_r)}\alpha_h- 2\sum_{h\in H(i_1,..,i_r)}\alpha_h}$,\\
does not visibly contain $e^{\mu}$.
This is because, along any $h\in I\setminus H(i_1,.., i_r)$ direction, we do not see its exponential $e^{-2\alpha_h}$ -- but we only see $e^{-\alpha_h}$ and $e^{-3\alpha_h}$ -- in the products after the second equality in the above equations.  
The proof of the formula of $c(\mu)$ at the beginning of Step 1 is complete.
\smallskip \\
\underline{Step 2} {\it $c(\mu)=0$ for $\mu= \rho-X_1\alpha_1-\cdots -X_n\alpha_n \ \leftrightarrow \  (X_1,\ldots,X_n)$ other than those in \eqref{Eqn solutions of os and 2s for rho in type A}}:
We fix such a $\mu$.
Let $T:=\{i\in \supp(\rho-\mu)\ |\ i \text{ is in the support of a }(1,\ldots, 1)\text{-block} \}$ ($T\neq\emptyset$ by Lemma \ref{Lemma at least 2 blocks in d-solutions}), and let $\widehat{T}=T\sqcup  \big\{ i\in \supp(\rho-\mu)\ \big|\  X_i=0 \big\}$.
So $(X_t)_{t\in T}=(1,\ldots,1)$ and $\Big[\mathfrak{g}_{T},\  \mathfrak{g}_{\supp(\rho-\mu)\setminus \widehat{T}}\Big]=0$.
Let $R_T=\sum_{t\in T}\alpha_t$, and note, $\rho-R_T$ does not satisfy \eqref{Eqn norm equality for rho 2} by Lemma \ref{Lemma at least 2 blocks in d-solutions}.
So, importantly, the coef. of $e^{\rho - R_T}$ in $R(T)\times \sum_{J\subseteq T} \Big(\dim V_{\rho-R_J}\Big)e^{\rho - R_J}$ is 0; also notice $\rho -R_T\in \wt V$ as it is not below any hole-weight in $V$.
Now we compute $c(\mu)$ following substeps i)--iii):\\
i) $c(\mu)$ is the coef. of $e^{\mu}$ in $R(\mathcal{I})\times\sum_{J\subseteq T}   \sum_{0\preceq \eta\preceq\ \mu+R_T}    
\dim V_{\rho-\eta}\times \dim V_{\rho-R_J }e^{\rho-\eta-R_J}$ \big(by  \eqref{Eqn dimension product identity for rho}\big).\\
ii) It suffices to replace $R(\mathcal{I})$ (ignoring the positions of $X_i=0$) by $R\big(T\sqcup  (\supp(\rho-\mu)\setminus \widehat{T})\big)=R(T)\times R(\supp(\rho-\mu)\setminus \widehat{T})$, by definitions of blocks and $A$-type Dynkin graph structure. So-\\
iii) The sum in i) is $R(\supp(\rho-\mu)\setminus \widehat{T})\cdot\Big(\sum_{0\preceq \eta\preceq\ \mu+R_T}  \dim V_{\rho-\eta} e^{-\eta}\Big)\cdot 
\Big(R(T)\sum_{J\subseteq T}\dim V_{\rho- R_J}e^{\rho-R_J}\Big)$. 
Now $e^{\mu}$ cannot occur in the product in point iii), since $e^{\rho-R_T}$ is absent in the second parenthesis therein; the latter was observed prior to point i).
This completes the proof of the theorem.
\end{proof}
\section{Proposition \ref{Prop number theory}: Kac--Kazhdan equation with three solutions}\label{Section 7}
We conclude by showing number-theoretic characterization Proposition \ref{Prop number theory} -- of (infinite) cases of Kac--Kazhdan equation \eqref{Eqn dot action by any positive root} over $\mathfrak{g}\big(A(2)\big)$ with unique solutions in $\lambda-\mathbb{N}\Pi$ -- which follows by:
     \begin{prop}
Fix $M_1\leqslant M_2\in \mathbb{N}$, and define $d := \gcd(M_1, M_2)\text{ and } 
A  := M_1^2 + M_2^2 - M_1 M_2$. 
Next let \(\omega = e^{2\pi i / 3}\) be the cube-root of unity, and in $\mathbb{Z}[\omega], \text{ we define } \tau(z) := k + l \pmod{3} \text{ whenever }\  z = k + l \omega$.  
Further, we set- 
\begin{center}
$N:= \begin{cases} M_1 & \text{ if } M_1 < M_2,  \\
\frac{2}{3} M & \text{ if } M_1 = M_2 = M.
\end{cases}\quad
\gamma :=
\begin{cases}
 M_1 + (M_1 - M_2) \omega & \text{ if } M_1 < M_2 ,\\
  M + M \omega & \text{ if }
  M_1 = M_2 = M. 
\end{cases}$
\end{center}
\end{prop}
\noindent
\textnormal{\bf Proof.} It follows by verifying the equivalence of following statements (1)--(9):
\begin{enumerate}
    \item In $\mathbb{N}\times \mathbb{N}$, the equation $X^2 + Y^2 + XY - M_1 X - M_2 Y = 0$ has exactly one solution \((X,Y)=(M_1, M_2 - M_1)\) if \( M_1 < M_2 \), or $(X,Y)=$\(\left( \frac{2}{3} M, \frac{2}{3} M \right)\) if \( M_1 = M_2 = M \).
    \item For \( X \in [1, M_2) \setminus \{ N \}\), \(-3 X^2 + 2 (2 M_1 - M_2) X + M_2^2\) is not a perfect square.
    \item For \( X \in [1, M_2) \setminus \{ N \}\), \( 4 A - (3 X + M_2 - 2 M_1)^2 \) is not of the form \( 3 z^2 \).
    \item If \( 4 A = v^2 + 3 z^2 \), \( M_2 - 2 M_1 < v < 4 M_2 - 2 M_1 \), and \( v \equiv M_1 + M_2 \pmod{3} \), then \( v = 3 N + M_2 - 2 M_1 \).
    \item If there exists \(\beta \in \mathbb{Z}[\omega]\) such that \( \gamma \bar{\gamma} = \beta \bar{\beta} \), \( M_2 - 2 M_1 < 2 \operatorname{Re} \beta < 4 M_2 - 2 M_1 \), and \( \tau(\beta) = \tau(\gamma) \), then \( \beta = \gamma \) or \( \beta = \bar{\gamma} \).
    \item If  there exists \(\beta \in \mathbb{Z}[\omega]\) such that \( \gamma \bar{\gamma} = \beta \bar{\beta} \), \( \tau(\beta) = \tau(\gamma) \), and \( \operatorname{Re} \beta > \operatorname{Re}(\omega \gamma) \) if \( M_1 < M_2 \), or \( M > \operatorname{Re} \beta > -\frac{M}{2} \) if \( M_1 = M_2 = M \), then \( \beta = \gamma \) or \( \beta = \bar{\gamma} \).
    \item If  there exists \(\beta \in \mathbb{Z}[\omega]\) such that \( \gamma \bar{\gamma} = \beta \bar{\beta} \), then the ideal generated by \( \beta \) equals the ideal generated by \( \gamma \) or \( \bar{\gamma} \), and 3 does not divide \( M_1 + M_2 \) unless \( M_2 = M_1 \) or \( M_2 = 2 M_1 \).
    \item If there exists \(\beta \in \mathbb{Z}[\omega]\) such that the ideal generated by \( \gamma \bar{\gamma} \) equals the ideal generated by \( \beta \bar{\beta} \), then the ideal generated by
    \( \beta \) equals the ideal generated by \( \gamma \) or \( \bar{\gamma} \), and 3 does not divide \( M_1 + M_2 \) unless \( M_2 = M_1 \) or \( M_2 = 2 M_1 \).
\item  (i) The prime factors of \( d \) are 2, 3, or of the form \( 6k - 1 \) for some \( k \in \mathbb{N} \).   \\
 (ii) If \( M_1 = M_2 \), then 3 divides \( d \). \quad
(iii) If \( M_2 \notin \{ M_1, 2 M_1 \} \), then 3 does not divide \( d \), and \(\frac{M_1^2 + M_2^2 - M_1 M_2}{d^2}\) is a prime of the form \( 6m + 1 \) for some \( m \in \mathbb{N} \).\hfill $\blacksquare$
\end{enumerate}

\bigskip

  \noindent
\address{(Souvik Pal)} \textsc{Department of Sciences and Humanities, CHRIST University, Bangalore 560 074, India.}
  \textit{E-mail address}: \email{\texttt{pal.souvik90@gmail.com, souvik.pal@christuniversity.in}} \smallskip\\
       	 \address{(G. Krishna Teja) \textsc{Stat. Math. Unit, Indian Statistical Institute Bangalore Center,  560059,  India.}}
	 \  \  	 \textit{E-mail address}: \email{\texttt{tejag@alum.iisc.ac.in, tejag\_pd@isibang.ac.in}}

 \end{document}